\title{When does a tree activate the random graph?}
\author{Asaf Cohen Antonir\thanks{School of Mathematical Sciences, Tel Aviv University, Tel Aviv 69978, Israel.} \and Yuval Peled\thanks{Einstein Institute of Mathematics, Hebrew University, Jerusalem 91904, Israel.} \and Asaf Shapira\footnotemark[1] \and Mykhaylo Tyomkyn\thanks{Department of Applied Mathematics, Faculty of Mathematics and Physics,
Charles University, Prague 11800, Czech Republic} \and Maksim Zhukovskii\thanks{School of Computer Science, The University of Sheffield, Sheffield S1 4DP, UK}}
\date{\today}
\theoremstyle{plain}
\newtheorem{theorem}{Theorem}[section]
\newtheorem{lemma}[theorem]{Lemma}
\newtheorem{claim}[theorem]{Claim}
\newtheorem{proposition}[theorem]{Proposition}
\newtheorem{observation}[theorem]{Observation}
\newtheorem{corollary}[theorem]{Corollary}
\newtheorem{problem}[theorem]{Problem}
\theoremstyle{remark}
\newtheorem{remark}[theorem]{Remark}
\theoremstyle{definition}
\newtheorem{definition}[theorem]{Definition}
\def\moverlay{\mathpalette\mov@rlay}
\def\mov@rlay#1#2{\leavevmode\vtop{%
   \baselineskip\z@skip \lineskiplimit-\maxdimen
   \ialign{\hfil$\m@th#1##$\hfil\cr#2\crcr}}}
\newcommand{\charfusion}[3][\mathord]{
    #1{\ifx#1\mathop\vphantom{#2}\fi
        \mathpalette\mov@rlay{#2\cr#3}
      }
    \ifx#1\mathop\expandafter\displaylimits\fi}
\renewenvironment{proof}[1][\proofname]
{\par\pushQED{\qed}
	\normalfont\topsep6\p@\@plus6\p@\relax\trivlist
	\item[\hskip\labelsep\bfseries#1\@addpunct{.}]
	\ignorespaces}
{\popQED\endtrivlist\@endpefalse}
\newcommand{\whp}{w.h.p.}
\newcommand{\E}{\mathbb E}
\newcommand{\eps}{\varepsilon}
\newcommand{\AC}[1]{{\textcolor{blue}{\textsc{(AC: #1)}}}}
\newcommand{\MZ}[1]{{\textcolor{red}{\textsc{(MZ: #1)}}}}
\DeclareMathOperator{\wsat}{wsat}
\DeclareMathOperator{\exc}{exc}
\DeclareMathOperator{\con}{con}
\DeclareMathOperator{\diam}{diam}
\DeclareMathOperator{\dist}{dist}
\definecolor{RED}{rgb}{1,0,0}\definecolor{BLUE}{rgb}{0,0,1} %DIF PREAMBLE
\begin{document}
\date{}
\maketitle

\begin{abstract}
%\begin{comment}%Asafs abstract
Let $F$ and $G$ be two graphs. A spanning subgraph $H$ of $G$ is called weakly $F$-saturated if one can add to $H$ the edges of $G \setminus H$ in some order, so that whenever a new edge is added, a new copy of $F$ is formed. Obtaining lower bounds for the minimum size $\mathrm{wsat}(G,F)$ of such an $H$ is a classical problem in extremal combinatorics. In particular, in the past 40 years, various algebraic tools have been developed to prove lower bounds on the weak saturation number $\mathrm{wsat}(G,F)$. Our paper uncovers a new connection of weak saturation to topology of clique complexes, that allows to prove tight lower bounds in some cases when the algebraic tools are not efficient. 

It is easy to see that the smallest $K_3$-saturating graphs in $K_n$ are trees, thus $\mathrm{wsat}(K_n,K_3)=n-1$. In 2017, Kor\'andi and Sudakov proved that this is also the case in dense random graphs $G\sim G_{n,p}$, $p=\mathrm{const}\in(0,1)$, and posed the question of determining the smallest $p$ for which $G_{n,p}$ contains a $K_3$-saturating tree with high probability. %Using the new topological connection,
 Using the new topological connection, we show that this critical $p$ is of order $n^{-1/3-o(1)}$. %we show that this $p$ lies between $n^{-1/3-\varepsilon}$ and $(1+\varepsilon)n^{-1/3}$, determining the asymptotics of $\log p_{K_3}$ for the critical probability $p_{K_3}$ for the property of stability of weak saturation. 

Inspired by Gromov's local-to-global principle for hyperbolic groups, we further develop our topological approach and determine the critical probability up to a constant factor, for trees with diameter at most $n^{c}$, for some $c>0$.

%We further prove that this threshold probability is $p=\Theta(n^{-1/3})$ when restricted to trees of diameter at most $n^{c}$ for some $c>0$.
%The proof of this last statement uses a variant of Papasoglu's proof of Gromov's local-to-global principle for hyperbolic groups.

The new connection also enables us to improve the best known upper bound on the threshold probability for simple connectivity of the 2-dimensional clique complex of $G_{n,p}$, due to Kahle.

\end{abstract}

\section{Introduction}

%The main object of study in this paper is the weak activation process, which was introduced by Bollob\'as \cite{}.

Cellular automata were introduced by von Neumann~\cite{Neumann} after a suggestion of Ulam~\cite{Ulam}. The particular case, where the states of cells do not decrease, is known as {\it monotone} cellular automata or bootstrap percolation~\cite{Morris-automata}\footnote{More commonly, the notion of bootstrap percolation is referred to a monotone process with initially active cells chosen at random.}. 
%\AC{Their particular interest was in automata where the states of cells do not decrease, this is known as a {\it monotone} cellular automata or bootstrap percolation~\cite{Morris-automata}.}
 Although initially cellular automata were considered on lattices, since the foundational paper of  Bollob\'{a}s \cite{bol}, bootstrap percolation in arbitrary graphs attracted significant interest in extremal and probabilistic combinatorial communities (see,~e.g.,~\cite[Chapters 10, 11]{Survey}). In this paper, we study the {\it graph bootstrap percolation} suggested by Bollob\'{a}s~\cite{bol}, which is a substantial generalisation of $r$-neighborhood bootstrap percolation model having applications in physics; see,~\cite{Adler, Fontes,Morris}. Given a graph $F$, an {\it $F$-bootstrap percolation process} is a sequence of graphs $H_0\subset H_1\subset\cdots\subset H_m$ %\AC{$\subseteq$?}
 such that, for  $i=1,\ldots,m$, the graph $H_i$ is obtained from $H_{i-1}$ by adding an edge that belongs to a copy of $F$ in $H_i$. A graph $H$ is {\it weakly $F$-saturated} in a graph $G$, if there exists an $F$-bootstrap percolation process $H=H_0\subset H_1\subset\cdots\subset H_m=G$. The minimum number of edges in a weakly $F$-saturated graph in $G$ is called the {\it weak $F$-saturation number} of $G$ and is denoted by $\mathrm{wsat}(G,F)$.

Despite a vast amount of research devoted to the graph bootstrap percolation~\cite{Alon85,AscoliHe,BBMR,BTT,EFT,Faudree,Frankl82,Kalai85,Kalai84,KMM,Morris-automata,MN,MNS,MShapira,Pikhurko-Phd,Pikhurko,ST-PAMS,Terekhov,TerekhZ-combi,TerekhZ,Tuza}, the exact value --- or even the asymptotics --- of $\mathrm{wsat}(G,F)$ is known only for some specific pairs of graphs $(G,F)$ and the complete picture is far from being well-understood. Even in the most well-studied case $G=K_n$, we are only aware of the following fairly general results describing possible values of  $\mathrm{wsat}(K_n,F)$: tight bounds on the weak saturation number for arbitrary $F$ and for $F$ from certain families~\cite{Faudree,TerekhZ-combi}; the existence of a non-trivial limit $c_F:=\lim_{n\to\infty}\mathrm{wsat}(K_n,F)/n$ for all $F$~\cite{Alon85}; a description of possible values of $c_F$~\cite{AscoliHe,TerekhZ-combi}.

One of the most efficient approaches to estimate from below the weak saturation number is linear algebraic, which in its most general form --- in terms of matroids --- was formulated by Kalai in~\cite{Kalai85}.  This approach allows to obtain the exact value of $\mathrm{wsat}(G,F)$, in particular, when both $G$ and $F$ are cliques~\cite{Alon85,Frankl82,Kalai85,Kalai84} and when $G$ is a clique and $F$ is a complete balanced bipartite graph~\cite{Kalai85,KMM}. It also allows to obtain asymptotically tight bounds when both $G$ and $F$ are complete bipartite graphs~\cite{KMM} and when $G$ and $F$ are grids~\cite{MNS}. Nevertheless, Terekhov and the last author of this paper proved~\cite{TerekhZ} that there are infinitely many $F$ such that the algebraic approach does not allow to find even the asymptotics of $\mathrm{wsat}(G=K_n,F)$, and the same is true for (asymptotically) almost all $G$.
In this paper, we supply a new topological approach to tackle weak saturation problems in a random setting. %. We will use this approach in a random setting to derive our main theorems, whereas
 Using the linear algebraic approach of \cite{BBMR} seems intractable in this case, and even known to fail when the ambient field is $\mathbb{F}_2$. In particular, it was shown that the cycle space and the triangle space coincide for `dense' values of $p$, see \cite{BarKah2019,DemHamKah2012,DubKah2024}. For these `dense' values of $p$ \whp\ $G_{n,p}$ is connected, and hence, the dimension of its cycle space is $e(G_{n,p})-n+1$. This implies that the lower bound given by the linear algebraic approach (with ambient field $\mathbb{F}_2$) is $n-1$, which also holds trivially by connectivity.
%\AC{I think it is worth mentioning that we develop a new attacking vector for this problem, that uses algebraic and topological methods.}
%\MZ{Can we explain why in few words?}

\paragraph{Random graphs.}  Over the past few decades, there has been considerable interest in extending extremal combinatorial results to random graph settings. A central focus of this research is to determine the threshold $p$ for which an extremal problem has the same solution on the complete graph and on $G_{n,p}$ \whp\footnote{With high probability, that is, with probability tending to 1 as $n\to\infty$.}  For example, Mantel's theorem states that the largest triangle free subgraph
of the complete graph contains $1/2$ of its edges (namely $\lfloor n^2/4\rfloor$) and is bipartite. The respective probabilistic question is to determine $p$ for which the largest triangle free subgraph of $G_{n,p}$ is bipartite. This question and its natural generalisation to graphs other than the triangle, known as Tur\'{a}n's problem, has been the focus of extensive research~\cite{BPS,CG-Annals,MK-Mantel,HS,HSZ,KKM,Schacht}. In the setting of weak saturation, it is thus natural to try to determine $p$, for which \whp
\begin{equation}
 \mathrm{wsat}(\mathbf{G},K_s)=\mathrm{wsat}(K_n,K_s)={n\choose 2}-{n-s+2\choose 2},\quad\text{ where }\mathbf{G}\sim G_{n,p}.
\label{eq:stability}
\end{equation}
This question was raised in 2017 by Kor\'{a}ndi and Sudakov who proved that~\eqref{eq:stability} holds \whp\ for every $s \geq 3$ and $p \in (0,1)$~\cite{KorSud}. 
They noted that their proof strategy can be used to show~\eqref{eq:stability} \whp\ when $p \geq n^{-\varepsilon}$ for
some sufficiently small constant $\varepsilon=\varepsilon(s)$ and raised the following question:
\begin{problem}
\label{Problem1}
For every fixed $s$, determine the range of $p$ for which~\eqref{eq:stability} holds \whp 
\end{problem}
This question appears to be complex even in the case when $s=3$. Observe that the equality 
$$
    \mathrm{wsat}(K_n,K_3)=n-1
$$
is straightforward for all $n\geq 3$ since every spanning subtree of a clique saturates its edges and since any weakly saturated subgraph has to be connected. In~\cite{Bidgoli}, it was proved that the stability property~(\ref{eq:stability}) exhibits a threshold probability $p_{K_s}$ and, in particular, for $s=3$,
\begin{equation}
 \left(\frac{3\ln n}{2n}\right)^{1/2}<p_{K_3}< \left(\frac{(\ln n)^2}{n}\right)^{1/3}.
\label{eq:old_bounds}
\end{equation}
More precisely, it was shown that if $p\geq \left(\frac{(\ln n)^2}{n}\right)^{1/3}$, then~\eqref{eq:stability} holds \whp, and, if $p\leq \left(\frac{3\ln n}{2n}\right)^{1/2}$, then~\eqref{eq:stability} does not hold \whp\  In this paper, we prove that $p_{K_3}=n^{-1/3-o(1)}$ --- in particular, we improve both the upper and the lower bound in~\eqref{eq:old_bounds} --- and give sharp (up to a constant factor) bounds on the threshold when the diameter of a weakly saturated tree is at most $n^{1/18-\varepsilon}$.

% Although much of this work has focused on Tur\'{a}n's problem~\cite{BPS,CG-Annals,MK-Mantel,HS,HSZ,KKM,Schacht}, other classical extremal combinatorial quantities have also been investigated. In particular, in 2017 Kor\'{a}ndi and Sudakov~\cite{KorSud} initiated the study of the weak saturation number $\mathrm{wsat}(\mathbf{G},F)$ of the binomial random graph $\mathbf{G}\sim G_{n,p}$. They proved that, for every {\it constant} $p\in(0,1)$ and every $s\geq 3$, \whp 
%\begin{equation}
% \mathrm{wsat}(\mathbf{G},K_s)=\mathrm{wsat}(K_n,K_s)={n\choose 2}-{n-s+2\choose 2},
%\label{eq:stability}
%\end{equation}
%i.e. the weak $F$-saturation number is {\it stable}, when $F$ is a clique. 
 
 In~\cite{Kal_Zhuk}, the result of Kor\'{a}ndi and Sudakov was extended to complete bipartite graphs and it was conjectured that~\eqref{eq:stability} takes place \whp\ for all graphs $F$. Although there is little hope to get explicit tight estimates of $\mathrm{wsat}(\mathbf{G},F)$ for all $F$, in~\cite{KMMT-R}, it was proved that, for any $F$, the weak saturation number is {\it asymptotically} stable: for any constant $p\in(0,1)$, \whp\
 $\mathrm{wsat}(\mathbf{G},F)=(1+o(1))\mathrm{wsat}(K_n,F)$.\\

%Kor\'{a}ndi and Sudakov noted that their proof strategy can be used to show~\eqref{eq:stability} when $p\geq n^{-\varepsilon}$ for some sufficiently small constant $\varepsilon=\varepsilon(s)$ and asked {\it to determine the range of $p$ where the stability property~\eqref{eq:stability} holds \whp\ } This question appears to be complex even in the case when $s=3$.

The rest of Introduction is organised as follows. In Section~\ref{sc:results}, we state the main results of our paper. Then, in Section~\ref{sc:methods}, we explain the novel topological technique that we develop to prove the new lower bound. We show a connection between contractibility of loops in the triangle complex $X^{(2)}(G_{n,p})$ of $G_{n,p}$ and the existence of a weakly saturated subtree. In particular, our approach to prove lower bounds relies on the notion of hyperbolicity and the local-to-global principle of Gromov. Somewhat surprisingly, the bridge between topology and weak saturation also allows to derive a new bound on the threshold for simple connectivity of $X^{(2)}(G_{n,p})$ from our upper bound on $p_{K_3}$. Then, in Section~\ref{sc:poluted}, we discuss some other related work on bootstrap percolation in a, so called, polluted environment.

%and show consequences for the threshold for simple connectivity of the clique complex of $G_{n,p}$.
%\AC{In the next section, we state our main results. Then, in Section~\ref{sc:methods}, we explain the novel topological technique that we develop to prove our results. Moreover, surprisingly, our topological approach also goes in the other direction, and as a corollary from our main results, we obtain an improvement for the threshold for simple connectivity of the clique complex of $G_{n,p}$. A reference to subsection \ref{sc:pre-topology}? Or in the first place we use topological terms.}

%\MZ{Add reference to Section 1.3}

%Let us recall the definitions of bootstrap percolation and the weak saturation number.

\subsection{New results}
\label{sc:results}

Let $H:=H_0\subset H_1\subset\cdots\subset H_s=:G$ be a $K_3$-bootstrap percolation process. We say that $H$ {\it activates} $G$ and write $H\to G$.

\begin{theorem}
\label{th:general}
     Let $\eps>0$, $p:=p(n)\in[0,1]$, and $\mathbf{G}\sim G_{n,p}$.
     \begin{enumerate}
     \item If $p\geq (1+\eps)n^{-1/3}$, then w.h.p.\ there exists a tree $T\subseteq \mathbf{G}$ such that $T\to \mathbf{G}$.
     \item If $p<n^{-1/3-\varepsilon}$, then w.h.p.\ there is no tree $T\subseteq \mathbf{G}$ such that $T\to \mathbf{G}$.
     \end{enumerate}
In particular, we have
$$
p_{K_3}=n^{-1/3-o(1)}.
$$ 
\end{theorem}

Theorem~\ref{th:general} thus determines the correct exponent of the threshold $p_{K_3}$ for the range in Problem~\ref{Problem1}. Note that both the upper and the lower bound in Theorem~\ref{th:general} improve~\eqref{eq:old_bounds}. %It also immediately follows that 
%$$
%p_{K_3}=n^{-1/3-o(1)}.
%$$ 

The first part (1-statement) of Theorem~\ref{th:general} is proven constructively: we show that w.h.p.\ there exists a tree of {\it diameter 4} that activates $\mathbf{G}$.
On the one hand, it is easy to see that w.h.p.\ 4 is the minimum possible diameter of a subtree that activates $\mathbf{G}$. On the other hand, we do not know whether for $p=n^{-1/3-o(1)}$ there exists a tree of larger diameter that activates $\mathbf{G}$ w.h.p.\ and cannot be `reduced' to a tree of diameter 4 (see discussions in Section~\ref{sc:discussions}).
%We prove the first part (1-statement) of Theorem~\ref{th:general} constructively: we show that there exists a tree of diameter 4 that activates $\mathbf{G}$ w.h.p. On the one hand, it is easy to see that this is the minimum possible diameter of a subtree that activates $\mathbf{G}$ w.h.p. On the other hand, we do not know whether for $p=n^{-1/3+o(1)}$ there exists a tree of larger diameter that activates $\mathbf{G}$ and cannot be reduced to a tree of diameter 4 (see discussions in Section~\ref{sc:discussions}). 
Below we state our second result showing that for such trees the upper bound on the threshold in Theorem~\ref{th:general} is tight. We manage to determine threshold probability (up to a constant factor) for all trees with diameter at most $n^{1/18-\varepsilon}$.

%\AC{The first part (1-statement) of Theorem~\ref{th:general} is proven constructively: we show that w.h.p. there exists a tree of diameter 4 that activates $\mathbf{G}$. On the one hand, it is easy to see that w.h.p. 4 is the minimum possible diameter of a subtree that activates $\mathbf{G}$. On the other hand, we do not know whether for $p=n^{-1/3-o(1)}$ w.h.p. there exists a tree of larger diameter that activates $\mathbf{G}$ and cannot be reduced to a tree of diameter 4 (see discussions in Section~\ref{sc:discussions}).  The second part (0-statement) of Theorem \ref{th:general} is proven via the deterministic Lemma \ref{lem: main lemma for 0-statement} relating activation processes to homotopies between loops, and thereby to the simple connectivity of the clique complex. Then, we use the main results from~\cite{Bab2012,CosFarHor2015} asserting that when $p<n^{-1/3-\varepsilon}$, w.h.p.\ the 2-dimensional clique complex $X_2(\mathbf{G})$ of $\mathbf{G}$ is not simply connected and hence does not admit an activation process.\\ Our second result shows that for such trees the upper bound on the threshold in Theorem~\ref{th:general} is tight. We manage to determine threshold probability (up to a constant factor) for all trees with diameter at most $n^{1/21-\varepsilon}$.}

\begin{theorem}
\label{th:shallow}
     Let $\eps>0$, $p=p(n)\in[0,1]$, and $\mathbf{G}\sim G_{n,p}$.
     \begin{enumerate}
     \item If $p\geq (1+\eps)n^{-1/3}$, then w.h.p.\ there exists a tree $T\subseteq \mathbf{G}$ of diameter 4 such that $T\to \mathbf{G}$.
     \item If $p<(2^{-7/3}-\eps)n^{-1/3}$, then w.h.p.\ there is no tree $T\subseteq \mathbf{G}$ of diameter at most $n^{1/18-\eps}$ such that $T\to \mathbf{G}$.
     \end{enumerate}
\end{theorem}
%\AC{Should we mention that the first part is exactly the first part of Theorem \ref{th:general}?}
Note that the first statement is just a refinement of the first statement of Theorem~\ref{th:general}, and that the second statement is not sufficient to determine the threshold for all trees --- in particular, \whp\ $\mathbf{G}$ has a Hamilton path that has diameter $n-1$ when $pn=\ln n+\ln\ln n+\omega(1)$~\cite{AKS,Ham3,Ham1,Ham2}. %\MZ{add citations}

We derive the second part (0-statement) in Theorem~\ref{th:general} from the fact that, when $p<n^{-1/3-\varepsilon}$, w.h.p.\ the 2-dimensional clique complex $X^{(2)}(\mathbf{G})$ of $\mathbf{G}$ is not simply connected~\cite{Bab2012,CosFarHor2015}. Since the structure of the fundamental group of $X^{(2)}(\mathbf{G})$ is unknown for $p=n^{-1/3-o(1)}$, this connection to topology is not enough to further improve the lower bound on the activation threshold. The main novel conceptual and technical part of our paper is the new topological approach that investigates deeper the relation between the activation process and the structure of the fundamental group of $X^{(2)}(\mathbf{G})$. This new approach allows us to prove the 0-statement in Theorem~\ref{th:shallow}.
%We further investigate the connection between the activation process and the structure of the fundamental group of $X^{(2)}(\mathbf{G})$ in order to prove the 0-statement in Theorem~\ref{th:shallow}, which is the main conceptual and technical part of our paper.
 We discuss the approach and give some flavour of the proof in the next section. %~\ref{sc:methods}.

%\AC{The second part (0-statement) of Theorem~\ref{th:shallow} is derived by our novel topological approach. To obtain such qualitative bounds on the threshold, it is not enough to employ Lemma \ref{lem: main lemma for 0-statement} and the main results from~\cite{Bab2012,CosFarHor2015}.  To this end, we investigate the connection between the activation process and the structure of the fundamental group of the two dimensional clique complex $X_2(\mathbf{G})$. This is the main conceptual and technical part of our paper. We discuss this connection and give some flavour of the proof in Section~\ref{sc:methods}. }

\subsection{Methodology: simple connectivity of the triangle complex of the random graph}
\label{sc:methods}
%\AC{Can we think of a title that fits in one line? :P}

Let $\mathbf{G}\sim G_{n,p}$ and let $\mathbf{X}:=X^{(2)}(\mathbf{G})$ be the 2-dimensional clique complex of $\mathbf{G}$. Our first crucial observation relating activation process and fundamental groups is that if there exists a spanning subtree $T\subseteq\mathbf{G}$ such that $T\to\mathbf{G}$, then $\mathbf{X}$ is simply connected.
%\AC{I think this is one of our selling points. We should try to emphasize its importance and `novelty'.}
%\AC{Our first crucial observation relating activation process and homotopies of cycles is the following.
\begin{lemma}\label{obs:activation implies simple connectivity}
    If there exists a spanning subtree $T\subseteq\mathbf{G}$ activating $\mathbf{G}$, then $\mathbf{X}$ is simply connected.
\end{lemma}%}
The proof is based on a connection between a $K_3$-bootstrap percolation process and homotopy equivalence: for every cycle $C\subseteq\mathbf{G}$, consider an activation process of $C$ from $T$ in $\mathbf{G}$. Then, follow this process backwards, edge by edge, starting from the last edge in $C$ that has been activated. At each step, we replace the activated edge with the two edges of the triangle that have been used to activate it. Eventually, $C$ is contracted to a closed walk along the tree $T$, % \AC{which is contractible,}
 see details in Section~\ref{sc:general-0_proof}. 

It is known that the threshold for simple connectivity of $\mathbf{X}$ is $n^{-1/3-o(1)}$.
%\AC{Given this observation, the problem reduces to understanding the fundamental group of $\mathbf{X}$. Luckily, it is known that the threshold for simple connectivity of $\mathbf{X}$ is $n^{-1/3-o(1)}$.}

\begin{theorem}[Babson~\cite{Bab2012}, Costa, Farber, and Horak~\cite{CosFarHor2015}]
\label{th:Babson}
    Let $\eps>0$ be a constant, $p\leq n^{-1/3-\eps}$, $\mathbf{G}\sim G_{n,p}$, and $\mathbf{X}=X^{(2)}(\mathbf{G})$. Then, \whp\ $\mathbf{X}$ is not simply connected.
\end{theorem}

%\AC{Do you know if these results are built upon the Annals result of Kahle? Or the AMS result of Kahle et al? I think it is built upon the AMS result, and this is a good place to mention it.}

Then by combining Lemma~\ref{obs:activation implies simple connectivity} and Theorem~\ref{th:Babson} we are able to derive the 0-statement in Theorem~\ref{th:general}.

In the other direction, Lemma~\ref{obs:activation implies simple connectivity} can be also used to improve the upper bound on the simple connectivity threshold. Indeed,
the best known upper bound on the threshold for simple connectivity equals $\left(\frac{3\ln n+O(1)}{n}\right)^{1/3}$, due to Kahle~\cite{Kahle}. 
%\begin{theorem}[Kahle~\cite{Kahle}]
%\label{th:Kahle}
%Let $p=\left(\frac{3\ln n+\omega(1)}{n}\right)^{1/3}$, $\mathbf{G}\sim G_{n,p}$, and $\mathbf{X}=X^{(2)}(\mathbf{G})$. Then, \whp\ $\mathbf{X}$ is simply connected.
%\end{theorem}
%We observe that the 1-statement in Theorem~\ref{th:general} immediately implies the following improvement of Theorem~\ref{th:Kahle}:
 Given Lemma~\ref{obs:activation implies simple connectivity}, the 1-statement in Theorem~\ref{th:general} immediately implies the following improvement. % of Theorem~\ref{th:Kahle}.

\begin{corollary}
Let $\eps>0$ be a constant, $p\geq(1+\eps)n^{-1/3}$, $\mathbf{G}\sim G_{n,p}$, and $\mathbf{X}=X^{(2)}(\mathbf{G})$. Then, \whp\ $\mathbf{X}$ is simply connected.
\end{corollary}

We prove the 1-statement in Theorem~\ref{th:general} constructively. We actually prove the 1-statement of Theorem~\ref{th:shallow} which is a stronger version of the former. In~\cite{Bidgoli}, it is proven that \whp\ the BFS tree $T$ rooted in any fixed vertex $v$ activates $\mathbf{G}$ when $p\geq((\ln n)^2/n)^{1/3}$. We note that w.h.p. this tree has depth 2.
The main reason why the BFS tree is sufficient is that, for such $p$, \whp\ for any vertex $u$ the common neighbourhood of $u$ and $v$ is connected. It is possible to push down the upper bound on the threshold to $(1+o(1))n^{-1/3}$ by observing that for $p>(1+\eps)n^{-1/3}$ \whp\ for every $u$, the common neighbourhood of $u$ and $v$ has a `giant' connected component of size $\Theta(np^2)$. This allows to choose the tree $T$ that activates $\mathbf{G}$ in a more delicate way, by attaching every vertex $u$ in the bottom layer of the tree to a vertex $u'$ of the middle layer so that $u'$ is in the giant component of the common neighbourhood of $u$ and $v$.

\paragraph{From local to global: proof overview of Theorem~\ref{th:shallow} part 2.}
The main novel part of our paper is the new technique that we develop to prove the 0-statement in Theorem~\ref{th:shallow}. Let us first observe that one of the standard tools in the theory of bootstrap percolation --- Aizenman-Lebovitz type property~\cite{AizenmanL} --- is not useful in our case. Recall that this property, roughly, asserts that there exists some constant $k$ such that, if some edge is activated via a graph on $m$ edges, then for every $m_0\leq m$, there exists an edge that uses at least $m_0$ and at most $km_0$ edges for its activation. Then, the usual way to apply this property is as follows (see, e.g.,~\cite{BBR,BrettZolta} for its applications in random graphs): First, prove that there exists some interval $I=I(n)=[i^-,i^+\geq k i^-]$ so that \whp\ there are no edges that can be activated via a graph whose size belongs to $I$. Then show that typically an edge requires a graph of size at least $i^-$ to activate it. Finally, apply the Aizenman-Lebovitz property to get that if some edge requires more than $i^+$ other edges for activation, then there exists another edge that is activated via a graph whose size belongs to $I$. In our case, this approach fails since, for any $i$ and every edge $e=\{v_1,v_2\}$, \whp\ there exists a subgraph of $G_{n,p}$ of size $i$ that activates $e$. Indeed, it is sufficient to find a sequence of vertices $v_3,v_4,\ldots$ such that each $v_j$ is adjacent to $v_{j-2}$ and $v_{j-1}$.

%\AC{Recall that this property, roughly, asserts that there exists some constant $k$ such that, if some edge is activated via a graph on $m$ edges, then \textbf{for every $m_0\leq m$,} there exists an edge that uses at least $m_0$ and at most $km_0$ edges for its activation.}

%\AC{I'm not sure that in \cite{BBMR} there is a use of this property. It seems like they cite it only once as an example of why bootstrap percolation is important in physics.}

%\AC{If I understand correctly, the last sentence shows why we cannot execute the first step. But, I think that one needs }

Let $X$ be a finite 2-dimensional path-connected simplicial complex and let $X^{(1)}$ be its 1-dimensional skeleton. The fundamental group $\pi_1(X)$ is called {\it Gromov--hyperbolic}, if for any Cayley graph of $\pi_1(X)$, equipped with the graph metric, there exists $\delta>0$ such that all geodesic triangles are $\delta$-thin~\cite{Gromov}.
This definition has an equivalent reformulation in terms of an isoperimetric inequality for loops~\cite{Gromov}:
 the fundamental group $\pi_1(X)$ is Gromov--hyperbolic, if and only if there exists a constant $C>0$ such that for every null-homotopic loop $\gamma:S^1\to X^{(1)}$ the area of $\gamma$ in its simplicial Van Kampen diagram admits a linear bound w.r.t.\  the length of the loop. In other words, $A_X(\gamma)\leq C|\gamma|$, where the area $A_X(\gamma)$  is the minimum number of triangles in a Van--Kampen diagram for $\gamma$, see the definition of a Van--Kampen diagram in Section~\ref{sc:pre-topology}. The proof of Theorem~\ref{th:Babson} in~\cite{Bab2012,CosFarHor2015} follows the blueprint of the proof of an analogous result in Linial--Meshulam complexes in~\cite{BabHofKah2011}, which essentially relies on the fact that $\pi_1(X^{(2)}(\mathbf{G}))$ is \whp\ hyperbolic. In turn, the proof of hyperbolicity of $\pi_1(X_2(\mathbf{G}))$ uses the {\it local-to-global principle} of Gromov~\cite{BabHofKah2011,Gromov}. This principle, roughly speaking, asserts that, if any pure subcomplex $Y\subset X$ with at most $C$ triangles is hyperbolic with the isoperimetric constant $\inf_{\gamma:S^1\to Y^{(1)}}\frac{|\gamma|}{A_Y(\gamma)}\geq\delta=\delta(C)$, then $X$ is hyperbolic.

Inspired by the Gromov's from local-to-global principle, we develop a local-to-global principle for activation processes. Omitting technical details, we show the following. Let us say that the activation process of a cycle $C$ is {\it contractible}, if $C$ is contractible in the complex of triangles participating in the process.
 Let $G$ be a graph. Fix positive integers $d$ and $\ell\geq 100d$, that might depend on $|V(G)|$. %There exists $\delta:\mathbb{N}\to\mathbb{N}$ such that, if, for some $M$, in any subgraph $H$ of a graph $G$ on at most $M$ vertices and for every cycle $C\subset H$ of length at most $\ell$ that can be activated in $H$, it suffices to use $\delta(M)$ vertices for its contractible activation, then for activation of any cycle $C$ of length at most $\ell$ in $G$ from a tree {\it of diameter at most $d$}, $\delta(M)$ vertices is suffices as well.
%\AC{This sentence is too complex. I added a suggestion, but probably we need to refine it also.}
%We show that there exists a function $\delta:\mathbb{N}\to\mathbb{N}$ with the following property: 
Suppose there exists a positive number $M\leq|V(G)|$ such that for any subgraph $H\subseteq G$ on at most $M$ vertices and any cycle $C\subseteq H$ of length at most $\ell$ which can be activated in $H$, it suffices to use only $M/100$ vertices to activate $C$ in $H$, via a contractible activation process.
Then, $M/100$ vertices also suffice to activate any cycle $C\subseteq G$ of length at most $\ell$ from a tree {\it of diameter at most $d$}.

%\MZ{A relation between $\delta^{\text{local}}$ and $\delta^{\text{global}}$?} \MZ{Should the host graph be random?}
 It then remains to show that, for an appropriate choice of $M=M(n)$ w.h.p.
\begin{enumerate}[label=\textbf{(P\arabic*)}]
        \item\label{ppty1} there is a cycle of length at most $100d$ that cannot be activated by a process that uses at most $M/100$ vertices;
        \item\label{ppty2} no cycle of length at most $100d$ can be activated by a process that {\it essentially} exploits more than $M/100$ but at most $M$ vertices. In other words, if a cycle can be activated via a contractible process that exploits at most $M$ vertices, then it can also be activated by a contractible process with at most $M/100$ vertices.
\end{enumerate}
\begin{comment}
    \begin{itemize}
\item[P1] \whp there is a cycle of length at most $100d$ that cannot be activated by a process that uses at most $\delta(M)$ vertices; 
\item[P2] \whp no cycle of length at most $100d$ can be activated by a process that {\it essentially} exploits more than $\delta(M)$ but at most $M$ vertices. 
\end{itemize} 
\end{comment}
%\MZ{Explain what essentially means} \MZ{Then explain how we use contractibility} 
In order to prove both properties of $\mathbf{G}$, let us first observe that it is easy to count `planar' activation processes. Indeed, consider a triangulation $D$ of a cycle $C$ and its spanning subtree $T\subset D$. Then, clearly, $T$ activates $C$ in $D$. Having enumeration results for triangulations of a cycle $C$ of a given length and with a fixed number of internal vertices~\cite{BerFus2012,Tut1962}, one can estimate the expected number of such planar triangulations of a fixed cycle in $\mathbf{G}\sim G_{n,p}$ and derive that w.h.p.\ typical cycles in $\mathbf{G}$ cannot be activated in a `planar' way when $p\leq\left(3/4^{4/3}-\varepsilon\right)n^{-1/3}$. Even though we do believe that these `planar' processes are the cheapest way to activate a cycle, we do not manage to prove that. Nevertheless, we show that locally this is not far from being the truth, confirming the above two properties when $p\leq (2^{-7/3}-\eps)n^{-1/3}$. Instead of using the enumeration results for planar triangulations, we bound from above the number of graphs whose edges participate in an activation process of a given cycle by introducing the notion of an {\it activation diagram}, which resembles the notion of Van Kampen diagram with the key difference that the activation diagram contains each triangle from $X^{(2)}(\mathbf{G})$ at most once and it is not simply connected.
We reduce the problem of counting these diagrams to the problem of counting pairs of trees and certain walks in them. Our upper bound on the number of diagrams is sufficient to derive both properties \ref{ppty1} and \ref{ppty2}.  %By a purely combinatorial argument, we then prove an upper bound on the number of such diagrams. This upper bound turns out to be sufficient to derive both properties \ref{ppty1} and \ref{ppty2}.
% \AC{Let us also remark that our combinatorial approach to enumerate diagrams can probably be adapted to count other variants of the Van Kampen diagram, see Subsection \ref{subsection:1}.} \MZ{What do you mean here?} 
% \AC{The claim I wrote is too strong. I meant to say that it is possible that our combinatorial approach can be used in different settings of counting Van-Kampen like diagrams. Then, I added the reference to the relevant subsection. I guess this can be skipped.}\AC{Should we maybe reference to Section \ref{sec:main proof}?}

%We prove an upper bound on the number of such diagrams \AC{Do we want to mention the tree lemma?} \MZ{Asaf will add some details here}, that is sufficient to derive both properties \ref{ppty1} and \ref{ppty2}.

Unfortunately, when the length of a cycle exceeds $n^{\alpha}$ for a certain $\alpha\in(0,1)$\footnote{In the current proof $\alpha=1/18$, but % we did not try to optimise it. Nevertheless, there is a critical value $\alpha=1/3$, that our methods do not allow to exceed. \AC{
 we did not try to optimise this constant, as there is a critical value $\alpha=1/3$, that our methods do not allow to exceed.}, our bounds on the number of activation diagrams become ineffective. This barrier does not allow us to generalise the 0-statement in Theorem~\ref{th:shallow} for all trees. We suspect that deriving a stronger version of our from-local-to-global principle that does not restrict the diameter of a tree seems more promising than improving our enumeration results for activation processes.

\begin{remark}
As we already mentioned, the Aizenman--Lebovitz property \cite{AizenmanL} cannot be used to implement our proof strategy, although its assertion is similar to our main technical lemmas, Lemma \ref{lem: no ell-cycle between log and Klog} and Lemma \ref{lem: exists an ell-cycle with more than log}.
Indeed, Aizenman--Lebovitz type arguments find an element with restricted `activation speed' --- similarly, we find such a `short' cycle. Crucially, a cycle is built from a few edges, and Aizenman-Lebovitz arguments only yield the existence of single edges with restricted `activation speed'.  
\end{remark}
%\MZ{Finally, explain why we cannot generalise the proof for deeper trees}

% if  do so by combining it with the first moment method. Informally, we show two things: First, we show that w.h.p.\ there is a `short cycle' that cannot be activated by a `fast' activation process. Second, we show that no `short cycle' can be activated by a `moderate' process (which is  `slower' than `fast' activation process, but yet quite fast). This is then used in the fourth subsection. 

\subsection{Related work: Bootstrap percolation in a polluted environment}
\label{sc:poluted}

Let us mention another line of research related to our results. In this paper, we study a model of bootstrap percolation where the domain set is random. Bootstrap percolation in random domains is also referred to as {\it bootstrap percolation in a polluted environment}. %\AC{can you say a few words about it? This paragraph needs to contain the relevance of this model to our setting.}.
 A model of polluted bootstrap percolation on lattices was introduced by Gravner and McDonald~\cite{GravMcD} and has been actively studied in recent years, see, e.g.,~\cite{GravHolroyd,GHS}. Furthermore, Gravner and Kolesnik~\cite{GravKolesnik} studied such processes in `polluted cliques', i.e. in $G(n,p)$. In our settings, their result can be stated as follows.
\begin{theorem}[Gravner, Kolesnik~\cite{GravKolesnik}]\label{GravKole}
Let $D>0$ be a fixed constant and let $H=H(n)$ be a fixed sequence of graphs on $[n]$ with maximum degrees at most $D$. Let $\mathbf{G}\sim G_{n,p}$ be sampled on the same vertex set $[n]$. There exist positive constants $c=c(D)$ and $C=C(D)$ such that
\begin{enumerate}
\item if $p<\frac{c}{\sqrt{\ln n}}$, then \whp\ $H\not\to H\cup\mathbf{G}$.
\item if $p>\frac{C\ln\ln n}{\sqrt{\ln n}}$, then \whp\ $H\to H\cup\mathbf{G}$.
\end{enumerate}
\end{theorem}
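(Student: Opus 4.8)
Write $\langle H\rangle$ for the $K_3$-bootstrap closure of $H$ inside the host graph $H\cup\mathbf{G}$, obtained by repeatedly adding an edge of $(H\cup\mathbf{G})\setminus\langle H\rangle$ as soon as its endpoints have a common neighbour in the current graph; since this operation is monotone and order-independent, $H\to H\cup\mathbf{G}$ holds if and only if $\langle H\rangle=H\cup\mathbf{G}$, so both statements are assertions about $\langle H\rangle$. I would analyse $\langle H\rangle$ through the nucleation-and-growth scheme standard in polluted bootstrap percolation. The basic object is a \emph{droplet}: a vertex set $S$ with $\langle H\rangle[S]=(H\cup\mathbf{G})[S]$ and $(H\cup\mathbf{G})[S]$ connected, so that inside $S$ the process has already done all it can. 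A closed cherry $u\!-\!w\!-\!v$ of $H$ (i.e.\ with $uv\in\mathbf{G}$) gives a triangle droplet, and a suitable arrangement of closed cherries inside a small ball of $H$ gives a larger one.

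For the 1-statement the plan has three stages. \emph{Nucleation.} As $H$ has maximum degree $D$, a ball of radius $r$ in $H$ carries $O(D^{r})$ cherries and hence $\Theta(D^{r}p)$ closed ones in expectation; I would check that for $r\asymp\log\log n$ and $p\ge C(\log\log n)/\sqrt{\log n}$ the probability that such a ball nucleates a droplet of a prescribed ``critical'' size is $\gg 1/n$, so a first/second moment argument yields whp at least one critical droplet. \emph{Growth.} For a droplet $S$ of size $q$, only the at most $Dq$ vertices with an $H$-edge into $S$ are candidates for absorption — an $H$-edge is what lets the process start at the new vertex — and such a candidate is absorbed once it has enough $\mathbf{G}$-edges into $S$, the connectedness of $(H\cup\mathbf{G})[S]$ letting the freshly activated edges cascade. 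The expected number of absorbable vertices is of order $Dq\cdot\min(qp,1)$, which past the critical size exceeds $1$ by a factor growing with $q$, so a Borel--Cantelli argument over the absorption steps shows the droplet never stalls. \emph{Flooding.} Once $|S|\gg(\log n)/p$, a union bound gives that every vertex of $V\setminus S$ has a $\mathbf{G}$-edge into $S$, and iterating absorption along $H$ (using that $H$ is bounded-degree and, one may assume, connected — otherwise localise to each component) fills in all of $H\cup\mathbf{G}$.

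For the 0-statement it is enough to exhibit, whp, one edge of $\mathbf{G}\setminus H$ that never enters $\langle H\rangle$. I would look for a vertex $v$ whose $\Theta(\log\log n)$-neighbourhood $W\ni v$ in $H\cup\mathbf{G}$ is \emph{blocked}: too few random edges lie inside $W$ to activate any edge at $v$ from host edges of $W$, and the few $H$-edges leaving $W$ cannot ferry activation inward. Self-containedness of $W$ rests on the elementary fact that the first edge of $\langle H\rangle$ to cross $\partial W$ would require a previously activated crossing edge, so a $W$ with no boundary $H$-edges is genuinely sealed; the ``few crossing $H$-edges'' case must be handled as a robust perturbation of this. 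A first/second moment estimate should then show a blocked $W$ appears whp when $p<c/\sqrt{\log n}$ and disappears when $p\ge C(\log\log n)/\sqrt{\log n}$ — the two matching thresholds producing the $\log\log n$ gap between the parts of the theorem. To make ``no edge at $v$ activates'' rigorous I would run an Aizenman--Lebovitz-type extraction: were some edge at $v$ in $\langle H\rangle$, a moderately sized activation history would be responsible for it, and I would union-bound over encodings of such histories, in the spirit of the activation-diagram counting developed elsewhere in this paper.

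The crux in both directions is that $\langle H\rangle$ is a global, dynamically generated object that must be replaced by a static combinatorial proxy. For the 1-statement the delicate step is not the existence of critical droplets but showing that they coalesce and percolate despite the long-range dependencies created when distant droplets reuse the same random edges; for the 0-statement it is having to rule out \emph{every} activation history reaching the distinguished vertex, which forces the careful encoding-and-union-bound and, with it, the $\log\log n$ slack. Closing that slack — matching thresholds up to a constant — would require a sharper proxy, which is presumably why the theorem carries the gap.
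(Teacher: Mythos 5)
The paper does not prove this theorem; it is quoted as a black-box result from Gravner and Kolesnik \cite{GravKolesnik}, and the only in-paper material is the informal two-sentence heuristic in Section~\ref{sc:poluted} explaining the 0-statement for paths. That heuristic runs as follows: if the process ever activated an edge $\{u,v\}$ with $d_H(u,v)=\Theta(\ln n)$, then the minimal subpath $P'$ responsible for that activation would have to support $\Theta(|V(P')|)$ triangles whose $\mathbf{G}$-edges are \emph{short} relative to $d_{P}(u,v)$, and for $p<c/\sqrt{\ln n}$ such a dense family of short $\mathbf{G}$-edges along a stretch of length $\Theta(\ln n)$ is unlikely. This is a union bound over activation histories of a single distinguished edge, not a geometric sealing argument. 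Since the actual proof lives in \cite{GravKolesnik}, strictly speaking there is no ``paper's own proof'' to match your proposal against; but the sketch the paper does give points at a different mechanism than yours.

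On the 0-statement, your central device is a blocked ball $W$ that the process ``can never enter,'' and this has a genuine gap. Your sealing lemma is sound in the degenerate case: if $W$ has no $H$-edges crossing $\partial W$, then inducting over activation steps shows no $\mathbf{G}$-edge crossing $\partial W$ is ever activated, because activating the first such edge would already require a previously activated crossing edge. But that case happens only when $W$ is a union of $H$-components, in which case the 0-statement is trivial (and, for what it's worth, the 1-statement would be false, so the theorem implicitly assumes $H$ connected --- your ``otherwise localise to each component'' does not fix this, since $\mathbf{G}$-edges between $H$-components can never be activated). For connected bounded-degree $H$, \emph{every} small ball has a handful of $H$-edges crossing its boundary, and those are precisely what allow activity to ferry in from outside: once $\langle H\rangle$ is large elsewhere, a single crossing $H$-edge $\{a,b\}$ with $a\in W$ closes every cherry $a$--$b$--$z$ with $z$ already reached and $\{a,z\}\in\mathbf{G}$. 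Dismissing the ``few crossing $H$-edges'' case as a robust perturbation is where all the content lies; the obstruction is dynamical (the process cannot build up enough short $\mathbf{G}$-edges to start propagating along $H$), not geometric, and it has to be established by bounding the number of witnessing activation histories, as the paper's sketch and \cite[Section~2]{GravKolesnik} indicate. Your concluding paragraph does gesture at exactly such an Aizenman--Lebowitz-style encoding, but it is bolted onto the blocked-ball picture rather than replacing it; carried out on its own it would be much closer to the right argument.

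Your 1-statement plan (nucleation of critical droplets at scale $\log\log n$, then growth, then flooding) is plausible, and the $\ln\ln n$ slack in the theorem is indeed the hallmark of a multi-scale growth scheme. But there is nothing in the paper to compare it against: for the actual proof one must consult Gravner and Kolesnik directly.
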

%\AC{We enumerated all results, how about enumerating this also?}
 Actually, the tree of diameter 4 that \whp\ activates $\mathbf{G}\sim G_{n,p}$ when $p>(1+\varepsilon)n^{-1/3}$ can be reduced to a weakly saturated Hamilton path, see details in Section~\ref{sc:discussions}. Therefore, for bounded degree trees the threshold probability for the {\it existence} of a weakly saturated subtree from Theorem~\ref{th:general} is much lower than the threshold probability for the property that a {\it fixed} tree activates the random graph\footnote{By applying the methods from~\cite{Bidgoli} verbatim, one can also demonstrate that this property has a threshold probability.}, given by Theorem~\ref{GravKole}.

%\AC{A key difference between Theorem \ref{GravKole} and our main results is the entropy coming from the choices of the saturating graph. Indeed, in the 1-statements of Theorem \ref{th:general} we construct a tree of diameter 4 that \whp\ activates $\mathbf{G}\sim G_{n,p}$ when $p>(1+\varepsilon)n^{-1/3}$. In Section~\ref{sc:discussions} we show that this tree can be reduced to a weakly saturated Hamilton path. Therefore, for bounded degree trees the threshold probability for the existence of a weakly saturated subtree is much lower than the threshold probability for the property that a fixed tree activates the random graph given by Theorem \ref{GravKole}.}

In order to explain the reason for this difference for paths, assume that a fixed path $P$ activates at least one edge of $\mathbf{G}\sim G_{n,p}$ between two vertices at distance $\Theta(\ln n)$ in $P$. Then, let $\{u,v\}$ be the first such edge in the respective activation process. Take the minimum subpath $P'\subset P$ that activates $\{u,v\}$. In the respective subprocess all edges $\{x,y\}$ are {\it short}, i.e. $d_P(x,y)<d_P(u,v)$. From this one can derive that there are $\Theta(|V(P')|)$ triangles $\{x,y,z\}$ such that either $\{x,y\}\in E(P')$ or $\{x,y\}\in\mathbf{G}$ has length 2 in $P'$, and $\{x,z\},\{y,z\}$ are short edges of $\mathbf{G}$. The latter event is not likely when $p<\frac{c}{\sqrt{\ln n}}$, for a sufficiently small $c>0$ (for more details see~\cite[Section 2]{GravKolesnik}).

On the other hand, when we are able to choose the initially active subtree of $\mathbf{G}$, it is clearly possible to activate long edges for $p$ that is even much less than the threshold from Theorem~\ref{th:general}. Indeed, fix any vertex $v$, expose its neighbourhood $N(v)$ in $\mathbf{G}$, and take a Hamilton path $P$ in $N(v)$. Such a path exists \whp\ when $p\gg\sqrt{\ln n/n}$. Extend $P$ by attaching the vertex $v$ to one of its ends. This path activates all edges that contain $v$. The longest such edge has length $(1+o(1))np$ in~$P$ \whp

\subsection{Organisation}
Section~\ref{sc:pre} contains the required prerequisites; it is separated into two subsections --- Section~\ref{sc:pre-topology} recalls topological concepts that we use in our proofs and Section~\ref{sc:pre-RG} contains several useful properties of random graphs. In Section~\ref{sc:proof_shallow_1} we prove the 1-statement from Theorem~\ref{th:shallow}, which, in turn, immediately implies the 1-statement in Theorem~\ref{th:general}. In Section~\ref{sc:general-0_proof}, we prove Lemma~\ref{obs:activation implies simple connectivity}, which is reiterated in the deterministic setting as Lemma~\ref{lem: main lemma for 0-statement}, and then combine it with Theorem~\ref{th:Babson} to derive the 0-statement in Theorem~\ref{th:general}. % \AC{from Lemma~\ref{lem: main lemma for 0-statement} which establish the connection to simply connectedness and from Theorem~\ref{th:Babson}}.
 The main novel part of our paper --- the proof of the 0-statement in Theorem~\ref{th:shallow} --- is presented in Section~\ref{sec:main proof}. % \AC{should we mention its subsections?}.
  Finally, Section~\ref{sc:discussions} is devoted to discussions of our results, their generalisations, and further challenges. In particular, we give a sketch of the proof of an analogue of Theorem~\ref{th:shallow} for Linial--Meshulam random 2-complexes. %\AC{Should this be mentioned even before? This improves the result of Yuval, Daniel, and Benny...}

\section{Preliminaries}
\label{sc:pre}

\subsection{Simplicial complexes and Van Kampen diagrams}
\label{sc:pre-topology}

Throughout the paper, it will be convenient to treat simplicial complexes and the homotopy of loops as combinatorial objects. For the sake of completeness, we recall the respective definitions in this section.

%Let $V$ be a finite set. An {\it (abstract) $k$-dimensional simplicial complex} $X$ is a downwards-closed family of subsets of $V$ of size at most $k$. Every element of $X$ is a {\it face}. The {\it dimension} of a face $S\in X$ is $|S|-1$. The 0-dimensional faces are called {\it vertices}. We denote the set of vertices of $X$ by $V(X)$. For a positive integer $k$, the {\it $k$-skeleton of $X$} is the $k$-dimensional simplicial complex $X^{(k)}$ consisting of all faces of $X$ that have dimensions at most $k$. 

%\AC{Is a graph also $3$-dimensional simplicial complex?}
%\AC{How about slightly changing in the following way:

Let $V$ be a finite set. An {\it (abstract) simplicial complex} $X$ is a downwards-closed family of subsets of $V$. Every element $S\in X$ is a {\it face}, and its {\it dimension} is $|S|-1$. The 0-dimensional faces are called {\it vertices}. We denote the set of vertices of $X$ by $V(X)$. For a positive integer $k$, we say that a simplicial complex $X$ is {\it $k$-dimensional} if the maximum dimension of a face in $X$ is $k$. Moreover, the {\it $k$-skeleton of $X$} is the simplicial complex $X^{(k)}$ consisting of all faces of $X$ that have dimensions at most $k$.

%}

\begin{definition}[Simplicial Map]
    Let $X,Y$ be simplicial complexes. A function $\varphi:V(X)\to V(Y)$ is called a \emph{simplicial map}, if, for every $S\in X$, we have $\varphi(S)\in Y$. Moreover, we say that a simplicial map $\varphi$ is an \emph{isomorphism} if $\varphi$ is a bijection, and $\varphi^{-1}$ is a simplicial map as well. 
\end{definition}

In this paper, we consider 2-dimensional complexes. We call the 1-dimensional faces of a complex $X$ {\it edges}. We will denote by $E(X)$ and $F(X)$ the sets of edges and 2-dimensional faces of $X$, respectively.

 Let $D$ be a {\it triangulated disc}, i.e. a 2-dimensional complex (not necessarily pure, i.e. some edges may not belong to triangles) comprising faces of a plane embedding of a planar graph, that has only triangular faces, excluding the `external face'\footnote{In topological terms, there exists a simplicial map $\gamma:S^1\to D$ such that the mapping cylinder of $\gamma$ is a disc with boundary $S^1\times\{0\}$.}. We denote by $\partial D$ the boundary of $D$.

%Let $X$ be a 2-dimensional complex and $C\subset X^{(1)}$ be a cycle. It is called {\it contractible}, if 

\begin{definition}[Contractible Cycle]\label{def:contractible_cycle}
    Let $X$ be a simplicial complex, and let $C\subseteq X^{(1)}$ be a cycle. We say that a triangulated disc $D$ is a \emph{simplicial filling / Van Kampen  diagram of $C$ in $X$}, if there exists a simplicial map $\varphi:V(D)\to V(X)$ such that $\varphi|_{V(\partial D)}$ is an isomorphism between $\partial D$  and $C$. Moreover, if there exists a simplicial filling of $C$ in $X$, we say that $C$ is \emph{contractible} in~$X$.
\end{definition}

Notice that a simplicial filling of a cycle is necessarily pure. However, we can extend Definition~\ref{def:contractible_cycle} to closed walks, and then a simplicial filling may not be pure. If $C$ is a closed walk in $X^{(1)}$, then its {\it simplicial filling} $D$ --- which is still a 2-dimensional complex embedded into the plane but not necessarily homeomorphic to a disc --- and the respective simplicial map $\varphi$ satisfy the following property: there exists a closed walk along the entire boundary that visits the first vertex exactly twice and each edge at most twice, such that the $i$-th edge of this walk maps to the $i$-th edge of $C$\footnote{The topological definition involving mapping cylinders remain unchanged, since the mapping cylinder is homeomorphic to a closed disc in either case.}, see Figure \ref{Fig-VK-diagram-walks}. More formally, let $D$ be any planar connected 2-dimensional simplicial complex such that all the bounded complementary regions for the graph $D^{(1)}$ are 2-dimensional faces of $D$. For a fixed vertex $x$ on the boundary of $D$, there exists a unique (up to the choice of the direction) walk $U:=U(D,x)=(u_1,\ldots ,u_t)$ along the boundary such that $u_1=u_t=x$, and every edge of the boundary is visited at most twice by the walk $U$. % visits it at most twice \AC{This sentence does not read well. How about: and, every edge of the boundary is visited at most twice by the walk $U$}.
 The planar complex $D$ is a {\it simplicial filling} of the walk $W=(w_1,\ldots,w_t)$, if there exists a simplicial map $\varphi : V(D)\to V(W)$ such that, for some choice of $x\in V(\partial D)$ and for $(u_1,\ldots,u_t)=U(D,x)$, we have that $\varphi(u_1)=w_1$ and, for every $i\in[t-1]$, $\varphi(u_iu_{i+1})=w_iw_{i+1}$. % \AC{, see Figure \ref{Fig-VK-diagram-walks}.}.%, where $u_{t+1}:=u_1$ and $w_{t+1}:=w_1$ \AC{I think that the indices are off by one, am I right?}.

\begin{figure}[ht]
\centering
    \includegraphics[scale=0.49]{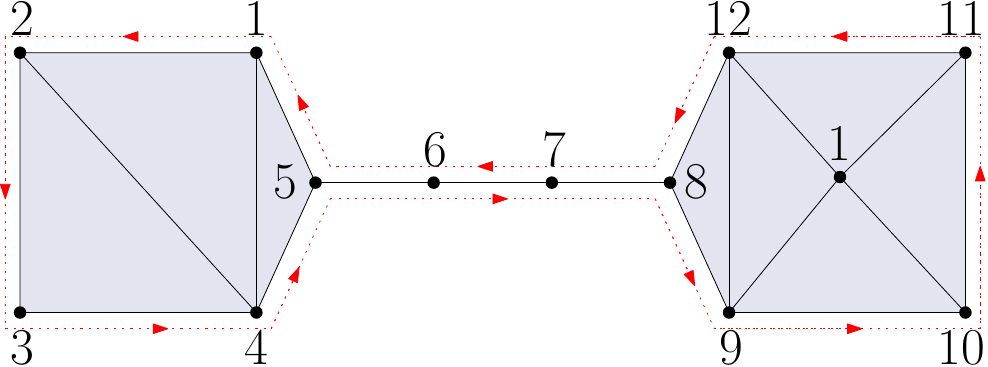}
    \caption{A Van-Kampen diagram for a closed walk, presented in red.}\label{Fig-VK-diagram-walks}
\end{figure}

%and closed walk $U=(u_1,\ldots ,u_t)$ satisfying the following:
%First, for all $1 \leq i\leq t$ we have $u_i \in \partial D$. Second, $|\varphi^{-1}(w_1)|=2$, and for all $1\leq i\leq t$ we have $\varphi(u_iu_{i+1})=w_iw_{i+1}$, as well as $|\varphi^{-1}(w_iw_{i+1})|\leq 2$\footnote{The topological definition involving mapping cylinders remain unchanged, since the mapping cylinder is homeomorphic to a closed disc in either case.}.

%For clarity of presentation, we will refer to $D$ a filling of $C$ in $X$, as a labelled triangulated disc with boundary $C$, and such that every triangle is a face in $X$. 

%We end this section with another important definition of simply connectedness of a complex $X$.

\begin{definition}[Simply Connected Complex]
    A simplicial complex $X$ with a connected skeleton $X^{(1)}$ is {\it simply connected}, if every cycle $C\subseteq X^{(1)}$ is contractible in $X$ (or, equivalently, every closed walk is contractible).  
\end{definition}

Let $G$ be a graph. The {\it clique complex} of $G$ is the simplicial complex $X(G)$ that consists of all cliques of $G$. In particular $X^{(2)}(G)$ comprises all triangles, edges, and vertices of $G$.

\subsection{Random graphs}
\label{sc:pre-RG}

In this section, we describe certain properties of $\mathbf{G}\sim G_{n, p}$ that we will use in the proofs. First of all, we recall that degrees of vertices in $G_{n,p}$ have binomial distribution and, thus, they are well-concentrated.

\begin{claim}[\cite{Bollobas}, Chapter 3]
\label{cl:degrees}
If $pn\gg\ln n$, then w.h.p.\ every vertex has degree $(1+o(1))np$ in $\mathbf{G}$.
\end{claim}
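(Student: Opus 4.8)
The plan is a routine application of Chernoff's inequality together with a union bound over the $n$ vertices; this is exactly the content of the cited reference, so I will only sketch it. Fix a vertex $v\in[n]$. Since the edges of $\mathbf{G}\sim G_{n,p}$ appear independently, the degree $\deg_{\mathbf{G}}(v)$ has distribution $\Bin(n-1,p)$, whence $\E[\deg_{\mathbf{G}}(v)]=(n-1)p=(1+o(1))np$, using that $np\to\infty$, which follows from $pn\gg\ln n$. I would write $\omega:=\omega(n):=pn/\ln n$, so that $\omega\to\infty$ by assumption, and set $\delta:=\delta(n):=\omega^{-1/3}$, which tends to $0$.

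By the standard Chernoff bound for the binomial distribution (see~\cite[Chapter 3]{Bollobas}), for every vertex $v$,
\[
\Prob\big[\,|\deg_{\mathbf{G}}(v)-(n-1)p|\ge \delta (n-1)p\,\big]\le 2\exp\!\Big(-\tfrac{\delta^2 (n-1)p}{3}\Big).
\]
Since $\delta^2(n-1)p=(1+o(1))\,\omega^{-2/3}\cdot \omega\ln n=(1+o(1))\,\omega^{1/3}\ln n$, the right-hand side is at most $2\exp(-\tfrac14\omega^{1/3}\ln n)=2n^{-\omega^{1/3}/4}$ for $n$ large. Taking a union bound over all $n$ choices of $v$, the probability that some vertex violates this concentration is at most $2n\cdot n^{-\omega^{1/3}/4}=2n^{1-\omega^{1/3}/4}\to 0$, because $\omega^{1/3}\to\infty$. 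Hence \whp\ every vertex $v$ satisfies $\deg_{\mathbf{G}}(v)=(1\pm\delta)(n-1)p=(1+o(1))np$, as claimed.

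There is no real obstacle here; the only point requiring (minor) care is the choice of the error parameter $\delta$: it must tend to $0$ so that the conclusion reads $(1+o(1))np$, yet slowly enough --- here $\delta^2 pn\gg\ln n$ suffices --- that the union bound over the $n$ vertices still yields an $o(1)$ failure probability. Any $\delta=\delta(n)$ with $\delta\to0$ and $\delta^2 pn/\ln n\to\infty$ works, and such $\delta$ exists precisely because $pn\gg\ln n$; the choice $\delta=(pn/\ln n)^{-1/3}$ above is one convenient option.
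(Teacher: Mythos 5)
Your proof is correct and is exactly the standard Chernoff-plus-union-bound argument that the cited reference ([Bollobas, Chapter 3]) uses; the paper itself simply cites this as a known fact without reproving it. Your explicit choice $\delta=(pn/\ln n)^{-1/3}$, balancing $\delta\to 0$ against $\delta^2 pn\gg\ln n$, is a clean way to make the "$(1+o(1))$" quantitative.
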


The same applies to co-degrees. Indeed, for fixed different vertices $u,v\in[n]$, their common neighbourhood $N(u)\cap N(v)$ has size $\mathrm{Bin}(n-2,p)$. Therefore, due to the Chernoff bound (see, e.g.~\cite[Theorem 2.1]{Janson}), the following is true:
\begin{claim}
\label{cl:co-deg}
If $np^2\gg\ln n$, then % \AC{for every fixed $u,v\in [n]$}
$$
 \mathbb{P}\biggl(\forall u,v\quad |N(u)\cap N(v)|=(1+o(1))np^2\biggr)=1-\exp(-\omega(\ln n)).
$$
\end{claim}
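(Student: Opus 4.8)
Claim 2.15 (co-degree concentration): If $np^2\gg\ln n$, then $\mathbb{P}(\forall u,v:\ |N(u)\cap N(v)|=(1+o(1))np^2)=1-\exp(-\omega(\ln n))$.

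This is a routine concentration-and-union-bound statement. Let me think about the proof.

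The plan is to prove this by a routine Chernoff-plus-union-bound argument. First, I would fix distinct $u,v\in[n]$ and note that a vertex $w\notin\{u,v\}$ lies in $N(u)\cap N(v)$ exactly when both $uw$ and $vw$ are present, an event of probability $p^2$, independently over the $n-2$ candidate vertices $w$; hence $|N(u)\cap N(v)|\sim\mathrm{Bin}(n-2,p^2)$, with mean $\mu:=(n-2)p^2=(1+o(1))np^2$. (The parenthetical remark just before the claim should therefore read $\mathrm{Bin}(n-2,p^2)$ rather than $\mathrm{Bin}(n-2,p)$.)

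Next, since the hypothesis is the strong $np^2\gg\ln n$, I would choose a slack $\delta=\delta(n)\to 0$ that still satisfies $\delta^2 np^2\gg\ln n$; for instance $\delta(n):=\bigl(\ln n/(np^2)\bigr)^{1/4}$ works, since then $\delta^2 np^2=\sqrt{np^2\ln n}=\omega(\ln n)$. Applying the Chernoff bound~\cite[Theorem 2.1]{Janson} to the binomial above gives, for each fixed pair $\{u,v\}$ and $n$ large enough that $\delta<1$ and $\mu\ge np^2/2$,
$$\mathbb{P}\Bigl(\bigl|\,|N(u)\cap N(v)|-\mu\,\bigr|\ge\delta\mu\Bigr)\le 2\exp\left(-\frac{\delta^2\mu}{3}\right)\le 2\exp\left(-\frac{\delta^2 np^2}{6}\right).$$
A union bound over the fewer than $n^2$ pairs $\{u,v\}$ then bounds the probability that some pair fails this estimate by
$$n^2\cdot 2\exp\left(-\frac{\delta^2 np^2}{6}\right)=\exp\left(2\ln n-\frac{\delta^2 np^2}{6}+O(1)\right)=\exp(-\omega(\ln n)),$$
because the $\omega(\ln n)$ term $\delta^2 np^2/6$ swamps $2\ln n$. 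On the complementary event every pair satisfies $|N(u)\cap N(v)|=(1\pm\delta)\mu=(1+o(1))np^2$, with a single $o(1)$ error valid simultaneously for all $u,v$, which is exactly the claim.

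There is no real obstacle here; the argument is entirely standard. The only point that deserves attention is the choice of $\delta(n)$: it must tend to $0$ to deliver the uniform $(1+o(1))$ factor, yet stay large enough that $\delta^2 np^2$ still outpaces $\ln n$ so the union bound over $\binom{n}{2}$ pairs survives. This balance is precisely what the hypothesis $np^2\gg\ln n$ provides --- under a weaker $np^2\ge C\ln n$ one would only obtain a \whp\ statement rather than probability $1-\exp(-\omega(\ln n))$.
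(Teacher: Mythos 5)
Your proposal is correct and matches the paper's (implicit, one-line) argument: the paper simply observes that $|N(u)\cap N(v)|$ is binomial and invokes the Chernoff bound from Janson--\L uczak--Ruci\'nski, exactly the route you flesh out with an explicit choice of $\delta(n)$ and a union bound over pairs. You are also right that the paper's ``$\mathrm{Bin}(n-2,p)$'' is a typo for $\mathrm{Bin}(n-2,p^2)$.
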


We also recall that, if for some positive constant $\eps$, we have $np\geq 1+\eps$, then w.h.p.\ $\mathbf{G}$ contains a unique component of linear size in $n$. Moreover, the size of this component $\mathbf{g}_n$ has exponential tails.
\begin{theorem}[O'Connell \cite{Oco1998}]\label{th:giant}
Let $C=C(\varepsilon)$ satisfy $C=1-e^{-(1+\varepsilon)C}$ and suppose that $np\geq 1+\eps$. Then, for every $\delta>0$, with probability $1-\exp(-\Omega(n))$, the graph $\mathbf{G}$ contains a unique component of size $\mathbf{g}_n>(1-\delta)Cn$ and all other components have size less than $(1-2\delta)Cn$.
\end{theorem}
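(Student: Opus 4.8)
The plan is to follow the classical exploration-process approach, in which the exponential-in-$n$ control ultimately comes from a large-deviation analysis of that process (this is precisely what \cite{Oco1998} supplies). Fix a vertex $v$ and reveal the component $C(v)$ of $\mathbf G\sim G_{n,p}$ by breadth-first search, keeping at each step a partition of $[n]$ into \emph{explored}, \emph{active} and \emph{neutral} vertices; let $A_t$ be the number of active vertices after $t$ steps, so $A_0=1$ and $A_t=A_{t-1}-1+\eta_t$, where, conditionally on the first $t-1$ steps, $\eta_t\sim\mathrm{Bin}(N_{t-1},p)$ with $N_{t-1}$ the current number of neutral vertices, and $|C(v)|=\min\{t\ge1:A_t=0\}$. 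As long as $t\le\delta_0 n$ for a small constant $\delta_0=\delta_0(\varepsilon)>0$, one has $N_{t-1}\ge(1-2\delta_0)n$, so the conditional mean of $\eta_t$ is at least $(1-2\delta_0)np-1\ge\varepsilon/2$; hence on this range $(A_t)$ has uniformly positive drift and stochastically dominates a random walk of mean drift $\varepsilon/2$ until it dies or reaches time $\delta_0 n$. A Chernoff bound for that walk gives, for every $T$, $\mathbb P\bigl(T\le|C(v)|\le\delta_0 n\bigr)\le e^{-cT}$ for some $c=c(\varepsilon)>0$: to die at a time $t\in[T,\delta_0 n]$ the walk must reach level $0$ at time $t$, which for the drifted walk has probability $\le e^{-c't}$, and we sum over $t\ge T$. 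Taking $T=\eta n$ with $\eta$ a small constant and a union bound over $v$, with probability $1-e^{-\Omega(n)}$ every component of $\mathbf G$ has size either $<\eta n$ (``small'') or $>\delta_0 n$ (``big''), so there are at most $1/\delta_0$ big components.

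A refinement — running a fluid/ODE comparison for $A_t/n$ beyond time $\delta_0 n$, tracking the drift $(n-t)p-1$ until it turns negative — upgrades this to a \emph{strong dichotomy}: with probability $1-e^{-\Omega(n)}$, every component of $\mathbf G$ has size either $<\eta n$ or $(1\pm\delta)\tau n$, where $\tau=\tau(np)\in(0,1)$ is the unique positive root of $x=1-e^{-npx}$ (so $\tau\ge C$ since $np\ge 1+\varepsilon$). This controls the global structure: explore the components of $\mathbf G$ in turn; once the first big component $C_1$, of size $(1\pm\delta)\tau n$, is deleted, the remainder of $\mathbf G$ is distributed as $G_{n',p}$ on $n'=n-|C_1|\approx(1-\tau)n$ vertices, with mean degree $n'p\approx(1-\tau)\,np$. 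Rewriting the fixed-point equation as $np=-\ln(1-\tau)/\tau$ gives $(1-\tau)\,np=-\tfrac{(1-\tau)\ln(1-\tau)}{\tau}<1$ for all $\tau\in(0,1)$, with a gap of order $\varepsilon$ when $np$ is near $1+\varepsilon$. Hence $G_{n',p}$ is \emph{subcritical}, and the standard first-moment bound yields $\mathbb P\bigl(G_{n',p}\text{ has a component of size}\ge\alpha n\bigr)\le e^{-\Omega(n)}$ for any fixed $\alpha>0$. Thus, with probability $1-e^{-\Omega(n)}$, $\mathbf G$ has at most one big component, and every other component has size $<(1-2\delta)Cn$.

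It remains to produce, with probability $1-e^{-\Omega(n)}$, a component of size $>(1-\delta)Cn$. Couple $\mathbf G=G_{n,p_1}\cup G_{n,p_2}$ with $G_{n,p_1},G_{n,p_2}$ independent and $(1-p_1)(1-p_2)=1-p$, choosing $p_1$ slightly below $p$ so that $np_1>1$ and $\tau(np_1)\ge(1-\delta/2)C$, and $p_2$ of order $1/n$. By the strong dichotomy applied to $G_{n,p_1}$ and an exponential concentration bound for the number $L_1$ of vertices in big components of $G_{n,p_1}$ (whose mean is at least $(1-\delta/2)Cn$), with probability $1-e^{-\Omega(n)}$ those big components together span at least $(1-\delta)Cn$ vertices, and there are at most $1/\delta_0$ of them, each of size $\ge\delta_0 n$. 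Finally, any two disjoint vertex sets of size $\ge\delta_0 n$ are joined by an edge of $G_{n,p_2}$ with probability $\ge1-(1-p_2)^{\delta_0^2 n^2}=1-e^{-\Omega(n)}$; a union bound over the at most $\delta_0^{-2}$ pairs of big components of $G_{n,p_1}$ shows that, with probability $1-e^{-\Omega(n)}$, they all lie in a single component of $\mathbf G$, which therefore has size $>(1-\delta)Cn$. Intersecting this event with the one from the previous paragraph proves the theorem.

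The main obstacle is obtaining the estimates at rate $e^{-\Omega(n)}$ rather than at a polylogarithmic rate: a plain Chernoff bound for the raw exploration walk controls only a single scale, whereas the strong dichotomy (nothing of size strictly between $\eta n$ and $(1-\delta)\tau n$) and the concentration of $L_1$ both require a genuine large-deviation treatment of the exploration process — a Cram\'er-type analysis of the increments $\eta_t$ together with the deterministic decay of the drift as $t$ grows. That is exactly the input of \cite{Oco1998}; by comparison, the subcriticality identity $(1-\tau)np<1$ and the sprinkling step are elementary.
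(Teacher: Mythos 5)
The paper does not prove this statement; it is quoted from O'Connell \cite{Oco1998} as a preliminary in Section~2 and invoked as a black box in Section~3 and Appendix~B, where the exponential-in-$n$ failure rate is what lets the union bounds over polynomially many events go through. There is therefore no in-paper proof against which to compare your argument. Your outline is the standard route to such a result: a positive-drift exploration walk yields the ``small or linear'' split at rate $e^{-\Omega(n)}$; a refined analysis pins the big components near $\tau n$ with $\tau=1-e^{-np\tau}\geq C(\varepsilon)$ by monotonicity of $\tau(\cdot)$; your subcriticality identity $(1-\tau)np=-(1-\tau)\ln(1-\tau)/\tau<1$ is correct and gives uniqueness and the upper bound on the remaining components; and sprinkling supplies the lower bound $(1-\delta)Cn$.

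You also correctly identify --- and explicitly defer to \cite{Oco1998} for --- the genuine technical content: neither the strong dichotomy (no component of size between $\eta n$ and $(1-\delta)\tau n$) nor the exponential concentration of $L_1$ follows from the plain Chernoff/domination argument for the exploration walk; both require a Cram\'er-type analysis of the increments $\eta_t$ together with the time-decaying drift, which is precisely what the cited paper provides. Two small imprecisions are worth flagging. First, the bound $N_{t-1}\geq(1-2\delta_0)n$ on the range $t\leq\delta_0 n$ is itself only a high-probability statement, since the number of discovered vertices at time $t-1$ is $1+\sum_{i\leq t-1}\eta_i$ rather than $t-1$, so the active set $A_{t-1}$ also needs to be controlled before the domination applies. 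Second, once you grant the strong dichotomy, the concentration of $L_1$, and the subcriticality of the peeled graph, the sprinkling step is redundant: those three already force a unique component of size at least $(1-\delta)Cn$. Neither point affects the soundness of the outline; modulo the large-deviation input you attribute to \cite{Oco1998}, it is a faithful route to O'Connell's theorem.
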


Finally, we need the following theorem about the threshold for the existence of extensions in random graphs. A rooted graph $(K,S)$, where $K$ is a graph and $S\subseteq V(K)$ is a proper {\it ordered} subset, is called {\it strictly balanced}, if, for every set $S'\supseteq S$, which is also a proper subset of $V(K)$,
$$
 \frac{|E(K[S'])|-|E(K[S])|}{|S'|-|S|}<
 \frac{|E(K)|-|E(K[S])|}{|V(K)|-|S|}=:\rho(K,S).
$$
The quantity $\rho(K,S)$ is called the {\it density} of the rooted graph $(K,S)$. Rooted graphs $(\tilde K,\tilde S)$ and $(K,S)$ are {\it isomorphic}, if there exists a bijection $\varphi:V(K)\to V(\tilde K)$ with the following properties: 
\begin{itemize}
\item for every $i\in[|S|]$, the bijection $\varphi$ maps the $i$-th vertex from $S$ to the $i$-th vertex from $\tilde S$;
\item for every pair of vertices $u,u'\in V(K)$, such that at least one of them is not in $S$, we have that $\{u,u'\}\in E(K)$ if and only if $\{\varphi(u),\varphi(u')\}\in E(\tilde K)$. %\MZ{I do not like adding a noun expressing the mathematical object $a$ in front of expression like $a\in...$, $a=...$, etc, unless it plays a role of a part of an expression for a quantification. It does not seem grammatically correct, but I am not 100\% sure.}
\end{itemize}
%\AC{Don't we need to map $S$ to $\tilde{S}$?}

\begin{theorem}[Spencer \cite{Spe1990}]
\label{th:extensions}
Let $(K,S)$ be a strictly balanced rooted graph with density $\rho$. There exists a constant $C=C(K,S)>0$ such that, if $p>(\ln n)^{C} n^{-1/\rho}$ and $\mathbf{G}\sim G_{n,p}$, then w.h.p., for every set $\tilde S\subseteq [n]$ of size $|S|$, there exists a subgraph $\tilde K\subseteq \mathbf{G}$ such that the rooted graphs $(\tilde K,\tilde S)$ and $(K,S)$ are isomorphic.
\end{theorem}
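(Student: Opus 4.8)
The plan is to prove a strong concentration statement for a single choice of roots and then take a union bound over all at most $n^{|S|}$ ordered $|S|$-sets. We may assume $\rho>0$ (if $|E(K)|=|E(K[S])|$ the statement is trivial, since no non-root edge has to be present), and, as ``every $|S|$-set extends'' is an increasing event, it suffices to treat $p=(\ln n)^{C}n^{-1/\rho}$. Write $k:=|S|$, $h:=|V(K)|-k\geq 1$ and $m:=|E(K)|-|E(K[S])|$, so that $\rho=m/h$. Fix an ordered $k$-set $\tilde S\subseteq[n]$ and let $N=N(\tilde S)$ be the number of distinct copies of $(K,S)$ in $\mathbf{G}$ rooted at $\tilde S$ (copies related by a rooted automorphism of $(K,S)$ being identified). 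Since the isomorphism only constrains edges meeting $V(K)\setminus S$, such a copy amounts to an injection $\phi$ of the $h$ non-root vertices of $K$ into $[n]\setminus\tilde S$ that sends each of the $m$ non-root edges of $K$ into $E(\mathbf{G})$; hence $\E[N]=(1+o(1))\,c\,n^{h}p^{m}$ for a positive constant $c=c(K,S)$. The goal is to show $\Prob(N=0)=o(n^{-k})$.

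Substituting $p=(\ln n)^{C}n^{-1/\rho}$ and using $m/\rho=h$ gives $\E[N]=(1+o(1))\,c\,(\ln n)^{Cm}$, which tends to infinity; choosing $C=C(K,S)$ large enough that $Cm\geq k+2$ we get $\E[N]\geq(\ln n)^{k+2}$ for large $n$. Note that Chebyshev's inequality only yields $\Prob(N=0)=O(1/\E[N])$, which is $o(1)$ but not $o(n^{-k})$; instead I would apply Janson's inequality to the increasing events $\{\tilde K\subseteq\mathbf{G}\}$. This gives $\Prob(N=0)\leq\exp\!\big(-\E[N]+\Delta\big)$, where $\Delta$ is the usual overlap term, i.e.\ the sum of $\Prob(\tilde K\subseteq\mathbf{G},\ \tilde K'\subseteq\mathbf{G})$ over unordered pairs of distinct copies $\tilde K,\tilde K'$ sharing at least one non-root edge. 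Thus, provided $\Delta=o(\E[N])$, we obtain $\Prob(N=0)\leq\exp\big(-(1-o(1))\E[N]\big)\leq\exp\big(-\tfrac12(\ln n)^{k+2}\big)=n^{-\omega(1)}$, and a union bound over the $\leq n^{k}$ ordered $k$-sets completes the proof.

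Everything therefore reduces to the bound $\Delta=o(\E[N])$, which is exactly where strict balance is used. If two copies $\tilde K,\tilde K'$ share a non-root edge, their non-root vertex sets meet in a set $T$ with $1\leq|T|=j\leq h$. Grouping the pairs by $j$ and by the isomorphism type of the overlap, there are $O(n^{2h-j})$ such pairs, and since at most $f(j):=\max_{T'\subseteq V(K)\setminus S,\,|T'|=j}\big(|E(K[S\cup T'])|-|E(K[S])|\big)$ non-root edges can be common to both, the probability that both copies are present is at most $p^{2m-f(j)}$. Hence the contribution of overlap size $j$ to $\Delta$ is
$$
O\!\big(n^{2h-j}p^{2m-f(j)}\big)=O\!\big(\E[N]^{2}\cdot n^{-j}p^{-f(j)}\big)=O\!\big(\E[N]^{2}\,(\ln n)^{-Cf(j)}\,n^{f(j)/\rho-j}\big).
$$
For $1\leq j\leq h-1$, strict balance gives $f(j)<\rho j$, so $f(j)/\rho-j$ is a negative constant; since $\E[N]$ is only polylogarithmic, this contribution is $O(n^{-\Omega(1)})=o(\E[N])$. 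The borderline case $j=h$ must be treated separately, since there $f(h)=m=\rho h$ and the naive bound only gives $O(\E[N]^{2}(\ln n)^{-Cm})=\Theta(\E[N])$: but two \emph{distinct} copies on the same non-root vertex set differ in at least one non-root edge, hence share at most $m-1$ of them, and there are only $O(n^{h})$ such pairs, so their total contribution is $O(n^{h}p^{m+1})=O(\E[N]\cdot p)=o(\E[N])$. Summing over the finitely many values of $j$ yields $\Delta=o(\E[N])$.

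The main obstacle is precisely this last step. One must count \emph{distinct} copies (otherwise the $|\mathrm{Aut}(K,S)|-1$ rooted automorphisms of each copy already contribute a term of order $\E[N]$ to $\Delta$), bound correctly how many edges two overlapping copies can share as a function of the overlap, and recognise that strict balance is exactly the inequality needed to make every overlap term negligible against the polylogarithmic quantity $\E[N]$ — the exponent of $n$ carries a fixed positive amount of slack while $\E[N]=(\ln n)^{O(1)}$, so the bookkeeping must be careful but has room to spare. The temptation to replace Janson by a second moment argument fails, as noted, because Chebyshev does not beat the $n^{-k}$ required for the union bound.
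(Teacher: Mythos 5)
The paper states this result as a cited theorem of Spencer \cite{Spe1990} and gives no proof, so there is no internal argument to compare against; the assessment below is therefore on the merits of your argument rather than a comparison.

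Your Janson-inequality derivation is correct and is a standard, self-contained route to the existence statement. You correctly observe that $\E[N]$ is only polylogarithmic at the stated value of $p$, so Chebyshev/second-moment alone cannot beat the $n^{-|S|}$ needed for the union bound and Janson is genuinely required. The overlap term is handled properly: writing $f(j)$ for the maximum number of non-root edges inside $S$ together with $j$ non-root vertices, strict balance gives $f(j)h\leq mj-1$ for $1\leq j\leq h-1$, hence a fixed polynomial gain $n^{-1/m}$ that overwhelms all polylogarithmic factors; and you correctly single out the $j=h$ case, where the needed observation is that two distinct rooted copies on the same non-root vertex set differ in at least one non-root edge (which holds once copies are taken modulo rooted automorphisms, as you set up). The only nit is that your choice $Cm\geq k+2$ should be strict (or $C$ slightly larger) to absorb the automorphism constant $c\leq 1$, but this is cosmetic. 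For context, Spencer's original paper proves the stronger uniform concentration $N(\tilde S)=(1+o(1))\E[N]$ for all roots simultaneously via a different variance-based analysis; for the bare existence of an extension, which is all that is quoted here, your Janson argument is simpler and entirely adequate.
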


\section{Proof of Theorem~\ref{th:shallow}, part 1}
\label{sc:proof_shallow_1}

%[Discussion about the proof strategy and the existence of the threshold]

Let $\eps>0$ be a fixed constant, $p:=p(n)\geq(1+\eps)n^{-1/3}$, and $\mathbf{G}\sim G_{n,p}$. In this section, we construct a tree of depth 2 that activates $\mathbf{G}$ w.h.p. In~\cite{Bidgoli}, it is proven that, if we slightly increase the lower bound on $p$ --- namely, for $p\gg(\ln n/n)^{1/3}$ --- then, w.h.p. the BFS tree explored from any vertex has depth 2 and activates $\mathbf{G}$. Here we prove that a slightly more delicate way of attaching vertices at the second layer of the tree to the vertices at the first layer allows to activate $\mathbf{G}$ w.h.p.\ for the entire range $p>(1+\eps)n^{-1/3}$. %\MZ{Not exactly as you suggested, but is it fine?}\\

%\AC{How about the following: Let $\eps>0$ be fixed, $p:=p(n)\geq(1+\eps)n^{-1/3}$, and $\mathbf{G}\sim G(n,p)$. In this section, we construct a tree of depth 4 that activates $\mathbf{G}$ w.h.p. In~\cite{Bidgoli}, it is proven that provided $p\gg(\ln n/n)^{1/3}$ w.h.p., the BFS tree explored from any vertex has depth 4, and it activates $\mathbf{G}$.
%Here we prove that a slightly more delicate way of attaching vertices at the second layer of the tree to the vertices at the first layer allows to activate $\mathbf{G}$ w.h.p.\ even when $p>(1+\eps)n^{-1/3}$.}

In what follows, we denote by $N(v)$ the set of neighbours of $v$ in $\mathbf{G}$. Fix a sufficiently small constant $c \coloneqq c(\eps)>0$. Let $v\in[n]$ be an arbitrarily chosen vertex. % Let $U\subset[n]\setminus\{v\}$ be the set of all vertices that are not adjacent to $v$ in $\mathbf{G}$.
For every $u\in [n]\setminus(N(v)\cup\{v\})$, let $H(u)$ be the graph induced by the common neighbourhood of $u$ and $v$, that is % \AC{how about replacing the colon with "namely,". To avoid double colon.}
%\AC{Replace with: For every $u\in [n]\setminus(N(v)\cup\{v\})$, let $H(u)$ be the graph induced by the common neighbourhood of $u$ and $v$, that is }
 $$
 H(u):=\mathbf{G}[N(u)\cap N(v)].
 $$
 Due to Claim~\ref{cl:co-deg} and Theorem~\ref{th:giant}, w.h.p., for every $u\in [n]\setminus(N(v)\cup\{v\})$, there exists a component $C(u)$ of size at least $cn^{1/3}$ in $H(u)$. Indeed, we first fix any vertex $u$ and expose all edges containing $v$. % \AC{Replace and expose the edges from $v$ with: and expose the edge between $u$ and $v$} \MZ{No, we need to expose all edges from $v$} \AC{I meant that in order to check if $u\in N(v)$ we need to expose only $uv$, but I guess that it is okay to leave as is.}.
  If $u\in N(v)$, then there is nothing to check. Otherwise, expose all edges containing $u$. Due to Claim~\ref{cl:co-deg}, %with probability $1-o(1/n)$, \AC{ we have (to avoid the two equations)}
  $|V(H(u))|=(1+o(1))np^2$ with probability $1-o(1/n)$. By Theorem~\ref{th:giant} and since 
  $$
   |V(H(u))|p=(1+o(1))np^3>1+\varepsilon
  $$
%  $np^3\geq 1+\varepsilon-o(1)$, \AC{the $o(1)$ is confusing, as it refers to $\eps\to 0$, where usually we refer to $n\to \infty$} 
    with probability $1-o(1/n)$, there exists a component $C(u)$ of size at least $cn^{1/3}$ in $H(u)$ with the same probability bound. % \AC{maybe denote the component by $C(u)$ as used later?}.
     By the union bound over $u\in[n]$, we get the desired assertion.

%Before we define our desired tree $T$, we collect some typical properties of $\mathbf{G}$. 
%Indeed, fix a vertex $v\in[n]$ and expose all edges of $\mathbf{G}$ adjacent to $v$. 
%Let $V$ be the set of neighbours of $v$, and let $v_1,\ldots,v_{n-s}$ be the vertices outside $V$ where $s=|V|$.
%We note that by \ref{} \AC{degree conditions} w.h.p.\ $s=(1+o(1))np=(1+\eps+o(1))n^{2/3}$.

%\begin{claim}
%    w.h.p.\ for every $1\leq i\leq n-s$ the induced subgraph $H_{i}\coloneqq \mathbf{G}[V \cap N_{\mathbf{G}}(v_{i})]$ contains a connected component of size $cn^{1/3}$.   
%\end{claim}

%\begin{proof}
%    Since $H_{i}$ are identically distributed for all $i$, by the union bound, it suffices to show that $H_{1}$ contains the desired connected component with probability at least $1-o(n^2)$. 
%    To this end, expose the edges going from $v_1$ to $V$. By Chernoff's inequality \ref{}, with probability at least $1-o(n^{-2})$, the vertex $v_1$ has $((1+\eps)^2+o(1))n^{1/3}$ neighbours in $V$.
%    Let $V_1\subset V$ be the set of these neighbours. Expose the edges inside $V_1$. With probability $1-e^{-\Omega(n^{1/3})}$ (see~\cite[Theorem 3.1]{Oco1998}), there exists a connected component $U_1$ of size $cn^{1/3}$ in $V_1$.
%\end{proof}

The tree $T\subseteq \mathbf{G}$ such that $T\to \mathbf{G}$ w.h.p.\ is constructed as follows. First of all, it contains all edges between $v$ and $N(v)$. Second, for every vertex $u\notin N(v)\cup\{v\}$, add to $T$ an edge from $u$ to the giant component $C(u)\subseteq H(u)$. Let us show that w.h.p. $T\to \mathbf{G}$. % w.h.p.\ \AC{Change to: Let us show that w.h.p.\ $T\to \mathbf{G}$.}

Clearly, any edge $e$ of $\mathbf{G}[N(v)]$ can be activated since its addition completes the triangle $e\cup\{v\}$. Let us show that we may also activate all edges between $N(v)$ and $[n]\setminus (N(v)\cup\{v\})$. Let $u\in[n]\setminus(N(v)\cup\{v\})$. We activate the edges between $u$ and $H(u)$ in the following way. Let $u'$ be the neighbour of $u$ in $T$. The activation of edges between $u$ and $C(u)$ is straightforward: for every such edge $\{u,w\}$, there exists a path $u'=w_0w_1\ldots w_s=w$. Then, the edges $\{u,w_1\},\ldots,\{u,w_s\}$ can be activated sequentially --- the edge $\{u,w_i\}$ completes the triangle $\{u,w_{i-1},w_{i}\}$.

Now, let $w\in V(H(u))\setminus V(C(u)) $ and let $\tau=\lfloor n^{2/3}(\ln n)^2\rfloor$. Order the vertices of $[n]\setminus(N(v)\cup\{u,v\})$ in an arbitrary way: $u_1,\ldots,u_m$. Note that \whp\ $m=n-np(1+o(1))\gg\tau$. We will use the following assertion to show the activation procedure for $\{u,w\}$. % \AC{Maybe add a remark saying that $m=n-\Theta(np)$ and in particular $m\gg \tau$}. We need the following assertion. \AC{remove the last sentence?}

\begin{claim}\label{cl:giant_to_giant}
With probability $1-o(1/n^2)$, there exist $i\in[\tau]$ and a vertex $z\in V(C(u))$ such that $\{u,u_i\}\in E(\mathbf{G})$ and $w,z\in V(C(u_i))$. %\AC{This implies $\{u_i, w\},\{u_i, z\}\in E(\mathbf{G})$.} \MZ{We say this explicitly when we use the claim. Not sure that we should also say it here}.\AC{I agree, I meant that in the first bullet, we do say it explicitly. Should it be removed?}
\end{claim}

\begin{proof}
For every $i=1,\ldots,\tau$, we expose sequentially some sets of edges `associated' with $u_i$. These sets of edges are defined in the following way. Assume that, before  step $i\in[\tau]$, all edges between $\{u=:u_0,u_1,\ldots,u_{i-1}\}$ and $N(v)\cup\{u\}$ are exposed. Also, for every $j\in\{0,\ldots,i-1\}$, % \AC{I think $[0]$ usually reads as the empty set.},
 the edges in a certain set $U_j\subseteq N(u_j)\cap N(v)$, that is defined below, are exposed. Set $U_0=V(H(u))$.

At step $i$, we first expose the adjacency relations between $u_i$ and $\{u\}\cup N(v)$. Let us consider the following event $\mathcal{E}_i$:
\begin{itemize}
\item $\{u_i,u\},\{u_i,w\}\in E(\mathbf{G})$;
\item $u_i$ has a neighbour in $C(u)$.
\end{itemize}
If $\mathcal{E}_i$ is not satisfied, set $U_i=\emptyset$. Otherwise, set
$$
U_i:=(N(u_i)\cap N(v))\setminus (U_0\sqcup U_1\sqcup\ldots\sqcup U_{i-1})
$$ 
and expose the edges of $\mathbf{G}[U_i]$. Also, let $z\in V(C(u))$ be a neighbour of $u_i$. Subject to the event that \begin{equation}
\label{eq:common_neighbourhood_exposed}
|N(v)\setminus(U_0\sqcup U_1\sqcup\ldots\sqcup U_{i-1})|=(1+o(1))np,
\end{equation}
the set $U_i$ satisfies
\begin{equation}
\label{eq:U_i_cardinality}
|U_i|=(1+o(1))np^2
\end{equation}
%\AC{In both equation above, I think it looks better to put the $(1+o(1))$ in front.}  
 with probability $1-o(n^{-3})$ by the Chernoff bound. Due to Theorem~\ref{th:giant}, with probability $1-o(n^{-3})$, there exists a component of size at least $cn^{1/3}$ in $\mathbf{G}[U_i]$. If such a component exists, then it has neighbours of $z$ and $w$ with probability at least 
\begin{equation}
\label{eq:two_neighbours}
\left(1-(1-p)^{cn^{1/3}}\right)^2\sim \left(1-e^{-(1+\varepsilon)c}\right)^2.
\end{equation}
As soon as we find such neighbours in $U_i$, we get the desired $i\in[\tau]$.

Let $i^*$ be the first $i\in[\tau]$ such that at least $(\ln n)^2$ events $\mathcal{E}_j$, $j\leq i$, hold, see Figure \ref{Fig1}.  If no such $i$ exists, set $i^*=\tau+1$. %\MZ{Could you please add an edge between $u_{i^*}$ and $u$ on the figure?} \AC{Ok now?}.
 Since the events $\mathcal{E}_i$, $i\in[\tau]$, are independent and %\AC{Probably need to mention that $\mathcal{E}_i$ are independent.} 
$$
\mathbb{P}(\mathcal{E}_i)=p^2\left(1-(1-p)^{|C(u)|}\right)=\Theta(n^{-2/3})
$$
%\AC{What is $C$?}
for all $i\in[\tau]$, we get that $i^*\leq \tau$ with probability $1-o(n^{-2})$. Due to Claim~\ref{cl:degrees}, with probability $1-o(n^{-2})$, 
$$
(1-o(1))np-(1+o(1))\ln^2 n\cdot np^2\leq |N(v)\setminus(U_0\sqcup U_1\sqcup\ldots\sqcup U_{i})|=(1+o(1))np
$$
for all $i\leq i^*$. %\AC{Replace $\ln$ into $\ln$. Is the lower bound above correct?}. \MZ{Added $(1+o(1))$-factor in front of $(\ln n)^2np^2$. Is it the problem you had in mind?}
Due to~\eqref{eq:two_neighbours} and Theorem~\ref{th:giant}, with probability $1-o(n^{-2})$, there exists $i\leq i^*$ such that $\mathcal{E}_i$,~\eqref{eq:common_neighbourhood_exposed},~and~\eqref{eq:U_i_cardinality} hold, the largest component in $U_i$ is inside $C(u_i)$, and this component has neighbours of $z$ and $w$, completing the proof.
\end{proof}

\begin{figure}[ht]
\centering
    \includegraphics[scale=0.49]{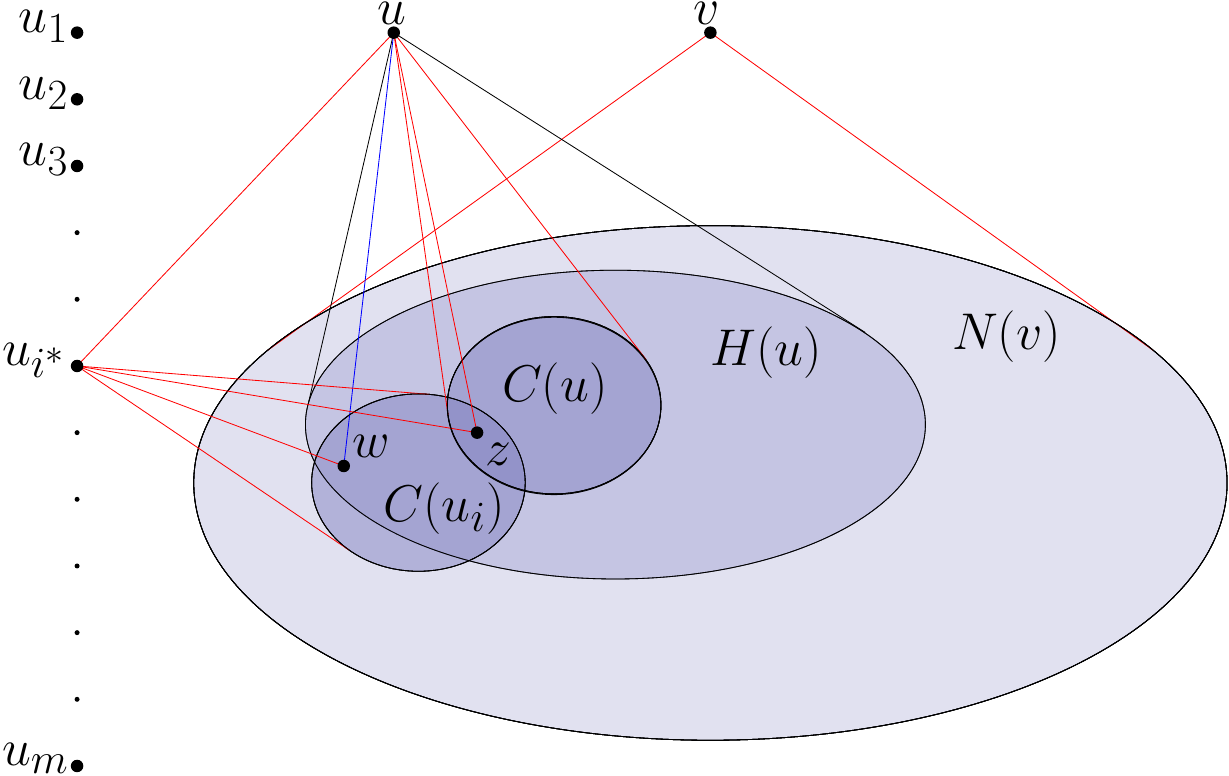}
    \caption{Activation of the blue edge $\{u,w\}$. Red edges participate in the activation process. Black edges show the entire neighbourhood of $u$ in $N(v)$.}\label{Fig1}
\end{figure}

Due to Claim~\ref{cl:giant_to_giant} and the union bound over all pairs of vertices from $N(v)\times([n]\setminus (N(v)\cup\{v\})$, w.h.p.\ we may activate all edges between $N(v)$ and $[n]\setminus (N(v)\cup\{v\})$. In particular, the edge $\{u,w\}$ is activated in the following way: first of all, we may assume that edges $\{u_i,z\},\{u_i,w\}$ are already activated since $z,w\in C(u_i)$. Then, the edge $\{u,u_i\}$ is activated via the triangle $\{u,z,u_i\}$. Finally, the desired edge $\{u,w\}$ is activated via the triangle $\{u,u_i,w\}$.

%Note that with probability $1-\exp[-\Omega(n^{2/3})]$ (see~\cite{connell}) every vertex $v_i$ has a unique giant component in its neighborhood in $V$ implying that a unique edge between $v_i$ and $U_i$ belongs to $T$. It immediately follows that w.h.p.\ we may activate all edges between $V$ and $[n]\setminus V$. 

The fact that we may activate all the remaining edges follows immediately from the following claim.

\begin{claim}
\label{cl:extension}
W.h.p., for every pair of vertices $u,\tilde u\notin N(v)\cup\{v\}$, there exist vertices $w,\tilde w$ in $N(v)$ and a vertex $x\in[n]\setminus N(v)$ such that the pairs $\{u,w\}$, $\{\tilde u,\tilde w\}$, $\{w,x\}$, $\{\tilde w, x\}$, $\{u,x\}$, $\{\tilde u,x\}$ are adjacent in $\mathbf{G}$.
\end{claim}

\begin{proof}
The assertion follows from Theorem~\ref{th:extensions} and the fact that the rooted graph $(K,S)$, where 
\begin{itemize}
\item $V(K)=\{u,\tilde u,w,\tilde w,x,v\}$, \item $E(K)$ contains all the edges required by the claim as well as $\{v,w\},\{v,\tilde w\}$ (see Figure~\ref{fg:CL_3.2}), and 
\item $S=\{v,u,\tilde u\}$,
\end{itemize}
is strictly balanced and has density $8/3$.
\end{proof}

\begin{figure}[ht]
\centering
    \includegraphics[scale=0.49]{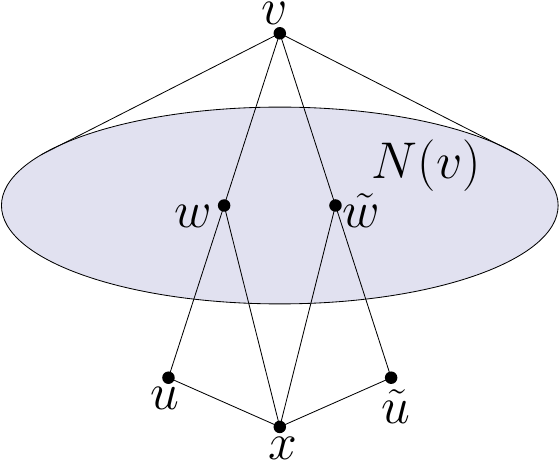}
    \caption{The graph present in $G_{n,p}$ due to Claim \ref{cl:extension}}\label{fg:CL_3.2}
\end{figure}

Let us complete the proof of the first part of Theorem~\ref{th:shallow}. Let $\{u,\tilde u\}$ be an edge of $\mathbf{G}$ in $[n]\setminus(N(v)\cup\{v\})$ that we want to activate and assume that the vertices $w,\tilde w,x$ from the assertion of Claim~\ref{cl:extension} exist. Then the edge $\{u,\tilde u\}$ can be activated in the following way: We may assume that the edges $\{u,w\},\{\tilde u,\tilde w\},\{x,w\},\{\tilde x,\tilde w\}$ are already activated. Then we may activate edges $\{u,x\}$ and $\{\tilde u,x\}$ via the triangles $\{u,x,w\}$ and $\{\tilde u,x,\tilde w\}$, respectively. These two edges, in turn, activate the desired $\{u,\tilde u\}$. %\AC{Should we add QED sign here?} \MZ{No --- since the section is the entire proof.}

\section{Proof of Lemma~\ref{obs:activation implies simple connectivity} and Theorem~\ref{th:general}, part 2}
\label{sc:general-0_proof}

Let $n^{-1/2}<p<n^{-1/3-\varepsilon}$, $\mathbf{G}\sim G_{n,p}$. The goal of this section is to prove that w.h.p.\ $\wsat(\mathbf{G},K_3) \neq n-1$ or, in other words, that w.h.p.\ there is no spanning tree that activates $\mathbf{G}$. It immediately follows from Theorem~\ref{th:Babson} and the following lemma.

\begin{lemma}
\label{lem: main lemma for 0-statement}
    If there exists a spanning tree $T$ in a graph $G$ such that $T\to G$, then $X^{(2)}(G)$ is simply connected.
\end{lemma}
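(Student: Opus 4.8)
The plan is to show that every cycle $C \subseteq G$ is contractible in $X^{(2)}(G)$, which by definition gives simple connectivity (connectivity of the skeleton follows since $T$ is a spanning tree). The key idea is to run the $K_3$-bootstrap percolation process $T = H_0 \subseteq H_1 \subseteq \cdots \subseteq H_m = G$ \emph{backwards} and track how it transforms closed walks. Concretely, for a fixed cycle $C$, I would argue by reverse induction on $i$ that $C$, viewed as a closed walk in $H_i^{(1)}$, is homotopic (rel.\ basepoint) in $X^{(2)}(G)$ to a closed walk lying entirely in $H_{i-1}^{(1)}$; iterating from $i = m$ down to $i = 1$ yields a closed walk in $T^{(1)}$, and any closed walk in a tree is null-homotopic. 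Since $C$ already lies in $H_m^{(1)} = G^{(1)}$, the base case is trivial.

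For the inductive step, suppose $H_i$ is obtained from $H_{i-1}$ by adding an edge $e = \{a,b\}$ that lies in a triangle $\{a,b,c\}$ with $\{a,c\}, \{b,c\} \in H_{i-1}$ and $\{a,b,c\} \in F(X^{(2)}(G))$. Given a closed walk $W$ in $H_i^{(1)}$, replace every traversal of the edge $e$ in $W$ by the length-two detour $a \to c \to b$ (or its reverse). The resulting walk $W'$ lies in $H_{i-1}^{(1)}$. The walks $W$ and $W'$ differ only by inserting, at each such traversal, the boundary of the triangle $\{a,b,c\}$, which is a $2$-face of $X^{(2)}(G)$; hence $W$ and $W'$ are homotopic in $X^{(2)}(G)$. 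Formally this amounts to exhibiting a simplicial filling (Van Kampen diagram) of the closed walk $W \cdot \overline{W'}$: one starts from the (non-pure) diagram witnessing that $W'$ is obtained from $W$, and each edge-replacement contributes a single triangle glued along the path $a$–$c$–$b$. Because the process adds each edge of $G \setminus T$ exactly once, no bookkeeping about repeated edges is needed beyond what the walk-version of Van Kampen diagrams (already set up in Section~\ref{sc:pre-topology}, Figure~\ref{Fig-VK-diagram-walks}) permits.

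Finally, to pass from ``$C$ is homotopic to a closed walk in $T$'' to ``$C$ is contractible,'' observe that a closed walk $W_0$ in the tree $T^{(1)}$ bounds: since $T$ has no cycles, $W_0$ can be reduced to the constant walk by successively cancelling backtracks $\cdots x y x \cdots \mapsto \cdots x \cdots$, each cancellation corresponding to a degenerate (or trivially fillable) piece of the diagram. Concatenating the diagram built in the inductive steps with this reduction gives a simplicial filling of $C$ in $X^{(2)}(G)$, so $C$ is contractible. I expect the main obstacle to be purely expository: making precise that the sequence of elementary moves (edge-to-detour replacements plus backtrack cancellations) assembles into a single genuine triangulated-disc diagram in the sense of Definition~\ref{def:contractible_cycle} — in particular that the successive non-pure intermediate diagrams can be amalgamated and, at the end, one recovers a filling whose boundary is isomorphic to $C$ itself rather than merely a closed walk tracing $C$. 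This is where one must invoke the walk-version of a Van Kampen diagram and the fact that a closed walk is contractible iff the underlying cycle is, as noted after Definition~\ref{def:contractible_cycle}.
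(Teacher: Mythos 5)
Your proposal is correct and follows essentially the same route as the paper: run the bootstrap process backwards, at each step replacing every traversal of the last-activated edge by the length-two detour through the apex of the activating triangle, arrive at a closed walk on the tree $T$, and then contract that walk by cancelling backtracks — this is exactly what the paper's ``algorithmic Van Kampen diagram'' construction formalizes, followed by the same gluing of repeated sequential tree edges. The one point you flag as potentially tricky (amalgamating the stepwise replacements into a single triangulated-disc filling with boundary isomorphic to $C$) is precisely what the paper's explicit iterative construction of the labelled complexes $D_{s+1}, D_s, \ldots, D_1$ handles, so your instinct about where the expository care is needed matches the paper's treatment.
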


Let us first give an `informal proof' of Lemma~\ref{lem: main lemma for 0-statement}. For any cycle $C\subseteq G$ consider the process of activation of its edges from $T$. Then, $C$ can be contracted in the following way: at step $i$, we have a closed walk $C_i$ on $G$ (in particular, $C_0=C$). Let $\{x_1,x_2\}\in C_i$ be the last activated edge in $C_i$ (it may have several instances in $C_i$) and let $\{x_1,x_2,x_3\}$ be the triangle that was used to activate $e$. Then replace each $\{x_1,x_2\}$ in $C_i$ with the pair of edges $\{x_1,x_3\},\{x_2,x_3\}$ that was used to activate it. Eventually, we get a walk on $T$ which is clearly contractible.

In order to formalise the above argument, we introduce an algorithm to construct a van Kampen diagram in the specific case when a cycle can be activated from a tree. We call the output of this algorithm the {\it algorithmic van Kampen diagram}. For a bootstrap percolation process $H:=H_0\subset H_1\subset\cdots\subset H_s=:G_0$ on edges of a graph $G$ initiated at a subgraph $H\subseteq G$ and a graph $G_0\subseteq G$ activated in this process, we will write $H\stackrel{G}\to G_0$. For every $i\in[s]$, let $\Delta_i\subseteq H_i$ be a triangle the edge $H_i\setminus H_{i-1}$ belongs to. We will call the tuple $(H;\Delta_1,\ldots,\Delta_s)$ an {\it activation process}.

%Let $H:=H_0\subset H_1\subset\cdots\subset H_s=:G$ be a $K_3$-bootstrap percolation process where each edge $H_i\setminus H_{i-1}$ belongs to a triangle $\Delta_i\subseteq H_i$. and let $G'\subseteq G$. We say that $H$ {\it activates} $G'$ in $G$, write $H\stackrel{G}\to G'$, and call the tuple $(H;\Delta_1,\ldots,\Delta_s)$ an {\it activation process}. For simplicity of presentation we write $H\to G$ instead of $H\stackrel{G}\to G$, when the activated graph $G'$ and the ambient graph $G$ coincide.
%If $H:=H_0\subset H_1\subset\cdots\subset H_m=:G$ is a $K_3$-bootstrap percolation process where each edge $H_i\setminus H_{i-1}$ belongs to a triangle $F_i\subseteq H_i$ and if $G'\subseteq G$, we will say that $H$ {\it activates} $G'$ in $G$, write $H\stackrel{G}\to G'$, and call the tuple $(H;F_1,\ldots,F_s)$ an {\it activation process}. We will simply write $H\to G$ when $H\stackrel{G}\to G$.

%\AC{In the above, do we need $H$ to be spanning subgraph of $G$?}

\begin{definition}[Algorithmic Van Kampen Diagram]
    Suppose that $H\subseteq G$ are graphs and $C\subseteq G$ is a cycle such that $H$ activates $C$ in a process $\mathbf{A}=(H;\Delta_{1},\ldots ,\Delta_{s})$.
    Moreover, for every $1\leq k\leq s$ denote by $e_k=\{x_k^1,x_k^2\}$ the edge that was activated at step $k$, i.e.\ $e_k\coloneqq \Delta_{k}\setminus \left(H\cup \bigcup_{i=1}^{k-1}\Delta_i\right)$. Define the following sequence of labelled complexes iteratively (labels of different vertices are not necessarily different):
    \begin{enumerate}
        \item Start by initialising $D_{s+1}:=C$.
        \item Let $k\geq 1$ and suppose that $D_{k+1}$ is defined. Then, define the complex $D_{k}$ by gluing the triangle $\Delta_{k}=e_k\cup\{y_k\}$, to every edge of $D_{k+1}$ whose vertices are labelled by $x_k^1$ and $x_k^2$. For every attached instance of $\Delta_k$, label its third vertex by $y_k$.
    \end{enumerate}
    We call the resulting labelled simplicial complex $D_1$, the \emph{algorithmic $\mathbf{A}$-diagram} of $C$. Note that $D_1$ may have {\it different} vertices labelled by the same label. We also note that an embedding of $D_1$ into the plane is homotopic to an embedding of an annulus. The labelled version of the boundary of the `hole' contains only edges of $H$. See Figure \ref{Fig-diagrams} that compares different diagrams.
\end{definition}

\begin{comment}

\begin{figure}
    \centering
    \includesvg[scale=0.65]{Figures/Fig-the-graph-V2.svg}
    \caption{The graph with an activation tree in red}
    \label{Figures/Fig-the-graph}
\end{figure}

\begin{figure}
\centering
\begin{minipage}{0.5\textwidth}
  \centering
  \includesvg[scale=0.65]{Figures/Fig-VK-diagram-glued-V2.svg}
    \caption{Van Kampen diagram glued}
    \label{Figures/Fig-VK-diagram-glued}
\end{minipage}%
\begin{minipage}{.5\textwidth}
  \centering
  \includesvg[scale=0.65]{Figures/Fig-activation-diagram-V2.svg}
    \caption{Activation diagram}
    \label{Figures/Fig-activation-diagram}
\end{minipage}
\end{figure}

\begin{figure}
\centering
\begin{minipage}{0.5\textwidth}
  \centering
  \includesvg[scale=0.65]{Figures/Fig-algorithmic-V2.svg}
    \caption{Algorithmic diagram}
    \label{Figures/Fig-algorithmic}
\end{minipage}%
\begin{minipage}{.5\textwidth}
  \centering
  \includesvg[scale=0.65]{Figures/Fig-algorithmic-glued-V2.svg}
    \caption{Algorithmic diagram glued}
    \label{Figures/Fig-algorithmic-glued}
\end{minipage}
\end{figure}

\end{comment}

\begin{proof}[Proof of Lemma~\ref{lem: main lemma for 0-statement}.]

    Suppose that $G$ can be activated from a tree $T\subseteq G$. Consider any cycle $C\subseteq G$ and its algorithmic van Kampen diagram $D=D(C)$. Since the preimage $W$ of the boundary of the hole of an embedding of $D$ into $\mathbb{R}^2$ consists of edges of $T$ only, the sequence of consecutive edges of $W$ is a closed walk along $T$. Then, all the edges of this walk can be glued via the following iterative algorithm: if $W$ contains a pair of sequential edges $\{v_i,v_{i+1}\}$ and $\{v_{i+1},v_i\}$, then identify these two edges. Eventually, we get a simplicial complex $D'$ that is a simplicial filling of $C$, completing the proof.
%    Note that for all $i$ we have $M_i\subseteq E(T)$, as otherwise, the process generating $D_i$ would have continued. For every $i$ let $P_{i}$ be the path induced by marked edges of $D_i$ starting at $v_i$ and ending at $v_{i+1}$.
%    Since $M_i\subseteq E(T)$, the path $P_{i}$ corresponds to a walk on $T$, starting at $v_i$ and ending at $v_{i+1}$.
%    In particular, by concatenating the walks corresponding to $P_{i}$ one by one, we obtain $W$ a closed walk on $T$.     
%    Now define a filling of $C$ by first gluing $v_{i+1}$ in $D_{i}$ to $v_{i+1}$ in $D_{i+1}$ for all $i$. 
%    Then, as $W$ is a walk on a tree, by choosing some orientation, we can always find a subwalk of the form $xyx$. Glue the edges $xy$ and $yx$, repeat iteratively until all edges of $W$ are glued. We end up with required filling of $C$.
\end{proof}

\begin{figure}[H]
    \centering
    \includegraphics[scale=0.49]{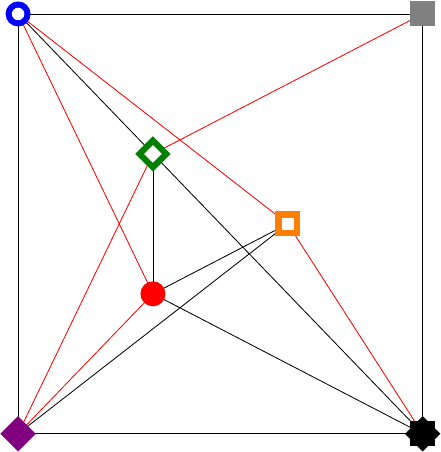}
    \caption*{The graph with an activation tree in red}
    \label{Figures/Fig-the-graph}
%\phantom.

\vspace{0.45cm}

%\phantom.
\begin{minipage}{0.5\textwidth}
  \centering
  \includegraphics[scale=0.49]{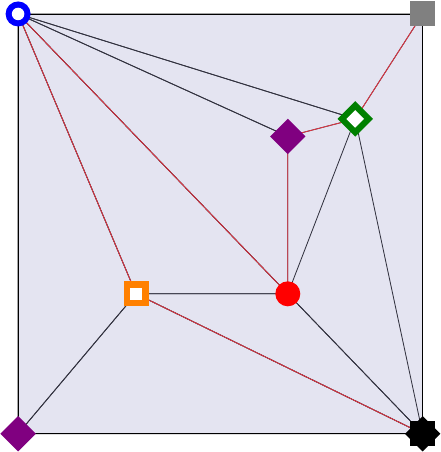}
    \caption*{Van Kampen diagram}
    \label{Figures/Fig-VK-diagram-glued}
\end{minipage}%
\begin{minipage}{.5\textwidth}
  \centering
  \includegraphics[scale=0.49]{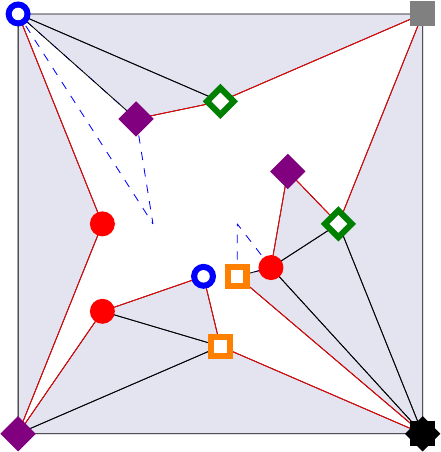}
    \caption*{Activation diagram}
    \label{Figures/Fig-activation-diagram}
\end{minipage}
%\phantom.

\vspace{0.45cm}

%\phantom.
\begin{minipage}{0.5\textwidth}
  \centering
  \includegraphics[scale=0.49]{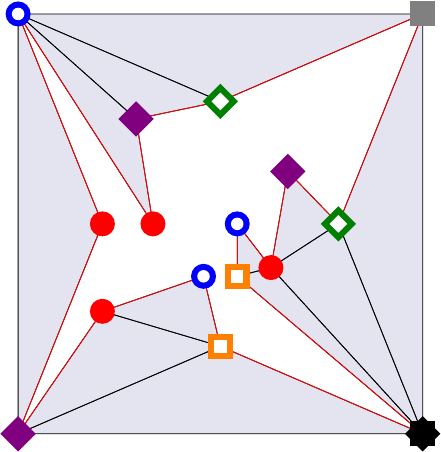}
    \caption*{Algorithmic diagram}\label{Figures/Fig-algorithmic}
\end{minipage}%
\begin{minipage}{0.5\textwidth}
  \centering
  \includegraphics[scale=0.49]{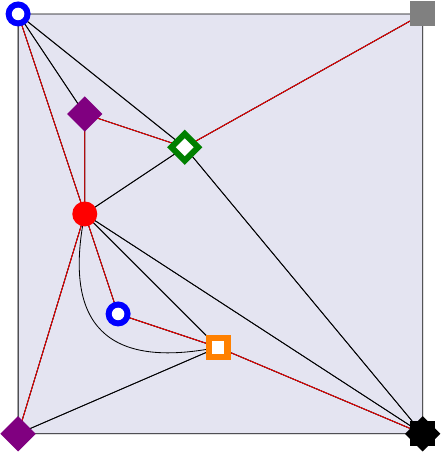}
    \caption*{Algorithmic diagram glued}\label{Figures/Fig-algorithmic-glued}
\end{minipage}
\caption{Different types of diagrams}
\label{Fig-diagrams}
\end{figure}

%\MZ{Also it is weird that we have labels of vertices for the graph only. How do they help? Maybe at least add labels to VK diagram?}\AC{The idea is to clarify that different colours corresponds to different labels. How about removing the labels from the graph also?} \MZ{I suggest it would be clearer if we add names to at least the VK. Remember that people print papers in black and white.}

\section{Proof of Theorem~\ref{th:shallow}, part 2}\label{sec:main proof}

In this section, we prove the second part of Theorem~\ref{th:shallow}, which is done in the following four subsections. We start from a short sketch, disclosing the structure of this section.

In Subsection \ref{subsection:1}, we count activation processes under some restrictions. To this end, we start the first subsection by defining a nice activation process which is, in a sense, a minimal activation processes. 
Then, we explore the structure of nice processes and, using these structural results, we bound the number of such nice processes, under some restrictions.

The purpose of Subsection \ref{subsection:2} is to transfer our main counting lemma from Subsection \ref{subsection:1} --- Lemma \ref{lem: low w} --- to the probabilistic realm.
We do so by combining it with the first moment method. Informally, we show two things: First, we show that w.h.p.\ there is a `short cycle' that cannot be activated by a `fast' activation process. Second, we show that no `short cycle' can be activated by a `moderate' process (which is  `slower' than `fast' activation process, but yet quite fast). This is then used in the fourth subsection. 

Subsection \ref{subsection:3} is somewhat detached from the previous two. 
In this subsection, we develop the notion of \emph{subprocesses}. A key advantage of this notion is the ability to define, naturally, the union of two subprocesses, which is not possible for general processes. We complete this subsection by studying the behavior of the union of two nice and `contractible' subprocesses.

Finally, in Subsection \ref{subsection:4} we complete the proof of Theorem~\ref{th:shallow}. To this end, we assume that $G$ satisfies the assertions of both our main lemmas from Subsection \ref{subsection:2} --- Lemma \ref{lem: no ell-cycle between log and Klog} and Lemma \ref{lem: exists an ell-cycle with more than log} --- and show, by contradiction, that no bounded depth tree activates $G$.
The high-level idea is to consider a `short' cycle $C$ which must be activated quite slowly by a subprocess (slower than by `moderate' processes). Such a cycle exists due to our main lemmas from Subsection~\ref{subsection:2}. Then, using the activation process of $C$ from a bounded depth tree, %, denoted by $\mathbf{T}$,
 we add several branches of the tree to the cycle and get three cycles, each activated by a fast subprocess. % of $\mathbf{T}$.
  Properties of unions of subprocesses from Subsection \ref{subsection:3} imply that in such a situation, $C$ can actually be activated with at most a `moderate speed', which is a contradiction.

%\MZ{Please check tha above text --- I polished it a bit.}\AC{Perfect!}

\subsection{Counting activation processes}\label{subsection:1}

The main aim of this section is to prove an upper bound on the number of labelled graphs $G$ on $[v+\ell]$ such that the following holds:
\begin{itemize}
\item The cycle $C\coloneqq (1,2,\ldots,\ell,1)$ is contained in $G$.
\item There exists a connected graph $H\subseteq G$ which is close to a tree (in terms of the edit distance) such that $H\rightarrow C$ via an activation process $(H;\Delta_1,\ldots,\Delta_s)$ exploiting all edges of $G$, i.e.  $G=H\cup\Delta_1\cup\ldots\cup\Delta_s$.
\end{itemize}

%\AC{Rephrased. Is it fine?}
%\AC{Do you want to use the arrow notation here?}\MZ{I am not sure that we are able to say that $G=H\cup\Delta_1\cup\ldots\cup\Delta_s$ with only arrow notation.}
%\AC{Probably can do something like ``$H\rightarrow C$ via a process $(H;\Delta_1,\ldots ,\Delta_s)$ that saturate $G$, i.e\ $G=H\cup\Delta_1\cup\ldots\cup\Delta_s$". What do you think?}

Let us start with a couple of definitions.

\begin{definition}[Activation Complex]
     Suppose that $\mathbf{A}=(H;\Delta_1,\ldots,\Delta_s)$ is an activation process. Let $K(\mathbf{A})$ be the simplicial complex consisting of the 2-dimensional simplices $\Delta_1,\ldots ,\Delta_s$, as well as all edges and vertices of the graph $H\cup\Delta_1\cup\ldots\cup\Delta_s$. % downwards closure of the set of $I\cup\{\Delta_i:i=1,\ldots,s\}$.
     We call the abstract $2$-dimensional complex $K(\mathbf{A})$ the \emph{activation complex of $\mathbf{A}$}.
\end{definition}

%\AC{Rephrased. Is it fine?}

Note that $K(\mathbf{A})$ is not necessarily the 2-dimensional clique complex of  $H\cup\Delta_1\cup\ldots\cup\Delta_s$ --- the latter graph may have some triangles that are not included as 2-dimensional faces in $K(\mathbf{A})$.

%\AC{Do you think that some explanation for the notation is needed?} \MZ{Which notation do you mean?} \AC{The contribution and excess.}

\begin{definition}[Contribution and Excess]
    Suppose that $H,G'\subseteq G$ are graphs, and that $H\stackrel{G}\to G'$ via an activation process $\mathbf{A}=(H;\Delta_1,\ldots,\Delta_s)$.
    Then, for every edge $e$ of $K(\mathbf{A})$, we define its \emph{contribution} as
    \[
        \con_{\mathbf{A}}(e)\coloneqq |\{\Delta_i:e\subseteq\Delta_i\text{ and }\Delta_i\text{ does \emph{not} activate }e\}|,
    \]
    where `$\Delta_i$ does not activate $e$' means that $e\in E(H\cup \Delta_1 \cup \cdots \cup \Delta_{i-1})$. 
    Furthermore, we define the \emph{excess} of $e$ as  
    \[
        \exc_{\mathbf{A}}(e)\coloneqq \begin{cases}
                            \con_{\mathbf{A}}(e)&\quad \text{if } e\in E(G')\setminus E(H),\\
                            \con_{\mathbf{A}}(e)-1&\quad \text{if } e\notin E(G')\triangle E(H),\\
                             \con_{\mathbf{A}}(e)-2&\quad \text{if } e\in E(H)\setminus E(G').%,\\
                             %\con_{\mathcal{I}}(e)-1&\quad \text{if } e\not\in E(I)\cup E(H).
                         \end{cases}
    \]
    Lastly, we define the excess of the process $\mathbf{A}$ to be $\exc({\mathbf{A}})\coloneqq \sum \exc_{{\mathbf{A}}}(e)$. % \MZ{Maybe $\exc(\mathbf{A})$?}
\end{definition}

In other words, the contribution of an edge is the number of activations that this edge produces. The `cheapest' way, in terms of the excess, to activate a cycle, when the `glued version' of the algorithmic $\mathbf{A}$-diagram (i.e., the edges of the boundary of the hole of the algorithmic $\mathbf{A}$-diagram are glued) is simply connected, is when this `glued version' does not have different equally labelled edges. 

\begin{definition}[Nice Process]
    Suppose that $H, G'\subseteq G$ are graphs and that $H\stackrel{G}{\rightarrow}G'$ via an activation process $\mathbf{A}$. 
    We say that the activation process $\mathbf{A}$ is \emph{nice} if every edge of $K(\mathbf{A})$ has a non-negative excess.
\end{definition}

\begin{remark} 
\label{rk:nice-minimal}
If $\mathbf{A}$ is {\it not} nice, $G'$ is a cycle, %\MZ{Added the condition that $G'$ is a cycle. Otherwise, it may be false, right?}
and $K(\mathbf{A})$ is simply connected, then $\mathbf{A}$ can be reduced to a nice process. Indeed, let $e$ be the first edge in the process with a negative excess. If it belongs to $G'$, then $e$ also belongs to $H$ and it does not activate any other edge i.e.\ $\mathrm{con}_{\mathbf{A}}(e)=0$. Therefore, $K(\mathbf{A})$ has an edge $e$ that does not belong to any 2-dimensional face but belongs to a cycle. Thus, $K(\mathbf{A})$ is not simply connected %other copy of $e$ on the boundary of the hole of the algorithmic $\mathbf{A}$-diagram, meaning that it cannot be glued 
 --- a contradiction. %\AC{Why is the complex needs to be pure? I guess we should refer to Lemma \ref{lem: number of edges in minimal process}} \MZ{fine now?}. 
Thus, $e\notin E(G')$. If $e\notin E(H)$, then $\mathrm{con}_{\mathbf{A}}(e)=0$, so the activation of $e$ can be just removed from the process. Finally, if $e\in E(H)$, then $\mathrm{con}_{\mathbf{A}}(e)\leq 1$. Similarly, we may assume that $\mathrm{con}_{\mathbf{A}}(e)=1$. But then we may replace $e$ in $H$ with the only edge that it activates, and remove $e$ from the process.\footnote{In the proof of Lemma~\ref{lemma: activations contain good activations} in Section~\ref{subsection:3}, we explain why $K(\mathbf{A})$ remains simply connected, and so we cannot get rid of all edges of $K^{(1)}(\mathbf{A})\setminus H$ eventually.} We may repeat these steps until we reach a nice activation process, see details in the proof of Lemma~\ref{lemma: activations contain good activations}. %\AC{This is done more formally in Lemma \ref{lemma: activations contain good activations}, should we refer to it?.}\MZ{Fine? Also is reference in the footnote correct?}
\end{remark}

We also observe that, if $H$ is not connected, then its connected components activate vertex-disjoint graphs --- an edge between connected components of $H$ cannot appear at any activation step. Thus, if the process is nice, then $G'$ is also disconnected --- otherwise, the last edge that was activated from a connected component of $H$ that does not overlap with $G'$ has a negative excess. 
\begin{observation}\label{obs:nice_connected}
If $\mathbf{A}$ is nice and $G'$ is connected, then $H$ is also connected.
\end{observation}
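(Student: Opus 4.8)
The plan is to argue by contraposition. Assume $\mathbf{A}=(H;\Delta_1,\dots,\Delta_s)$ is nice and $G'$ is connected, and suppose towards a contradiction that $H$ is disconnected, with connected components $H_1,\dots,H_t$ ($t\ge 2$) on vertex sets $V_1,\dots,V_t$. The first step is to prove, by induction on $k$, that every triangle $\Delta_k$ of the process has all three of its vertices inside one part $V_i$. Writing $\Delta_k=e_k\cup\{y_k\}$ with $e_k=\{x_k^1,x_k^2\}$ the edge activated by $\Delta_k$, the two remaining edges $\{x_k^1,y_k\}$ and $\{x_k^2,y_k\}$ belong to $H\cup\Delta_1\cup\dots\cup\Delta_{k-1}$; each of these two edges lies within a single part (edges of $H$ because the $V_i$ are the vertex classes of its components, edges of earlier triangles by the inductive hypothesis), and since the two edges share the vertex $y_k$ they lie within the \emph{same} part, which therefore contains $x_k^1,x_k^2,y_k$ and in particular $e_k$. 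Hence no activated edge --- and no edge of $H$ --- joins two distinct parts, so $H\cup\Delta_1\cup\dots\cup\Delta_s$ decomposes as a vertex-disjoint union $G_1\cupdot\dots\cupdot G_t$, where $G_i$ is the union of $H_i$ with those $\Delta_k$ whose vertices lie in $V_i$.

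The second step uses connectivity of $G'$. Since $G'\subseteq G_1\cupdot\dots\cupdot G_t$ is connected, its edge set lies in a single $G_i$; relabel so that $E(G')\subseteq E(G_1)$, and consider the component $H_2$ (which exists because $t\ge 2$). Then $G_2$ shares no edge with $G'$, and I would now produce an edge of $G_2$ with negative $\exc_{\mathbf{A}}$, contradicting niceness. If some edge of $G_2$ is activated during $\mathbf{A}$, let $e$ be the one activated last, say at step $k$. By the first step, every triangle of $\mathbf{A}$ containing $e$ is among those comprising $G_2$; such a triangle processed before step $k$ is impossible since $e$ is created only at step $k$ and lies in neither $H$ nor any $\Delta_i$ with $i<k$, and such a triangle processed after step $k$ would activate another edge of $G_2$ after $e$, contradicting the choice of $e$; thus $\con_{\mathbf{A}}(e)=0$. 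As $e\notin E(H)$ and $e\notin E(G')$, the excess definition gives $\exc_{\mathbf{A}}(e)=\con_{\mathbf{A}}(e)-1=-1$. If instead no edge of $G_2$ is activated, then $G_2=H_2$ and no triangle of $\mathbf{A}$ contains any edge $e$ of $H_2$, so again $\con_{\mathbf{A}}(e)=0$; this time $e\in E(H)\setminus E(G')$, so $\exc_{\mathbf{A}}(e)=\con_{\mathbf{A}}(e)-2=-2$. Either way $\mathbf{A}$ has an edge of negative excess, contradicting niceness, and the proof is complete.

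I do not anticipate a genuine obstacle here --- this simply formalises the informal remark preceding the statement. The two points needing a little care are the case split in the second step (a component of $H$ performing no activation is precisely where the $-2$ branch of the excess, and hence niceness in its full strength, is invoked) and the verification that the last activated edge inside a part has contribution exactly $0$, which leans on the first step to confine every triangle through that edge to the same part.
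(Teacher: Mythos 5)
Your proof is correct and formalises precisely the one-sentence argument the paper gives immediately before the observation (``the last edge that was activated from a connected component of $H$ that does not overlap with $G'$ has a negative excess''). The inductive Step~1 --- every triangle $\Delta_k$ is confined to the vertex class of a single component of $H$, because its two non-activated edges share $y_k$ and each lies inside one class --- is exactly the formal content of ``connected components activate vertex-disjoint graphs.'' Your case split in Step~2 is a genuine improvement in rigor over the paper's sketch: the paper mentions only the scenario corresponding to your first case (the isolated component does perform an activation, and the last such edge has excess $-1$), whereas your second case, in which $G_2=H_2$ performs no activation and any edge of $H_2$ has $\con=0$ hence $\exc=-2$, is needed for a complete argument and is precisely where the $-2$ branch of the excess definition is used. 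One very small caveat, which the paper's own sketch shares: both arguments produce an edge of negative excess inside the stray component $H_2$, and so tacitly assume $H_2$ contains at least one edge; if $H$ could have an isolated vertex not touched by any $\Delta_i$, no contradictory edge exists. In the paper's applications $H$ is always taken to be a connected object or a subtree, so this degenerate case does not arise, but if you wanted the statement to be unconditional you would need to either forbid isolated vertices in $H$ or read $V(H)$ as the support of $E(H)$.
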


\begin{comment}
We also need the following simple property of a nice process that activate a contractible cycle.

\begin{lemma}\label{lem: excess minus boundary is nonnegative}
    Assume that $H\stackrel{G}{\rightarrow} C$ via a nice activation process $\mathbf{A}=(H;\Delta_1,\ldots ,\Delta_s)$, where $C$ is a cycle contractible in $K(\mathbf{A})$.
    Then, every edge $e\in E(H)\cap E(C)$ satisfies $\con_{\mathbf{A}}(e)\geq 1$.
\end{lemma}

\begin{proof}
    Let $e\in E(H)\cap E(C)$. By the definition of a contractible cycle, $C$ has a simplicial filling. As $e\in E(C)$,
    there is a face in $K(\mathbf{A})$ that $e$ belongs to. This face participates in the activation process and, since $e\in E(H)$, it is used to activate an edge other than $e$. This means that $\con_{\mathbf{A}}(e)\geq 1$ as required. 
\end{proof}
\end{comment}

Our next lemma % in this section
 gives a lower bound on the number of edges in a nice activation process of a cycle.
To this end, we introduce an additional notation. For a graph $H$, we denote by 
$$
d_H\coloneqq |E(H)|-|V(H)|+1
$$ 
the {\it complexity} of $H$ i.e.\ its edit distance from the closest tree. %\AC{How about changing $|E(H)|$ and $|V(H)|$ to $e(H),v(H)$?} \MZ{not sure...}

\begin{lemma}\label{lem: number of edges in minimal process}
    Suppose that $C$ is an $\ell$-cycle and $H{\rightarrow}C$ via a nice $\mathbf{A}=(H;\Delta_1,\ldots,\Delta_s)$. Let $G=K^{(1)}(\mathbf{A})$. Then,   
    \[
        |E(G)|\geq 3|V(H)|+3d_H+\exc({\mathbf{A}})-(3+\ell).
    \]
\end{lemma}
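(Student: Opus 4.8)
The idea is to carefully count, for each edge of $G = K^{(1)}(\mathbf{A})$, how many triangles $\Delta_i$ contain it, and then sum these counts in two different ways. On one hand, $\sum_{e \in E(G)} \con_{\mathbf{A}}(e)$ counts (triangle, edge-not-activated-by-it) incidences, and since each $\Delta_i$ has exactly two of its three edges not activated by $\Delta_i$, this sum equals $2s$. On the other hand, the total number of triangle-edge incidences is $\sum_{\Delta_i} 3 = 3s$, and since each triangle activates exactly one new edge, we have exactly $s$ edges that are activated during the process. Thus $|E(G)| = |E(H)| + s$, giving $s = |E(G)| - |E(H)| = |E(G)| - |V(H)| - d_H + 1$.

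Now I would exploit the excess. By definition, $\exc_{\mathbf{A}}(e) = \con_{\mathbf{A}}(e)$ for the $\ell$ edges of $C$ that are not in $H$ (here I use that $C \subseteq K^{(1)}(\mathbf{A})$ and that at most $\ell$ edges lie in $E(C) \setminus E(H)$; edges of $C \cap H$ contribute with a $-1$ or $-2$ shift, which only helps the inequality), $\exc_{\mathbf{A}}(e) = \con_{\mathbf{A}}(e) - 1$ for edges in neither $E(C) \triangle E(H)$, and $\exc_{\mathbf{A}}(e) = \con_{\mathbf{A}}(e) - 2$ for edges of $H$ that are activated away. Summing, $\exc(\mathbf{A}) = \sum_e \con_{\mathbf{A}}(e) - (\text{number of edges with the } {-1} \text{ shift}) - 2(\text{number of edges with the } {-2} \text{ shift})$. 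The number of $-2$ edges is $|E(H) \setminus E(C)|$ restricted to those activated away, but more cleanly: every edge of $H$ that does not survive in $C$ contributes $-2$, and there are at least $|E(H)| - \ell$ such edges (at most $\ell$ edges of $H$ can lie on $C$). The $-1$ edges are the remaining edges of $K^{(1)}(\mathbf{A})$; there are at most $|E(G)|$ of them. So $\exc(\mathbf{A}) \le 2s - (|E(G)| - \ell - |E(H)|) - 2(|E(H)| - \ell) = 2s - |E(G)| + 3\ell - |E(H)|$. Wait — I need to be careful with signs; let me instead bound: $\sum_e \con_{\mathbf{A}}(e) = 2s = \exc(\mathbf{A}) + \#\{-1\text{ edges}\} + 2\cdot\#\{-2\text{ edges}\}$, and since $\#\{-1\} + \#\{-2\} + \ell \ge |E(G)|$ is the wrong direction — rather $\#\{-1\} + \#\{-2\} + \#\{C\text{-edges counted}\} = |E(G)|$, with $\#\{C\text{-edges}\} \le \ell$, hence $\#\{-1\} + \#\{-2\} \ge |E(G)| - \ell$, and $\#\{-2\} \ge |E(H)| - \ell$.

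Combining: $2s = \exc(\mathbf{A}) + \#\{-1\} + 2\#\{-2\} \ge \exc(\mathbf{A}) + (\#\{-1\} + \#\{-2\}) + \#\{-2\} \ge \exc(\mathbf{A}) + (|E(G)| - \ell) + (|E(H)| - \ell)$. Substituting $s = |E(G)| - |E(H)|$ and $|E(H)| = |V(H)| + d_H - 1$, one rearranges to $|E(G)| \ge \exc(\mathbf{A}) + 3|E(H)| - 2\ell - |E(G)| + |E(G)|$... I would do this algebra carefully on paper: the target is $|E(G)| \ge 3|V(H)| + 3d_H + \exc(\mathbf{A}) - (3 + \ell)$, i.e. $|E(G)| \ge 3|E(H)| + \exc(\mathbf{A)} - \ell$, so I need $2|E(G)| \ge 2\exc(\mathbf{A}) + \dots$; plugging $s = |E(G)|-|E(H)|$ into $2s \ge \exc(\mathbf{A}) + |E(G)| - \ell + |E(H)| - \ell$ gives $2|E(G)| - 2|E(H)| \ge \exc(\mathbf{A}) + |E(G)| + |E(H)| - 2\ell$, hence $|E(G)| \ge \exc(\mathbf{A}) + 3|E(H)| - 2\ell = 3|V(H)| + 3d_H - 3 + \exc(\mathbf{A}) - 2\ell$, which is slightly weaker than claimed (it has $-2\ell$ not $-\ell$, and the constant matches up to bookkeeping).

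\textbf{Main obstacle.} The delicate point — and where I'd expect to spend real effort — is the precise accounting of which edges of $K^{(1)}(\mathbf{A})$ get the $-1$ versus $-2$ shift and how the $\ell$ edges of the cycle $C$ overlap with $E(H)$; getting the constants exactly right ($-(3+\ell)$ rather than $-2\ell$ or similar) requires being careful that an edge of $C \cap H$ is double-counted correctly (it is both a "$C$-edge" and an "$H$-edge activated away or not"), and that the niceness hypothesis $\exc_{\mathbf{A}}(e) \ge 0$ is used where needed to avoid negative terms. I would also need to double check the base case / connectivity subtleties (using Observation~\ref{obs:nice_connected}, $H$ is connected since $C$ is) to ensure $d_H$ is well-defined and $|E(H)| = |V(H)| + d_H - 1$ holds. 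The cleanest writeup probably introduces, for each edge $e$, the quantity $\con_{\mathbf{A}}(e) - \exc_{\mathbf{A}}(e) \in \{0,1,2\}$, sums $\sum_e(\con - \exc) = 2s - \exc(\mathbf{A})$, and lower-bounds the left side by a direct count of edges weighted by their shift, which is where the $-\ell$ slack from $C$-edges enters.
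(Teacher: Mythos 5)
Your approach is the paper's approach: double-count pairs $(\Delta_i,e)$ with $e\subset\Delta_i$ not activated by $\Delta_i$, use $\sum_e\con_{\mathbf{A}}(e)=2s=2|E(G)|-2|E(H)|$, and then regroup the $\con$-sum by shift class ($\exc=\con$, $\con-1$, $\con-2$). The gap is in your last step: you bound $\#\{0\text{-shift}\}\le\ell$ and $|E(C\cap H)|\le\ell$ \emph{separately}, thereby paying $-\ell$ twice and landing on $-2\ell$, which is strictly weaker than the claimed $-\ell$. These two quantities are not independent: the $0$-shift edges are exactly $E(C)\setminus E(H)$, so $\#\{0\text{-shift}\}+|E(C\cap H)|=|E(C)|=\ell$ \emph{exactly}, and the two $\ell$-losses cancel. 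Once you compute rather than bound, you get the identity
\[
\sum_{e}\con_{\mathbf{A}}(e)=\exc(\mathbf{A})+\underbrace{\bigl(|E(G)|-\#\{0\text{-shift}\}\bigr)}_{\#\{-1\}+\#\{-2\}}+\underbrace{\bigl(|E(H)|-|E(C\cap H)|\bigr)}_{\#\{-2\}}=\exc(\mathbf{A})+|E(G)|+|E(H)|-\ell,
\]
and equating with $2|E(G)|-2|E(H)|$ gives $|E(G)|=3|E(H)|+\exc(\mathbf{A})-\ell$; substituting $|E(H)|=|V(H)|+d_H-1$ (which holds by the definition of $d_H$, connectivity from Observation~\ref{obs:nice_connected} only being needed to make $d_H\ge 0$) yields exactly $3|V(H)|+3d_H+\exc(\mathbf{A})-(3+\ell)$, with equality. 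In short: the ``delicate accounting'' you flag as the main obstacle dissolves once you realize the lemma is really an identity, so no inequalities are needed until the very last line.
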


%\AC{The notation for the number of edges is a bit unpleasant. This is the reason why in the previous version I introduced $F\coloneqq K^{(1)}(\mathcal{I})$.} \MZ{It is with a graph in brackets actually --- not in the subscrpit. I think that this is fine if we use a different style $e$ for the Euler's number --- say, $\mathrm{e}$.}

\begin{proof}    
    The proof follows from a double counting argument. We count the set of pairs $(\Delta_i,e)$ such that $e\subset\Delta_i$ and $\Delta_i$ does not activate $e$.
    
    On the one hand, recalling that $s$ is the number of activation steps in $\mathbf{A}$, the number of such pairs is exactly 
    \[
        2s=2(|E(G)|-|E(H)|)= 2|E(G)|-2|E(H)|.
    \]
    On the other hand, the number of such pairs equals %\MZ{Well, maybe we indeed want to convert all edges $e$ to $f$?} \AC{I'm not sure what is the problem with using $e$ for an edge and $e(G)$ for the number of edges. If I think using $f$ is weird and might confuse people with faces.} \MZ{OK}
    \begin{align*}
        \sum_{e\in G} \con_{\mathbf{A}}(e)&=\sum_{e\in C\setminus H} \exc_{\mathbf{A}}(e)+\sum_{e\not\in C\triangle
        H} (1+\exc_{\mathbf{A}}(e))+\sum_{e\in H\setminus C}(2+ \exc_{\mathbf{A}}(e))\\
        &=|E(G)|-|E(H\cup C)|+|E({C\cap H})|+2|E({H\setminus C})|+\exc({\mathbf{A}})\\
        &=|E({G})|-|E({H\setminus C})|-|E({C})|+|E({C\cap H})|+2|E({H\setminus C})|+\exc({\mathbf{A}})\\
        &=|E(G)|+|E({H})|-\ell+\exc({\mathbf{A}}).
    \end{align*}
%    \AC{You mentioned that the connected assumption is not needed. I do not understand why. The inequality is not an equality if $e(H)<v(H)-1$.}
    Due to Observation~\ref{obs:nice_connected}, the graph $H$ is connected. Thus, $|E(H)|\geq |V(H)|-1$ and hence we conclude that 
    \[
        |E(G)|=3|E(H)|+\exc({\mathbf{A}})-\ell\geq 3|V(H)|+3d_{H}+\exc({\mathbf{A}})-(3+\ell) .\qedhere
    \]
\end{proof}

%\AC{ I think that $\mathbf{A}$ being a good process starting from $H$ and activating a connected graph $G$ is enough to imply that $H$ is connected. Indeed, suppose not, then the activation complex splits into connected components. Only one of the components contains $G$. Note that now the last activated edge in the connected component that does not contain $G$ must have excess $-1$, a contradiction to the goodness condition.}

Let $T$ be a tree and $W$ be a walk on $T$ that visits each edge of $T$ at most twice. Then, a straightforward induction argument in the number of steps of the walk implies that the set of edges of $T$ that are visited exactly once is a path. In the next lemma, we generalise this statement for several walks on a tree. 
We will use this generalisation later in this section in the proof of our main counting result, Lemma \ref{lem: low w}.

\begin{lemma}\label{lem: union of walks covering a tree} 
    Suppose that $W_1,W_2,\ldots ,W_k$ are walks on trees $T_1\subseteq T_2\subseteq \ldots \subseteq T_k$, respectively.
    Further, suppose that, for every $1\leq i\leq k$, each edge of $T_i$ is covered by the union of walks $W_1\cup \ldots \cup W_i$ either once or twice.
    Moreover, for every $1\leq i\leq k$, set $F_i$ to be the set of edges of $T_i$, covered exactly once by the union of walks $W_1\cup\ldots \cup W_i$.  
    Then, for every $1\leq i\leq k-1$, the set of edges $E(F_{i})\cap E(W_{i+1})\subseteq T_{i}$ forms a path. 
\end{lemma}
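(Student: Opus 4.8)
The plan is to prove the statement by induction on $i$, with the key structural invariant being: at each stage $i$, the set $F_i$ of once-covered edges forms a subtree of $T_i$ (in fact a path, but a subtree suffices for the induction and is the property that propagates cleanly). Actually, let me reconsider — the cleanest invariant to carry is that $F_i$ forms a path, since the base case $i=1$ is exactly the classical fact quoted just before the lemma: a single walk $W_1$ on a tree $T_1$ covering every edge once or twice has its once-covered edge set equal to a path (the unique path between the two endpoints of the walk, or a single vertex if the walk is closed).

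For the inductive step, suppose $F_i$ is a path $P$ in $T_i$; I want to understand $F_{i+1}$ and, along the way, verify that $E(F_i) \cap E(W_{i+1})$ is a path. Consider the walk $W_{i+1}$ on $T_{i+1}$. First I would handle the edges of $T_{i+1} \setminus T_i$: since the hypothesis says every edge of $T_{i+1}$ is covered once or twice by $W_1 \cup \cdots \cup W_{i+1}$, and edges outside $T_i$ get no coverage from $W_1, \ldots, W_i$, each such new edge is covered once or twice by $W_{i+1}$ alone. Restricting $W_{i+1}$ to the forest $T_{i+1} \setminus T_i$ (together with however it passes through $T_i$, which I collapse/contract appropriately), the once-covered-by-$W_{i+1}$ edges again form a path-like structure by the base-case reasoning applied componentwise. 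For the edges inside $T_i$: an edge $e \in E(T_i)$ is in $F_{i+1}$ iff its total coverage by $W_1 \cup \cdots \cup W_{i+1}$ is exactly one; combined with the constraint that it is at most two, and that its coverage by $W_1\cup\cdots\cup W_i$ is either one (if $e \in F_i$) or two (if $e \in E(T_i)\setminus F_i$, noting it can't be zero since $T_i$ must be covered by stage $i$), it follows that $W_{i+1}$ covers $e$ either $0$ or $1$ times, and $e \in F_{i+1} \cap E(T_i)$ iff ($e \in F_i$ and $W_{i+1}$ misses $e$). Hence $E(F_i) \cap E(W_{i+1})$ is precisely the set of edges of the path $P = F_i$ that $W_{i+1}$ *does* traverse, and I need this to be a path.

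The main obstacle is exactly this last point: showing $E(F_i) \cap E(W_{i+1})$ is a path, i.e., that the walk $W_{i+1}$, when intersected with the path $P$, traverses a contiguous sub-path of $P$. The key constraint to exploit is that $W_{i+1}$ covers each edge of $T_i$ at most once (derived above, since those edges already have coverage $\geq 1$ from earlier walks and total $\leq 2$). So $W_{i+1}$ restricted to $T_i$ is a walk that traverses each edge of $T_i$ at most once. I would argue: a walk on a tree that uses each edge at most once must in fact use each edge it touches as a "there-and-not-back" traversal, so its edge set is a union of internally-disjoint paths forming... actually, a walk using each tree-edge at most once has edge set forming a path (its trace is a path from start to finish with no backtracking possible — any backtrack would reuse an edge). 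More carefully: contract $T_i$ to the subtree $T_i' $ spanned by the portions of $W_{i+1}$ lying in $T_i$; restricted there $W_{i+1}$ may leave and re-enter (via excursions into $T_{i+1}\setminus T_i$), but each re-entry/exit happens at a vertex, and the edge-trace in $T_i$ is still a set of edges each used once. The edges of $P$ used by $W_{i+1}$: since $P$ is a path $v_0 v_1 \cdots v_m$ and $W_{i+1}$ uses each edge $v_j v_{j+1}$ at most once, and $W_{i+1}$'s intersection with the *tree* $T_i$ has edge-trace that is a disjoint union of paths — I need that only one such path meets $P$ and it meets $P$ in a subpath. This requires a short argument using that $P = F_i$ sits inside $T_i$ in a specific way; I expect to need the parity/coverage bookkeeping once more to pin it down, possibly by considering the "boundary" vertices where coverage changes. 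This contiguity argument is where the real work lies; everything else is careful but routine bookkeeping with the coverage counts.
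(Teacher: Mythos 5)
Your proposal has a genuine gap, and in fact its central invariant is false. You want to carry the statement ``$F_i$ is a path'' through an induction on $i$, but this is not true under the lemma's hypotheses. Take $T_1=\{ab\}$ with $W_1=(a,b)$, and $T_2$ the path $a$--$b$--$c$--$d$ with $W_2=(d,c,b,c)$. Then $W_1$ covers $ab$ once, and $W_1\cup W_2$ covers $ab$ once, $bc$ twice, $cd$ once, so all hypotheses hold for $i=1,2$; yet $F_2=\{ab,cd\}$ is a pair of vertex-disjoint edges, not a path. (Note that the lemma itself is consistent with this: it only asserts something about $E(F_i)\cap E(W_{i+1})$ for $i\le k-1$, and here $E(F_1)\cap E(W_2)=\emptyset$, which is fine. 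The lemma makes no claim about $F_k$ being a path, and indeed it need not be.) So the induction cannot even get off the ground at the first inductive step.

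Beyond the false invariant, you also explicitly leave the ``contiguity argument'' unfinished --- precisely the part you flag as ``where the real work lies.'' That contiguity step is essentially the connectivity argument of the paper's proof: if $W_{i+1}$ traverses two edges $e,e'$ of $F_i$ meeting at vertices that are not adjacent in the trace, then since $W_{i+1}$ visits endpoints of both $e$ and $e'$, it must cross every edge on the unique path in $T_{i+1}$ between them, and a coverage-count argument forces those intermediate edges into $E(F_i)\cap E(W_{i+1})$ as well. The paper does exactly this, paired with a direct ``maximum degree $\le 2$'' argument (two of three edges at a branching vertex of degree $\ge 3$ would force a second traversal of the middle one, again by uniqueness of tree paths), and concludes immediately because a connected subgraph of a tree with maximum degree $2$ is a path. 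That direct route avoids any global claim about $F_i$. To salvage your writeup you should drop the $F_i$-is-a-path invariant entirely and instead argue directly about $H:=E(F_i)\cap E(W_{i+1})$, proving (a) $\Delta(H)\le 2$ and (b) $H$ is connected, both via the ``unique path in a tree'' observation combined with the at-most-twice coverage bound.
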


\begin{figure}[ht]
\centering
\includegraphics[scale=0.49]{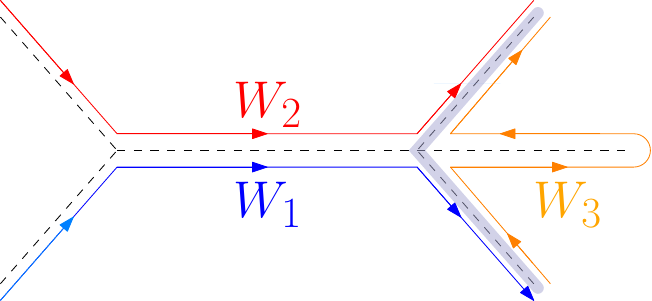}
  \caption{Three walks $W_1,W_2,$ and $W_3$, covering each edge of $T_3$ once or twice, and $E(F_2)\cap E(W_3)$ are highlighted.}
\end{figure}

\begin{proof}
    Fix $1\leq i\leq k-1$. First, we prove that the graph $H$ consisting of the edges of $E(F_{i})\cap E(W_{i+1})$ has maximum degree $2$.
    Second, we show that $H$ is connected, concluding the proof of the lemma, as $H$ is a subgraph of the tree $T_i$, and hence does not contain any cycle.

    Indeed, suppose towards contradiction that there is a vertex $v\in V(H)$ with $\deg(v)\geq 3$. Let $w_1,w_2,w_3$ be three distinct neighbours of $v$ in $H$.
    Hence, $\{v,w_p\}\in E(F_{i})\cap E(W_{i+1})$ for $p=1,2,3$.
    
    Recall that for any fixed $1\leq j\leq k$, each edge of $T_{j}$ is covered at most twice by the union of walks $W_1\cup \ldots \cup W_{j}$. Hence, by the definition of $F_i$, for $p=1,2,3$, the edges $\{v,w_p\} \in E(F_{i})\cap E(W_{i+1})$ are covered exactly once by the union of walks $W_1\cup \ldots \cup W_{i}$, and exactly once by the walk $W_{i+1}$.
    Without loss of generality, suppose that for every $p<q$, the edge $\{v,w_p\}$ appears before the edge $\{v,w_q\}$ along the walk $W_{i+1}$. Every part of the walk $W_{i+1}$ from $w_1$ to $w_2$ uses both edges $\{v,w_1\}$ and $\{v,w_2\}$.
    This is because the unique path between $w_1$ and $w_2$ in $T_{i+1}$ is $w_1vw_2$. 
    In the same manner, every part of the walk $W_{i+1}$ from $w_2$ to $w_3$ uses both edges $\{v,w_2\}$ and $\{v,w_3\}$. We conclude that $W_{i+1}$ covers the edge $\{v,w_2\}$ twice --- a contradiction. %, which contradicts the fact that $\{v,w_2\}\in E(F_{i})$.
    
    Next, suppose towards contradiction that $H$ is not connected. Let $u,v\in V(H)$ be two vertices that are not connected in $H$ with the minimum distance in $T_{i}$. 
    Since $T_{i}\subseteq T_{i+1}$ are trees, there is a unique path $P\subseteq T_{i+1}$ between $u,v$, which is the same in both trees. Let $u'$ be the neighbour of $u$ in $P$. % ordered from $u$ to $v$.
     In particular, $\{u,u'\}\in E(T_{i})$ and hence it is covered at least once by the union of walks  $W_1 \cup \ldots \cup W_{i}$. 
    Note also that the edge $\{u,u'\}$ appears at least once in $W_{i+1}$. This is due to the connectivity of a graph explored by a walk: we know that $W_{i+1}$ is a walk along the tree $T_{i+1}$ that meets both $u$  and $v$, and so it passes through $P$. % and the unique path between $u$ and $w$ passes through $\{u,u'\}$.
    This implies that actually, the edge $\{u,u'\}$ is covered exactly once by $W_{i+1}$ and exactly once by the union of walks $W_1 \cup \ldots \cup W_{i+1}$ (since the total number of instances of every edge in all walks is at most $2$). 
    Thus, $\{u,u'\}\in E(H)$. In particular, the vertices $u,u'$ lie in the same connected component of $H$ and $v$ lies in a distinct connected component of $H$.
    This contradicts the minimality assumption on $u,v$, as $\dist_{T_{i}}(u',v)<\dist_{T_{i}}(u,v)$. 
\end{proof}

Next, we introduce a compressed version of the algorithmic van Kampen diagram, we call such diagrams `activation diagrams'.
The idea behind the following definition is to avoid repetitions of faces: the number of 2-dimensional faces in the compressed version equals the number of triangles that participate in the activation process, while the number of faces in an algorithmic diagram may be exponentially larger than the number of faces in the respective process.

\begin{definition}[The Activation Diagram]
    Suppose that $H\stackrel{G}\to C$ via $\mathbf{A}=(H;\Delta_{1},\ldots ,\Delta_{s})$ where $H\subseteq G$ are connected graphs, and $C$ is a cycle.
    Moreover, for all $1\leq k\leq s$ denote by $e_k=\{x_k^1,x_k^2\}$ the edge that was activated at step $k$, i.e.\ $e_k\coloneqq \Delta_{k}\setminus \left(H\cup \bigcup_{i=1}^{k-1}\Delta_i\right)$. Define the following sequence of labelled complexes iteratively:
    \begin{enumerate}
        \item Start by initialasing $D_{s+1}\coloneqq C$, and mark all edges as \emph{marked} edges.
        \item  Let $k\geq 1$ and suppose that $D_{k+1}$ is defined. Then, define the complex $D_{k}$ by gluing the triangle $\Delta_{k}=e_k\cup\{y_k\}$, to the unique \emph{marked} edge of $D_{k+1}$ whose vertices are labelled by $x_k^1$ and $x_k^2$. Then, label the third vertex of $\Delta_k$ by $y_k$, unmark the edge $\{x_k^1,x_k^2\}$, and for $i=1,2$ mark the edge $\{x_k^i,y_k\}$ if in $D_{k+1}$ there is no marked edge whose vertices have labels $x_k^i,y_k$.
    \end{enumerate}
    We call the resulting complex $D_1$, the \emph{$\mathbf{A}$-activation diagram} of $C$; when clear from the context we omit the dependency in $\mathbf{A}$ (see Figure~\ref{Fig-diagrams}).
\end{definition}
%\AC{The idea of putting marks is that it allows us to construct $D_1$ uniquely. Note that choosing some edge $\{x_k^1,x_k^2\}$ is problematic since choosing different edges give rise to different diagrams.}
%\AC{See Figure \ref{Fig-diagrams} which illustrates the difference between different diagrams.}
%\AC{Reference to the algorithmic diagram figures?} \MZ{Sure}

%Set $\ell=|E(C)|$ and let $B_0,\ldots ,B_{\ell-1}$ be disjoint triangulated discs without internal vertices. For every $i$ fix an edge $\{x_{i},y_{i}\}$ on the boundary of $B_i$. Now set $B\coloneqq B(B_0,\ldots ,B_{\ell-1})$ to be $\cup_{i=0}^{\ell-1}B_i/\sim$ where $x_i \sim y_{i+1}$ for every $i$, where the summation is modulo $\ell$. Then, every $D_i$ is isomorphic to $B$ for some choice of $B_0,\ldots ,B_{\ell-1}$. \AC{Should we give $B$ a name and use it later on in Lemma \ref{lem: low w}? Or just refer to it as $B$?}

Hence, the complexes $D_i$ are homotopy equivalent to $S^1\times[0,1]$, that is an annulus. In other words, a planar embedding of $D_i$ is a region between two cycles --- $C$, which we call {\it the external boundary}, and the cycle, formed by edges of $H$ as well as some edges of $G\setminus H$ that were used in more than one activation, which we call {\it the internal boundary}. %In this case, we call the preimage of the internal boundary part of $A$, which is a cycle, \emph{the internal boundary of $D_i$}.

%Further, observe that the 1-skeleton of the complex, obtained by gluing equally labelled vertices and edges of the activation diagram $D_1$, has a spanning subgraph formed by edges of the internal cycle of $D_1$. %\MZ{Rewrote the above sentence. Fine?}
The next lemma connects the excess function of a nice process and activation diagrams.
%\MZ{The above paragraph is changed and reduced.}\AC{Needed to be changed again. Please check.}
%We will be mainly interested in situations where $\mathbf{A}$ is a nice activation process and $C$ is contractible in~$K(\mathbf{A})$. %The next two lemmas explore the structure of activation diagrams in this scenario. \MZ{Do we assume contractibility in the second lemma?} \AC{No. Should we rewrite the above sentence?}

\begin{lemma}
    Assume that $H\stackrel{G}{\rightarrow} C$ via a nice process $\mathbf{A}=(H;\Delta_1,\ldots ,\Delta_s)$ where $C$ is a cycle.
    Then the internal boundary of the activation diagram has $2|E(H)|+\exc({\mathbf{A}})=2(|V(H)|-1)+2d_{H}+\exc({\mathbf{A}})$ vertices\footnote{Vertices with equal labels are considered as different vertices here.}.
\label{lemma:internal-length}
\end{lemma}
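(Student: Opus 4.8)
The statement to prove is Lemma~\ref{lemma:internal-length}: for a nice process $\mathbf{A}=(H;\Delta_1,\ldots,\Delta_s)$ activating a cycle $C$, the internal boundary of the $\mathbf{A}$-activation diagram has exactly $2|E(H)|+\exc(\mathbf{A})$ vertices, which equals $2(|V(H)|-1)+2d_H+\exc(\mathbf{A})$ since $|E(H)| = |V(H)|-1+d_H$ by definition of complexity. The plan is to count the edges lying on the internal boundary of the activation diagram $D_1$, since in the annular (cyclic) internal boundary the number of vertices equals the number of edges.

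\textbf{First step: identify which edges form the internal boundary.} I would trace through the construction of the activation diagram. An edge of $K^{(1)}(\mathbf{A}) = G$ (viewed with labels) survives as part of the internal boundary precisely when, at the end of the process, it is still \emph{marked} --- equivalently, every face $\Delta_i$ containing a copy of that label-edge either activated it (contributing the first, consumed instance) or left an unmarked instance, with at least one marked instance remaining. The key bookkeeping is: each marked edge of $D_1$ corresponds to some edge $e$ of $G$, and a given $e$ can appear on the internal boundary with some multiplicity $m(e) \geq 0$. I would argue that $m(e)$ is determined by $\con_{\mathbf{A}}(e)$ together with whether $e\in E(C)$, $e\in E(H)$, or both --- mirroring exactly the three-case definition of $\exc_{\mathbf{A}}(e)$. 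Concretely: each face $\Delta_i \ni e$ that does \emph{not} activate $e$ contributes one instance of $e$ onto the boundary of the growing complex; one such instance gets ``absorbed'' (unmarked) for free if $e$ was already present in $H$ (it bounds $H$ to begin with), and the initial copy of $e$ in $C=D_{s+1}$ also counts if $e\in E(C)$. Carefully matching the marking rules, the number of marked copies of $e$ on the internal boundary should come out to be $\exc_{\mathbf{A}}(e) + [e\in E(H)]\cdot(\text{correction})$, and summing over all $e$ will telescope into $2|E(H)| + \exc(\mathbf{A})$.

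\textbf{Second step: do the global count.} Rather than tracking $m(e)$ edge-by-edge (which is error-prone because of how the ``unmark one, mark up to two'' rule interacts with prior instances), I would prefer the cleaner double-counting route used in the proof of Lemma~\ref{lem: number of edges in minimal process}. The total number of (face, edge) incidences $(\Delta_i, e)$ with $e\subset \Delta_i$ and $\Delta_i$ not activating $e$ equals $2s = 2(|E(G)| - |E(H)|)$ on one hand, and $\sum_e \con_{\mathbf{A}}(e)$ on the other. Using $\sum_e \con_{\mathbf{A}}(e) = |E(G)| + |E(H)| - \ell + \exc(\mathbf{A})$ (already derived there), one gets $|E(G)| = 3|E(H)| + \exc(\mathbf{A}) - \ell$. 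Now the activation diagram $D_1$ is a planar annular complex whose faces are exactly the $s$ triangles $\Delta_1,\ldots,\Delta_s$, whose external boundary is the $\ell$-cycle $C$, and whose edge set (counted with multiplicity in the plane) consists of: the $\ell$ external edges, the internal boundary edges, and the ``interior'' edges each shared by two faces. An Euler-characteristic / incidence count for this annulus --- $3s$ total face-edge incidences on one side; each external edge in $1$ face, each internal-boundary edge in $1$ face, each interior edge in $2$ faces on the other --- together with the relation between the number of marked-once versus shared edges, pins down the internal boundary length as $3s + \ell - 2(\text{number of interior edges})$, which I would reconcile with $2|E(H)| + \exc(\mathbf{A})$ using the edge count $|E(G)| = 3|E(H)| + \exc(\mathbf{A}) - \ell$ and the fact that interior edges plus internal-boundary edges plus external edges, counted appropriately, recover $|E(G)|$ adjusted by multiplicities coming from $\con_{\mathbf{A}}$.

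\textbf{Main obstacle.} The genuine difficulty is the bookkeeping of \emph{multiplicities}: a single edge $e$ of $G$ may be glued into the diagram several times (once per face that fails to activate it), and the ``marked/unmarked'' rule means some of these copies coalesce on the internal boundary and some do not. Getting the per-edge contribution exactly right in all three cases ($e\in E(C)\setminus E(H)$, $e\notin E(C)\triangle E(H)$, $e\in E(H)\setminus E(C)$) --- and in particular checking that niceness ($\exc_{\mathbf{A}}(e)\geq 0$) is exactly what prevents any edge from ``disappearing'' and forcing a non-annular topology --- is where care is needed; the role of niceness is precisely that no marked edge is ever needed to be unmarked twice, so the internal boundary stays a single cycle. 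Once the per-edge contribution is confirmed to be $\con_{\mathbf{A}}(e) - [e\notin E(C)]\cdot 1 \cdot(\ldots)$ summing to $2|E(H)| + \exc(\mathbf{A})$, the final identity $2|E(H)| = 2(|V(H)|-1) + 2d_H$ is immediate from $d_H = |E(H)| - |V(H)| + 1$ and the connectivity of $H$ (Observation~\ref{obs:nice_connected}).
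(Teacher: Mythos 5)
Your first step shadows the paper's proof (which does a per-edge multiplicity count split into the cases $e\in E(H)$ versus $e\notin E(H)$), but the key piece of bookkeeping is stated backwards. You write that one instance of $e$ ``gets absorbed (unmarked) for free if $e$ was already present in $H$''. In fact it is the other way around: for $e\in E(H)$ \emph{no} copy is ever absorbed, because no triangle $\Delta_i$ activates $e$, so all $\con_{\mathbf{A}}(e)+\mathbbm{1}_{e\in E(C)}$ planar copies of $e$ remain on the internal boundary; whereas for $e\notin E(H)$ exactly one copy is absorbed (the marked copy to which the unique activating $\Delta_i$ is glued), leaving $\con_{\mathbf{A}}(e)+\mathbbm{1}_{e\in E(C)}-1$ copies. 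Chasing your stated rule literally gives the per-edge count $\con_{\mathbf{A}}(e)+\mathbbm{1}_{e\in E(C)}-\mathbbm{1}_{e\in E(H)}$, which does \emph{not} agree with the target $\exc_{\mathbf{A}}(e)+2\mathbbm{1}_{e\in E(H)}$ (they differ by $2\mathbbm{1}_{e\in E(H)}-1$), and does not sum to $2|E(H)|+\exc(\mathbf{A})$. So although you correctly guess the final per-edge formula, the mechanism you give for it is inconsistent with it, and that inconsistency is precisely the crux of the lemma. The paper proves the two claims \ref{item: case 1} and \ref{item: case 2} directly, observing that ``once an edge is added it becomes an internal boundary edge, and it leaves the boundary only when some $\Delta_i$ is glued to it, which happens iff some $\Delta_i$ activates $e$''.

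Your second route --- a face--edge incidence count on the annulus --- is genuinely different from the paper's argument, and it \emph{can} be made to work, but two things would need repairing. First, the displayed formula $3s+\ell-2(\text{interior edges})$ has the wrong sign: solving $3s=2I+\ell+b$ gives $b=3s-\ell-2I$, and the $+$ sign, carried through with the relation $|E(D_1)|=\ell+2s$, yields $b=s-\ell$ rather than the correct $b=s+\ell$. Second, the assumption ``each external edge in $1$ face, each internal-boundary edge in $1$ face'' fails whenever $E(H)\cap E(C)\neq\emptyset$: such edges lie on \emph{both} boundary cycles and belong to $0$ faces (e.g.\ when $C$ is a triangle and $H$ is a $2$-path sharing two edges with $C$, so that $D_1$ consists of just two triangles glued along one edge). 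It happens that the error cancels --- the contribution of such doubly-boundary edges drops out and one still gets $b=s+\ell$, which combined with $|E(G)|=3|E(H)|+\exc(\mathbf{A})-\ell$ from the proof of Lemma~\ref{lem: number of edges in minimal process} and $s=|E(G)|-|E(H)|$ gives the claim --- but as written your argument neither notices the exceptional edges nor verifies the cancellation. In short, both of your two routes point in promising directions, but neither is carried to the point where it would be accepted as a proof, and both contain an error that would have to be caught.
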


\begin{proof}
First, we note that 
\begin{align*}
     2|E(H)|+\exc({\mathbf{A}})%&=\sum_{e\in E(K(\mathbf{A}))}(\exc_{\mathbf{A}}(e)+2\cdot \mathbbm{1}_{e\in E(H)})\\
     &= \sum_{e\in E(H)}(\exc_{\mathbf{A}}(e)+2)+\sum_{e\in E(K(\mathbf{A}))\setminus E(H)}\exc_{\mathbf{A}}(e)\\
     &=\sum_{e\in E(H)\setminus E(C)}\con_{\mathbf{A}}(e)+\sum_{e\in E(H)\cap E(C)}(\con_{\mathbf{A}}(e)+1)+\sum_{e\in E(K(\mathbf{A}))\setminus E(H)}\exc_{\mathbf{A}}(e).
\end{align*}

Hence, it is enough to prove the following:
\begin{enumerate}[label=\textbf{(C\arabic*)}]
    \item \label{item: case 1} Every $e\in E(H)$ appears exactly $\con_{\mathbf{A}}(e)+\mathbbm{1}_{e\in E(C)}$ times along the internal boundary.
    \item \label{item: case 2} Every $e\in E(K(\mathbf{A}))\setminus E(H)$ appears exactly $\exc_{\mathbf{A}}(e)$ times along the internal boundary.
\end{enumerate}

Recall that $\Delta_i$ {\it does not activate} $e\in E(K(\mathbf{A}))\cap \Delta_i$ if $e\in E(H)\cup \Delta_1 \cup \cdots \cup \Delta_{i-1}$. Otherwise, we say that $\Delta_i$ {\it activates} $e$. With this terminology, for every $e\in E(K(\mathbf{A}))\setminus E(C)$, the number of edges labelled as $e$ in the activation diagram $D_1$ is the number of triangles $\Delta_i$ containing $e$ but not activating $e$. This is exactly $\con_\mathbf{A}(e)$.
Furthermore, as each edge $e\in E(C)$ is presented in $D_{s+1}$, the number of edges labelled as $e$ in $D_1$ is $\con_{\mathbf{A}}(e)+1$. We further prove separately \ref{item: case 1} and \ref{item: case 2}.

\paragraph{Proof of \ref{item: case 1}:}
Let $e\in E(H)$. As mentioned, the number of edges labelled as $e$ in $D_1$ is $\con_\mathbf{A}(e)+\mathbbm{1}_{e\in E(C)}$.
Since $e\in E(H)$, its contribution in $\mathbf{A}$ equals to the number of triangles $\Delta_i$ containing $e$.
Hence, to conclude the proof, it is enough to show that every instance of $e$ in $D_1$ belongs to the internal boundary.

To this end, note that, in the process generating $D_1$, once an edge is added, it becomes an edge of the internal boundary.
Thus, for an edge not to be on the internal boundary, there must be some triangle $\Delta_i$ activating it. Since $e\in E(H)$, no triangle $\Delta_i$ activates $e$, and thus every copy of $e$ lies at the internal boundary.

\paragraph{Proof of \ref{item: case 2}:}
Let $e\in E(K(\mathbf{A}))\setminus E(H)$. Recalling that $e$ appears $\con_{\mathbf{A}}(e)+\mathbbm{1}_{e\in E(C)}$ times in $D_1$ and that $\con_{\mathbf{A}}(e)=\exc_{\mathbf{A}}(e)+\mathbbm{1}_{e\not\in E(C)}$, we find that $e$ appears $\exc_{\mathbf{A}}(e)+\mathbbm{1}_{e\not\in E(C)}+\mathbbm{1}_{e\in E(C)}=\exc_{\mathbf{A}}(e)+1$ times in $D_1$. Hence, it suffices to show that, outside of the internal boundary, there is a single edge labelled as $e$.

Indeed, as before, in the process generating $D_1$, once an edge is added, it becomes an edge of the internal boundary. As $e\in E(K(\mathbf{A}))\setminus E(H)$ there is a triangle $\Delta_i$ that activates $e$ --- clearly, this triangle is unique. For any $j>i$, all edges labelled $e$ in $D_j$ also belong to the internal boundary of $D_j$. As $e$ is activated through $\Delta_i$, a single edge labelled $e$ belonging to the internal boundary of $D_{i+1}$, becomes a non-internal boundary edge in $D_{i}$. 
%In $D_i$ the triangle $\Delta_i$ is attached. As $e$ is activated through $\Delta_i$, a single edge labelled $e$ belonging to the internal boundary of $D_{i+1}$, becomes a non-internal boundary edge in $D_{i}$.  
 Further, for all $j<i$ we have $e\not \in \Delta_j$. Thus, the number of edges labelled $e$ in the internal boundary of $D_j$ equals the number of edges labelled $e$ in $D_i$.
In particular, the above holds for $j=1$, completing the proof. 
\end{proof}

%\AC{Did you check the above proof? It is new.} \MZ{I checked everything I believe}

%\AC{I commented out all of the discussion about Tutte's enumeration.}
\begin{comment}
Before we prove the main lemma of this section, we state a counting result of Tutte \cite {Tut1962}.
For an interested reader, we note that the original proof of Tutte used a recursive enumeration of planar graphs. An alternative (and, in a sense, more explicit) proof that uses bijections to decorated plane trees is given in~\cite{BerFus2012}. \MZ{OK?}

%That said, it is hard to understand the combinatorial reasons behind his formulas. Nowadays, a bijective proof is known, and the most general \AC{Is is true?} one is given here in \cite{BerFus2012}.

\begin{theorem}[Tutte \cite{Tut1962}]\label{thm: Tutte's theorem}
Suppose that $n>\ell\geq 3$ are integers. Let $t_{\ell,n}$ be the number of non-isomorphic\footnote{Triangulations $D_1$ and $D_2$ of an $\ell$-cycle $C$ are isomorphic if there exists an isomorphism $\varphi:D_1\to D_2$ such that $\varphi(C)=C$.} triangulation of a fixed $\ell$-cycle on $n$ vertices. Then,
\begin{equation}\label{eq: Tutte's formula}
    t_{\ell,n} = %\frac{2(2\ell-3)!}{(\ell-1)!(\ell-3)!}\cdot \frac{(4n+2\ell-5)!}{n!(3n+2\ell-3)!}.
    \frac{3(\ell-1)!(\ell-4)!}{(3n-6)!}\times\sum_{j=0}^{\ell-3}\frac{(4n-\ell-j-8)!(\ell+j-1)(\ell-3j-3)}{j!(j+1)!(\ell-j-3)!(\ell-j-1)!(n-\ell-j-1)!}.
\end{equation}
\end{theorem}
\AC{Check and replace:
\[
    \frac{2(2\ell-3)!(4n-2\ell-5)!}{(\ell-1)!(\ell-3)!(n-\ell)!(3n-\ell-3)!}
\]
}
We note that when $\ell=o(n)$ asymptotically, Equation \eqref{eq: Tutte's formula} implies that $t_{\ell,n}=({4^4}/{3^3})^{(1+o(1))n}$. 
\AC{Fix the above comment.}
\end{comment}

To state the main lemma in this subsection, we introduce one more notation. Fix non-negative integers $w$ and $v\geq\ell\geq 3$. We let $P_{v,w}:=P_{v,w}(\ell,n)$ be the collection of all $\mathbf{A}$-activation diagrams $D$ such that the following holds:
\begin{itemize}
    \item $\mathbf{A}=(H;\Delta_1,\ldots,\Delta_s)$ is a nice activation process of an $\ell$-cycle;
    \item $V(H)\subseteq[n]$ and $|V(H)|=v$;
    \item $2d_{H}+\exc({\mathbf{A}})=w$.
\end{itemize}
%\MZ{Check the above paragraph --- it is rewritten}
%\AC{Please check the above (from the paragraph before Tutte's theorem). 
%Also, I prefer $\subseteq$ rather than $\subset$, I changed it in all places -- is that okay?} \MZ{$\subseteq$ --- OK}
%Note that, although $P_{v,w}$ depends on $\ell,n$, for simplicity of notations, we do not specify the dependence on both parameters.

\begin{lemma}\label{lem: low w}
    Let $\ell=\ell(n)\geq 3$, $v=v(n)\geq\ell(n)$, and and $w=w(n)$ be three sequences of non-negative integers. 
    %Set 
    %\begin{equation}
    %\gamma:=2^7.
    %\label{eq:gamma-definition}
    %\end{equation}
    Then, 
%    \[
%      |P_{v,w}|\leq n^v (2v)^{4w} w^w \cdot \binom{2v+w-3}{\ell-1} \binom{2v+w-2}{w}2^{7v}.
%    \]
    \[|P_{v,w}|\leq  n^v (2v)^{4w} w^w \cdot \binom{2v+w-2}{w}2^{7v}.\]
\end{lemma}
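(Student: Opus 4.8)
The plan is to bound $|P_{v,w}|$ by describing an encoding of every activation diagram $D\in P_{v,w}$ using a bounded amount of combinatorial data, and then counting the number of possible encodings. The first and most important choice of data is the vertex set of $H$: since $V(H)\subseteq[n]$ and $|V(H)|=v$, there are at most $n^v$ ways to choose an ordered list of the vertices (overcounting by a factor $v!$, which we do not need to save). All other data we record will then refer only to these $v$ labelled vertices and to the internal combinatorial structure of the diagram, contributing the remaining factors $(2v)^{4w}w^w\binom{2v+w-2}{w}2^{7v}$.

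Next I would exploit the structure of the activation diagram $D$ from Lemma~\ref{lemma:internal-length}: $D$ is an annular planar complex whose external boundary is the $\ell$-cycle $C$ and whose internal boundary is a cycle on exactly $2(v-1)+w$ vertices (counting labelled repetitions). The triangles $\Delta_1,\ldots,\Delta_s$ of $D$ can be peeled off one at a time, and the reverse of this peeling is precisely a process of walks on a tree: reversing the algorithmic diagram construction, the boundary of the hole is a closed walk along $H$ visiting each edge at most twice, and $H$ is connected (Observation~\ref{obs:nice_connected}), hence $H$ has $v-1+d_H$ edges, so $d_H\le w/2$. The key combinatorial reduction — this is where Lemma~\ref{lem: union of walks covering a tree} enters — is to view the diagram as built from a spanning tree $T$ of $H$ together with $d_H$ extra edges, and to encode the way the triangles are attached as a collection of walks on $T$. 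Each such walk traverses $O(v+w)$ edges, the set of "used-once" edges at each stage forms a path (by Lemma~\ref{lem: union of walks covering a tree}), and recording for each triangle which marked edge it is glued onto plus the label of its apex costs at most $2v$ choices per triangle; since there are $s=d_H+\tfrac12(\exc(\mathbf A)+\text{stuff})$ triangles of "excess type" beyond a baseline, the diagrams differing from the planar/tree skeleton are controlled by $w$, giving the $(2v)^{4w}w^w$ factor. The binomial $\binom{2v+w-2}{w}$ accounts for choosing, among the $2v+w-2\approx$ internal-boundary positions, the $w$ positions where the excess is "spent" (i.e. where an edge label repeats or where $H$ contributes a non-tree edge), and the $2^{7v}$ factor absorbs the number of plane trees on $\le 2v$ vertices (Catalan-type bound $4^{2v}$, times a constant-base slack for orientations and root choices) needed to pin down the planar embedding of the skeleton.

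Concretely, the counting steps in order would be: (1) choose $V(H)$ as an ordered $v$-tuple from $[n]$: $\le n^v$; (2) choose the plane-tree structure of a spanning tree $T$ of $H$ and its embedding in the annulus: $\le 2^{7v}$ (a generous Catalan bound, since $T$ has $\le v-1 < 2v$ edges); (3) choose the $d_H\le w/2$ non-tree edges of $H$ and, more generally, the $w$ "excess slots" along the internal boundary of length $2v+w-2$: $\le\binom{2v+w-2}{w}$; (4) for each of the at most $\tfrac12 w$-worth of non-baseline attachment events, record which marked edge receives the next triangle (at most $2v$ marked edges, and at most $2v$ choices for the apex label), and the order of these events: this is where $(2v)^{4w}w^w$ comes from, the $w^w$ being the number of ways to interleave $\le w$ marked walks and the $(2v)^{4w}$ being $\le (2v)^2$ per excess unit with room to spare. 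Multiplying through gives exactly $n^v(2v)^{4w}w^w\binom{2v+w-2}{w}2^{7v}$.

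The main obstacle I anticipate is step (4): making precise that the entire activation diagram is reconstructible from the tree skeleton plus only $O(w)$ bits of extra data, rather than $O(v)$ or $O(s)$ bits. The subtlety is that a "nice" process can have arbitrarily many triangles (so $s$ is large), but the niceness condition forces every edge to have non-negative excess, which — together with Lemma~\ref{lemma:internal-length} identifying the internal-boundary length as $2(v-1)+2d_H+\exc(\mathbf A) = 2(v-1)+w$ — means the diagram deviates from a "planar tree-generated" one in only $w$ controlled places. I would formalize this by a deletion/peeling argument: repeatedly remove a triangle glued along a marked edge that lies in the "used-once path" of the current walk-union (Lemma~\ref{lem: union of walks covering a tree} guarantees such a path exists), charging each deviation from the canonical peeling order to one of the $w$ excess units; what survives is a planar triangulation of a cycle generated by a spanning tree, which is counted by the tree-plus-embedding data of steps (2)–(3). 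Getting the bookkeeping of this charging argument tight enough to land the stated exponents — in particular keeping the per-triangle choices multiplied only $O(w)$ times and not $O(s)$ times — is the crux.
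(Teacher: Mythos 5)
Your proposal takes the right broad approach --- injectively encode every activation diagram by bounded combinatorial data and count the encodings --- and you correctly anticipate the main ingredients: a $\binom nv$-type factor for $V(H)$, a Catalan-type factor, the binomial $\binom{2v+w-2}{w}$ for excess positions along the internal boundary, and the walk-on-trees lemma (Lemma~\ref{lem: union of walks covering a tree}). The overall product also matches the target bound. However, your step~(4) is not the paper's argument, and it is exactly the part you flag as ``the crux'' without resolving.

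Two specific points where your plan does not close. First, you apply the Catalan bound only to ``a spanning tree $T$ of $H$ and its embedding,'' which is not what controls the shape of the diagram: the paper applies a Catalan/Segner-type bound to the isomorphism type of the \emph{entire} annular complex $D$, using the structural fact that $D$ decomposes into triangulated discs $B_0,\dots,B_{\ell-1}$ (one per edge of $C$) with no internal vertices and total boundary length $2v+w+\ell-2$. Because $D$ has no internal vertices and its boundary lengths are controlled, the number of isomorphism types is $\leq 2^{4v+2w}$; this already absorbs all ``per-triangle'' choices, so there is nothing left to charge against $w$. Your proposal, by contrast, tries to pay for the triangle attachments one at a time and then charge the excess ones to $w$; this is precisely the bookkeeping you admit you cannot make tight, and it seems to require that the diagram deviates from a ``tree-generated'' planar triangulation in only $O(w)$ places, which is not how the count is actually structured. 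Second, the role of the $v!$ hidden in your ordered choice $n^v$ is never cashed out. In the paper, the critical delicate step is labelling the $E^*$-edges: one shows, via Lemma~\ref{lem: union of walks covering a tree}, that the $E^*$-paths form walks on a tree covering each edge exactly twice, that the fresh-edge endpoints cost $\le v!$, that the path edges inside each walk form a path in the used-once forest (so $\le v^2$ choices per walk, $v^{2w}$ total), and that the remaining old-edge endpoints are then uniquely determined. Your sketch gestures at this lemma but does not set up the fresh/old/path-edge accounting that makes the $v!\cdot v^{2w}$ bound work, and without it there is no way to control the $2(v-1)$ internal-boundary edges in $E^*$ with only $O(w)$-sized data.

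So: same high-level strategy, correct identification of the auxiliary lemma, matching final exponents, but a genuine gap at the central step --- replace the charging/peeling idea by (i) a Catalan count of the shape of the whole annulus $D$, not of the tree, and (ii) the fresh/old/path-edge walk analysis to pin down the tree labelling, and the proof goes through.
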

%\AC{We probably would like to specify the constants in the above $O$. The constants will depend on $\ell$ if $\ell=\Omega(n)$, otherwise its contribution will be negligible.} \MZ{Why would we like it?} \AC{It should matter if we want to prove it for depth $\omega(1)$. Am I missing something here?}
%\MZ{Please check. Can we get rid of $e^v$?}
%\AC{We need to assume here that $\ell = o(v)$? This seems problematic, as in Lemma \ref{lem: exists an ell-cycle with more than log} we do not assume that.}
\begin{proof}
    Let $B_{0},\ldots, B_{\ell-1}$ be triangulated discs without internal vertices such that $\sum_{i=1}^{\ell}|V(B_i)| = 2(v-1)+w+\ell$. 
    In each $B_i$, let $\{x_i,y_i\}$ be a boundary edge, and let $D= D(B_0,\ldots,B_{\ell-1})$ be the complex defined by gluing $x_i$ and $y_{i+1}$, where addition is done modulo $\ell$. 
    Note that $D$ is a 2-dimensional simplicial complex which is homotopy equivalent to $S^1\times[0,1]$ with $|V(D)|=2(v-1)+w$.
    Let $V\subseteq [n]$ be a set of size $v$, and let $T$ be a tree on $V$.
    We say that $(D,T,V)$ equipped with a labelling $V(D)\to V$ is an \emph{activation scheme} if the following conditions hold (see Figure~\ref{Fig-activation-scheme}):
    %Let $D$ be a 2-dimensional simplicial complex which is homotopy equivalent to $S^1\times[0,1]$ with $|V(D)|=2(v-1)+w$, let $V\subseteq [n]$ be a set of size $v$, and let $T$ be a tree on $V$.
    %We say that $(D,T,V)$ equipped with a labelling $V(D)\to V$ is an \emph{activation scheme} if the following conditions hold:
    %
    %    Let $(D,T,V)$ be such that $V\subseteq [n]$ is a set of size $v$ which we view as a set of labels, $D$ is a planar labelled map\footnote{Recall that a planar map is a planar graph with an embedding in $\mathbb{R}^2$.} \AC{Will it be clear enough if we right a $2$-dimensional complex embedded into $\mathbb{R}^{2}$?} with $2(v-1)+w$ vertices leballed by numbers from $V$, and $T$ is a labelled tree on $V$.
    \begin{enumerate}[label=\textbf{(D\arabic*)}]
        \item\label{item 1 - X} The internal boundary of $D$ has length $2(v-1)+w$ and the external boundary of $D$ has length~$\ell$.  %As a planar map (forgetting the labels), \AC{I think maps do have faces, as the graph is embedded into $\mathbb{R}^2$} $D$ has one face of size $\ell$, one face of size $2(v-1)+w$ \AC{If we think about it as a complex, there is a hole, otherwise this does not make sense.}, which we call the internal boundary, and the rest of the faces are of size $3$, i.e.\ triangles.
        \item \label{item 2 - X} A subset $E^*$ of the set of edges of the internal boundary of $D$ of size $2(v-1)$ is specified; every edge from $E^*$ is labelled as some edge of $T$. 
        \item\label{property of the diagram} Each edge $e\in E(T)$ has exactly two edges from $E^*$ labelled as $e$.
    \end{enumerate}

\begin{figure}[ht]
\centering
\begin{minipage}{.5\textwidth}
  \centering
    \includegraphics[scale=0.49]{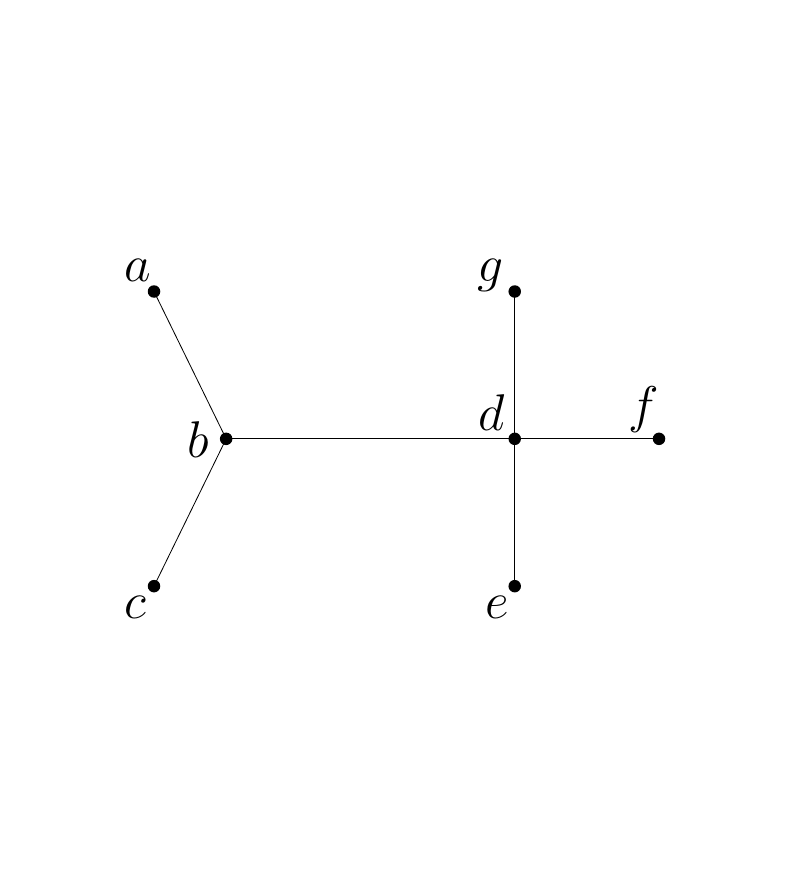}
  \caption*{The tree $T$ with $V(T)\coloneqq \{a,b,c,d,e,f,g\}$}
\end{minipage}%
\begin{minipage}{0.5\textwidth}
  \centering
  \includegraphics[scale=0.49]{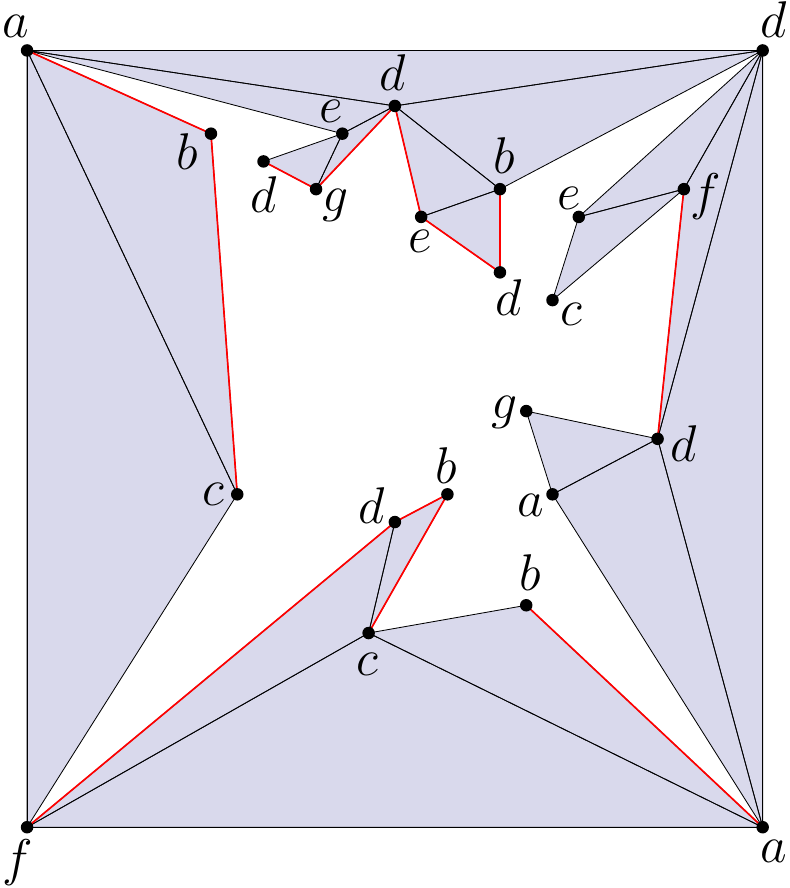}
  \caption*{The complex $D$ labelled by vertices from $V$, where the set $E^*$ is coloured red}
\end{minipage}%
\caption{An example of an activation scheme} \label{Fig-activation-scheme}
\end{figure}
%\MZ{Make the tree on the left bigger by making edges longer --- it looks bad when there is much white space. Also, please make the captions under lhs and rhs on the same level. Also, I guess it is worth moving it to Page 19 where it is cited}\AC{Is it better?}

%    \MZ{Please check the above. Is it what you meant? Do we want to say that all vertices of the external boundary have different labels?}\AC{Yes, this is what I meant. I think we do not need to require that the outer cycle has distinct labels. Do you agree?}
%    We remark that vertices that are not endpoints of marked tree-edges may be arbitrarily labelled. \AC{Is this still confusing?}
%    \AC{See Figure \ref{Fig-activation-scheme} for an example of an activation scheme.}
    
    Let $X$ be the set of all activation schemes.
    We claim that $|P_{v,w}|\leq |X|$, and hence it suffices to bound $|X|$.
    Indeed, it is easy to map $P_{v,w}$ into $X$ injectively. First of all, for every labelled connected graph $H$, let $T_H$ be the canonical spanning subtree of $H$. For $D\in P_{v,w}$, let $\mathbf{A}_D$ be the respective activation process that starts from the graph $H_D$ comprising some of the edges of the internal boundary of $D$. Let us show that the map $D\to(D,T_{H_D},V(D))$ is the desired injection into $X$.
    If $D\neq D'$, then also $(D,T_{H_D},V(D))\neq (D',T_{H_D'},V(D'))$.
    Let us show that the image of $P_{v,w}$ lies in $X$. Indeed, $D\in P_{v,w}$ is isomorphic to some $D(B_0,\ldots , B_{\ell-1})$, and its set of vertices $V(D)=V(H_D)$ has size $2(v-1)+w$ and satisfies \ref{item 1 - X} by the definition of an activation diagram and Lemma~\ref{lemma:internal-length}. 
    Further, since $\mathbf{A}_D$ is a nice activation process and $T_{H_D}$ is a spanning subtree of $H_D$, which satisfies $V(H_D)=V(D)$, every edge of $T_{H_D}$ appears at least twice along the internal boundary of $D$. This implies that both $\ref{item 2 - X}$ and \ref{property of the diagram} are satisfied for any subset of the internal boundary $E^*$ comprising exactly two instances of every edge of $T_{H_D}$.
    
    Hence, to conclude the proof of the lemma, it is indeed enough to bound $|X|$. 
    To bound the number of activation schemes $(D,T,V)$, we first note that there are \textbf{at most $\binom{n}{v}$ ways to choose $V$}.
    Given $V$, we now bound the number of pairs $(D,T)$.

    To this end, we first bound the number of choices of an isomorphism type for $D$, and then bound the number of labellings of $V(D)$ with labels taken from $V$ % \AC{We use the arrow notation for activations -- how about replacing $V(D)\to V$ with ``of $V(D)$ with labels taken from $V$''?}
     and choices of a tree $T$ on $V$ so that the triple $(D,T,V)$ is a valid activation scheme.

%    Indeed, every isomorphism type of $D=D(B_0,\ldots,,B_{\ell-1})$ can be achieved in the following way:   First, choose a sequence of integers $(a_0,\ldots ,a_{\ell-1})$ such that $a_i\geq 2$ and moreover, $\sum_{i=0}^{\ell-1}a_i=2v+w+\ell-2$, this can be done in exactly $\binom{2v+w-3}{\ell-1}$ ways.
%    Then, fix an $\ell$-cycle $\partial D$ on the role of the outer boundary of $D$, and, for every $i$, on the top of its $i$-th edge, glue an $a_i$-cycle, denoted by $\partial B_i$.
%    Lastly, choose isomorphism types of triangulations with boundary $\partial B_i$ and no internal vertices --- at most $4^{a_i-2}$ choices. Indeed, recall that the number of isomorphism types of triangulations of an $n$-cycle without internal vertices is exactly the $n$-th Catalan number. We conclude that there are \textbf{at most $\binom{2v+w-3}{\ell-1} 4^{2v+w-\ell-2}$ ways to choose an isomorphism type} for $D$. Fix an isomorphism type of $D$.
  
    Indeed, every isomorphism type of $D=D(B_0,\ldots,,B_{\ell-1})$ can be achieved in the following way:
    First, choose a sequence of integers $(a_0,\ldots ,a_{\ell-1})$ such that $a_i\geq 2$ and moreover, $\sum_{i=0}^{\ell-1}a_i=2v+w+\ell-2$. Then, fix an $\ell$-cycle $\partial D$ on the role of the outer boundary of $D$, and, for every $i$, on the top of its $i$-th edge, glue an $a_i$-cycle, denoted by $\partial B_i$.
    Lastly, choose isomorphism types of triangulations with boundary $\partial B_i$ and no internal vertices.
    To bound the number of such options, recall that the number of isomorphism types of triangulations of an $(n+2)$-cycle without internal vertices is exactly the $n$-th Catalan number denoted by $C_n$.
    Thus, the number of ways to choose an isomorphism type for $D$ is $\sum \prod_{i=0}^{\ell-1} C_{a_i-2}$, where the sum is taken over all $a_0,a_1,\ldots,a_{\ell-1}\geq 2$ which sums up to $2v+w+\ell-2$.
    Equivalently, the number of ways to choose an isomorphism type for $D$ is $\sum \prod_{i=0}^{\ell-1} C_{a_i-1}$, where the sum is taken over all $a_0,a_1,\ldots,a_{\ell-1}\geq 1$ which sums up to $2v+w-2$.
    By the classical result of Catalan \cite{Cat}, we have
    \[
        \sum_{\stackrel{a_0+\ldots +a_{\ell-1} =2v+w-2}{a_0,\ldots ,a_{\ell-1}\geq 1}} C_{a_0-1}\cdots C_{a_{\ell-1}-1} = \frac{\ell}{2(2v+w-2)-\ell}\binom{2(2v+w-2)-\ell}{\ell}.
    \]
    Hence, we conclude that there are \textbf{at most $\frac{\ell}{4v+2w-\ell-4}\binom{4v+2w-\ell-4}{\ell}\leq 2^{4v+2w}$ ways to choose an isomorphism type} for $D$. Fix an isomorphism type of $D$.
  
    %Indeed, every isomorphism type of $D$ can be achieved by choosing an isomorphism type of a triangulation with $2(v-1)+w+1$ vertices, and removing one vertex in order to create the internal cycle.
    %Hence, it is enough to bound the number of isomorphism types of planar triangulations with $2v+w-1$ vertices, and choices of a vertex in the triangulation.
    %By Theorem \ref{thm: Tutte's theorem}, up to isomorphism, the number of planar triangulations of an $\ell$-cycle with $m$ internal vertices is at most $(4^4/3^3)^{(1+o(1))m}$.
    
    % \AC{Do you think we should deduce the asymptotic bound in a different claim/lemma?} \MZ{Yes, state it as a separate theorem with a correct reference} \AC{We need to decide if we are aiming to optimize the constant in the lemma. If so, I believe we should sharpen the proof. In particular, now there is no reason to use Tutte, as a simple Catalan counting is enough.}.
    %Therefore, there are \textbf{at most $(2v+w-1)(4^4/3^3)^{(1+o_v(1))(2v+w-1)}$ ways to choose an isomorphism type} for $D$. Fix an isomorphism type of $D$.
    
    Since the internal boundary is a spanning subgraph of $D$, it is enough to bound the number of choices of $T$ and assignments of labels for the vertices of the internal boundary such that the resulting triple $(D,T,V)$ is an activation scheme.
    %Note that an isomorphism type of $D$ consists of faces with boundary of length either $\ell,3$ or $2(v-1)+w$ \AC{As of now I left it as a planar map and not a complex - this should be changed}.
    %For the assignment of the labels in the internal boundary to be valid, there should be $2v-2$ marked tree-edges along the internal boundary, this is due to \ref{item 2 - X} \AC{Is it better now?}.
    Clearly, there are \textbf{at most $\binom{2v+w-2}{2v-2}$ ways to choose the set of edges $E^*$} from the internal boundary; fix such a choice. There are \textbf{at most $v^{2w}$ ways to choose labels} from $V$ for the endpoints of the edges of the internal boundary that do not belong to $E^*$.
    
    Let us summarise that the number of ways to choose $V$, the isomorphism type of $D$, the set $E^*$, and labels for the vertices of the edges of the internal boundary of $D$ that do not belong to $E^*$ is at most 
%    \begin{align}\label{eq: intermidiate counting}
%       \binom{2v+w-3}{\ell-1} 4^{2v+w-\ell-2}\binom{n}{v}\binom{2v+w-2}{w}v^{2w}\leq \frac{(2v)^{2w}n^{v}}{v!}\binom{2v+w-2}{w}\binom{2v+w-3}{\ell-1} 16^{v}. 
%    \end{align}
    \begin{align}\label{eq: intermidiate counting}
       \binom{n}{v}\binom{2v+w-2}{w}\cdot v^{2w} \cdot 2^{4v+2w}\leq \frac{(2v)^{2w}\cdot n^{v}}{ v!}\binom{2v+w-2}{w} \cdot 2^{4v}. 
    \end{align}
    %\begin{align}\label{eq: intermidiate counting}
    %    (2v+w-1)\left(\frac{4^4}{3^3}\right)^{(1+o_v(1))(2v+w-1)}\binom{n}{v}\binom{2v+w-2}{w}v^{2w}\leq\frac{n^v e^v (2v)^{3w}}{v^v}\left(\frac{4^4}{3^3}\right)^{(1+o_v(1))(2v+w)}. 
    %\end{align}
    %    where we use the convention that $w\ln w=0$. 
    %\AC{I think that in the above the error term is indeed $w\ln(vw)$. This comes from choosing the permutation in $w^w$ ways and also the choice of order $v^w$ above.} \MZ{What if $w=0$?}\AC{We then use the convention that $0\ln(0)=0$, should we remark that in a footnote? Or in the text itself?}

    Given the above information, it is left to bound the number of choices of $T$ and valid assignments of labels for the endpoints of edges from $E^*$ (note that some of them might be already labelled). 
    In other words, we need to choose $T$ and a mapping from $E^*$ to $E(T)$ such that every edge of $T$ has two pre-images.
    We claim that this can be done in at most $2^{3v}\cdot v^{2w}\cdot w!\cdot v!$ ways. If this is true, then combining it with \eqref{eq: intermidiate counting}, we obtain the required inequality
%    \[
%        |P_{v,w}|\leq |X|\leq n^v (2v)^{4w} w^w \cdot \binom{2v+w-3}{\ell-1} \binom{2v+w-2}{w}2^{7v}.
 %   \]
    \[
        |P_{v,w}|\leq |X|\leq n^v (2v)^{4w} w^w \cdot \binom{2v+w-2}{w}\cdot 2^{7v}.
    \]
    
    We start by fixing an orientation of the internal boundary, which can be done in $2$ ways. The set $E^*$ partitions the internal boundary into ordered intervals (the order is induced by the above orientation): directed paths comprising edges of $E^*$ (we refer to them as $E^*$-paths) followed by directed paths comprising edges that do not belong to $E^*$. Let $J_{1},\ldots,J_{k}$ be all $E^*$-paths. Note that $k\leq w$, as the number of edges that do not belong to $E^*$ is $w$. For a valid choice of $T$ and the labels of the remaining vertices of $D$, the labelled path $J_i$ corresponds to a walk on $T$ with the property that no edge of $T$ is covered more than twice; we denote this walk by $W_i$. 
    Let us denote by $T_i'$ the subtree of $T$ that is covered by the edges of $W_i$. 
    By \ref{property of the diagram} the walks $W_1,\ldots ,W_k$ cover all edges of $T$, each edge exactly twice.
    Since $T$ is connected, one can find a permutation on $[k]$ denoted by $\sigma$, so that for all $t\in[k]$, the graphs $T_t\coloneqq \cup_{i=1}^{t}T_{\sigma(i)}'$ are nested subtrees of $T$.
    Note also that the starting point and the ending point of every $W_i$ are already labelled, as they are also endpoints of edges that do not belong to $E^*$. 
    
    Thus, in order to bound the number of possible choices of $T$ and labellings of the remaining set of vertices of $D$, it is enough to 1) choose a permutation $\sigma$ on $[k]$, which can be done in {\bf at most $k!\leq w!$ ways}; 2) choose a sequence of walks $(W_i)_{i=1}^{k}$ such that for all $i\in[k]$, the graph $T_i'\coloneqq E(W_i)$ is a tree on some subset of $V$ and
    \begin{enumerate}[label=\textbf{(P\arabic*)}]
        \item \label{property 1} For every $i\in[k]$, each edge of $T_i\coloneqq \cup_{j=1}^{i}T_j'$ is a tree.
        \item \label{property 2} Each edge of $T\coloneqq \cup_{i=1}^{k}T_i'$ is covered exactly twice by the union of edges induced by the walks $W_1,\ldots , W_k$.
        \item \label{property 3} For every $i\in[k]$, the walk $W_i$ is of length $|E(J_i)|$.
    \end{enumerate}
    The rest of the proof is devoted to proving that there are at most $2^{3v}\cdot v^{2w}\cdot v!$ ways to construct such sequences, concluding the proof of Lemma \ref{lem: low w}. 

    For every $i\in[k]$ we shall identify a sequence of vertices $W_i= (w_0^{i},\ldots ,w_{|V(J_i)|}^{i})$ forming a walk, where $w_0^{i}$ and $w_{|V(J_i)|}^{i}$ are already specified. % (as they are endpoints of edges separating $J_{i-1}, J_i,$ and $J_{i+1}$).
    In every valid sequence of walks $(W_i)_{i=1}^{k}$, i.e.\ a sequence satisfying \ref{property 1}, \ref{property 2}, and \ref{property 3}, every edge of $T$ appears twice.
    In particular, there are in total $2v-2$ edges along all walks $(W_i)_{i=1}^{k}$. We denote the $s$-th edge $\{w^i_s,w^i_{s+1}\}$ of $W_i$ by $e_s^i$. An edge $e_s^i$ is {\it fresh}, if it does not appear in the set of edges $e^j_{\ell}$ preceding $e_s^i$ in the lexicographic order on $(j,\ell)$, $j\in[k]$, $\ell\in [|V(J_j)|]$. Clearly, exactly $v-1$ edges are fresh. We will also refer to non-fresh edges as \emph{old} edges.

    \begin{itemize}
    
    \item We choose a set $\Tilde{E}\subseteq E^*$ of $v-1$ edges that play the role of fresh edges. There are {\bf at most $2^{2v}$ choices} of this set. We remark that this also determines the set of old edges.

    \end{itemize}

    In every valid sequence of walks $(W_i)_{i=1}^{k}$, for every $i\in[k]$, let the forest $F_{i}\subseteq T_{i}$ consist of edges covered once by $W_1,\ldots, W_{i}$. Clearly, $P_1:=F_1$ is a path. Moreover, by Lemma \ref{lem: union of walks covering a tree}, the graphs $P_{i}\coloneqq E(F_{i-1})\cap E(W_i)\subseteq T_{i-1}$ are paths for all $i\in\{2,\ldots,k\}$. 
    
    \begin{itemize}

    \item For each $i\in[k]$, there are at most $2^{|E(J_{i})\cap \Tilde{E}|}$ ways to specify which edges of the (still undefined) walk $W_i$ are participating in $P_{i}$. We call these edges \emph{path edges}. We choose the path edges from $E^*$ in {\bf at most $2^{\sum |E(J_i)\cap \Tilde{E}|}=2^{v-1}$ ways}.
    
    %\item For each $i\in[k]$, there are at most $2^{e(J_{i})}$ ways to specify which edges of the (still undefined) walk $W_i$ are participating in $P_{i}$. We call these edges \emph{path edges}. We choose the path edges from $E^*$ in {\bf at most $2^{\sum e(J_i)}=2^{2v-2}$ ways}. \AC{I think we can replace $2^{2v}$ with $2^v$ as the path edges need to be chosen only from fresh edges}

    \item Clearly, in a valid sequence of walks, there are {\bf at most $v!$ possible sequences} of labels for the latter (according to the orientation that we fixed) endpoint $w_{s+1}^{i}$ of every fresh edge $e_{s}^i$. Fix such a sequence of labels.

    \end{itemize}

    We claim that given the above information, there are {\bf at most $v^{2w}$ labellings of $D$} that are consistent with this information.
    This will conclude the proof as then the total number of sequences $(W_i)_{i=1}^{k}$ satisfying \ref{property 1}, \ref{property 2}, and \ref{property 3} is at most $2^{3v}\cdot v^{2w}\cdot v!$ as required.

    To this end, let us show by induction that, for every $i\in[k]$, given $W_1,\ldots,W_{i-1}$, there are at most $v^2$ ways to choose $W_i$ such that there exists an extension of the sequence $W_1,\ldots,W_i$  satisfying \ref{property 1}, \ref{property 2}, and \ref{property 3}.
    
    Indeed, suppose that either $i=1$ or $i\geq 2$ and $W_1,\ldots,W_{i-1}$ are chosen. For clarity of presentation, from now on, when it is clear from the context, we will omit the upper indices of vertices and edges in walks.
    
    Assuming $i\geq 2$, we recall that $W_1,\ldots,W_{i-1}$ define the trees $T'_j$ for every $j\in[i-1]$ and, hence, the tree $T_{i-1}=\cup_{j=1}^{i-1} T_j'$. Given two vertices of the forest $F_{i-1}\subseteq T_{i-1}$, which belong to the same connected component of $F_{i-1}$, there is a unique path between them. 
    In particular, there are at most $v^2$ ways to choose $P_{i}$ --- it is sufficient to choose two vertices in $F_{i-1}$ to define it. Fix a path $P_i\subseteq F_{i-1}$. Since the path edges of $W_i$ were already specified, and we have just defined labels of all vertices along $P_i$, we can label the respective vertices of path edges in $W_i$. % \MZ{And what happens if $i=1$? It seems that ``Second, ...'' is missing.}
    %Second, if $i=1$ we do not specify any more information.
    %\AC{Added the `second' part. What do you think?}

    It is left to show that given the above, there is at most a single way to label vertices of old non-path edges of $W_i$. We label these vertices one by one as they appear in $W_i$.
    Let $w_{s}$ be the first non-labeled vertex in $W_i$. 
    As the first vertex in $W_i$ had received a label, we conclude that $s\geq 2$. 
    The edge $e_{s-1}=\{w_{s-1},w_{s}\}$ is an old non-path edge since otherwise $w_s$ has been already labelled.
    Therefore, its first fresh instance appears in the same walk $W_{i}$ --- denote it by $e_{p}$, where $p\leq s-2$.
    We claim that either there is no valid extension of $(W_1,\ldots,W_{i-1})$ or that $p$ is maximal among all $q\leq s-2$ such that the edge $e_{q}$ is a fresh edge, and its second appearance is not any of $e_{r}$ for $q\leq r\leq s-2$.
    Indeed, otherwise, there is $q\in\{p+1,\ldots,s-2\}$ such that $e_{s-1}$ is the second appearance of $e_{p}$ and $e_{q}$ is a fresh edge such that its second instance does not appear before $e_{s-2}$ in $W_i$.
    Since $W_{i}$ is a walk on a tree, the sequence $W'=(w_{p},w_{p+1},\ldots, w_{q},w_{q+1} ,\ldots ,w_{s-1},w_{s})\subseteq W_{i}$ is also a walk on a tree.
    By our assumption, $\{w_{p},w_{p+1}\}=\{w_{s-1},w_{s}\}$, and thus, we actually have $w_{p+1}=w_{s-1}$. Hence, $W''=(w_{p+1},\ldots, w_{q},w_{q+1} ,\ldots ,w_{s-1})\subseteq W'$ is a closed walk on a tree.
    In particular, each edge in the walk $W''\subseteq W_i$ appears twice --- this is a contradiction to the assumptions on $e_q$.

    Now, either there is no valid extension, and we are done; or $p$ is maximal among all $q\leq s-2$ such that the edge $e_{q}$ is a fresh edge, and its second instance is not any of $e_{r}$ for $q\leq r\leq s-2$.
    In the latter case, the edge $e_p=\{w_p,w_{p+1}\}$ is uniquely determined and then we set $w_s:=w_p$ as the only possible label for the latter endpoint of $e_p$. % \MZ{Right?}\AC{Yes! Sorry, you are right.}. % \AC{I believe it is indeed the latter. Note that we label $w_s$, which is the latter endpoint of $e_{s-1}$, the second instance of $e_p$.}.
    
    Hence, there are at most $v^2$ ways to extend $(W_1,\ldots, W_{i-1})$ to $(W_1, \ldots , W_{i})$, and so there are at most $v^{2k}\leq v^{2w}$ ways to define $W_1, \ldots , W_k$ as required.
\end{proof}

%\MZ{Make the tree on the left bigger by making edges longer --- it looks bad when there is much white space. Also, please make the captions under lhs and rhs on the same level. Also, I guess it is worth moving it to Page 19 where it is cited}\AC{Is it better?}

\subsection{Activation processes are typically slow} \label{subsection:2}

In this section, we transfer our main counting Lemma \ref{lem: low w} into the probabilistic realm. Let $\mathbf{G}\sim G_{n,p}$, % \AC{Do you mind changing $G_n$ to $G$ everywhere?} \MZ{I think that $G$ is not good, prefer $G_n$, or $\mathbf{G}_n$, or $\mathbf{G}$} \AC{Changed $G_n$ to $\mathbf{G}$}
 where $p<cn^{-1/3}$ and $c\in(0,2^{-7/3})$ is a constant. % \AC{By Theorem \ref{th:general} we only need to deal with $p\geq n^{-1/3-\eps}$. Should we state it here?} \MZ{Do we use the fact that $p$ is such?}\AC{I think we are using it in Lemma \ref{lem: exists an ell-cycle with more than log} ---- I'll stress where this is used in the lemma.}.

%We start with some notations.
Let $\ell \coloneqq \ell(n)\geq 3$ be a sequence of integers, and also let $v=v(n),w=w(n)$ be two sequences of non-negative integers.
%First, we set $P_{v,w}^*\subseteq P_{v,w}$ to be the set of diagrams $D$ that can be created from a \emph{nice} activation process $\mathbf{A}$ \MZ{But $P$ is also defined for nice processes. What's the difference between $P$ and $P^*$?}. 
%Second, we
 Let $Z^{\ell}_{v,w}$ be the (random) number of activation diagrams $D\in P_{v,w}$ with $D\subseteq X^{(2)}(\mathbf{G})$. Hereinafter, with a slight abuse of notation, we write $D\subseteq X^{(2)}(\mathbf{G})$ to denote the event that all triangles in $D$ (labelled by the vertices of $\mathbf{G}$) also belong to $X^{(2)}(\mathbf{G})$. %\MZ{Does it look OK?}.

\begin{lemma}\label{lem: expectation of the number of activations}
Suppose that $3\leq \ell=\ell(n)\leq v=v(n)$, and $w=w(n)$ are sequences of non-negative integers such that $w\leq 2v^2$. % Suppose also that $c\in(0,\sqrt[3]{1/\gamma})$ is a constant and that $p\leq c n^{-1/3}$. 
 Then, for large enough $n$, 
%    \[
%        \E\left[Z_{v,w}^{\ell}\right] \leq  n^{(-1/3+o(1))w+(1/3 +o(1))\ell+1} \cdot v^{6w+(1+ \mathbbm{1}[w>v])\ell}\cdot(2^7 c^3)^{v}.
%        \]
        \[
        \E\left[Z_{v,w}^{\ell}\right] \leq  n^{(-1/3+o(1))w+(1/3 +o(1))\ell+1} \cdot v^{6w}\cdot(2^7 c^3)^{v}.
        \]
\end{lemma}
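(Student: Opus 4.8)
The plan is a routine first‑moment computation: bound $\E[Z_{v,w}^{\ell}]$ by the number of potential activation diagrams times the probability that a fixed such diagram sits inside $X^{(2)}(\mathbf{G})$, and then plug in Lemma~\ref{lem: low w} for the former and a simple edge count for the latter. First I would fix a diagram $D\in P_{v,w}$ and ask for $\Prob(D\subseteq X^{(2)}(\mathbf{G}))$. By definition of $P_{v,w}$ there is a nice activation process $\mathbf{A}=(H;\Delta_1,\ldots,\Delta_s)$ of an $\ell$‑cycle with $|V(H)|=v$ and $2d_H+\exc(\mathbf{A})=w$, and $D$ is its activation diagram. The event $D\subseteq X^{(2)}(\mathbf{G})$ is exactly the event that every edge appearing in $K^{(1)}(\mathbf{A})=G$ is an edge of $\mathbf{G}$ (the triangles $\Delta_i$ are then automatically present since all three of their edges are). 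Hence $\Prob(D\subseteq X^{(2)}(\mathbf{G}))=p^{|E(G)|}$, and by Lemma~\ref{lem: number of edges in minimal process} we have $|E(G)|\ge 3v+3d_H+\exc(\mathbf{A})-3-\ell\ge 3v+ \tfrac{3}{2}w - 3 - \ell$ (using $3d_H\ge \tfrac32(2d_H)\ge \tfrac32(2d_H+\exc(\mathbf{A}))-\tfrac32\exc(\mathbf{A})$, i.e.\ $3d_H+\exc(\mathbf{A})\ge \tfrac32 w$ since $\exc(\mathbf{A})\ge 0$ and $2d_H+\exc(\mathbf{A})=w$). Since $p<1$, this gives $\Prob(D\subseteq X^{(2)}(\mathbf{G}))\le p^{\,3v+\frac32 w-3-\ell}$.

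Next I would combine this with Lemma~\ref{lem: low w}:
\[
\E[Z_{v,w}^{\ell}] \;\le\; |P_{v,w}|\cdot p^{\,3v+\frac32 w-3-\ell}
\;\le\; n^{v}(2v)^{4w}w^{w}\binom{2v+w-2}{w}2^{7v}\cdot p^{\,3v+\frac32 w - 3 - \ell}.
\]
Now substitute $p<cn^{-1/3}$. The $n$‑power coming out of $p^{\,3v+\frac32 w-3-\ell}$ is $n^{-v-\frac{w}{2}+1+\frac{\ell}{3}}$, which multiplied by the $n^{v}$ factor gives $n^{-w/2+1+\ell/3}$. The constant‑and‑$c$ part is $2^{7v}c^{\,3v+\frac32 w-3-\ell}\le (2^7 c^3)^{v}\cdot c^{\frac32 w-3-\ell}$; since $c<1$ and $w,\ell$ may be taken so that $\frac32 w - 3 - \ell$ contributes only a harmless $n^{o(\cdot)}$‑type factor (indeed $c^{-\ell}\le n^{o(\ell)}$ trivially, and $c^{\frac32 w}\le 1$), this is absorbed. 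The polynomial‑in‑$v$ clutter $(2v)^{4w}w^{w}\binom{2v+w-2}{w}$ is at most $v^{O(w)}$ using $w\le 2v^2$ and $\binom{2v+w-2}{w}\le (2v+w)^{w}\le v^{O(w)}$; bundling it as $v^{6w}$ (after checking the exponents: $(2v)^{4w}$ gives $v^{(4+o(1))w}$, $w^w\le v^{(2+o(1))w}$, and the binomial gives $v^{o(w)}$ when $w=O(v^2)$, for a total comfortably below $v^{6w}$ for large $n$) and the $n^{o(w)+o(\ell)}$ slack from $c^{-\ell}$ and from $w^w$ versus $v^{6w}$ into the $n^{o(\cdot)}$ exponents yields exactly
\[
\E[Z_{v,w}^{\ell}] \;\le\; n^{(-1/3+o(1))w+(1/3+o(1))\ell+1}\cdot v^{6w}\cdot (2^7 c^3)^{v},
\]
as claimed (note $-w/2 \le (-1/3+o(1))w$ so the cruder exponent $-1/3$ in the statement is even more generous, giving room for all the $o(1)$ absorptions).

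The only genuinely delicate point — and the step I would write out most carefully — is the bookkeeping of where the $o(1)$'s in the exponents of $n$ come from and that they are uniform in the (arbitrary) sequences $\ell(n),v(n),w(n)$ subject to $3\le\ell\le v$ and $w\le 2v^2$. Concretely one must confirm that: (i) $c^{-\ell}=n^{o(\ell)}$ and $c^{-3}=n^{o(1)}$ are legitimate since $c$ is a fixed constant in $(0,1)$, so $\log_n(c^{-1})\to 0$; (ii) the factor $w^w$ is swallowed by $v^{6w}$ up to $n^{o(w)}$, because if $w\le v^{1-\epsilon}$ then $w^w\le v^w$ outright, while if $v^{1-\epsilon}\le w\le 2v^2$ then $w^w\le (2v^2)^w = 2^w v^{2w}\le v^{3w}$ for large $v$; and (iii) the binomial coefficient contributes $n^{o(w)}$ by the same dichotomy. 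Everything else is an honest substitution. I do not anticipate any conceptual obstacle here — the real work of the subsection was done in Lemmas~\ref{lem: low w} and~\ref{lem: number of edges in minimal process}; this lemma is the clean first‑moment consequence, and the proof is essentially the displayed chain of inequalities above followed by the $o(1)$‑absorption argument.
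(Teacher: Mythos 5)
Your approach is the paper's approach: first moment, combine Lemma~\ref{lem: low w} for $|P_{v,w}|$ with Lemma~\ref{lem: number of edges in minimal process} for a lower bound on the number of labelled edges, and absorb constants into the $o(1)$ exponents. However, there is a genuine algebra error in the step where you extract the edge count.

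You claim $3d_H + \exc(\mathbf{A}) \geq \tfrac32 w$. This is false, and your parenthetical derivation actually flips an inequality. From $2d_H+\exc(\mathbf{A})=w$ one has $3d_H+\exc(\mathbf{A})=\tfrac32 w-\tfrac12\exc(\mathbf{A})$; since $\exc(\mathbf{A})\ge 0$ this gives $3d_H+\exc(\mathbf{A})\le\tfrac32 w$, not $\ge$. The correct lower bound is obtained from $d_H\ge 0$: $3d_H+\exc(\mathbf{A})=w+d_H\ge w$, hence $|E(G)|\ge 3v+w-3-\ell$ (which is what the paper uses). Your stronger bound $|E(G)|\ge 3v+\tfrac32 w-3-\ell$ is wrong, and consequently the inequality $\Prob(D\subseteq X^{(2)}(\mathbf{G}))\le p^{3v+\frac32 w-3-\ell}$ you deduce from it does not hold in general --- if $|E(G)|=3v+w-3-\ell$ (possible when $d_H=0$), then $p^{|E(G)|}$ can strictly exceed your claimed bound. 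This is the step that would fail. Using the correct bound $|E(G)|\ge 3v+w-3-\ell$ produces $n^{-w/3}$ rather than $n^{-w/2}$, which matches the lemma's claim exactly, so the theorem is still true and the fix is immediate; but as written, the chain of inequalities has a broken link and the $-w/2$ exponent you advertise should not be trusted.

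A more minor point: your bookkeeping of the polynomial prefactors has the right total but misattributes the pieces. In the regime $w\le v$, the binomial contributes $v^{(1+o(1))w}$, not $v^{o(w)}$, while $w^w\le v^{w}$ rather than $v^{(2+o(1))w}$. It is the two regimes $w\le v$ and $v<w\le 2v^2$ together (as the paper splits them via $\binom{2v+w-2}{w}\le (3v)^w$ vs.\ $\le 2^{3w}$) that each give $v^{(6+o(1))w}$; your stated per-factor exponents only add up correctly by coincidence. Also note that the crude bound $\binom{2v+w-2}{w}\le(2v+w)^w$ loses a factor of order $v^{2w}$ in the regime $w>v$, giving $v^{8w}$ rather than $v^{6w}$ if applied naively; this would feed downstream into Lemma~\ref{lem: no ell-cycle between log and Klog} and change $1/18$ into $1/24$ there, so the paper's sharper two-case binomial bound is not just cosmetic.
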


\begin{proof}  
For every $\mathbf{A}$-activation diagram $D\in P_{v,w}$, where $\mathbf{A}=(H;\Delta_1,\ldots,\Delta_s)$ is a nice activation process of an $\ell$-cycle, Lemma \ref{lem: number of edges in minimal process} and the equality $2d_H+\exc(\mathbf{A})=w$, from the definition of a diagram, imply that 
    \[
    |E(D)|\geq 3v+3d_H +\exc(\mathbf{A})-3-\ell \geq3v+w-3-\ell.
%        |E(D)|\geq \max\{3v+3d_H +\exc(\mathbf{A})-3-\ell , v \}\geq \max \{3v+w-3-\ell, v\}.
    \]
Hence, by Lemma \ref{lem: low w},
% \begin{align*}
%    \E\left[Z_{v,w}^{\ell}\right]&=\sum_{D\in P_{v,w}}\mathbb{P}(D\subseteq X^{(2)}(\mathbf{G}))= \sum_{D\in P_{v,w}}p^{|E(D)|} \leq  |P_{v,w}|\cdot p^{3v+w-3-\ell}\\
%    &\leq  n^v (2v)^{4w} w^w \cdot \binom{2v+w-3}{\ell-1} \binom{2v+w-2}{w}2^{7v} \cdot \left(cn^{-\frac{1}{3}}\right)^{3v+w-3-\ell}\\
%    &\leq (2v)^{4w} w^w \cdot \binom{2v+w-3}{\ell-1} \binom{2v+w-2}{w} \cdot \left(cn^{-\frac{1}{3}}\right)^{w-3-\ell} (2^7c^3)^{v}.
% \end{align*}

\begin{align*}
    \E\left[Z_{v,w}^{\ell}\right]&=\sum_{D\in P_{v,w}}\mathbb{P}(D\subseteq X^{(2)}(\mathbf{G}))= \sum_{D\in P_{v,w}}p^{|E(D)|} \leq  |P_{v,w}|\cdot p^{3v+w-3-\ell}\\
    &\leq  n^v (2v)^{4w} w^w\cdot  \binom{2v+w-2}{w}\cdot 2^{7v} \cdot \left(cn^{-\frac{1}{3}}\right)^{3v+w-3-\ell}\\
    &\leq (2v)^{4w} w^w \cdot  \binom{2v+w-2}{w} \cdot \left(cn^{-\frac{1}{3}}\right)^{w-3-\ell} \cdot (2^7c^3)^{v}.
 \end{align*}
It remains to apply the inequality
%\begin{equation*}
% \binom{2v+w-3}{\ell-1} \binom{2v+w-2}{w} \leq \begin{cases} (3v)^{\ell+w} \quad&\text{ if }w\leq v,\\(3w)^{\ell} \cdot 2^{3w} \quad & \text{ if }w>v.
%\end{cases}
%\end{equation*}
\begin{equation*}
 \binom{2v+w-2}{w} \leq \begin{cases} (3v)^{w} \quad&\text{ if }w\leq v,\\  2^{3w} \quad & \text{ if }w>v.
\end{cases}
\end{equation*}
%Indeed, if $w\leq v$ then
%\begin{align*}
%       \E\left[Z_{v,w}^{\ell}\right]&\leq (2v)^{4w}w^w \cdot (3v)^{\ell+w} \cdot (cn^{-1/3})^{w-3-\ell}\cdot  (2^7 c^3)^{v}\\
%       &\leq  n^{(-1/3+o(1))w+(1/3 +o(1))\ell+1} \cdot v^{6w+\ell} \cdot(2^7 c^3)^{v}.
%\end{align*}
%Moreover, if $v<w\leq 2v^2$ then
%\begin{align*}
%       \E\left[Z_{v,w}^{\ell}\right]&\leq (2v)^{4w}w^w \cdot(3w)^{\ell} \cdot 2^{3w} \cdot (cn^{-1/3})^{w-3-\ell} \cdot(2^7 c^3)^{v}\\
%       &\leq n^{(-1/3+o(1))w+(1/3+o(1))\ell+1} \cdot v^{6w+2\ell} \cdot (2^7 c^3)^{v}.\qedhere
%\end{align*}

Indeed, if $w\leq v$ then
\begin{align*}
       \E\left[Z_{v,w}^{\ell}\right]&\leq (2v)^{4w}w^w \cdot (3v)^{w} \cdot (cn^{-1/3})^{w-3-\ell}\cdot  (2^7 c^3)^{v}\\
       &\leq  n^{(-1/3+o(1))w+(1/3 +o(1))\ell+1} \cdot v^{6w} \cdot(2^7 c^3)^{v}.
\end{align*}
Moreover, if $v<w\leq 2v^2$ then
\begin{align*}
       \E\left[Z_{v,w}^{\ell}\right]&\leq (2v)^{4w}w^w \cdot 2^{3w} \cdot (cn^{-1/3})^{w-3-\ell} \cdot(2^7 c^3)^{v}\\
       &\leq n^{(-1/3+o(1))w+(1/3+o(1))\ell+1} \cdot v^{6w} \cdot (2^7 c^3)^{v}.\qedhere
\end{align*}
\end{proof}

As mentioned at the beginning of Section \ref{sec:main proof}, in the next two lemmas we show the following:
First, we show that typically, for $\ell\leq n^{1/18-\Theta(1)}$, every $\ell$-cycle in $\mathbf{G}$, cannot be activated by an activation process using only a `moderate amount of vertices.
Second, we show that typically there exists an $\ell$-cycle in $\mathbf{G}$ which cannot be activated by a `fast' process.

Before stating the next lemmas, let us make the following observation.
Let $\mathbf{A}=(H;\Delta_1,\ldots,\Delta_s)$
be an activation process with $|V(K(\mathbf{A}))|=v$. By the definition of $\exc_{\mathbf{A}}$ and $\con_{\mathbf{A}}$, we get
\begin{equation*}
      2d_H+\exc(\mathbf{A})\leq 2|E(K(\mathbf{A}))|+\sum_{e\in E(K(\mathbf{A}))}\con_\mathbf{A}(e) = 2|E(K(\mathbf{A}))|+2s < 4|E(K(\mathbf{A}))|.
\end{equation*}
Therefore, $2d_H+\exc(\mathbf{A}) < 4|E(K(\mathbf{A}))|\leq 2v^2$, implying that 
\begin{equation}\label{eq:Pvw=0}
    P_{v,w}=\emptyset \quad \text{for} \quad w>2v^2.
\end{equation}
We conclude that $Z^{\ell}_{v,w} \equiv 0$ deterministically, whenever $w>2v^2$. % \MZ{Rephrased the paragraph above. OK?} \AC{Should we emphasize the values of $\ell$ in the above paragraph?}

\begin{lemma}\label{lem: no ell-cycle between log and Klog}
    Fix constants $K_2>K_1> -\frac{1}{\ln(2^7c^3)}$ and $0<\eps<1/18$. % and let $c\in(0,\sqrt[3]{1/\gamma})$ be sufficiently small. Let $p\leq c n^{-1/3}$ \MZ{Change these assumptions?}. 
     Then, w.h.p.\ for every integer $3\leq \ell=\ell(n)\leq n^{\eps}$, there are no $H,C\subseteq \mathbf{G}$ such that $C$ is an $\ell$-cycle, $|V(H)|\in [K_1\ell\ln n , K_2\ell\ln n]$, and $H \stackrel{\mathbf{G}}\longrightarrow C$ via a nice activation process.
\end{lemma}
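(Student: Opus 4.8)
The strategy is a union bound over all relevant activation diagrams, powered by the expectation estimate in Lemma~\ref{lem: expectation of the number of activations}. Fix $\ell$ with $3 \le \ell \le n^\eps$. By Remark~\ref{rk:nice-minimal} we may restrict attention to nice processes; moreover, if $H \xrightarrow{\mathbf{G}} C$ via a nice process $\mathbf{A}=(H;\Delta_1,\ldots,\Delta_s)$ with $C$ an $\ell$-cycle, then the associated activation diagram $D$ lies in $P_{v,w}$ for $v=|V(H)|$ and $w=2d_H+\exc(\mathbf{A})$, and $D\subseteq X^{(2)}(\mathbf{G})$. Hence the event we must exclude is contained in $\bigcup_{\ell}\bigcup_{v}\bigcup_{w} \{Z_{v,w}^\ell > 0\}$, where $v$ ranges over $[K_1\ell\ln n, K_2\ell\ln n]$ and, by \eqref{eq:Pvw=0}, $w$ ranges over $[0, 2v^2]$. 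So it suffices to show $\sum_{\ell \le n^\eps}\sum_{v,w}\E[Z_{v,w}^\ell] = o(1)$, where we use Markov's inequality $\mathbb{P}(Z_{v,w}^\ell>0)\le \E[Z_{v,w}^\ell]$.

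Plug in the bound from Lemma~\ref{lem: expectation of the number of activations}:
\[
\E[Z_{v,w}^\ell] \le n^{(-1/3+o(1))w + (1/3+o(1))\ell + 1}\cdot v^{6w}\cdot (2^7 c^3)^v.
\]
The key point is that $v \ge K_1\ell\ln n$ and $2^7 c^3 < 1$ (since $c < 2^{-7/3}$), so the factor $(2^7c^3)^v \le (2^7c^3)^{K_1\ell\ln n} = n^{K_1\ell\ln(2^7c^3)}$; because $K_1 > -1/\ln(2^7c^3)$, i.e.\ $K_1\ln(2^7c^3) < -1$, this contributes a factor $n^{-(1+\Omega(1))\ell}$, which beats the $n^{(1/3+o(1))\ell+1}$ coming from the other terms. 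For the $w$-dependent part: when $w$ is not too large compared to $\ell\ln n$, the $n^{-w/3}$ factor dominates $v^{6w}$ since $v^{6w} = n^{O(w \log\log n / \log n)} \cdot (\ldots)$ — more precisely $v \le K_2\ell\ln n \le n^{2\eps}$ for large $n$, so $v^{6w} \le n^{12\eps w}$, and since $\eps < 1/18 < 1/36$ we have $12\eps w < w/3$, hence $n^{(-1/3+o(1))w}v^{6w} \le n^{-\Omega(w)} \le 1$. Thus for each $(\ell,v,w)$ we get $\E[Z_{v,w}^\ell]\le n^{-\Omega(\ell)}$, in fact $\le n^{-\ell}$ for large $n$. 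Summing over $w \in [0,2v^2]$ (at most $2v^2 \le n^{5\eps}$ values), over $v$ in an interval of length $\le K_2\ell\ln n \le n^{2\eps}$, and over $\ell \le n^\eps$ contributes only a polynomial factor $n^{O(1)}$, which is absorbed since $n^{-\ell} \le n^{-3}$ and, being geometric in $\ell$, $\sum_{\ell\ge 3} n^{O(1)} n^{-\ell} = o(1)$.

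The main subtlety — and the step I would be most careful about — is the interplay of the three error exponents: one must check that the margin $K_1\ln(2^7c^3)+1 < 0$ in the hypothesis is genuinely enough to kill the $n^{(1/3+o(1))\ell+1}$ term uniformly over the whole range, \emph{and} that the $o(1)$ terms hidden in Lemma~\ref{lem: expectation of the number of activations} (which may depend on $\ell, v, w$ through $\log\log n$-type factors) are small enough not to eat the constant-size gaps; this is where the constraint $\eps < 1/18$ is used, to ensure $v \le n^{2\eps}$ and $w \le 2v^2 \le n^{4\eps}$ are polynomially bounded so that $v^{o(w)}$ and similar factors are $n^{o(\ell\ln n)}$ and thus harmless. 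Once these bookkeeping inequalities are verified, the union bound closes and the lemma follows.
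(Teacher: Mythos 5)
Your overall plan — reduce to counting activation diagrams, invoke Lemma~\ref{lem: expectation of the number of activations}, and close with a first-moment union bound over $(\ell,v,w)$ — is exactly the paper's strategy, and the role of the two hypotheses ($K_1 > -1/\ln(2^7c^3)$ to defeat the $n^{(1/3+o(1))\ell+1}$ term via $(2^7c^3)^v$, and $\eps<1/18$ to tame the $v^{6w}$ term against $n^{-w/3}$) is correctly identified.

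There is, however, an arithmetic slip that breaks the key inequality as written. You bound $v\le n^{2\eps}$ and then write ``since $\eps<1/18<1/36$ we have $12\eps w<w/3$.'' But $1/18>1/36$, so $\eps<1/18$ does \emph{not} give $12\eps<1/3$; with $\eps$ near $1/18$ one has $12\eps$ near $2/3>1/3$, and the claimed domination of $n^{-w/3}$ over $v^{6w}$ fails. The crude bound $v\le n^{2\eps}$ is too lossy. The paper instead uses $v\le K_2\ell\ln n\le K_2 n^\eps\ln n=n^{\eps(1+o(1))}$, giving $v^{6w}\le n^{(6\eps+o(1))w}$, and then $6\eps<6\cdot\frac{1}{18}=\frac{1}{3}$ makes the combined exponent $(-1/3+6\eps+o(1))w$ strictly negative for large $n$. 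With this correction your argument closes. A secondary (harmless but worth noting) overstatement: $\E[Z_{v,w}^\ell]\le n^{-\ell}$ need not hold when $K_1$ is close to $-1/\ln(2^7c^3)$ — the true bound is of the form $n^{-(2/3+\delta-o(1))\ell+1}$ for a small $\delta>0$ depending on the gap in the hypothesis on $K_1$ — but since the number of $(v,w)$ pairs is $n^{O(\eps)}$ with $\eps<1/18$, the union bound still goes through; the paper avoids this issue altogether by summing the geometric series in $v$ and in $w$ directly rather than bounding term-by-term and multiplying by the count.
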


\begin{proof}
Recall that an activation process of an $\ell$-cycle $\mathbf{A}=(H;\Delta_1,\ldots ,\Delta_s)$ defines an activation diagram.
    If further $\mathbf{A}$ is a nice process, with $|V(K(\mathbf{A}))|=v$, and with $2d_H+\exc({\mathbf{A}})=w$, then the activation diagram belongs to $P_{v,w}$.
    Thus, in order to conclude the proof, it suffices to show that w.h.p.\ for any $K_1\ell\ln n\leq v\leq K_2\ell\ln n$ and any $w\leq 2v^2$ there is no $D\in P_{v,w}$ such that $D\subseteq X^{(2)}(\mathbf{G})$.
We will show that, for all $n$ large enough and for every $3\leq\ell\leq n^{\varepsilon}$, with probability at least $1-n^{-1/2}$,
$$
Z_{\ell}\coloneqq \sum_{v=K_1\ell\ln n}^{K_2\ell\ln n}\sum_{w=0}^{2v^2} Z_{v,w}^{\ell}=0.
$$
This clearly suffices to finish the proof by the union bound.
%\AC{Change `is clearly suffices' to `clearly suffices'?}
As $Z_\ell$ is an integer-valued random variable, by Markov's inequality, it is enough to show that $\E[Z_{\ell}]\leq n^{-1/2}$ for all $\ell$ in the range. Indeed, we first write $\E[Z_\ell]$ in the following way:
\begin{align*}
	\E[Z_\ell]=  \sum_{v=K_1\ell\ln n}^{K_2\ell\ln n}\sum_{w=0}^{2v^2} \E\left[Z_{v,w}^{\ell}\right].
    %=  \sum_{v=K_1\ell\ln n}^{K_2\ell\ln n}\sum_{w=0}^{v} \E\left[Z_{v,w}^{\ell}\right] + \sum_{v=K_1\ell\ln n}^{K_2\ell\ln n}\sum_{w=v+1}^{2v^2} \E\left[Z_{v,w}^{\ell}\right].
\end{align*}
By applying Lemma \ref{lem: expectation of the number of activations} in conjunction with our assumptions that $\eps<1/18$, $\ell\leq n^{\eps}$, and that $v\leq K_2\ell \ln n$, we obtain the following inequality: 
\begin{align*}
        \sum_{v=K_1\ell\ln n}^{K_2\ell\ln n}\sum_{w=0}^{2v^2} \E\left[Z_{v,w}^{\ell}\right]&\leq \sum_{v=K_1\ell\ln n}^{K_2\ell\ln n} \sum_{w=0}^{2v^2} n^{(-1/3+6\eps+o(1))w+(1/3+o(1))\ell+1} \cdot(2^7 c^3)^{v}\\
            &\leq \exp((1+\ell/3+o(\ell))\ln n) \cdot(2^7 c^3)^{K_1\ell \ln n}.
    \end{align*}

%    \begin{align*}
%        \sum_{v=K_1\ell\ln n}^{K_2\ell\ln n}\sum_{w=0}^{v} \E\left[Z_{v,w}^{\ell}\right]&\leq \sum_{v=K_1\ell\ln n}^{K_2\ell\ln n} \sum_{w=0}^{v} n^{(-1/3+6\eps+o(1))w+(1/3 +\eps+o(1))\ell+1} \cdot(2^7 c^3)^{v}\\
%            &\leq \exp((1+(1/3+\eps)\ell+o(\ell))\ln n) \cdot(2^7 c^3)^{K_1\ell \ln n},
%    \end{align*}
%and
%\begin{align*}
%        \sum_{v=K_1\ell\ln n}^{K_2\ell\ln n}\sum_{w=v+1}^{2v^2} \E\left[Z_{v,w}^{\ell}\right]&\leq \sum_{v=K_1\ell\ln n}^{K_2\ell\ln n} \sum_{w=v+1}^{2v^2} n^{(-1/3+6\eps+o(1))w+(1/3+2\eps+o(1))\ell+1} \cdot (2^7 c^3)^{v}\\
%        &\leq \exp((1+(1/3+2\eps)\ell+o(\ell))\ln n) \cdot(2^7 c^3)^{K_1\ell \ln n}.
%    \end{align*}
%\begin{align*}
%        \sum_{v=K_1\ell\ln n}^{K_2\ell\ln n}\sum_{w=0}^{v} \E\left[Z_{v,w}^{\ell}\right]&\leq \sum_{v=K_1\ell\ln n}^{K_2\ell\ln n} \sum_{w=0}^{v} n^{(-1/3+6\eps+o(1))w+(1/3+o(1))\ell+1} \cdot(2^7 c^3)^{v}\\
%            &\leq \exp((1+\ell/3+o(\ell))\ln n) \cdot(2^7 c^3)^{K_1\ell \ln n},
%    \end{align*}
%    and
%\begin{align*}
%        \sum_{v=K_1\ell\ln n}^{K_2\ell\ln n}\sum_{w=v+1}^{2v^2} \E\left[Z_{v,w}^{\ell}\right]&\leq \sum_{v=K_1\ell\ln n}^{K_2\ell\ln n} \sum_{w=v+1}^{2v^2} n^{(-1/3+6\eps+o(1))w+(1/3+o(1))\ell+1} \cdot (2^7 c^3)^{v}\\
%        &\leq \exp((1+\ell/3+o(\ell))\ln n) \cdot(2^7 c^3)^{K_1\ell \ln n}.
%    \end{align*}

By our assumption that $K_1>-\frac{1}{\ln(2^7c^3)}$ we obtain the required inequality for large enough $n$:
\[
    \E[Z_\ell] \leq \exp((1+\ell/2)\ln n) \cdot(2^7 c^3)^{K_1\ell \ln n}<\exp((1-\ell/2)\ln n)\leq n^{-1/2}, 
\]
completing the proof.
%\AC{I think that for the lower bound on $K_1$ it is enough to take $-\frac{1}{\ln (2^7c^3)}$. We just need $K_1 \ln(2^7c^3)<-1$.}
\end{proof}

\begin{lemma}\label{lem: exists an ell-cycle with more than log}
    Fix positive constants $K$, $\eps<1/18$, and $\eps'<1/12$ . Then, for any $p \geq n^{-1/3-\eps'}$ w.h.p.\footnote{Even though the statement of this lemma holds for smaller values of $p$ as well, the considered range is sufficient for our goals and restricting $p$ on it makes the proof of this lemma shorter. Therefore, we state it here in this weaker form.} for every integer $4\leq \ell=\ell(n)\leq n^{\eps}$ there exists an $\ell$-cycle $C\subseteq \mathbf{G}$ with the following property: 
    There is no $H\subseteq \mathbf{G}$ with 
    $|V(H)|\in [\ell, K\ell\ln n]$ that activates $C$ via a nice activation process in $\mathbf{G}$.
\end{lemma}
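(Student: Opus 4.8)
The plan is to prove this lemma by a second-moment/first-moment hybrid: we need to show that, with high probability, a random $\ell$-cycle admits \emph{no} cheap nice activation process, and moreover this holds \emph{simultaneously} for all $\ell$ in the range $4\le\ell\le n^\eps$ and for \emph{some} cycle in $\mathbf G$ of each length $\ell$. The natural strategy is: first count, for a \emph{fixed} labelled $\ell$-cycle $C$ on vertex set $[\ell]$, the probability that $C\subseteq\mathbf G$ \emph{and} $C$ is activated in $\mathbf G$ by a nice process using at most $K\ell\ln n$ vertices; then show this probability is bounded by $p^\ell(1-\Omega(1))$ — i.e.\ appreciably smaller than the probability $p^\ell$ that $C$ simply appears. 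Since the number of \emph{disjoint} potential $\ell$-cycles in $[n]$ is of order $n/\ell\cdot n^{o(1)}$, a union bound over disjoint cycles then shows at least one such `hard' cycle survives w.h.p.; a final union bound over $\ell\le n^\eps$ completes the argument.

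Concretely, I would first reduce to counting activation diagrams, exactly as in Lemma~\ref{lem: no ell-cycle between log and Klog}: an $\ell$-cycle $C$ on a \emph{fixed} vertex set that is activated by a nice process $\mathbf A=(H;\Delta_1,\ldots,\Delta_s)$ with $|V(H)|=v$ and $2d_H+\exc(\mathbf A)=w$ yields an activation diagram in $P_{v,w}$ whose external boundary is that fixed cycle. The key adjustment relative to Lemma~\ref{lem: no ell-cycle between log and Klog} is that now the $\ell$ boundary vertices are \emph{prescribed}, so the counting bound of Lemma~\ref{lem: low w} should be applied with those $\ell$ labels fixed; this saves roughly a factor $n^{-\ell}$ (more precisely $\ell!/n^{\underline{\ell}}$, absorbed into $n^{o(1)}$ factors) in the enumeration. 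Writing $\widetilde Z^\ell_{v,w}$ for the number of such diagrams with external boundary equal to the fixed cycle $C$ and all triangles in $X^{(2)}(\mathbf G)$, Lemma~\ref{lem: number of edges in minimal process} gives $|E(D)|\ge 3v+w-3-\ell$, but now $\ell$ of these edges are \emph{the edges of $C$ itself}, so the probability computation factors as $p^\ell$ times $p^{|E(D)|-\ell}\ge p^{3v+w-3-2\ell}$ over the remaining edges. Running the Lemma~\ref{lem: expectation of the number of activations} computation with the extra saving from fixing the boundary labels, one obtains
\[
\E\Big[\textstyle\sum_{v=\ell}^{K\ell\ln n}\sum_{w=0}^{2v^2}\widetilde Z^\ell_{v,w}\Big]\le p^\ell\cdot n^{o(\ell)}\cdot\sum_{v=\ell}^{K\ell\ln n}(2^7c^3)^{v},
\]
and since $c<2^{-7/3}$ we have $2^7c^3<1$, so the geometric sum is dominated by its first term $v=\ell$, giving a bound of the shape $p^\ell\cdot n^{o(\ell)}\cdot (2^7c^3)^\ell$. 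For $p\ge n^{-1/3-\eps'}$ with $\eps'$ small enough this is $o(p^\ell)$ once we also handle the $n^{o(\ell)}$ slack; in fact it is even $\le p^\ell/2$ for $n$ large, because the $n^{o(\ell)}$ term is subpolynomial in $n$ whereas $p^\ell\ge n^{-(1/3+\eps')\ell}$ — so one should be slightly careful and instead use that the ratio of the bound to $p^\ell$ is at most $(2^7 c^3)^\ell\cdot n^{o(\ell)}$, which tends to $0$ since $2^7c^3$ is a constant $<1$ and the $o(\ell)$ in the exponent of $n$ can be made $o(1)\cdot\ell$ uniformly; hence the ratio $\le \exp(-\Omega(\ell))\to 0$.

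Having this, I would finish as follows. Fix $\ell$. Let $\mathcal C_\ell$ be a family of $\lfloor n/\ell\rfloor$ pairwise vertex-disjoint $\ell$-cycles on $[n]$ (so the events $\{C\subseteq\mathbf G\}$ are mutually independent, and the activation diagrams of different $C$ involve disjoint vertex sets). Let $B_C$ be the event that $C\subseteq\mathbf G$ and $C$ is activated by a nice process on at most $K\ell\ln n$ vertices. Then $\Pr[B_C]\le \E[\sum_{v,w}\widetilde Z^\ell_{v,w}]\le p^\ell\cdot e^{-\Omega(\ell)}$, while $\Pr[C\subseteq\mathbf G]=p^\ell$. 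I claim $\Pr[\text{no }C\in\mathcal C_\ell\text{ has }C\subseteq\mathbf G\text{ and not }B_C]$ is tiny: conditioning on which cycles of $\mathcal C_\ell$ are present (a $\mathrm{Bin}(\lfloor n/\ell\rfloor,p^\ell)$ number, so w.h.p.\ at least $\tfrac12 (n/\ell)p^\ell\ge n^{2/3-o(1)}$ of them are present since $p^\ell\ge n^{-(1/3+\eps')\ell}$ and $\ell\le n^\eps$ — wait, this needs $\ell$ bounded; for large $\ell$ instead note $(n/\ell)p^\ell$ may be small, so one should take $\mathcal C_\ell$ of size exactly $\Theta((n/\ell)\wedge$ enough), and argue that \emph{given} $C\subseteq\mathbf G$, $\Pr[B_C\mid C\subseteq\mathbf G]=\Pr[B_C]/p^\ell\le e^{-\Omega(\ell)}\le \tfrac12$), each present cycle independently fails $B_C$ with probability $\ge\tfrac12$, so the probability all present cycles satisfy $B_C$ is at most $2^{-(\#\text{present})}$, which is $o(1)$ once $\#\text{present}\to\infty$; and $\#\text{present}\to\infty$ w.h.p.\ provided $(n/\ell)p^\ell\to\infty$, which holds for $\ell\le n^\eps$, $\eps<1/18$, $p\ge n^{-1/3-\eps'}$, $\eps'<1/12$ — indeed $(n/\ell)p^\ell\ge n^{1-\eps}n^{-(1/3+\eps')\ell}$, which is not automatically large. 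The genuine fix, and the step I expect to be the main obstacle, is to choose the cycle length-by-length more cleverly (or to allow cycles sharing few vertices, or to bound $\Pr[\bigcap_C B_C\cup\{C\not\subseteq\mathbf G\}]$ directly via independence of the disjoint diagrams) so that the final union bound over $\ell\le n^\eps$ survives; but the heart of the matter — that a fixed cycle is genuinely unlikely to be activated cheaply because $2^7c^3<1$ kills the dominant $v=\ell$ term — is exactly the computation above, inherited from Lemmas~\ref{lem: low w} and~\ref{lem: expectation of the number of activations}.
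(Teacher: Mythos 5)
Your core computation is on the right track and matches the paper in spirit: both bound the expected number of cheaply-activatable $\ell$-cycles via Lemmas~\ref{lem: low w} and~\ref{lem: expectation of the number of activations}, and both exploit $c<2^{-7/3}$ and the bound $p\geq n^{-1/3-\eps'}$. Indeed your quantity $\Pr[B_C]$ is, by vertex-symmetry, exactly $\E[X_\ell]$ divided by the number of labelled $\ell$-cycles, and your ratio $\Pr[B_C]/p^\ell$ is exactly the paper's $\E[X_\ell]/\E[Y_\ell]$.

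The genuine gap is in how you conclude existence. Your plan is to take a family $\mathcal C_\ell$ of $\lfloor n/\ell\rfloor$ pairwise vertex-disjoint $\ell$-cycles, condition on which ones are present, and argue that present cycles fail $B_C$ independently. This has two fatal problems. First, there simply are not enough disjoint cycles present: the expected number of cycles of $\mathcal C_\ell$ contained in $\mathbf G$ is at most $(n/\ell)p^\ell\le c^\ell\,n^{1-\ell/3}$, which tends to $0$ already for $\ell=4$ (and a fortiori for all $\ell\ge 4$). So with high probability none of your disjoint cycles even appear, and the argument never gets started — this is not a technicality for large $\ell$ only, as you seem to hope, but a failure across the entire range. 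Second, the parenthetical claim that ``the activation diagrams of different $C$ involve disjoint vertex sets'' is false: vertex-disjointness of the cycles $C,C'$ says nothing about the internal vertices of the graphs $H$ and the triangles $\Delta_i$, which may overlap arbitrarily, so the events $B_C$ are not conditionally independent even when restricted to present disjoint cycles.

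The paper's proof sidesteps both issues by never invoking independence. It defines $Y_\ell$ to be the total number of $\ell$-cycles in $\mathbf G$ (not a disjoint subfamily) and $X_\ell$ to be the number of those that \emph{can} be cheaply activated. A routine second-moment computation gives $Y_\ell\geq \E[Y_\ell]/2$ w.h.p.\ simultaneously for all $\ell\le n^\eps$, where $\E[Y_\ell]=(1+o(1))(np)^\ell/(2\ell)=\omega(1)$. Meanwhile $\E[X_\ell]\leq n^{1+(1/3+o(1))\ell}$ by exactly the diagram-counting you already carry out, and a short calculation shows $\E[X_\ell]\le(2+o(1))\,\ell\, n^{-\delta\ell}\,\E[Y_\ell]$ for $\delta=\frac{1}{2}(\frac1{12}-\eps')>0$, using $\ell\ge 4$. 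Markov's inequality then gives $X_\ell<\E[Y_\ell]/4\le Y_\ell/2$ w.h.p.\ (with enough room to spare for the union bound over $\ell$), so some $\ell$-cycle in $\mathbf G$ is hard to activate. This Markov-vs.-second-moment comparison is the idea your argument is missing; it makes the heavy overlap among the $\ell$-cycles of $\mathbf G$ a non-issue, whereas your disjointness requirement turns it into an insurmountable one.
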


\begin{proof}
Let $X_{\ell}$ denote the random variable counting the number of $\ell$-cycles $C\subseteq \mathbf{G}$ that \textbf{do not} satisfy the assertion of the lemma. Formally, $X_{\ell}$ counts the number of $\ell$-cycles $C\subseteq \mathbf{G}$ such that:
\[
	\exists H \subseteq \mathbf{G} \text{ with } |V(H)|\in [\ell, K\ell\ln n] \text{ and satisfying }  H\stackrel{\mathbf{G}}{\longrightarrow} C \text{ via a nice activation process}.
\]
Also, set $Y_{\ell}$ to be the random variable counting the number of $\ell$-cycles in $\mathbf{G}$. Recalling that $\ell\leq n^{\varepsilon}<n^{1/18}$ and $p\geq n^{-1/3-\varepsilon'}$, we get
\begin{equation}\label{eq:Y_ell_expectation}
\E[Y_{\ell}]=\frac{1}{2\ell}\cdot\frac{n!}{(n-\ell)!}\cdot p^{\ell}= \frac{1+o(1)}{2\ell}(np)^{\ell}\geq\frac{1+o(1)}{2\ell}(n^{2/3-\varepsilon'})^{\ell}=\omega(1).
\end{equation}
%\AC{The above is true as $\ell < n^{1/2+\eps}$, should it be mentioned?}
It is a standard exercise to check that $\mathrm{Var}(Y_{\ell})=O\left(\ell^3(\mathbb{E}[Y_{\ell}])^2/(n^2p)\right)$. Indeed, fix any $\ell$-cycle $C$ on an $\ell$-subset of $[n]$. Then, as $\ell^2=o(np)$ for all $\ell$ and $p$ in the considered ranges,
\begin{align*}
 \mathbb{E}[Y_{\ell}^2] &=\mathbb{E}[Y_{\ell}]\cdot\mathbb{E}[Y_{\ell}\mid C\subseteq \mathbf{G}]\leq\mathbb{E}[Y_{\ell}]\left(1+\sum_{i=1}^{\ell-2}{\ell\choose i}^2 n^{\ell-i-1}p^{\ell-i}\right)\\
 &\leq
 \mathbb{E}[Y_{\ell}]\left(1+\frac{1}{n}\sum_{i=1}^{\ell-2}\left(\frac{e^2\ell^2}{i^2 np}\right)^{i} (np)^{\ell}\right)=
 \mathbb{E}[Y_{\ell}]\left(1+O\left(\frac{\ell^2}{n^2p}(np)^{\ell}\right)\right)\\
 &=\mathbb{E}[Y_{\ell}]\left(1+O\left(\frac{\ell^3}{n^2p}\mathbb{E}[Y_{\ell}]\right)\right).
\end{align*}
%\AC{The second equality clearly holds provided $\ell^2/np = o(1)$, is it always true for some reason? If not, should it be mentioned that we indeed have $\ell^2 < np$ as $p>n^{-1/3-\eps'}$ and as $\eps'<1/12-2\eps$?}
Therefore, by the union bound and Chebyshev's inequality, \whp\ $Y_{\ell}\geq\mathbb{E}[Y_{\ell}]/2$, for every $\ell\leq n^{\eps}$.

In order to conclude the proof of the lemma, it is enough to show that w.h.p., for every $\ell\leq n^{\eps}$, 
\[
	X_{\ell}\leq \E[Y_{\ell}]/4.
\]
%It is well known that as long as $np/\ln n =\omega(1)$ w.h.p.\ $Y\geq \E[Y]/2$ (See \cite{} \AC{A random graph book? Actually, do we need $p$ to be a bit denser for $\ell$ growing? Nevertheless, we have a polynomial separation, which must work.}).
%Thus, to conclude the proof of the lemma, it is only left to show that w.h.p.\ $X\leq \E[Y]/4$. 
 In what follows we prove that for some small constant $\delta>0$
 $$
 \E[X_{\ell}] \leq  n^{(2/3-\eps'- \delta)\ell}\stackrel{\eqref{eq:Y_ell_expectation}}\leq (2+o(1))\ell n^{-\delta\ell}\cdot \E[Y_{\ell}].
 $$
% \AC{Here we use the assumption that $p>n^{-1/3-\eps'}$.}
 Then, Markov's inequality and the union bound over $\ell$ yield the desired assertion.
% \[ 
%    \mathbb{P}(X\geq \E[Y]/4)\leq \frac{4\E[X]}{\E[Y]}=o(1).
% \]

The rest of the proof resembles the proof of Lemma \ref{lem: no ell-cycle between log and Klog}. 
Recall that an activation process $\mathbf{A}=(H;\Delta_1,\ldots ,\Delta_s)$ of an $\ell$-cycle defines an activation diagram.
    If $\mathbf{A}$ is nice, $|V(K(\mathbf{A}))|=v$, and $2d_H+\exc({\mathbf{A}})=w$, then the activation diagram belongs to $P_{v,w}$.
    Thus, 
$$
\E[X_{\ell}]\leq \sum_{v= \ell}^{K\ell\ln n}\sum_{w=0}^{2v^2} \E[Z_{v,w}^{\ell}].%=\sum_{v= \ell}^{K\ell\ln n}\sum_{w=0}^{v} \E[Z_{v,w}^{\ell}]+\sum_{v= \ell}^{K\ell\ln n}\sum_{w=v+1}^{2v^2} \E[Z_{v,w}^{\ell}].
$$
%\AC{The above $X$ needs to be $X_\ell$?}
By applying Lemma \ref{lem: expectation of the number of activations} in conjunction with our assumptions that $\eps < 1/18$, $c<2^{-7/3}$, $\ell\leq n^{\eps}$, and that $v\leq K\ell \ln n$, we obtain the following inequality:
%\begin{align*}
 %        \sum_{v= \ell}^{K\ell\ln n}\sum_{w=0}^{v} \E[Z_{v,w}^{\ell}]&\leq \sum_{v= \ell}^{K\ell\ln n}\sum_{w=0}^{v} n^{(-1/3+6\eps+o(1))w+(1/3+\eps+o(1))\ell+1}\leq n^{1+(1/3+\eps+o(1))\ell}
%\end{align*}
%and 
%\begin{align*}
%         \sum_{v= \ell}^{K\ell\ln n}\sum_{w=v+1}^{2v^2} \E[Z_{v,w}^{\ell}]&\leq \sum_{v= \ell}^{K\ell\ln n}\sum_{w=v+1}^{2v^2}n^{(-1/3+6\eps+o(1))w+(1/3+2\eps+o(1))\ell+1}\leq n^{1+(8\eps+o(1))\ell}.
%\end{align*}
%Set $\delta \coloneqq \frac{1}{2}\left(\frac{1}{12}-\eps-\eps'\right)>0$. Then, for $\ell\geq 4$, we have $1+\ell(1/3+\varepsilon)<\ell(2/3-\eps'-\delta)$, implying
%\[
%     \E[X_{\ell}] \leq n^{1+(1/3+\eps+o(1))\ell}\leq n^{(2/3-\eps'-\delta)\ell},
%\]
%and completing the proof.
\begin{align*}
         \sum_{v= \ell}^{K\ell\ln n}\sum_{w=0}^{2v^2} \E[Z_{v,w}^{\ell}]&\leq \sum_{v= \ell}^{K\ell\ln n}\sum_{w=0}^{v} n^{(-1/3+6\eps+o(1))w+(1/3+o(1))\ell+1}\leq n^{1+(1/3+o(1))\ell}.
\end{align*}
Set $\delta \coloneqq \frac{1}{2}\left(\frac{1}{12}-\eps'\right)>0$. Then, for $\ell\geq 4$, we have $1+\ell/3<\ell(2/3-\eps'-\delta)$, implying
\[
     \E[X_{\ell}] \leq n^{1+(1/3+o(1))\ell}\leq n^{(2/3-\eps'-\delta)\ell},
\]
and completing the proof.

%\begin{align*}
%         \sum_{v= \ell}^{K\ell\ln n}\sum_{w=0}^{v} \E[Z_{v,w}^{\ell}]&\leq \sum_{v= \ell}^{K\ell\ln n}\sum_{w=0}^{v} n^{(-1/3+6\eps+o(1))w+(1/3+o(1))\ell+1}\leq n^{1+(1/3+o(1))\ell}
%\end{align*}
%and 
%\begin{align*}
%         \sum_{v= \ell}^{K\ell\ln n}\sum_{w=v+1}^{2v^2} \E[Z_{v,w}^{\ell}]&\leq \sum_{v= \ell}^{K\ell\ln n}\sum_{w=v+1}^{2v^2}n^{(-1/3+6\eps+o(1))w+(1/3+o(1))\ell+1}\leq n^{1+(6\eps+o(1))\ell}.
%\end{align*}
%Set $\delta \coloneqq \frac{1}{2}\left(\frac{1}{12}-\eps'\right)>0$. Then, for $\ell\geq 4$, we have $1+\ell/3<\ell(2/3-\eps'-\delta)$, implying
%\[
%     \E[X_{\ell}] \leq n^{1+(1/3+o(1))\ell}\leq n^{(2/3-\eps'-\delta)\ell},
%\]
%and completing the proof.
\end{proof}

\subsection{Subprocesses}\label{subsection:3}

In this subsection, we introduce the notion of a `subprocess', and define the union of two subprocesses. Then, we connect these notions to the notion of activation complex from Subsection \ref{subsection:1} and study the contractibility of cycles in activation complexes of subprocesses and their unions.
We begin with the definition of a subprocess.

\begin{definition}
    Let $H$ be a graph, and let $\mathbf{A}=(H;\Delta_1,\ldots ,\Delta_s)$ be an activation process.
    For every graph $H'\subseteq K^{(1)}(\mathbf{A})$ and an activation process $\mathbf{B}=(H';\Delta'_1,\Delta'_2,\ldots, \Delta'_t)$, we say that $\mathbf{B}$ is a \emph{subprocess of $\mathbf{A}$} and write $\mathbf{B}\subseteq \mathbf{A}$ if the following holds:
\begin{enumerate}
	\item The sequence $(\Delta_k')_{k=1}^{t}$ is a subsequence of $(\Delta_k)_{k=1}^{s}$.
	\item For every $k$ the triangle $\Delta_k'$ activates\footnote{Recall that $\Delta_k$ activates an edge $e$ in the process $\mathbf{A}=(H;\Delta_1,\ldots ,\Delta_s)$ if $e\in \Delta_k\setminus (H\cup_{i=1}^{k-1}\Delta_i)$. } the same edge in both $\mathbf{A}$ and $\mathbf{B}$.

\end{enumerate}
\end{definition}

%In general, there is no natural way to define the union of two activation processes \MZ{Is it a rigorous statement?}\AC{No. Better to remove?}. 

A key advantage of the above definition is that it allows to combine processes, when they are subprocesses of the same `ground' process.
Indeed, given an activation process $\mathbf{A}=(H;\Delta_1,\dots,\Delta_s)$ and subprocesses $\mathbf{A}_1=(H_1;\Delta_1^{(1)},\dots,\Delta_{s_1}^{(1)})$ and  $\mathbf{A}_2=(H_2;\Delta_1^{(2)},\dots,\Delta_{s_2}^{(2)})$, we define $\mathbf{A}_1\cup\mathbf{A}_2$ as follows.
Let $(\Delta_{i_j})_{j=1}^{t}$ be the subsequence of $(\Delta_{i})_{i=1}^{s}$ satisfying the following,
%\AC{I don't like the multiple uses of $i$, both as an index and as a sequence, how about changing it to $r$ when it is a sequence?} \MZ{I think that all is fine now --- you can simultaneously have indices $i$ and $i_1,i_2,...$, as in our case. There is no ambiguity here, adding another letter would make it harder to read}
\begin{equation}\label{eq: union of activations}
	\{\Delta_{i_j}:j=1,\ldots,t\}=\{\Delta_i^{(1)}:i=1,\ldots ,s_1\}\cup \{\Delta_i^{(2)}:i=1,\ldots ,s_2\}.
\end{equation}

For every $k\in[t]$, we denote by $e_k$ the edge activated by $\Delta_{i_k}$ in $\mathbf{A}$. In a similar way, for every $i\in[s_1]$ and $j\in[s_2]$, we denote by $e^{(1)}_i$ and $e^{(2)}_j$ the edges activated by $\Delta^{(1)}_i$ in $\mathbf{A}_1$ and $\Delta^{(2)}_j$ in $\mathbf{A}_2$, respectively. 
Sequentially, for $k=0,1,\ldots,t$, we build graphs $H^{(k)}\subseteq H_1\cup H_2$ such that $(H^{(k)};\Delta_{i_1},\ldots , \Delta_{i_k})$ is an activation process. %\MZ{Is it better now?}
%\AC{How about: Inductively, we build a sequence of graphs $H^{(k)}\subseteq H_1\cup H_2$, $k\in\{0,1,\ldots,t\}$, such that $(H^{(k)};\Delta_{i_1},\ldots , \Delta_{i_k})$ is an activation process.}
 We initially set $H^{(0)}=\emptyset$. 
Suppose $0\leq k<t$ and suppose that $H^{(k)}\subseteq H_1\cup H_2$ has been already defined such that $(H^{(k)};\Delta_{i_1},\ldots , \Delta_{i_k})$ is an activation process.
%\AC{How about: Suppose $0\leq k<t$ and suppose that $H^{(k)}\subseteq H_1\cup H_2$ has been already defined such that $(H^{(k)};\Delta_{i_1},\ldots , \Delta_{i_k})$ is an activation process.}
 If $(H^{(k)};\Delta_{i_1},\ldots,\Delta_{i_k},\Delta_{i_{k+1}})$ is an activation process as well, define $H^{(k+1)}\coloneqq H^{(k)}$. Clearly, then $(H^{k+1};\Delta_{i_1},\ldots , \Delta_{i_{k+1}})$ is an activation process.
Otherwise, define % \AC{In what follows how about writing $\Delta_{i_{k+1}}\setminus (e_1\cup\ldots\cup e_{k+1})$ instead of $\Delta_{i_{k+1}}\setminus (e_1\cup\ldots\cup e_k\cup e_{k+1})$?}
$$
H^{(k+1)}\coloneqq H^{(k)}\cup (\Delta_{i_{k+1}}\setminus (e_1\cup\ldots\cup e_k\cup e_{k+1})),
$$
which implies that $(H^{(k+1)};\Delta_{i_1},\ldots , \Delta_{i_{k+1}})$ is an activation process. It remains to show that $\Delta_{i_{k+1}}\setminus (e_1\cup\ldots\cup e_{k+1})\subseteq H_{1}\cup H_{2}$. 
Indeed, $\Delta_{i_{k+1}}$ appears in $\mathbf{A}_1$ or in $\mathbf{A}_2$. Without loss of generality assume it appears in $\mathbf{A}_1$, that is $\Delta_{i_{k+1}}=\Delta^{(1)}_{k'}$ for some $k'\in[s_1]$. 
Clearly, $(H_1;\Delta_1^{(1)},\ldots ,\Delta_{k'}^{(1)})$ is a subprocess of $(H^{(k+1)};\Delta_{i_1},\ldots,\Delta_{i_{k+1}})$. In particular, $\Delta^{(1)}_{k'}\setminus (e^{(1)}_1\cup\ldots\cup e^{(1)}_{k'})\subseteq H_{1}$. Therefore,
$$
 \Delta_{i_{k+1}}\setminus (e_1\cup\ldots\cup e_{k+1})=\Delta_{k'}^{(1)}\setminus (e_1\cup\ldots\cup e_{k+1})\subseteq\Delta_{k'}^{(1)}\setminus (e^{(1)}_1\cup\ldots\cup e^{(1)}_{k'})\subseteq H_{1}\subseteq H_{1}\cup H_{2},
$$
as required.

%Moreover, as both $(\Delta_{i_j})_{j=1}^{t}$ and $(\Delta_{i}^{(1)})_{i=1}^{s_{1}}$ are subsequences of $(\Delta_{i})_{i=1}^{s}$, every element in $(\Delta_{i}^{(1)})_{i=1}^{\ell-1}$ appears in the sequence $(\Delta_{i_j})_{j=1}^{k}$.  
%In particuler, by above and the assumption on $H^{k}$ we find that $(H^{k}\cup H_1;\Delta_{i_1},\ldots , \Delta_{i_{k}},\Delta_{i_{k+1}}=\Delta_{\ell}^{(1)})$ is an activation process. This implies that $\Delta_{i_{k+1}}\setminus e_{k+1}=\Delta_{\ell}^{(1)}\setminus e_{k+1}\subseteq H_1$ as required.

Finally, we define $\mathbf{A}_1\cup \mathbf{A}_2 \coloneqq (H_t; \Delta_{i_1},\ldots , \Delta_{i_{t}})$, which is clearly a subprocess of $\mathbf{A}$. Equality \eqref{eq: union of activations} implies the following trivial, yet important observation.

\begin{observation}\label{obs: union of complexes}
    For subprocesses $\mathbf{A}_1,\mathbf{A}_2\subseteq \mathbf{A}$ we have $K(\mathbf{A}_1\cup \mathbf{A}_2)=K(\mathbf{A}_1)\cup K(\mathbf{A}_2)$.
\end{observation}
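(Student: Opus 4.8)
The plan is to check that the two simplicial complexes coincide on the $2$-dimensional faces and on the $1$-skeleton, separately, using nothing more than the construction of $\mathbf{A}_1\cup\mathbf{A}_2$ and Equality~\eqref{eq: union of activations}.

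First I would handle the $2$-dimensional faces, which is immediate: by the definition of the activation complex, the $2$-faces of $K(\mathbf{A}_1\cup\mathbf{A}_2)$ are $\{\Delta_{i_1},\ldots,\Delta_{i_t}\}$, while the $2$-faces of $K(\mathbf{A}_1)\cup K(\mathbf{A}_2)$ are $\{\Delta_i^{(1)}:i\in[s_1]\}\cup\{\Delta_i^{(2)}:i\in[s_2]\}$; these sets are equal by~\eqref{eq: union of activations}. Consequently every vertex and every edge lying in a triangle of one complex also lies in the other, so it remains only to compare the edges and vertices of the starting graphs.

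For the $1$-skeleton, one inclusion is easy: $K^{(1)}(\mathbf{A}_1\cup\mathbf{A}_2)=H_t\cup\Delta_{i_1}\cup\cdots\cup\Delta_{i_t}$, and by construction $H_t$ — like every $H^{(k)}$ — is a subgraph of $H_1\cup H_2$, so $K^{(1)}(\mathbf{A}_1\cup\mathbf{A}_2)\subseteq H_1\cup H_2\cup\Delta_{i_1}\cup\cdots\cup\Delta_{i_t}=K^{(1)}(\mathbf{A}_1)\cup K^{(1)}(\mathbf{A}_2)$, which together with the previous paragraph gives $K(\mathbf{A}_1\cup\mathbf{A}_2)\subseteq K(\mathbf{A}_1)\cup K(\mathbf{A}_2)$. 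For the reverse inclusion I would argue that, since $K^{(1)}(\mathbf{A}_j)=H_j\cup\bigcup_{i\in[s_j]}\Delta_i^{(j)}$ for $j=1,2$ and every triangle $\Delta_i^{(j)}$ already appears among $\Delta_{i_1},\ldots,\Delta_{i_t}$, it suffices to show that each edge of $H_1$ and of $H_2$ lies in $\Delta_{i_1}\cup\cdots\cup\Delta_{i_t}$.

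The only non-formal part is precisely this last point: in principle the initial graph of an activation process could carry an edge contained in none of its triangles, and then such an edge of $H_1$ would lie in $K^{(1)}(\mathbf{A}_1)$ but not in $K^{(1)}(\mathbf{A}_1\cup\mathbf{A}_2)$. In the setting where the observation is actually used, however — the nice subprocesses of Lemma~\ref{lemma: activations contain good activations} — this cannot occur, and I would spell it out: for a nice process $\mathbf{A}_j$, any $e\in E(H_j)$ lies in both $E(H_j)$ and $E(K^{(1)}(\mathbf{A}_j))$, hence $e\notin E(K^{(1)}(\mathbf{A}_j))\triangle E(H_j)$, so the definition of the excess forces $\con_{\mathbf{A}_j}(e)=\exc_{\mathbf{A}_j}(e)+1\geq1$; thus $e$ is contained in some $\Delta_i^{(j)}$, hence in $\Delta_{i_1}\cup\cdots\cup\Delta_{i_t}$. (Equivalently, one may simply take all starting graphs in this section to be spanned by their triangles, losing no generality.) Combining this with the easy inclusion and the matching of $2$-faces proves $K(\mathbf{A}_1\cup\mathbf{A}_2)=K(\mathbf{A}_1)\cup K(\mathbf{A}_2)$. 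I expect this single point about stray edges of the starting graphs to be the only real content; everything else is immediate from the definitions and from~\eqref{eq: union of activations}.
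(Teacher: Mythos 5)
Your proposal is correct and is, in fact, more careful than the paper: the paper presents this as a ``trivial'' consequence of Equality~\eqref{eq: union of activations} with no argument, whereas you fill in the reasoning and correctly pinpoint the one place where the stated equality is not literally automatic. As you observe, the $2$-faces and the containment $K(\mathbf{A}_1\cup\mathbf{A}_2)\subseteq K(\mathbf{A}_1)\cup K(\mathbf{A}_2)$ are immediate, and the only potential failure of the reverse containment is an edge of $H_1$ or $H_2$ lying in no triangle of the combined process (since the constructed initial graph $H^{(t)}$ consists only of edges of triangles $\Delta_{i_j}$ by its inductive definition from $H^{(0)}=\emptyset$). Your resolution via niceness is sound.

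One minor inaccuracy in your bookkeeping: the parameter $G'$ in the definition of $\exc_{\mathbf{A}_j}$ is the activated \emph{target} graph of the subprocess (in the applications, the cycle $C_j$), not $K^{(1)}(\mathbf{A}_j)$. Consequently, for $e\in E(H_j)\setminus E(C_j)$ the correct relation is $\con_{\mathbf{A}_j}(e)=\exc_{\mathbf{A}_j}(e)+2$ rather than $\exc_{\mathbf{A}_j}(e)+1$; for $e\in E(H_j)\cap E(C_j)$ one has $\con_{\mathbf{A}_j}(e)=\exc_{\mathbf{A}_j}(e)+1$. Either way, $\exc_{\mathbf{A}_j}(e)\geq 0$ still forces $\con_{\mathbf{A}_j}(e)\geq 1$, so every edge of $H_j$ lies in a triangle and your conclusion stands unchanged. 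It is also worth adding the remark that even when $\mathbf{A}_1\cup\mathbf{A}_2$ is not itself nice, its initial graph $H^{(t)}$ is by construction spanned by the triangles $\Delta_{i_1},\ldots,\Delta_{i_t}$, so the observation applies to iterated unions such as $(\mathbf{A}_1\cup\mathbf{A}_2)\cup\mathbf{A}_3$ in the proof of Proposition~\ref{prop: local to global} without further checks.
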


%Before we continue, let us remark that the notation $\mathbf{A_1}\cup \mathbf{A_2}$ is `justified' as we have $\mathbf{A_1},\mathbf{A_2}\subseteq \mathbf{A_1}\cup \mathbf{A_2}$. \AC{Actually, I think that what we actually do is define a category of all subprocesses of $\mathbf{A}$, and the union we defined above is actually the direct sum in this category. I'm not sure that it is relevant though... }
From this observation, it follows that $\mathbf{A}_1$ and $\mathbf{A}_2$ are subprocesses of $\mathbf{A}_1\cup\mathbf{A}_2$.
In the next claim, we show a relation between contractibility of cycles and the union operator, that we have just defined.

\begin{lemma}\label{lm:union_contractible}
Suppose that $\mathbf{A}$ is an activation process and that $\mathbf{A}_1,\mathbf{A}_2\subseteq \mathbf{A}$ are subprocesses activating cycles $C_1$ and $C_2$, respectively. Moreover, suppose that $C_1\cap C_2$ is a path, and that $C_1,C_2$ are contractible in $K(\mathbf{A_1}),K(\mathbf{A_2})$, respectively. Then, the cycle $C_1\triangle C_2$ is contractible in $K(\mathbf{A}_1\cup \mathbf{A}_2)$.
\end{lemma}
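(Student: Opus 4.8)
The plan is to build a simplicial filling of $C_1\triangle C_2$ in $K(\mathbf{A}_1\cup\mathbf{A}_2)$ by gluing a simplicial filling of $C_1$ to a simplicial filling of $C_2$ along the common path $P:=C_1\cap C_2$. First I would record the combinatorial shape of $C_1\triangle C_2$. Since $P$ is a path it is a proper subgraph of each cycle, so $C_i=P\cup Q_i$, where $Q_i$ is the complementary subpath of $C_i$ joining the two endpoints $a\neq b$ of $P$ and internally disjoint from $P$. As $E(Q_1)\cap E(Q_2)\subseteq E(C_1)\cap E(C_2)=E(P)$ while $Q_1$ and $Q_2$ are edge-disjoint from $P$, we get $E(C_1)\triangle E(C_2)=E(Q_1)\sqcup E(Q_2)$; moreover any vertex lying on both $Q_1$ and $Q_2$ would lie in $V(C_1)\cap V(C_2)=V(P)$ and hence be $a$ or $b$. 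Therefore $C_1\triangle C_2=Q_1\cup Q_2$ is a cycle, whose only vertices shared by $Q_1$ and $Q_2$ are $a$ and $b$.

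Next I would unpack the hypothesis via Definition~\ref{def:contractible_cycle}: contractibility of $C_i$ in $K(\mathbf{A}_i)$ furnishes a triangulated disc $D_i$ and a simplicial map $\varphi_i\colon V(D_i)\to V(K(\mathbf{A}_i))$ such that $\varphi_i|_{\partial D_i}$ is an isomorphism from $\partial D_i$ onto $C_i$; in particular each $D_i$ is pure. Write $\widetilde P_i:=\varphi_i^{-1}(P)$ and $\widetilde Q_i:=\varphi_i^{-1}(Q_i)$, subpaths of the boundary cycle $\partial D_i=\widetilde P_i\cup\widetilde Q_i$ that $\varphi_i$ maps isomorphically onto $P$ and $Q_i$. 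Form $D:=(D_1\sqcup D_2)/{\sim}$, where $\sim$ identifies $\widetilde P_1$ with $\widetilde P_2$ compatibly with the maps $\varphi_1,\varphi_2$, i.e.\ a vertex $x\in\widetilde P_1$ is glued to the unique $y\in\widetilde P_2$ with $\varphi_1(x)=\varphi_2(y)$ (and the edges of $\widetilde P_1,\widetilde P_2$ correspondingly). Because $\sim$ merges simplices only inside $\widetilde P_1$ and $\widetilde P_2$, the quotient $D$ is again a simplicial complex, with no spurious identifications; placing a plane drawing of $D_2$ on the outer side of a plane drawing of $D_1$, matched along the identified boundary arc, realises $D$ as a plane $2$-complex all of whose bounded faces are triangles, hence as a (pure) triangulated disc. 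Each edge of $\widetilde P$, which bordered exactly one triangle in its original disc, now borders one triangle from $D_1$ and one from $D_2$ and so becomes interior; consequently $\partial D=\widetilde Q_1\cup\widetilde Q_2$, joined only at the identified copies of $a$ and $b$.

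Finally I would check the map. Since $\varphi_1|_{\widetilde P_1}$ and $\varphi_2|_{\widetilde P_2}$ are isomorphisms onto $P$ and $\sim$ was defined precisely so that they agree after the identification, $\varphi_1$ and $\varphi_2$ descend to a well-defined map $\varphi\colon V(D)\to V(K(\mathbf{A}_1))\cup V(K(\mathbf{A}_2))$, which equals $V(K(\mathbf{A}_1\cup\mathbf{A}_2))$ by Observation~\ref{obs: union of complexes}. The map $\varphi$ is simplicial because every face of $D$ is the image of a face of $D_1$ or of $D_2$ and $K(\mathbf{A}_i)\subseteq K(\mathbf{A}_1\cup\mathbf{A}_2)$. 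Restricted to $\partial D$, the map $\varphi$ agrees with $\varphi_1$ on $\widetilde Q_1$ and with $\varphi_2$ on $\widetilde Q_2$, so, using that $Q_1$ and $Q_2$ share only $a$ and $b$, it is a bijection from $V(\partial D)$ onto $V(Q_1)\cup V(Q_2)=V(C_1\triangle C_2)$ carrying edges bijectively onto edges; that is, $\varphi|_{\partial D}$ is an isomorphism $\partial D\to C_1\triangle C_2$. Hence $D$ is a simplicial filling of $C_1\triangle C_2$ in $K(\mathbf{A}_1\cup\mathbf{A}_2)$, which is exactly the claimed contractibility.

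I expect the only genuinely delicate point to be the gluing bookkeeping in the second paragraph: verifying that identifying $D_1$ and $D_2$ along the common boundary path creates no spurious identifications, keeps the complex planar with all bounded faces triangular, turns every edge of the glued path into an interior edge, and leaves $\partial D$ equal to the \emph{cycle} $C_1\triangle C_2$ rather than a longer closed walk. The remaining steps are a direct unwinding of Definition~\ref{def:contractible_cycle} and Observation~\ref{obs: union of complexes}.
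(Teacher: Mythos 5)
Your proposal is correct and follows the same route as the paper: take simplicial fillings $D_1,D_2$ of $C_1,C_2$, glue them along the preimages of the common path $P=C_1\cap C_2$, and observe via Observation~\ref{obs: union of complexes} that the result is a filling of $C_1\triangle C_2$ in $K(\mathbf{A}_1\cup\mathbf{A}_2)$. The paper's proof is a three-sentence version of this; you have merely spelled out the bookkeeping (that $C_1\triangle C_2$ is a cycle meeting each $Q_i$ only at the endpoints of $P$, that the gluing produces a genuine triangulated disc with the edges of $\widetilde P$ now interior, and that the two simplicial maps descend to one) that the paper leaves implicit.
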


\begin{proof}
By Definition~\ref{def:contractible_cycle}, there are simplicial fillings $D_1,D_2$ of $C_1,C_2$ in $K(\mathbf{A}_1),K(\mathbf{A}_2)$, respectively. By gluing the path $C_1\cap C_2$ on the boundary of $D_1$ to its copy on the boundary of $D_2$, we obtain a filling of $C_1\triangle C_2$ in $K(\mathbf{A}_1)\cup K(\mathbf{A}_2)$. By Observation \ref{obs: union of complexes} this is also a filling in $K(\mathbf{A}_1\cup\mathbf{A}_2)$, as required.
\end{proof}

We now recall that in Remark~\ref{rk:nice-minimal} we observed that nice activation processes are in a sense minimal. In the next lemma we make this intuition rigorous.

\begin{lemma}\label{lemma: activations contain good activations}
Suppose $\mathbf{A}$ is an activation process of a cycle $C$ such that $C$ is contractible in $K(\mathbf{A})$. 
Then, there is a \emph{nice} subprocess $\mathbf{A^*}\subseteq \mathbf{A}$ activating $C$, starting from a connected graph, and such that $C$ is contractible in $K(\mathbf{A^*})$.
\end{lemma}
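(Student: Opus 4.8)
The plan is to iteratively apply the reduction described in Remark~\ref{rk:nice-minimal} to the given process $\mathbf{A}$, while maintaining two invariants: that the (modified) process is still a subprocess of $\mathbf{A}$ activating $C$, and that $C$ remains contractible in the associated activation complex. Concretely, suppose $\mathbf{B}=(H';\Delta'_1,\ldots,\Delta'_t)\subseteq\mathbf{A}$ is a subprocess activating $C$ with $C$ contractible in $K(\mathbf{B})$, but $\mathbf{B}$ is not nice. Let $e$ be the first edge in $\mathbf{B}$ (in the order in which edges are activated, with edges of $H'$ coming first) that has negative excess. I would split into the three cases from the remark. If $e\in E(C)$, then $e\in E(H')$ and $\con_\mathbf{B}(e)=0$, so $e$ is an edge of $K(\mathbf{B})^{(1)}$ on the cycle $C$ that lies in no $2$-face of $K(\mathbf{B})$ — this contradicts contractibility of $C$ in $K(\mathbf{B})$ (a simplicial filling would force every edge of $C$ into some triangle). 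If $e\notin E(H')$ and $e\notin E(C)$, then $\con_\mathbf{B}(e)=0$, meaning $e$ is activated but never used; I simply delete the triangle activating $e$ from the sequence, obtaining a shorter subprocess $\mathbf{B}'\subseteq\mathbf{A}$ that still activates $C$. If $e\in E(H')\setminus E(C)$, then $\con_\mathbf{B}(e)\le 1$; when $\con_\mathbf{B}(e)=0$ I delete $e$ from the starting graph, and when $\con_\mathbf{B}(e)=1$ I replace $e$ in the starting graph by the unique edge $e'$ it activates and delete that activation step — the remaining steps still form a valid (sub)process because $e'$ is now present from the start.

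The main point to verify — and the step I expect to require the most care — is that each of these local moves preserves contractibility of $C$ in the activation complex. For the deletion moves (cases where $\con_\mathbf{B}(e)=0$), the deleted triangle carries the unique activation of $e$, so in $K(\mathbf{B}')$ the edge $e$ disappears entirely together with that face; since $C$ is contractible in $K(\mathbf{B})$ via a filling $D$, and since the edge $e$ — lying in no $2$-face of $K(\mathbf{B})$ other than possibly the one we delete — cannot be an edge of the filling $D$ (any edge of $D$ maps to an edge lying in a triangle of $K(\mathbf{B})$), the filling $D$ is still a valid simplicial filling inside $K(\mathbf{B}')$. For the swap move ($e\in E(H')$, $\con_\mathbf{B}(e)=1$): here $e$ lies in exactly one triangle $\Delta=e\cup\{z\}$ of $K(\mathbf{B})$, which activates $e'\in\{e\cup\{z\}\}$. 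Replacing $e$ by $e'$ in $H'$ removes both $e$ and the face $\Delta$ from the complex; I would argue that in any simplicial filling $D$ of $C$, every occurrence of the edge $e$ appears as a side of a copy of $\Delta$ (because $\Delta$ is the only face containing $e$), and one can collapse each such copy of $\Delta$ — a free face collapse in the disc $D$ — obtaining a filling of $C$ in $K(\mathbf{B}')=K(\mathbf{B})\setminus\{e,\Delta\}$. This is precisely the subtlety the footnote in Remark~\ref{rk:nice-minimal} alludes to: the collapses are of faces with a genuinely free edge, so they do not disconnect the filling and they leave $C$ (which lies in $E(H')\setminus\{e\}$ after the move, so is untouched) on the boundary.

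Finally I would observe that each move strictly decreases the quantity $|E(K(\mathbf{B}))|$ (or, to be safe, the pair $(|E(K(\mathbf{B}))|, \#\{\text{steps}\})$ in lexicographic order), so the process terminates after finitely many steps at a subprocess $\mathbf{A}^*\subseteq\mathbf{A}$ in which every edge has non-negative excess, i.e.\ a nice process, still activating $C$ with $C$ contractible in $K(\mathbf{A}^*)$. It remains to record that $\mathbf{A}^*$ starts from a connected graph: by Observation~\ref{obs:nice_connected}, a nice process whose activated graph $C$ is connected must start from a connected graph, so this is automatic once niceness is achieved. This completes the proof. $\blacksquare$
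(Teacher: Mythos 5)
Your overall strategy is the same as the paper's: iterate the local reductions of Remark~\ref{rk:nice-minimal} on the first edge of negative excess, keep track of contractibility of $C$, and terminate by a finite-size argument. The case analysis matches the paper's, and your decision to defer connectivity to a single application of Observation~\ref{obs:nice_connected} at the end (rather than maintaining it as an invariant at every step, as the paper does) is a legitimate simplification, since none of the moves require the starting graph to be connected in order to be well-defined.

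However, there is a genuine gap in your treatment of the case $e\notin E(H')\cup E(C)$ with $\con_{\mathbf{B}}(e)=0$ (the paper's Case 1, where you delete the activating triangle $\Delta_i$). You assert that ``the filling $D$ is still a valid simplicial filling inside $K(\mathbf{B}')$'' on the grounds that $e$ ``cannot be an edge of the filling $D$.'' This is false: in this case $e$ \emph{does} lie in one $2$-face of $K(\mathbf{B})$, namely $\Delta_i$, so an arbitrary filling $D$ of $C$ may perfectly well contain internal edges mapping to $e$ — each such edge is then sandwiched between two $2$-simplices of $D$, both mapping to $\Delta_i$. Your supporting claim (``any edge of $D$ maps to an edge lying in a triangle of $K(\mathbf{B})$'') does not exclude $e$, since $e$ lies in $\Delta_i\in K(\mathbf{B})$. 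The correct move here is exactly the free-face-collapse argument you already use for the swap case: repeatedly delete each preimage $\{u,v\}$ of $e$ together with its two adjacent faces $\{u,v,z_1\},\{u,v,z_2\}$ and identify $z_1=z_2$, producing a filling of $C$ that avoids $e$ and $\Delta_i$ and hence lives in $K(\mathbf{B}')$. (By contrast, your reasoning is correct in the other $\con_{\mathbf{B}}(e)=0$ subcase, $e\in E(H')\setminus E(C)$, because there $e$ genuinely lies in no $2$-face of $K(\mathbf{B})$, so no edge of any filling can map to $e$.) You should separate the two deletion subcases and apply the collapse argument to the first.

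Two very minor remarks. First, for the swap move it is worth stating explicitly that the sequence after deleting $\Delta_i$ remains a valid activation process and a subprocess of $\mathbf{A}$: this uses that $\con_{\mathbf{B}}(e)=1$ (so no $\Delta_j$, $j\ne i$, contains $e$), and that $e'$, now present from the start, is activated by no other $\Delta_j$. Second, your termination measure $|E(K(\mathbf{B}))|$ is correct but you should note that it strictly drops in every move (in the swap move $e$ is removed while $e'$ was already present, so the edge count drops by one); the paper instead uses $K(\mathbf{A}_{k-1})\supsetneq K(\mathbf{A}_k)$, which is the same idea.
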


\begin{proof}
We assume that $K^{(1)}(\mathbf{A})$ is connected. Otherwise, we may replace $\mathbf{A}$ with its subprocess activating the connected component of $K^{(1)}(\mathbf{A})$ containing $C$. Clearly, this preserves contractibility~of~$C$.

We now introduce an iterative process which refines $\mathbf{A}$. Formally, we define a sequence $\mathbf{A}\eqqcolon\mathbf{A}_0\supseteq \mathbf{A}_1\supseteq \ldots $ of activation processes of $C$. Throughout this iterative definition, we make sure that, for all $k$, the graph $K^{(1)}(\mathbf{A}_k)$ is connected and that  $C$ is contractible in $K(\mathbf{A}_k)$. 

%, such that $C$ is contractible in $K(\mathbf{A_k})$ for all $k$.
%Throughout the process, we write $\mathbf{A_k}=(H_k,\Delta^k_{1},\Delta^k_{2},\ldots \Delta^k_{s_k})$, and we will also make sure that $H_k$ is connected for all $k$, or equivalently $K(\mathbf{A_k})$ is connected.

Let $k\geq 1$ and assume that the processes $\mathbf{A}_0\supseteq\ldots\supseteq \mathbf{A}_{k-1}$ are defined. If $\mathbf{A}_{k-1}=:(H;\Delta_1,\ldots,\Delta_s)$ is a nice process, then halt. 
%Indeed, suppose that we have completed $k-1$ iterations, let us describe the $k$-th iteration, and prove that $H_k$ is connected, and that $C$ is contractible in $K(\mathbf{A_k})$. 
%If $\mathbf{A_{k-1}}$ is a nice process, we stop the iterative process. Then, $\mathbf{A_{k-1}}$ is the requiered activation process.
 Otherwise, there is an edge $e\in K^{(1)}(\mathbf{A}_{k-1})$ with a negative excess. 
%Since $C$ is contractible in $K(\mathbf{A}_{k-1})$, by Lemma \ref{lem: excess minus boundary is nonnegative}, if $e\in E(H)\cap E(C)$, then $\con_{\mathbf{A}_{k-1}}(e)\geq 1$ which is not possible as $\exc_{\mathbf{A}_{k-1}}(e)<0$.
 First, if $e\in E(H)\cap E(C)$ then $\con_{\mathbf{A}_{k-1}}(e)\geq 1$, which is not possible as $\exc_{\mathbf{A}_{k-1}}(e)<0$. Indeed, $C$ is contractible in $K(\mathbf{A}_{k-1})$, and thus has a simplicial filling in $K(\mathbf{A}_{k-1})$. Since $e\in E(C)$, there is a face in $K(\mathbf{A}_{k-1})$ that $e$ belongs to. This face participates in the activation process and, since $e\in E(H)$, it is used to activate an edge other than $e$. This means that $\con_{\mathbf{A}}(e)\geq 1$. Second, if $e\in E(C)\setminus E(H)$, then $\exc_{\mathbf{A}_{k-1}}(e)=\con_{\mathbf{A}_{k-1}}(e)\geq 0$, which is again impossible. Hence, $e\not\in C$, and in particular, there are two options: either $e\in H\setminus C$ or $e\not\in H\cup C$.
We consider these two cases separately.

%\MZ{This lemma is not relevant since the process is not nice. Delete this reasoning?}\AC{You are right, but I think that in Lemma \ref{lem: excess minus boundary is nonnegative} we actually do not use niceness. Also, this is the only place we use Lemma \ref{lem: excess minus boundary is nonnegative}, does it make sense to just prove it as a claim in the proof?} \MZ{I suggest we delete lemma, and add a sentence or two here for the argument, and do not state it as a separate claim}

\paragraph{Case 1.} Assume $e\not\in H\cup C$. Since $\exc_{\mathbf{A}_{k-1}}(e)<0$, we have $\con_{\mathbf{A_{k-1}}}(e)=0$. Let $\Delta_i$ be the triangle activating $e$, and set
\[
    \mathbf{A}_k\coloneqq (H;\Delta_{1},\ldots,\Delta_{i-1},\Delta_{i+1},\ldots,\Delta_s).
\]
Note that $K^{(1)}(\mathbf{A}_k)$ is connected as $H$ is its connected spanning subgraph.
Let us now show that $C$ is contractible in $K( \mathbf{A}_k)$. Indeed, $C$ is contractible in $K( \mathbf{A}_{k-1})$, and thus there is a simplicial filling $D_{k-1}$ of $C$ in $K(\mathbf{A}_{k-1})$. 
Since the only face in $K(\mathbf{A}_{k-1})$ containing $e$ is $\Delta_{i}$, every triangle containing a preimage of $e$ in $D_{k-1}$ is a preimage of $\Delta_{i}$. Moreover, since $e\not \in C$, every preimage of $e$ in $D_{k-1}$ is contained in two preimages of $\Delta_{i}$.
Iteratively, for every preimage $\{u,v\}$ of $e$ that belongs to two faces $\{u,v,z_1\}$ and $\{u,v,z_2\}$ in $D_{k-1}$, remove the edge $\{u,v\}$, and identify $z_1=z_2$ by gluing edges $\{z_1,u\}$ and $\{z_1,v\}$ with $\{z_2,u\}$ and $\{z_2,v\}$, respectively; see Figure \ref{Fig-collapse}. % remove a preimage of $e_{k-1}$ from $D_{k-1}$ and collapse the diagram, which is possible as $e_{k-1}$ supports the same triangle twice.
 Since $D_{k-1}$ is finite, the process terminates, and we obtain the triangulated disc $D_k$ which is a filling of $C$ in $K(\mathbf{A}_{k-1})$.
Since $D_{k}$ does not contain $\Delta_{i}$, it is also a filling of $C$ in $K(\mathbf{A}_{k})$, as required.

\begin{figure}[ht]
\centering
\includegraphics[scale=0.49]{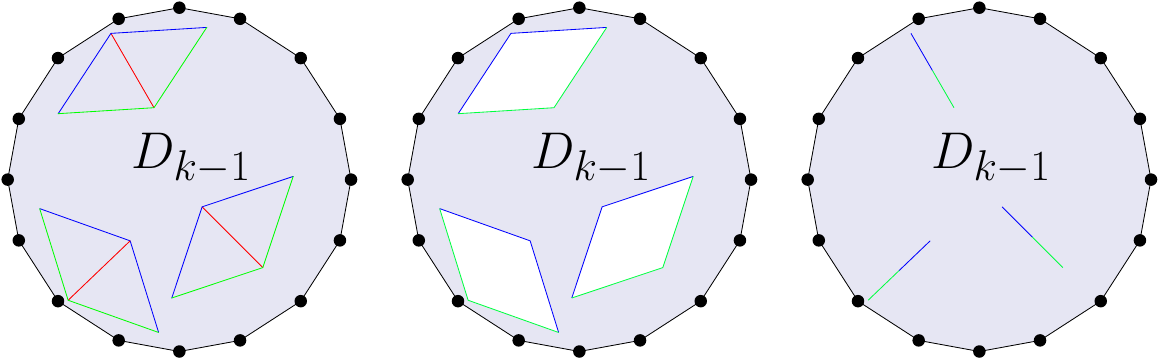}
  \caption{The rainbow triangles are preimages of $\Delta_i$, where the preimage of $e$ is coloured red.}\label{Fig-collapse}
\end{figure}
%\MZ{Is there any reason for having these coloured edges of different size? I would make them all of the same length (as big as in the bottom).}\AC{Better?}

\paragraph{Case 2.} Assume $e\in {H}\setminus C$. Since $\exc_{\mathbf{A}_{k-1}}e<0$, there are two possibilities: either $\con_{\mathbf{A}_{k-1}}(e)=0$, or $\con_{\mathbf{A}_{k-1}}(e)=1$.

First, suppose that $\con_{\mathbf{A}_{k-1}}(e)=0$. Since $e\not\in C$, we get that $C$ is being activated also by 
\[ 
	 \mathbf{A}'\coloneqq (H\setminus e;\Delta_{1}, \ldots, \Delta_{s}), 
\]
and, moreover, $C$ is contractible in $K(\mathbf{A}')$ --- as $K^{(2)}(\mathbf{A}')\setminus  K^{(1)}(\mathbf{A}')=K^{(2)}(\mathbf{A})\setminus K^{(1)}(\mathbf{A})$. Let $\mathbf{A}_k\subseteq \mathbf{A'}$ be the (unique) subprocess such that $K(\mathbf{A}_k)$ is exactly the connected component of $K(\mathbf{A'})$ containing $C$. Then $C$ is also contractible in $K(\mathbf{A}_k)$. Moreover, since $K^{(1)}(\mathbf{A}_k)$ is connected, the process $\mathbf{A}_k$ is initiated by a connected spanning subgraph of $K^{(1)}(\mathbf{A}_k)$. %\AC{I think this sentence is somewhat confusing, how about referencing to Observation \ref{obs:nice_connected}, note that we do not need niceness in the observation (should it be removed?).} %\MZ{No, we do need niceness in Observation \ref{obs:nice_connected} since $G'$ and $H$ are not necessarily on the same set of vertices. Rephrased the previous sentence, please check.} Thus, $\mathbf{A}_k$ has all the required properties. %Also, the contractibility of $C$ in $K(\mathbf{A'})$ implies the contractibility of $C$ in $K(\mathbf{A_{k}})$.

It remains to consider the case $\con_{\mathbf{A}_{k-1}}(e)=1$. Let $\Delta_{i}$ be the unique triangle of $\mathbf{A}_{k-1}$ containing $e$. Also, let $e'$ be the edge activated by $\Delta_{i}$ in the process $\mathbf{A}_{k-1}$. Define
\[
     \mathbf{A}_k\coloneqq (H\cup e' \setminus e;\Delta_{1},\ldots,\Delta_{i-1},\Delta_{i+1},\ldots,\Delta_{s}),
\]
see Figure \ref{Fig-changing edges}. Let us now show that $H\cup e' \setminus e$ is connected, and that $C$ is contractible in $K(\mathbf{A}_k)$.

Assume towards contradiction that $H\cup e' \setminus e$ is not connected. In particular, $e$ is a bridge in $H$, and $H\setminus e$ has two connected components $J_1$ and $J_2$. Since $\Delta_i$ is the only triangle that contains $e$, we get that $\Delta_i$ is the first triangle in the process $\mathbf{A}_{k-1}$ that activates an edge with one endpoint in $J_1$ and the other endpoint in $J_2$. Indeed, the first triangle that activates such an edge has to have another edge between $J_1$ and $J_2$, and then the latter edge must be in $H$ --- the only such edge is $e$. %\AC{Add a sentence saying that this implies that this triangle is $\Delta_i$ as it is the unique triangle containing $e$?} %\MZ{I think that the proof is already too long for such an obvious argument. Moreover, we already said that $\Delta_i$ is a unique such triangle.}
So, the edge $e'$ that $\Delta_i$ activates has one endpoint in $J_1$ and the other endpoint in $J_2$, implying that $H\cup e' \setminus e$ is actually connected --- a contradiction.

Finally, let us show that $C$ is contractible in $K(\mathbf{A}_k)$. Since $C$ is contractible in $K(\mathbf{A}_{k-1})$, there is a simplicial filling $D_{k-1}$ of $C$ in $K(\mathbf{A}_{k-1})$. Since, in this case, the only triangle in $K(\mathbf{A}_{k-1})$ containing $e$ is $\Delta_{i}$, every triangle containing a preimage of $e$ in $D_{k-1}$ is a preimage of $\Delta_{i}$. We then argue in the same way as in Case 1: Iteratively delete each preimage of $e$ and glue the identical edges that remain after the deletion. Eventually, we get a simplicial filling $D_k$ of $C$ in $K(\mathbf{A}_k)$, as required.\\

% Since the unique triangle containing $e_{k-1}$ in $K(\mathbf{A_{k-1}})$ is $\Delta^{k-1}_{i}$, every triangle containing a copy of $e_{k-1}$ in $D_{k-1}$ is also $\Delta^{k-1}_{i}$. Moreover, since $e_{k-1}\not \in C$, every copy of $e_{k-1}$ in $D_{k-1}$ is contained in two copies of $\Delta^{k-1}_{i}$. Iteratively, remove a copy of $e_{k-1}$ from $D_{k-1}$ and then collapse the diagram, which is possible as $e_{k-1}$ supports the same triangle twice. We end up with $D_{k}$ a filling of $C$ in $K(\mathbf{A_{k-1}})$ which does not use any copy of $e_{k-1}$ (and in particular, any copy of $\Delta^{k-1}_{i}$). Thus, $D_{k}$ is also a filling of $C$ in $K(\mathbf{A_{k}})$, as required. 

\begin{figure}[ht]
    \centering
    \includegraphics[scale=0.49]{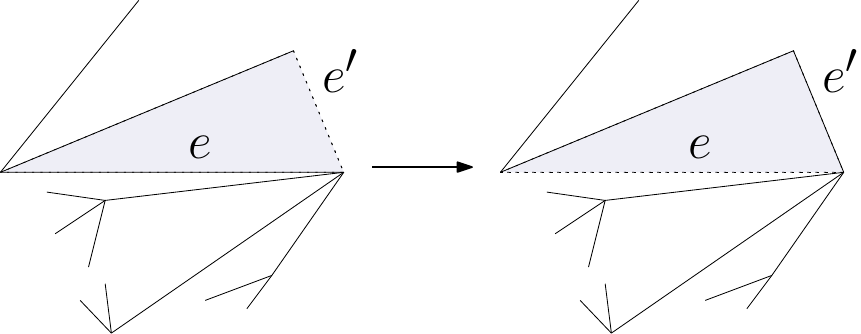}
    \caption{Replacing $e$ with $e'$ in the highlighted triangle $\Delta_i$.}
    \label{Fig-changing edges}
\end{figure}

The desired sequence of nested processes is constructed. Since for every $k\geq 1$, we have $K(\mathbf{A}_{k-1})\supsetneq K(\mathbf{A}_{k})$, and since $K(\mathbf{A}_{0})$ is a finite complex, the iterative process terminates. The resulting subprocess of $\mathbf{A}$ is then the required nice activation process.
\end{proof}

\subsection{From local to global}
\label{subsection:4}

This section introduces the final step of the proof, inspired by Gromov's `from local to global' principle. Informally, we show that for a graph $G$, if every fairly short cycle cannot be activated by a `moderate' process in $G$, and moreover, there is a fairly short cycle which a `fast' activation process cannot activate, then this cycle cannot be activated at all. This clearly means that $G$ itself cannot be activated. 

Using lemmas from Section~\ref{subsection:2}~and~Section~\ref{subsection:3}, we show here that under the assumption of the second part of Theorem \ref{th:shallow}, \whp\ $G_{n,p}$ satisfies the above conditions. This and the main result of this subsection will then conclude the proof of the second part of Theorem \ref{th:shallow}.

The above is given formally in the following proposition.

\begin{proposition}[From local to global]\label{prop: local to global}  
    Let $n,L$ and $K_2>K>100K_1$ be positive integers. Further, let $G$ be an $n$-vertex graph satisfying the following \emph{local} properties:
    \begin{enumerate}[label=\textbf{(L\arabic*)}]
        \item\label{local1} For every $3\leq \ell\leq L$ there are no subgraphs $H,C\subseteq G$ such that $C$ is an $\ell$-cycle, $|V(H)|\in [K_1\ell \ln n,K_2\ell \ln n]$, and $H\stackrel{G}{\longrightarrow}C$ via a nice activation process. 
        \item\label{local2} For every $4\leq \ell\leq L$ there is an $\ell$-cycle $C\subseteq G$ such that no $H\subseteq G$ with $|V(H)|\in [\ell,K\ell \ln n]$ activates $C$ in $G$  via a nice activation process. 
    \end{enumerate}
    Then, there is no tree $T\subseteq G$ with diameter at most ${L/100}$ activating $G$.
\end{proposition}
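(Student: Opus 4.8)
The plan is to argue by contradiction. Suppose $T\subseteq G$ is a tree with $\diam(T)\le L/100=:d$ activating $G$, and fix a $K_3$-bootstrap process realising this, i.e.\ an activation process $\mathbf{A}=(T;\Delta_1,\ldots,\Delta_s)$ with $G=K^{(1)}(\mathbf{A})$. Since adding an edge inside a copy of $K_3$ never creates a new vertex, $V(T)=V(G)$, so $T$ is spanning and Lemma~\ref{lem: main lemma for 0-statement} applies: $X^{(2)}(G)$ is simply connected. For later use, observe that if a subprocess $\mathbf{B}\subseteq\mathbf{A}$ activates a cycle $C$ with $C$ contractible in $K(\mathbf{B})$, then $V(C)\subseteq V(K(\mathbf{B}))=V(H_\mathbf{B})$, where $H_\mathbf{B}$ is the initial graph of $\mathbf{B}$; hence $|V(H_\mathbf{B})|\ge|C|$.

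The core is the following claim, which uses only~\ref{local1}: \emph{every cycle $C\subseteq G$ with $3\le|C|\le L$ is activated by some nice subprocess $\mathbf{B}\subseteq\mathbf{A}$, starting from a connected graph $H_\mathbf{B}$, with $C$ contractible in $K(\mathbf{B})$ and $|V(H_\mathbf{B})|<K_1|C|\ln n$.} Granting the claim, apply it to an $\ell$-cycle $C$ provided by~\ref{local2}: we get a nice activation process of $C$ from a connected $H$ with $\ell\le|V(H)|<K_1\ell\ln n\le K\ell\ln n$, i.e.\ $|V(H)|\in[\ell,K\ell\ln n]$, contradicting~\ref{local2}. This proves the proposition.

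To prove the claim, let $C$ be a counterexample of minimum length $\ell$. As $C\subseteq G=K^{(1)}(\mathbf{A})$, the process $\mathbf{A}$ activates $C$, and by the algorithmic van Kampen diagram argument in the proof of Lemma~\ref{lem: main lemma for 0-statement}, $C$ is contractible in $K(\mathbf{A})$; hence Lemma~\ref{lemma: activations contain good activations} yields a nice subprocess of $\mathbf{A}$ activating $C$ from a connected graph, with $C$ still contractible in its activation complex. Assume first $\ell>3d/2$. Choose $u,v,w\in V(C)$ splitting $C$ into three arcs of length as equal as possible, let $m$ be the median of $\{u,v,w\}$ in $T$, and let $C_1,C_2,C_3$ be the cycles obtained by joining each arc to the two $T$-paths from its endpoints to $m$; then $C_i\subseteq G$ (arc in $C$, $T$-paths in $T$), $|C_i|\le\lceil\ell/3\rceil+d<\ell$, each $C_i\cap C_j$ is a $T$-path through $m$, and $C_1\triangle C_2\triangle C_3=C$ (each of the three $T$-branches at $m$ lies in exactly two of the $C_i$ and cancels), after a routine genericity discussion ensuring the $C_i$ are genuine cycles. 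By minimality of $\ell$ the claim holds for each $C_i$, giving nice subprocesses $\mathbf{B}_i\subseteq\mathbf{A}$ activating $C_i$ from connected $H_i$ with $|V(H_i)|<K_1|C_i|\ln n$ and $C_i$ contractible in $K(\mathbf{B}_i)$. Put $\mathbf{B}^*:=\mathbf{B}_1\cup\mathbf{B}_2\cup\mathbf{B}_3\subseteq\mathbf{A}$; by Observation~\ref{obs: union of complexes}, $K(\mathbf{B}^*)=\bigcup_{i}K(\mathbf{B}_i)$, so two applications of Lemma~\ref{lm:union_contractible} give that $C=(C_1\triangle C_2)\triangle C_3$ is contractible in $K(\mathbf{B}^*)$, while
\[
|V(K(\mathbf{B}^*))|\le\sum_{i=1}^{3}|V(H_i)|<K_1\ln n\sum_{i=1}^{3}|C_i|\le K_1(\ell+3d)\ln n<3K_1\ell\ln n<K_2\ell\ln n,
\]
using $\ell>3d/2$ and $K_2>100K_1$. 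Applying Lemma~\ref{lemma: activations contain good activations} to $\mathbf{B}^*$ produces a nice subprocess $\mathbf{B}^{**}\subseteq\mathbf{A}$ activating $C$ from a connected $H^{**}$, with $C$ contractible in $K(\mathbf{B}^{**})$ and $|V(H^{**})|\le|V(K(\mathbf{B}^{**}))|\le|V(K(\mathbf{B}^*))|<K_2\ell\ln n$. By~\ref{local1}, $|V(H^{**})|\notin[K_1\ell\ln n,K_2\ell\ln n]$, so $|V(H^{**})|<K_1\ell\ln n$ --- contradicting the choice of $C$.

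There remains the base case $\ell\le3d/2$, where the tree-Steiner decomposition no longer shortens $C$; here one argues directly that such a short cycle is activated cheaply from the spanning tree $T$ --- for instance by expressing $C$ as a symmetric difference of cycles each built from a bounded number of $C$-edges together with $T$-paths and combining the corresponding subprocesses as above. I expect this to be the main obstacle: the rest of the proof is just bookkeeping of vertex counts through unions of subprocesses, made to close by the slack in $K_2>K>100K_1$ and $\diam(T)\le L/100$, whereas bounding the cost of activating a genuinely short cycle from $T$ needs a more hands-on input. A secondary, purely combinatorial difficulty is the decomposition step itself --- checking that $C_1,C_2,C_3$ and their pairwise intersections really have the shapes required by Lemma~\ref{lm:union_contractible}, which may require first replacing $C$ by a suitable sub-cycle.
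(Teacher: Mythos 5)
Your approach is genuinely different from the paper's, and the difference is precisely where the gap lies. The paper takes a cycle of length $\ell\le 100d$ minimising the number $s(C)$ of triangles in a subprocess of the fixed global activation $\mathbf{T}$; you take a counterexample minimising the cycle length $\ell$. The paper's choice is not cosmetic: unrolling the \emph{last} activation step --- replacing the last-activated edge of $C$ by the other two edges of $\Delta_s$ --- produces a cycle $C^*$ of length $\ell+1$ that is activated with one triangle \emph{fewer}. Minimality of $s(C)$ then forces $C^*\in\mathcal{X}$ whenever $\ell<100d$, and a short combination argument (as in your use of Lemma~\ref{lm:union_contractible} and Lemma~\ref{lemma: activations contain good activations}) puts $C$ itself in $\mathcal{X}$; this is the content of Claim~\ref{cl:ell=100d}, which pins $\ell=100d$. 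With $\ell$ that large relative to $d$, a single $T$-geodesic $P'$ splits $C^*$ into two strictly shorter cycles with strictly smaller $s$, and the argument closes. Your induction on $\ell$ cannot play this game, because $C^*$ is \emph{longer} than $C$; so you are left with a base case $\ell\lesssim 3d$ which you must handle directly, and there is no reason a cycle of length $4$, say, should admit a nice activation from only $4K_1\ln n$ vertices --- establishing that is exactly the work the last-triangle trick does. You flag this as ``the main obstacle,'' but it is more than an obstacle: without it the induction has no floor, and the symmetric-difference ideas you have at hand do not obviously reduce $s$ for short cycles, which is what is actually needed.

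A secondary gap, which you also flag: the three-way decomposition via the median $m$ of $u,v,w$ does not automatically produce three simple cycles whose pairwise intersections are paths, since the $T$-geodesics $[u,m]_T,[v,m]_T,[w,m]_T$ may re-enter $C$. The paper avoids this with a single chord: Claim~\ref{claim: creating two cycles} produces a $T$-subpath $P'$ meeting $C^*$ in \emph{exactly} its two endpoints, with $\dist_{C^*}(a,b)>\dist_P(a,b)$, so the two resulting cycles $C_1,C_2$ are honest cycles of length strictly between $2$ and $100d$ and intersect in the path $P'$. But that claim needs $\dist_{C^*}(x,y)=50d\gg\diam(T)$, i.e.\ it needs $\ell=100d$ --- and is therefore again downstream of Claim~\ref{cl:ell=100d}. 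In short, the vertex-count bookkeeping in your argument is fine and parallels the paper's, but both structural inputs you defer (the base case and the genericity of the median decomposition) are supplied in the paper by the $s(C)$-minimality route, not by an $\ell$-minimality route, and I do not see how to patch your version without importing that idea.
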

%\AC{How is it now?}

We first derive the second part of Theorem \ref{th:shallow} from Proposition \ref{prop: local to global} and then prove Proposition~\ref{prop: local to global} itself.

\begin{proof}[Proof of Theorem \ref{th:shallow}, part 2]

Fix any positive constants $\varepsilon<1/18$, $\eps'<1/12$, $c<2^{-7/3}$, and let $\mathbf{G}\sim G_{n,p}$, where 
$$
 n^{-1/3-\varepsilon'}\leq p<cn^{-1/3}.
$$ 
Let us also fix 
$$
 K_2>K>100K_1\geq -\frac{1}{\ln(2^7 c^3)}.
$$
The assertions of Lemma \ref{lem: no ell-cycle between log and Klog}~and~Lemma \ref{lem: exists an ell-cycle with more than log} hold for these parameters. Therefore, \whp\ $\mathbf{G}$ satisfies the conclusions of these lemmas.

Note that by setting $L\coloneqq n^{\eps}$, the conclusions of Lemma \ref{lem: no ell-cycle between log and Klog}~and~Lemma \ref{lem: exists an ell-cycle with more than log} are exactly \ref{local1} and \ref{local2}, respectively. Thus, due to Proposition \ref{prop: local to global} \whp\ $\mathbf{G}$ cannot be activated by any tree of diameter at most $n^{\eps}/100$, as required.
\end{proof}

The rest of this subsection is devoted to the proof of Proposition \ref{prop: local to global}.

\begin{proof}[Proof of Proposition \ref{prop: local to global}]

Let $G$ be as in the proposition. Assume, towards contradiction, that $G$ can be activated from a spanning subtree $T\subseteq G$ with $\diam(T) =:  d \leq L/100 $. Let $\mathbf{T}\coloneqq (T;\Delta_1,\Delta_2,\ldots ,\Delta_s)$ be an activation process of $G$.
 
Next, we define the set of all cycles that can be activated in ${G}$ via a fast subprocess of $\mathbf{T}$. Let $\mathcal{X}_{\ell}$ be the set of all $\ell$-cycles $C\subseteq{G}$ such that there exists a connected subgraph $H\subseteq{G}$ that activates $C$ via a {\it nice} subprocess $\mathbf{A}\subseteq\mathbf{T}$ satisfying the following two properties: 
\begin{itemize}%[label=\textbf{(X\arabic*)}]
    \item%\label{item: few faces}
     $|V(H)|\leq K_2\ell\ln n$;
    \item%\label{item: contractible}
     $C$ is contractible in $K(\mathbf{A})$. % \MZ{We do not have a reference to X2. Maybe remove references to both and make plain itemization?} \AC{Sure.}
\end{itemize}
%    We also write $X^c$ to denote the complement of $X$ in the set of all $\ell$-cycles of $G$ with $3\leq \ell \leq 100d$.
Set $\mathcal{X}:= \cup_{3\leq \ell \leq 100d}\mathcal{X}_{\ell}$.    
%\end{definition}

Note that, by \ref{local1}, any graph $H$ that activates $C\in\mathcal{X}$ via a nice process as above, actually has $|V(H)|\leq K_1\ell\ln n$. Moreover, by \ref{local2}, for every $\ell\in[4,100d]$, there exists an $\ell$-cycle $C$ such that every nice process $\mathbf{A}$ activating it has $|K^{(0)}(\mathbf{A})|>K\ell\ln n>K_1\ell\ln n$, implying $C\notin\mathcal{X}_{\ell}$. We get that for all $\ell\in[4,100d]$, there is an $\ell$-cycle $C\subseteq G$, that does not belong to $\mathcal{X}$.

%We start by noting two important properties of $X$. First, if $C\in X$ is an $\ell$-cycle, and $\mathbf{A}=(H;\Delta_1,\ldots ,\Delta_s)$ is a witness for that, then by \ref{item: few faces} the number of vertices of $H$ is at most $K_2\ell\ln n$. As $G$ satisfies the assertion of Lemma \ref{lem: no ell-cycle between log and Klog}, the number of vertices of $H$ is actually at most $K_1\ell\ln n$.

%Second, we claim that $X^{c}\neq \emptyset$. Indeed, by the assertion of Lemma \ref{lem: exists an ell-cycle with more than log}, there is an $\ell$-cycle $C\subseteq G$ with the following property: every nice activation process $\mathbf{A}$ activating $C$ such that $C$ is contractible in $K(\mathbf{A})$ has more than $K\ell\ln n$ vertices. 
%Thus, either there is no nice subprocess $\mathbf{A}\subseteq \mathbf{T}$ activating $C$ such that $C$ is contractible in $K(\mathbf{A})$, implying that $C\in X^{c}$; or there is such activation process $\mathbf{A}$. In the latter case, by the definition of $C$ the complex $K(\mathbf{A})$ has more than $K\ell\ln n>K_1\ell\ln n$ vertices, and thus $C\not \in X$.

For every cycle $C$ that has length at most $100d$, let $s(C)$ be the minimum number of triangles in a subprocess $\mathbf{A}\subseteq\mathbf{T}$ that starts from $T$ and activates $C$. Since the activation process of such $C$ starts from $T$, we get that $C$ is contractible in $K(\mathbf{A})$, due to Lemma~\ref{lem: main lemma for 0-statement}. We then fix a cycle $C\notin\mathcal{X}$ that has length $\ell:=|V(C)|\in[3,100d]$ with the minimum $s(C)$ among all such cycles. Let $\mathbf{A}=(T;\Delta_1,\ldots,\Delta_s)\subseteq\mathbf{T}$ be a subprocess of length $s=s(C)$ that activates $C$. Let $e$ be the last edge of $C$ activated by $\mathbf{A}$. If $s=1$, then, since $T$ is acyclic, $C=\Delta_1$. We get that $(\Delta_1\setminus e;\Delta_1)\subset\mathbf{T}$ is a nice subprocess activating $C$ and $C$ is contractible in its activation complex, implying $C\in\mathcal{X}$ --- a contradiction. Thus, $s\geq 2$.

%Also, fix $\mathbf{T'}\subseteq \mathbf{T}$ to be such a shortest subprocess. 

%Let us collect some properties of $\mathbf{T'}$. We first claim that the process $\mathbf{T'}$ is of length at least $2$. Indeed, $T$ does not contain any cycle, and therefore the length of the process $\mathbf{T'}$ is at least $1$.  Moreover, if the length of $\mathbf{T'}$ is $1$, then $C$ is a triangle. This is impossible, as then we obtain a \emph{nice} subprocess of length $1$ activating $C$ such that $C$ is also contractible in its activation complex. 
%Thus, from now on, we assume that the length of $\mathbf{T'}$ is at least $2$.  

In the next two claims, we describe important properties of $\mathbf{A}$ that we will then use to complete the proof of Proposition~\ref{prop: local to global}. Recall that the triangle $\Delta_s$ activates the edge $e$. Let $e',e''$ be the other two edges of $\Delta_s$. Moreover, let $C^*$ be the graph obtained from $C$ by deleting the edge $e$ and adding the edges $e'$ and $e''$.

\begin{claim}\label{claim: last triangle is not boundary triangle}
   We have $|V(e'\cap C)|=|V(e''\cap C)|=1$, that is both $e'$ and $e''$ share only one vertex with $C$.
\end{claim}

\begin{proof}
Both $e',e''$ share at least one vertex with $C$, since $e$ is an edge of $C$. Then, we should only exclude the situation when $|V(e'\cap C)|=|V(e''\cap C)|=2$, which happens if the common vertex $z$ of $e'$ and $e''$ lies on $C$ --- see Figure~\ref{Fig-last triangle in C^*}.

%We start by observing that both $f$ and $g$ have at least one endpoint in $C$, which is the shared endpoint with $e$. Moreover, $f$ and $g$ share an endpoint which is not an endpoint of $e$. Thus $|f\cap C|=1$ if and only if $|g\cap C|=1$. Hence, to prove that $|f\cap C|=|g\cap C|=1$, it is enough to contradict that $|f\cap C|=|g\cap C|=2$. 
    
Assume towards contradiction that $z\in V(C)$. Without loss of generality, we may assume that $e'\not \in E(C)$, as $s\geq 2$. Note that under this assumption, either the graph $C^*$ consists of two cycles $C_1$ and $C_2$ with the common vertex $z$ (see Figure \ref{Fig-the two cycles}), or $C^*$ consists of a cycle $C_1$ and the edge $e''=:C_2$ that shares $z$ with $C_1$. In both cases, $\max\{V(C_1),V(C_2)\}\leq 100d$.

\begin{figure}[ht]
\centering

\begin{minipage}{0.5\textwidth}
  \centering
  \includegraphics[scale=0.49]{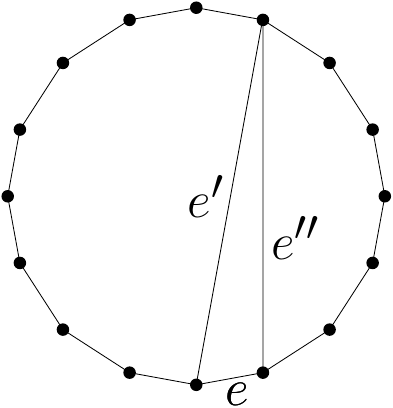}
  \caption{The case $|V(e'\cap C)|=|V(e''\cap C)|=2$}\label{Fig-last triangle in C^*}
\end{minipage}%
\begin{minipage}{.5\textwidth}
  \centering
    \includegraphics[scale=0.49]{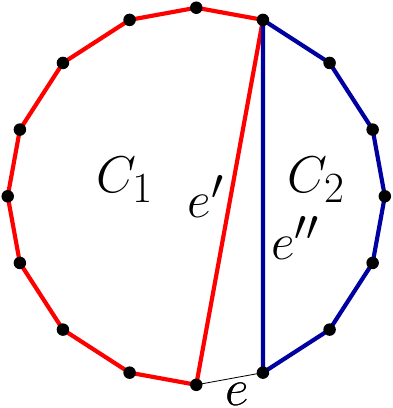}
  \caption{The two cycles $C_1$ and $C_2$ in $C^*$}\label{Fig-the two cycles}
\end{minipage}

\end{figure}

Both $C_1$ and $C_2$ can be activated by a subprocess of $\mathbf{A}$ without the last triangle $\Delta_s$. Therefore, by the minimality of $s(C)$, these cycles --- unless $C_2=e''$ --- are in $\mathcal{X}$. Then, both $C_1$ and $C_2$ can be activated via nice subprocesses $\mathbf{A}_1,\mathbf{A}_2\subseteq \mathbf{T}$ such that $\max\{|V(K(\mathbf{A}_1))|,|V(K(\mathbf{A}_2))|\}\leq K_1\ell\ln n$ and $C_1,C_2$ are contractible in $K(\mathbf{A}_1),K(\mathbf{A}_2)$, respectively. If $C_2=e''$, we let $K(\mathbf{A}_2)=C_2$ (note that $\mathbf{A}_2\subseteq \mathbf{A}$ since clearly $e''\in K^{(1)}(\mathbf{A})$). Set $\mathbf{A}_3\coloneqq (\{e',e''\};\Delta_s)$. Observe that $\mathbf{A}_3\subseteq\mathbf{A}$ %\AC{again, should mention that $e',e''\in K^{(1)}(\mathbf{A})$}  
 is a nice activation process of $\Delta_s$ and $\Delta_s$ is contractible in $K(\mathbf{A}_3)$. We then apply Lemma~\ref{lm:union_contractible} twice and get that the process $\mathbf{A}^{\cup}:=\mathbf{A}_1\cup\mathbf{A}_2\cup\mathbf{A}_3$ activates $C$ and $C$ is contractible in $K(\mathbf{A}^{\cup})$. Therefore, by Lemma \ref{lemma: activations contain good activations} there is a nice activation subprocess of $\mathbf{A}^{\cup}$ that activates $C$ and such that $C$ is contractible in the respective activation complex. By Observation~\ref{obs: union of complexes}, $K(\mathbf{A}^{\cup})$ has at most $2K_1\ell\ln n<K_2\ell \ln n$ vertices which contradicts the assumption that $C\notin\mathcal{X}$ and concludes the proof of the claim.
%Since, $K(\mathbf{C})\subseteq K(\mathbf{A}\cup \mathbf{B}\cup \mathbf{\Delta})$ and by applying Observation \ref{obs: union of complexes}, we find that $K(\mathbf{C})$ has at most $2K_1\ell\ln n<K_2\ell \ln n$ vertices which contradicts the assumptions on $C$ and concludes the proof of the claim.
\end{proof}

We then conclude that $C^*$ is an $(\ell+1)$-cycle.

%Next, by a similar argument to the proof of Claim \ref{claim: last triangle is not boundary triangle}, we find the length of $C$ is maximal.

\begin{claim}
    $\ell=100d$.
 \label{cl:ell=100d}
\end{claim}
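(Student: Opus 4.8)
The plan is to argue by contradiction. Recall that, by Claim~\ref{claim: last triangle is not boundary triangle} and the remark following its proof, the apex $z$ of $\Delta_s$ satisfies $z\notin V(C)$, and $C^*$ — obtained from $C$ by replacing the edge $e=\{a,b\}$ with the $2$-edge path $e'\cup e''$ through $z$ — is a genuine $(\ell+1)$-cycle. Suppose for contradiction that $\ell<100d$. Then $4\le \ell+1\le 100d\le L$, so $C^*$ falls in the range of cycles to which \ref{local1}, \ref{local2} and the definition of $\mathcal{X}$ apply.

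First I would bound $s(C^*)$. Since $\mathbf{A}=(T;\Delta_1,\ldots,\Delta_s)\subseteq\mathbf{T}$ activates $C$ and $\Delta_s$ is precisely the triangle activating the last edge $e$ of $C$, every edge of $C$ other than $e$, together with $e'$ and $e''$, already belongs to $T\cup\Delta_1\cup\cdots\cup\Delta_{s-1}$. Hence the prefix $(T;\Delta_1,\ldots,\Delta_{s-1})\subseteq\mathbf{T}$ activates a graph containing $C^*$, which gives $s(C^*)\le s-1=s(C)-1$. As $C^*$ has length $\ell+1\in[3,100d]$ and strictly smaller $s$-value than $C$, the minimality in the choice of $C$ forces $C^*\in\mathcal{X}$, say $C^*\in\mathcal{X}_{\ell+1}$: there is a connected $H^*\subseteq G$ activating $C^*$ via a nice subprocess $\mathbf{A}^*\subseteq\mathbf{T}$ with $|V(K(\mathbf{A}^*))|=|V(H^*)|\le K_2(\ell+1)\ln n$ and with $C^*$ contractible in $K(\mathbf{A}^*)$. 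Applying \ref{local1} to $\mathbf{A}^*$ (legitimate since $\ell+1\le L$) upgrades this to $|V(H^*)|<K_1(\ell+1)\ln n$.

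Next I would glue the triangle $\Delta_s$ back on. Exactly as in the proof of Claim~\ref{claim: last triangle is not boundary triangle}, $\mathbf{A}_3:=(\{e',e''\};\Delta_s)$ is a nice subprocess of $\mathbf{T}$ in which the $3$-cycle $\Delta_s$ is contractible. Since $C=C^*\triangle\Delta_s$ and $C^*\cap\Delta_s=e'\cup e''$ is a path, Lemma~\ref{lm:union_contractible} shows that $C$ is contractible in $K(\mathbf{A}^*\cup\mathbf{A}_3)$; moreover, by Observation~\ref{obs: union of complexes}, $K^{(1)}(\mathbf{A}^*\cup\mathbf{A}_3)=K^{(1)}(\mathbf{A}^*)\cup K^{(1)}(\mathbf{A}_3)$ contains $C^*\cup\Delta_s$ and hence $C$, so $\mathbf{A}^*\cup\mathbf{A}_3\subseteq\mathbf{T}$ activates $C$. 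Then Lemma~\ref{lemma: activations contain good activations} yields a nice subprocess $\mathbf{A}^{**}\subseteq\mathbf{A}^*\cup\mathbf{A}_3\subseteq\mathbf{T}$ that activates $C$, starts from a connected graph $H^{**}$, and keeps $C$ contractible in $K(\mathbf{A}^{**})$. Finally, $V(\Delta_s)=V(e)\cup\{z\}\subseteq V(C^*)\subseteq V(K(\mathbf{A}^*))$, so Observation~\ref{obs: union of complexes} gives $V(K(\mathbf{A}^{**}))\subseteq V(K(\mathbf{A}^*\cup\mathbf{A}_3))=V(K(\mathbf{A}^*))$, whence $|V(H^{**})|<K_1(\ell+1)\ln n\le 2K_1\ell\ln n<K_2\ell\ln n$ (using $\ell\ge 3$ and $K_2>100K_1$). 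Thus $C\in\mathcal{X}_\ell\subseteq\mathcal{X}$, contradicting $C\notin\mathcal{X}$; therefore $\ell=100d$.

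The only real work is bookkeeping: checking that $\mathbf{A}_3$, $\mathbf{A}^*\cup\mathbf{A}_3$ and $\mathbf{A}^{**}$ are all honest subprocesses of the fixed ground process $\mathbf{T}$ (so that the union operation and Lemma~\ref{lemma: activations contain good activations} from Subsection~\ref{subsection:3} apply), and that the relevant processes activate graphs still containing the cycle in question. All of this mirrors constructions already carried out in the proof of Claim~\ref{claim: last triangle is not boundary triangle}, so no genuinely new difficulty arises.
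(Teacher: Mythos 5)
Your proof is correct and follows essentially the same route as the paper's: pass to $C^*$ via $\Delta_s$, invoke the minimality of $s(C)$ to get $C^*\in\mathcal{X}$, reattach $\Delta_s$ as the nice subprocess $(\{e',e''\};\Delta_s)$ using Lemma~\ref{lm:union_contractible} and Lemma~\ref{lemma: activations contain good activations}, and derive the vertex bound from Observation~\ref{obs: union of complexes}. You are somewhat more explicit than the paper on a couple of bookkeeping points (the inequality $s(C^*)\le s(C)-1$, and the containment $V(\Delta_s)\subseteq V(C^*)\subseteq V(K(\mathbf{A}^*))$), but the substance is identical.
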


\begin{proof}
Assume towards contradiction that $\ell<100d$. Claim \ref{claim: last triangle is not boundary triangle} implies that $C^*$ is a cycle of length $\ell+1\leq 100d$. It can be activated by the subprocess of $\mathbf{A}$ without the last triangle $\Delta_s$. Recalling that $\mathbf{A}$ starts from $T$ and, therefore, the considered subprocess as well, by the minimality of $s(C)$, we get that $C^*\in\mathcal{X}$. In particular, $C^*$ admits a nice activation subprocess of $\mathbf{A}^*\subseteq\mathbf{T}$ with less than $K_1(\ell+1)\ln n$ vertices, and such that $C^*$ is contractible in $K(\mathbf{A}^*)$. 

 As in the proof of Claim \ref{claim: last triangle is not boundary triangle}, we note that $\mathbf{A}^{**}\coloneqq (\{e',e''\};\Delta_s)$ is a nice activation process of $\Delta_s$ and $\Delta_s$ is contractible in $K(\mathbf{A}^{**})$. We also note that $\mathbf{A}^{**}$ is a subprocess of $\mathbf{T}$ since $\Delta_s$ participates in $\mathbf{T}$, where it activates the same edge as in $\mathbf{A}^{**}$. We then apply Lemma~\ref{lm:union_contractible} and get that the process $\mathbf{A}^{\cup}:=\mathbf{A}^*\cup\mathbf{A}^{**}$ activates $C$ and $C$ is contractible in $K(\mathbf{A}^{\cup})$. Therefore, by Lemma \ref{lemma: activations contain good activations} there is a nice activation subprocess of $\mathbf{A}^{\cup}$ that activates $C$ and such that $C$ is contractible in the respective activation complex. By Observation~\ref{obs: union of complexes}, $K(\mathbf{A}^{\cup})$ has at most $K_1(\ell+1)\ln n <K_2\ell \ln n$ vertices --- a contradiction to the assumption that $C\notin\mathcal{X}$.
\end{proof}

We are now ready to complete the proof of Proposition \ref{prop: local to global}. Below we describe the last step of the proof: roughly speaking, we show that if an $\ell$-cycle is activated via a process involving more than $K_2\ell\ln n$ triangles, then there is also an $\ell'$-cycle whose activation process essentially involves `moderately' many triangles, that is some number which is between $K_1\ell'\ln n$ and $K_2\ell'\ln n$. This would contradict \ref{local1} and complete the proof of Theorem~\ref{th:shallow} due to the observation that $\mathcal{X}\neq\emptyset$. 

Consider two antipodal vertices $x,y$ of $C^*$, that is $\dist_{C^*}(x,y)=50d$. Let  $P$ the unique path between $x$ and $y$ in $T$. Recall that $T$ has diameter at most $d$, and therefore $|E(P)|\leq d$. 
\begin{claim}\label{claim: creating two cycles}
    There is a path $P'\subseteq P$ such that
    \[
        V(P')\cap V(C^*)=\{a,b\} \quad \text{and}\quad\dist_{C^*}(a,b)>\dist_{P}(a,b)=|E(P')|.
    \]
\end{claim}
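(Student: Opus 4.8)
The plan is to walk along the path $P$ from $x$ to $y$ and locate the first "return" to the cycle $C^*$. Formally, recall $x,y \in V(C^*)$ with $\dist_{C^*}(x,y) = 50d$, and $P = (x = v_0, v_1, \ldots, v_m = y)$ with $m = |E(P)| \le d$. Let $a := v_i$ be the last vertex of $P$, reading from $x$, such that $v_0, v_1, \ldots, v_i$ all lie on $C^*$ — equivalently, $a$ is the last vertex of the initial segment $P \cap C^*$ that forms a sub-path of $P$ containing $x$. (This initial segment is nonempty since $x = v_0 \in V(C^*)$.) Then let $b := v_j$ be the first vertex after $a$ that lies back on $C^*$; such a vertex exists because $y = v_m \in V(C^*)$. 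Set $P' := (v_i, v_{i+1}, \ldots, v_j)$, the sub-path of $P$ between $a$ and $b$. By construction the interior vertices $v_{i+1}, \ldots, v_{j-1}$ avoid $C^*$, so $V(P') \cap V(C^*) = \{a, b\}$, giving the first required property.

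It remains to show $\dist_{C^*}(a,b) > \dist_P(a,b) = |E(P')| = j - i$. First note $j - i \le m \le d$. For the distance in $C^*$: we have $\dist_{C^*}(x,a) \le i$ since $v_0, \ldots, v_i$ all lie on $C^*$ and consecutive ones are adjacent in $G$, hence adjacent along... — actually this needs a small argument, since adjacency in $G$ of $v_k, v_{k+1}$ only gives a chord of $C^*$, not an edge of $C^*$. Instead I would argue directly via the triangle inequality inside $C^*$: since $\dist_{C^*}(x,y) = 50d$ and $|E(P)| \le d$, and since the vertices of $P$ that lie on $C^*$ before $a$ form an initial segment, I claim $\dist_{C^*}(x,a)$ is small. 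The cleanest route is: every vertex $v_k$ with $k \le i$ lies on $C^*$, and $v_k v_{k+1} \in E(G)$; but $v_k, v_{k+1}$ both on $C^*$ with an edge between them means either the edge is in $C^*$ (so $\dist_{C^*}(v_k,v_{k+1}) = 1$) or it is a chord — in the latter case $C^*$ is still a cycle and $v_k, v_{k+1}$ could be far apart in $C^*$. This is the main obstacle, and I expect it must be handled by invoking that $T$ is a tree of small diameter together with the fact that $C \subseteq G$, $T \subseteq G$, and $C$ is activated from $T$: specifically, since the relevant cycle lengths $\ell, \ell+1 \le 100d$ and $T$ has diameter $\le d$, a chord $v_k v_{k+1}$ of $C^*$ would itself be an edge of $G$ lying between two vertices of the short cycle $C^*$; one then replaces the long $C^*$-arc by this chord plus the rest to produce a shorter cycle activated by a subprocess of $\mathbf{A}$ without needing extra triangles, contradicting either the minimality of $s(C)$ or the fact that the relevant cycle lies outside $\mathcal{X}$ — exactly the device used in Claims~\ref{claim: last triangle is not boundary triangle} and~\ref{cl:ell=100d}.

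So the structure is: (i) define $a, b, P'$ as above, getting $V(P') \cap V(C^*) = \{a,b\}$ and $|E(P')| \le d$ for free; (ii) observe $\dist_{C^*}(x,a) \le d$ and $\dist_{C^*}(y,b) \le d$ — proved by noting these segments of $P$ are paths in $G$ of length $\le d$ whose endpoints both lie on $C^*$, and ruling out "shortcut chords" by the minimality argument, since any such chord would yield a shorter activatable cycle outside $\mathcal{X}$; (iii) then by the triangle inequality in the cycle $C^*$,
\[
\dist_{C^*}(a,b) \;\ge\; \dist_{C^*}(x,y) - \dist_{C^*}(x,a) - \dist_{C^*}(y,b) \;\ge\; 50d - d - d \;=\; 48d \;>\; d \;\ge\; |E(P')|,
\]
which is the desired inequality. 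The main obstacle, as noted, is step (ii): justifying that the $P$-segments from $x$ to $a$ and from $y$ to $b$ do not lie far away on $C^*$, which is where the minimality of $s(C)$ and the local hypotheses \ref{local1}, \ref{local2} must be re-invoked in the same style as the two preceding claims.
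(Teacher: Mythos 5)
Your proposal has a genuine gap, and you partially recognize it yourself. The problem is that you commit to a \emph{specific} segment of $P$ --- the first excursion off $C^*$ --- and then need to certify that this particular choice satisfies $\dist_{C^*}(a,b) > |E(P')|$. Nothing forces that: the first excursion could perfectly well be a short hop between two $C^*$-vertices that are also close on $C^*$, with all the ``useful'' excursions occurring later. To salvage this you must bound $\dist_{C^*}(x,a)$ and $\dist_{C^*}(y,b)$, and as you observe this collides with the chord problem: consecutive $P$-vertices $v_k,v_{k+1}$ both lying on $C^*$ need not be adjacent on $C^*$, since a $T$-edge between them may be a chord. Your suggested fix --- re-invoking the minimality of $s(C)$ and the local properties \ref{local1}, \ref{local2} to rule out such chords --- is not worked out, is not obviously correct (a chord of $C^*$ does not immediately produce a forbidden cycle in the right size range), and in any case would defeat the purpose of the claim, which is meant to be a self-contained combinatorial lemma about a path and a cycle inside a graph of small diameter, not a step that recursively leans on the entire activation machinery.

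The paper's proof avoids the issue entirely by refusing to commit to a single excursion. Index \emph{all} the vertices of $P$ that lie on $C^*$ as $v_{i_1}=x, v_{i_2}, \ldots, v_{i_k}=y$, and let $P_j$ be the subpath of $P$ between consecutive ones; each $P_j$ automatically meets $C^*$ only at its two endpoints. If \emph{none} of the $P_j$ satisfied the claim, then $\dist_{C^*}(v_{i_j},v_{i_{j+1}}) \le \dist_P(v_{i_j},v_{i_{j+1}})$ for every $j$, and summing with the triangle inequality in $C^*$ gives $50d = \dist_{C^*}(x,y) \le \sum_j \dist_{C^*}(v_{i_j},v_{i_{j+1}}) \le \sum_j \dist_P(v_{i_j},v_{i_{j+1}}) = |E(P)| \le d$, a contradiction. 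This averaging/pigeonhole step is exactly what your argument is missing; it makes the chord issue irrelevant because the inequality is summed across all crossings rather than established crossing by crossing.
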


\begin{proof}
    Let $P=(x=v_1v_2\ldots v_r=y)$, and recall that $r\leq d+1$. Let $1=i_1<\ldots<i_k=r$ be indices of all the vertices $v_{i_1},\ldots,v_{i_k}$ of the path $P$ that also belong to the cycle $C^*$. Then, for all $1\leq j\leq k-1$ we let $P_{j}\subseteq P$ be the induced subpath between $v_{i_j}$ and $v_{i_{j+1}}$. Let us show that at least one of these $P_j$ satisfies the assertion of the claim. Indeed, otherwise, for all $1\leq j\leq k-1$ we have $\dist_{C^*}(v_{i_j},v_{i_{j+1}})\leq \dist_{P}(v_{i_j},v_{i_{j+1}})$, which by the triangular inequality implies
    \[
        50d=\dist_{C^*}(x,y)\leq \sum_{j=1}^{k-1}\dist_{C^*}(v_{i_j},v_{i_{j+1}})\leq \sum_{j=1}^{k-1}\dist_{P}(v_{i_j},v_{i_{j+1}})=\dist_P(x,y)=r-1 \leq d
    \]
--- a contradiction.
\end{proof}

Fix $P'$ as in Claim \ref{claim: creating two cycles}, and set $V(P')\cap V(C^*)=\{a,b\}$.
Let $P_1,P_2$ be the two distinct paths starting from $a$ and ending at $b$ in $C^*$. The path $P'$ splits $C^*$ into two cycles $C_1$ and $C_2$, where, for $i\in\{1,2\}$, $C_i=P_i\cup P'$ --- see Figure \ref{Fig-local-to-global}. Without loss of generality, assume that $|E(P_1)|\leq |E(P_2)|$. By Claim \ref{claim: creating two cycles}, we also have $|E(P')|< |E(P_1)|\leq |E(P_2)|$. In particular, it implies that both $C_i$ are actually cycles, i.e. $|E(C_2)|\geq |E(C_1)|\geq 3$ . Moreover, recalling that $|E(C^*)|=\ell+1=100d+1$ due to Claim~\ref{cl:ell=100d},  we get that, for $i\in\{1,2\}$,
$$
   |E(C_i)|=|E(P_i)|+|E(P')| \leq |E(P_i)|+|E(P_1)|-1\leq |E(P_2)|+|E(P_1)|-1=|E(C^*)|-1=100d.
$$
%and also
%\[
%  e_{C_i} \geq \dist_C(s,t)+\dist_{P'}(s,t) \geq 2\dist_{P}(s,t)+1\geq 3.
%\]
We conclude that $3\leq |E(C_i)|\leq 100d$ for both $i=1$ and $i=2$.

Both $C_1,C_2$ are activated from the subprocess of $\mathbf{A}$ without the last triangle $\Delta_s$, since this subprocess starts from $T$ and $P'\subseteq T$.
By the minimality of $s(C)$, we get that $C_1,C_2\in\mathcal{X}$. Hence, both cycles $C_1,C_2$ admit nice activation subprocesses $\mathbf{A}_1,\mathbf{A}_2\subseteq\mathbf{T}$ with at most $K_1\ln n$ vertices, and such that $C_1,C_2$ are contractible in $K(\mathbf{A}_1),K(\mathbf{A}_2)$, respectively. In the same way as in the proofs of Claims~\ref{claim: last triangle is not boundary triangle},~\ref{cl:ell=100d}, we now observe that $\mathbf{A}_3\coloneqq (\{e',e''\};\Delta_s)\subseteq\mathbf{T}$ is a nice activation process of $\Delta_s$ and $\Delta_s$ is contractible in $K(\mathbf{A}_3)$. Two applications of  Lemma~\ref{lm:union_contractible} imply that the process $\mathbf{A}^{\cup}:=\mathbf{A}_1\cup\mathbf{A}_2\cup\mathbf{A}_3$ activates $C$ and $C$ is contractible in $K(\mathbf{A}^{\cup})$. Therefore, by Lemma \ref{lemma: activations contain good activations} there is a nice activation subprocess of $\mathbf{A}^{\cup}$ that activates $C$ and such that $C$ is contractible in its activation complex. By Observation~\ref{obs: union of complexes}, $|K^{(0)}(\mathbf{A}^{\cup})|\leq 2K_1\ell\ln n<K_2\ell \ln n$. This contradicts the assumption that $C\notin\mathcal{X}$ and concludes the proof of Proposition \ref{prop: local to global}.\qedhere

\begin{figure}[ht]
\centering
\includegraphics[scale=0.49]{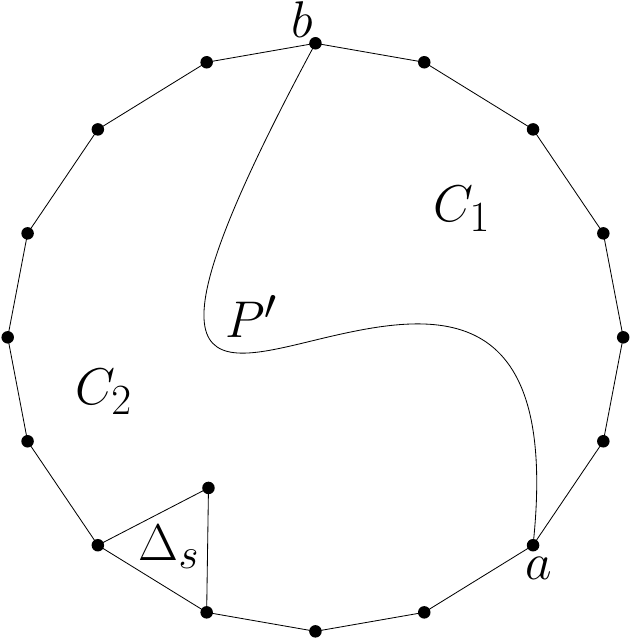}
  \caption{The three cycles $C_1$, $C_2$, and $\Delta_s$ which admit fast activation processes}\label{Fig-local-to-global}
\end{figure}

\end{proof}

\section{Concluding remarks}
\label{sc:discussions}

In this section, we discuss several generalisations of our result as well as remaining challenges. 

First, in Section~\ref{subsc:LM}, we show that our method is robust enough to tackle a similar problem in the Linial-Meshulam complexes. The problem of activating the 1-skeleton of the Linial-Meshulam complex from a tree was studied in \cite{KorPelSud}. % This is done in Subsection \ref{subsc:LM}, 
We show that our methodology allows to strengthen the main result of \cite{KorPelSud}.
 
Second, in Section~\ref{subsc:hampath}, we show that, when $p\geq (1+\eps)n^{-1/3}$, the diameter of a tree that activates $G_{n,p}$ can be `artificially' extended: \whp\ there exists a Hamilton path $P$ in $G_{n,p}$ such that $P\to G_{n,p}$. On the one hand, it shows that it is not the case that the diameter of a tree in the first part of Theorem~\ref{th:general} has to be bounded. On the other hand, this is essentially the same tree --- there exists an activation process of $P$ from a certain tree $T$ with diameter 4 such that $P$ can be obtained from $T$ in the corresponding activation complex via a sequence of transformations as in Figure~\ref{Fig-changing edges}. We actually believe that any tree that has diameter more than $n^{1/18-\varepsilon}$ can be reduced (in the above sense) to a tree with diameter at most $n^{1/18-\varepsilon}$. If this is true, this would prove that the upper bound from Theorem~\ref{th:general} is tight, up to a constant factor.

Finally, in Section~\ref{subsc:questions}, we discuss remaining questions and future avenues.

\subsection{An analogue in the Linial-Meshulam complex}
\label{subsc:LM}

%\MZ{Recall the definition of LM complex}

The aim of this subsection is to prove an analogue of Theorem~\ref{th:shallow} in the setting of the Linial-Meshulam complexes. Since the proof resembles the proof of Theorem~\ref{th:shallow}, we only sketch the former proof here and pay most attention to the steps in the latter proof that have to be modified. Nevertheless, for completeness, a full proof is given in Appendix \ref{Appendix}.

%Although in this subsection we only sketch the proof of the modified version of Theorem \ref{th:shallow}, for completeness, a rigorous proof is given in Appendix \ref{Appendix}. 

Let us start by introducing some notations. Let $n$ be an integer, and let $p\in (0,1)$. The ($2$-dimensional) {\it Linial-Meshulam complex} $H_{n,p}$ is constructed by setting $H_{n,p}^{(1)}\coloneqq K_{n}$ and then including every triangle, independently of the others, with probability $p$.

Let $F\subseteq G\subseteq  K_n$, and let $X$ be a $2$-dimensional complex with $X^{(1)}=K_n$. 
We say that $F$ {\it triadically activates} $G$ in $X$ if there is a sequence of graphs $F\coloneqq F_0 \subset F_1 \subset \cdots \subset F_s \eqqcolon G$ where $F_i$ is obtained from $F_{i-1}$ by adding an edge $e_i$ that belongs to a triangle $\Delta_i\in X$ whose edges, in turn, belong to $F_i$. In this case, we write $F\stackrel{X}{\rightarrow}G$ 
and say that {\it $F$ activates $G$}. Notice that this notion of activation differs from its analogue in graphs, defined in Section~\ref{sc:results}: here, we are only allowed to use triangles that belong to $X$, although a graph activated at either step may induce some new triangles that do not belong to $X$.

%\MZ{and, with a slight abuse of terminology,} say that {\it $F$ activates $G$}. \MZ{Actually I do not see here that we use activation in a different way as in the rest of the paper. Should we delete this `abuse of terminology'?} \AC{We do use it in Lemma \ref{lem: exists an ell-cycle with more than log LM}, we can rephrase, then we should also rephrase Lemmas \ref{lem: exists an ell-cycle with more than log},\ref{lem: exists an ell-cycle with more than log LM Appendix}.}

We prove the following analogue to the 0-statement in Theorem \ref{th:shallow}.

\begin{theorem}\label{th:0-statement LM}
    Let $\eps>0$ be a constant, let $0<p\coloneqq p(n)\leq (2^{-7/2}-\eps)n^{-1/2}$, and let $\mathbf{H}\sim H_{n,p}$.
    Then, w.h.p.\ there is no tree $T\subset K_n$ of diameter at most $n^{1/12-\eps}$ such that $T\stackrel{\mathbf{H}}{\longrightarrow} K_n$. 
\end{theorem}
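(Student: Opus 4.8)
The plan is to follow closely the proof of the second part of Theorem~\ref{th:shallow}, replacing the random graph $\mathbf{G}\sim G_{n,p}$ by the random complex $\mathbf{H}\sim H_{n,p}$ and, correspondingly, replacing the event ``an edge is present with probability $p$'' by ``a triangle is present with probability $p$''. All of the purely combinatorial and topological machinery of Sections~\ref{subsection:1}, \ref{subsection:3} and~\ref{subsection:4} transfers verbatim: the activation complex $K(\mathbf{A})$ and the notion of a nice process, the activation diagram and the counting Lemmas~\ref{lem: number of edges in minimal process}, \ref{lemma:internal-length} and~\ref{lem: low w}, the subprocess calculus together with Observation~\ref{obs: union of complexes} and Lemmas~\ref{lm:union_contractible} and~\ref{lemma: activations contain good activations}, and the ``from local to global'' Proposition~\ref{prop: local to global}. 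The point is that none of these ever uses that all triangles are available --- they only manipulate the triangles that occur in a given process --- and all the reductions performed (inside Lemma~\ref{lemma: activations contain good activations}, and in forming unions of subprocesses) only delete $2$-faces, so every activation complex that arises stays inside $\mathbf{H}$. One also needs the triadic analogue of Lemma~\ref{lem: main lemma for 0-statement}: if a process starting from a forest triadically activates a cycle $C$, then $C$ is contractible in $K(\mathbf{A})$; this is proved exactly as before via the algorithmic van Kampen diagram, and it is what feeds the local-to-global argument.

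The genuinely new input is the first-moment computation. Given an activation diagram $D\in P_{v,w}$ of an $\ell$-cycle arising from a nice process $\mathbf{A}=(H;\Delta_1,\dots,\Delta_s)$, the diagram $D$ has exactly $s$ triangular faces, and since triangles of $\mathbf{H}$ are present independently with probability $p$, the event that all faces of $D$ lie in $\mathbf{H}$ has probability $p^{s}$. Each activation step reveals one new edge, so $s=|E(K^{(1)}(\mathbf{A}))|-|E(H)|$; combining Lemma~\ref{lem: number of edges in minimal process} with $|E(H)|=v-1+d_H$ and $w=2d_H+\exc(\mathbf{A})$ gives $s\ge 2v+w-2-\ell$. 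Feeding this and the bound on $|P_{v,w}|$ from Lemma~\ref{lem: low w} into $\E[Z^{\ell}_{v,w}]\le|P_{v,w}|\,p^{s}\le|P_{v,w}|\,(cn^{-1/2})^{2v+w-2-\ell}$, the factor $n^{v}$ cancels against the $n^{-v}$ produced by $(n^{-1/2})^{2v}$ and the subexponential terms are absorbed, yielding the analogue of Lemma~\ref{lem: expectation of the number of activations}:
\[
    \E\!\left[Z^{\ell}_{v,w}\right]\ \le\ n^{(-1/2+o(1))w+(1/2+o(1))\ell+1}\cdot v^{6w}\cdot\bigl(2^{7}c^{2}\bigr)^{v}.
\]
It is here that $c^{2v}$ (rather than $c^{3v}$ in the graph case) is produced, which is exactly why the critical constant becomes $2^{-7/2}$: all the estimates below require $2^{7}c^{2}<1$.

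With this estimate in hand, the two probabilistic lemmas go through. Fixing $0<\eps_0<1/12$ (we may assume $\eps<1/12$, the statement being vacuous otherwise) and $K_2>K>100K_1>-100/\ln(2^{7}c^{2})>0$, the same Markov-inequality computation as in Lemma~\ref{lem: no ell-cycle between log and Klog} --- using that $v\le K_2\ell\ln n$ and $\ell\le n^{\eps_0}$ force $v^{6w}\le n^{(6\eps_0+o(1))w}$ with $6\eps_0<1/2$, so the sum over $w$ converges --- gives that w.h.p., for every $3\le\ell\le n^{\eps_0}$, no $H$ with $|V(H)|\in[K_1\ell\ln n,K_2\ell\ln n]$ triadically activates an $\ell$-cycle via a nice process. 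The analogue of Lemma~\ref{lem: exists an ell-cycle with more than log} is actually simpler, because $\mathbf{H}^{(1)}=K_n$ is deterministic: the number of $\ell$-cycles is $Y_\ell=\tfrac{1}{2\ell}\cdot\tfrac{n!}{(n-\ell)!}=n^{(1-o(1))\ell}$ with no randomness, so no second-moment step is needed, and from $\E[X_\ell]\le\sum_{v,w}\E[Z^{\ell}_{v,w}]\le n^{1+(1/2+o(1))\ell}=o(Y_\ell)$ together with Markov and a union bound over $\ell$ one gets that w.h.p., for every $3\le\ell\le n^{\eps_0}$, there is an $\ell$-cycle that no $H$ with $|V(H)|\in[\ell,K\ell\ln n]$ triadically activates via a nice process. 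These are exactly hypotheses~\ref{local1} and~\ref{local2} of Proposition~\ref{prop: local to global} with $L=n^{\eps_0}$, so w.h.p.\ no tree of diameter at most $n^{\eps_0}/100$ triadically activates $K_n$ in $\mathbf{H}$; choosing $\eps_0\in(1/12-\eps,1/12)$ makes $n^{\eps_0}/100>n^{1/12-\eps}$ for large $n$, completing the argument.

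The hard part here is not conceptual but bookkeeping: one must verify that every step of Sections~\ref{subsection:1}--\ref{subsection:4} that referred to $\mathbf{G}$ or $X^{(2)}(\mathbf{G})$ is insensitive to restricting the set of available triangles, and in particular that Lemma~\ref{lemma: activations contain good activations} and Observation~\ref{obs: union of complexes} stay valid when $K(\mathbf{A})$ is only required to be a subcomplex of the fixed complex $\mathbf{H}$ rather than a clique complex. Since the reductions in Lemma~\ref{lemma: activations contain good activations} only delete $2$-faces and unions of subprocesses only take unions of the $2$-faces already present, no new triangle is ever created, so all activation complexes that arise are subcomplexes of $\mathbf{H}$ and the whole argument carries over.
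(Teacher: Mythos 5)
Your proposal is correct and follows essentially the same route as the paper: Sections~\ref{subsection:1}, \ref{subsection:3} and \ref{subsection:4} are reused unchanged, the only genuinely new step is replacing $p^{|E(D)|}$ with $p^{|F(D)|}$ and bounding $|F(D)|\ge 2v+w-2-\ell$, which yields the factor $(2^7c^2)^v$ and hence the threshold $2^{-7/2}$; your observation that the analogue of Lemma~\ref{lem: exists an ell-cycle with more than log} is simpler since $\mathbf{H}^{(1)}=K_n$ is deterministic also matches the paper's appendix.
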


%Note that the above theorem is analogous only to the $0$-statement of Theorem~\ref{th:shallow}.
 The $1$-statement was proved by Kor\'{a}ndi, Peled, and Sudakov \cite{KorPelSud}.  They also proved that the bound in the 1-statement is sharp for stars. %Their main theorem in our terminology is the following.

\begin{theorem}[Kor\'{a}ndi, Peled, Sudakov \cite{KorPelSud}]\label{th:KorPelSud}
    Let $\varepsilon>0$, $p=p(n)\in [0,1]$, and $\mathbf{H}\sim H_{n,p}$.
    \begin{enumerate}
        \item If $p\geq (1/2+\eps)n^{-1/2}$, then \whp\ there exists a star $S\cong K_{1,n-1}$ such that $S\stackrel{\mathbf{H}}{\longrightarrow} K_n$.
        \item If $p\leq (1/2-\eps)n^{-1/2}$, then \whp\ there is no star $S\cong K_{1,n-1}$ such that $S\stackrel{\mathbf{H}}{\longrightarrow}K_n$.
    \end{enumerate}
\end{theorem}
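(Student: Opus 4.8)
The plan is to reduce both directions to a statement about weak $K_3$-activation of $K_{n-1}$ from a binomial random graph. Every star $K_{1,n-1}$ in $K_n$ has the form $S_v:=\{\{v,a\}:a\neq v\}$, so fix $v$ and put $V':=V\setminus\{v\}$. I claim that $S_v\stackrel{\mathbf H}{\to}K_n$ if and only if the graph $G^v:=\{\{a,b\}\subseteq V':\{v,a,b\}\in\mathbf H\}$ weakly $K_3$-activates $K_{n-1}$ using only the triangles $\{a,b,c\}\in\mathbf H$ with $a,b,c\in V'$. Indeed, the edges through $v$ are present from the start, while an edge $\{a,b\}\subseteq V'$ can be added either through the triangle $\{a,b,v\}$ --- which, since $\{v,a\},\{v,b\}$ are already active, requires exactly $\{v,a,b\}\in\mathbf H$, i.e.\ $\{a,b\}\in G^v$ --- or through a triangle $\{a,b,c\}\in\mathbf H$ with $c\in V'$ and $\{a,c\},\{b,c\}$ active; so one may first add all of $G^v$ and then run the ordinary process on $V'$. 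The point is that $G^v\sim G(n-1,p)$, that $\mathbf H[V']\sim H_{n-1,p}$, and that the two are \emph{independent}, being determined by disjoint families of triangle indicators.

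For the $1$-statement take $v$ arbitrary and assume $p\ge(1/2+\eps)n^{-1/2}$. The argument splits into a \emph{core} step and an \emph{endgame}. In the core step one shows that \whp\ the process started from $G^v$ reaches a complete graph $K_W$ on a set $W\subseteq V'$ with $|W|\ge\delta n$ for a constant $\delta=\delta(\eps)>0$: starting from $A_0:=G^v$, in round $i$ one adds every edge contained in an $\mathbf H$-triangle two of whose edges already lie in $A_{i-1}$, and one follows the growth of the active neighbourhoods round by round, using that $G^v$ is connected with minimum degree $(1/2+o(1))\sqrt n$ \whp; the threshold constant $1/2$ is exactly what makes this expansion self-sustaining all the way to a clique on a linear vertex set. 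In the endgame, given $K_W$ active with $|W|=\delta n$: for each $u\in V'\setminus W$ the set $N_{G^v}(u)\cap W$ is non-empty \whp\ (the expected number of exceptions is $n(1-p)^{\delta n}=o(1)$) and the ``link graph through $u$'', namely $\{\{w,w'\}\subseteq W:\{u,w,w'\}\in\mathbf H\}\sim G(|W|,p)$, is connected \whp\ since $|W|p=\Theta(\sqrt n)\gg\ln n$; as all edges inside $W$ are active, a one-neighbour bootstrap activates $\{u,w\}$ for every $w\in W$. Once every vertex of $V'$ has full degree into $W$, any remaining edge $\{u,u'\}$ is activated through a triangle $\{u,u',w\}$ with $w\in W$, which exists \whp\ because $(1-p)^{|W|}=o(n^{-2})$; a union bound over pairs yields $K_{V'}$, hence $S_v\stackrel{\mathbf H}{\to}K_n$.

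For the $0$-statement assume $p\le(1/2-\eps)n^{-1/2}$. By a union bound over the $n$ centres it suffices to show $\Prob[S_v\stackrel{\mathbf H}{\to}K_n]=o(1/n)$, i.e.\ that $G^v$ typically fails to activate $K_{n-1}$ in $\mathbf H[V']$. This is done by a first-moment argument over activation witnesses, in the spirit of Section~\ref{sec:main proof} but without the local-to-global step of Section~\ref{subsection:4}: one passes to a \emph{nice} activation process (as in Remark~\ref{rk:nice-minimal}) and encodes it as an activation diagram $D$ whose external boundary is $K_{n-1}$ and whose initial graph is distributed as $G(n-1,p)$; a double count as in Lemma~\ref{lem: number of edges in minimal process} bounds $|E(D)|$ from below in terms of the number of vertices and the excess, a counting bound in the spirit of Lemma~\ref{lem: low w} (adjusted to the present range, where the initial graph has complexity $\Theta(n^{3/2})$) bounds the number of diagrams, and multiplying by $p^{|E(D)|}$ and summing --- using the independence of $G^v$ and $\mathbf H[V']$ to keep the two contributions apart --- one finds the total expectation is $o(1/n)$ precisely when the leading constant is below $1/2$.

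The main obstacle is the \emph{sharp constant} $1/2$. In the $0$-statement it forces the diagram count to be matched against $p^{|E(D)|}$ tightly enough that the polynomial-in-$n$ factors cancel and only a geometric term survives, whose base crosses $1$ exactly at $p=\tfrac12 n^{-1/2}$ --- a computation of the same flavour as Lemma~\ref{lem: expectation of the number of activations} but more delicate, since here the initial graph is far from a tree. In the $1$-statement it is the core step that is delicate: the round-by-round expansion is self-sustaining only above this constant, so both the bookkeeping for the neighbourhood growth and the passage from expanding neighbourhoods to a clique on a linear set have to be handled with care. By contrast the reduction and the endgame are routine.
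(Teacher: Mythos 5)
This theorem is not proved in the paper at all --- it is quoted verbatim from Kor\'andi--Peled--Sudakov \cite{KorPelSud}, so the comparison below is against the known proof there, which analyses the triadic process via the differential equation method in both directions. Your opening reduction is correct and standard: conditioning on the centre $v$, the link $G^v=\{\{a,b\}:\{v,a,b\}\in\mathbf H\}$ is distributed as $G_{n-1,p}$, is independent of $\mathbf H[V']$, and $S_v\to K_n$ iff the process seeded by $G^v$ inside $V'$ completes. That part is fine.

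The genuine gap is in how you propose to extract the sharp constant $1/2$, in both directions but most seriously in the 0-statement. A first-moment bound over (nice) activation diagrams in the style of Lemmas~\ref{lem: number of edges in minimal process} and~\ref{lem: low w} cannot deliver a sharp threshold here: the paper's own transplantation of exactly this machinery to the Linial--Meshulam setting (Theorem~\ref{th:0-statement LM}) only reaches the constant $2^{-7/2}\approx 0.088$ against the true $1/2$, and the counting bounds are explicitly stated to become ineffective once the boundary length or the number of vertices is polynomial in $n$. A witness for activating all of $K_{n-1}$ has $\Theta(n^2)$ triangles and an initial graph of complexity $d_{G^v}=\Theta(n^{3/2})$, so the entropy of the diagram count and the factor $p^{|F(D)|}$ are each of order $\exp(\Theta(n^2\log n))$, and there is no reason (and you give none) for them to cancel precisely at $p=\tfrac12 n^{-1/2}$. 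Worse, for $p=(1/2-\eps)n^{-1/2}$ the process does \emph{not} fail for lack of large witnesses --- it activates a positive proportion of all edges before stalling --- so the event to be ruled out is that a globally large process terminates short of $K_{n-1}$, which is a statement about the dynamics of the process, not about the nonexistence of structured subcomplexes. This is precisely why \cite{KorPelSud} track the process with Wormald's method rather than union-bounding over witnesses. A symmetric problem afflicts your 1-statement ``core step'': the naive round-by-round density recursion $\rho_{k+1}\approx\rho_k(1+np\rho_k)$ either escapes to a clique for every constant $c>0$ or stalls for every $c$, depending on how one accounts for the reuse of triples and the correlations between the two supporting edges; the constant $1/2$ only emerges from the self-consistent fixed-point/ODE analysis, which your sketch names as ``delicate'' but does not supply. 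Finally, note that the 0-statement as stated quantifies over all $n$ centres, so even granting the fixed-star result one needs a failure probability $o(1/n)$ per centre or a separate argument; this should at least be acknowledged.
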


Although the constant factor in the 0-statement in Theorem \ref{th:0-statement LM} does not match the constant factor in the 1-statement in Theorem~\ref{th:KorPelSud}, it establishes the activation threshold, up to a constant factor, for all trees that have diameter at most $n^{1/12-\eps}$.

Before sketching the proof of Theorem~\ref{th:0-statement LM}, let us notice that, when $p<n^{-1/2-\varepsilon}$, w.h.p. there is no tree $T\subset K_n$ such that $T\stackrel{\mathbf{H}}{\longrightarrow}K_n$, due to the celebrated result of Babson, Hoffman, and Kahle~\cite{BabHofKah2011} and Lemma \ref{lem: main lemma for 0-statement}.

\begin{theorem}[Babson, Hoffman, Kahle \cite{BabHofKah2011}]
 For every $\eps>0$ and $p<n^{-1/2-\eps}$ w.h.p.\ $\mathbf{H}$ is not simply connected.     
\end{theorem}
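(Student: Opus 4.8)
The plan is to mimic the proof of Theorem~\ref{th:shallow}, part 2, replacing graph activation by triadic activation and adjusting the counting exponents to the Linial--Meshulam setting. First, recall that by Lemma~\ref{lem: main lemma for 0-statement} (or rather its triadic analogue, which is proved identically: an activation process of a cycle from a spanning tree yields an algorithmic Van Kampen diagram, hence a simplicial filling), if $T\stackrel{\mathbf{H}}{\longrightarrow}K_n$ for some spanning tree $T$, then for every cycle $C\subseteq K_n$ the activation process of $C$ from $T$ is contractible in the corresponding activation complex. Thus it suffices to establish a from-local-to-global principle and the two local properties analogous to \ref{local1} and \ref{local2}, now with the Linial--Meshulam exponents: the key arithmetic change is that a triangle contributes $3$ edges to the ambient graph but costs only one factor of $p\sim n^{-1/2}$ (as a triangle of $\mathbf{H}$), so the relevant density balance shifts from exponent $1/3$ to $1/2$, yielding the threshold constant $2^{-7/2}$ and the diameter restriction $n^{1/12-\eps}$.

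Concretely, I would redo Subsection~\ref{subsection:1} in the triadic setting: define the activation complex $K(\mathbf{A})$, the contribution/excess functions, nice processes, and activation diagrams exactly as before --- these are purely combinatorial and transfer verbatim. Lemma~\ref{lem: number of edges in minimal process} becomes a statement about the \emph{number of triangles} rather than edges: a nice process activating an $\ell$-cycle uses at least roughly $|V(H)|+d_H+\tfrac13(\exc(\mathbf{A}))-O(\ell)$ triangles (here each activation step consumes one triangle and the double-counting of (triangle, non-activating edge) pairs gives $2s = 2(|E(G)|-|E(H)|)$ while on the other side $\sum_e \con_{\mathbf{A}}(e) = |E(G)| + |E(H)| - \ell + \exc(\mathbf{A})$; combining with $|E(G)| \le 3s + |E(H)| $ and $|E(H)|\ge |V(H)|-1$ gives $s \ge |V(H)| + d_H + \tfrac13\exc(\mathbf{A}) - O(\ell)$, up to constants I would pin down carefully). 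The counting Lemma~\ref{lem: low w} carries over with the same bound on the number of activation diagrams $|P_{v,w}|$, since that lemma only counts the combinatorial diagrams and does not see $p$; the tree-walk decomposition argument (Lemma~\ref{lem: union of walks covering a tree} and the inductive labelling count) is unchanged. The probabilistic transfer, the analogue of Lemma~\ref{lem: expectation of the number of activations}, is where the new exponent enters: $\mathbb{E}[Z_{v,w}^\ell] \le |P_{v,w}| \cdot p^{\,s} $ where $s \gtrsim v + w/2 - O(\ell)$ is the number of triangles, and with $p < cn^{-1/2}$, $c<2^{-7/2}$, one gets $\mathbb{E}[Z_{v,w}^\ell] \le n^{(-1/2+o(1))w + (1/2+o(1))\ell + 1}\cdot v^{O(w)}\cdot (2^{7}c^{2})^{v}$ or similar, which is summable in the relevant ranges when $\ell \le n^{1/12-\eps}$. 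From here Lemmas~\ref{lem: no ell-cycle between log and Klog} and~\ref{lem: exists an ell-cycle with more than log} go through with $K_1 > -1/\ln(2^7 c^2)$ and the constraint $\eps < 1/12$ replacing $\eps<1/18$.

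With the two local properties in hand, Subsections~\ref{subsection:3} and~\ref{subsection:4} transfer with essentially no change: the notion of subprocess, the union operation $\mathbf{A}_1\cup\mathbf{A}_2$, Observation~\ref{obs: union of complexes}, Lemma~\ref{lm:union_contractible}, and Lemma~\ref{lemma: activations contain good activations} are all purely about the combinatorics of activation complexes and contractibility, so they hold identically for triadic activation (one must only check that the collapse moves in Lemma~\ref{lemma: activations contain good activations}, which remove an edge lying in a unique triangle, still produce a valid triadic subprocess --- they do, since we only ever \emph{delete} triangles and edges). The from-local-to-global Proposition~\ref{prop: local to global} then applies with $L := n^{1/12-\eps}$, contradicting the existence of a tree of diameter at most $L/100$ that activates $K_n$, which gives Theorem~\ref{th:0-statement LM}.

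\textbf{Main obstacle.} The main technical point to get right is the arithmetic in the triadic analogue of Lemma~\ref{lem: number of edges in minimal process} and the resulting first-moment bound: one must carefully track that a triangle of $\mathbf{H}$ carries exactly one factor of $p$ while contributing up to three edges to the ambient graph, and verify that the critical constant comes out to exactly $2^{-7/2}$ (from $2^{7}c^{2} < 1$) and the diameter exponent to $1/12$ (from requiring $-1/2 + 6\eps < 0$, i.e.\ $\eps < 1/12$). A secondary subtlety is that, unlike in the graph case, activating an edge can create triangles \emph{not} in $\mathbf{H}$ which cannot subsequently be used --- but this only \emph{restricts} the available moves, so every triadic activation process is in particular a graph activation process using a subset of the triangles of $X^{(2)}(\mathbf{H}^{(1)})=X^{(2)}(K_n)$ that lie in $\mathbf{H}$; hence all structural lemmas about nice processes and activation diagrams apply, and one only needs the first-moment bound to range over diagrams whose triangles all lie in $\mathbf{H}$, which is exactly what $Z_{v,w}^\ell$ counts. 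I expect no genuinely new idea is needed beyond the careful bookkeeping, which is why the authors say they only sketch it.
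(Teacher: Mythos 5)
You have sketched a proof of the wrong statement. What you write is, in substance, a sketch of Theorem~\ref{th:0-statement LM} (no tree of diameter at most $n^{1/12-\eps}$ triadically activates $K_n$ in $\mathbf{H}$ when $p\le (2^{-7/2}-\eps)n^{-1/2}$), and it tracks Appendix~\ref{Appendix} of the paper quite closely --- the triangle-counting replacement of Lemma~\ref{lem: number of edges in minimal process}, the unchanged combinatorial Lemma~\ref{lem: low w}, the $(2^7c^2)^v$ exponent, the $\eps<1/12$ constraint, and the re-use of the subprocess and local-to-global machinery are all correct. But the statement you were asked about is the theorem of Babson, Hoffman, and Kahle, which asserts that $\mathbf{H}$ is \emph{not simply connected} w.h.p.\ when $p<n^{-1/2-\eps}$. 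That is a different conclusion, and your argument does not reach it.

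The logical gap is a one-way implication. Lemma~\ref{lem: main lemma for 0-statement} gives: if some spanning tree activates, then the complex is simply connected. Its contrapositive lets one deduce ``no tree activates'' from ``not simply connected,'' which is exactly how the paper \emph{uses} the Babson--Hoffman--Kahle theorem. Your argument runs the opposite way: you show ``no tree (of bounded diameter) activates'' and would like to conclude ``not simply connected,'' but that implication is false in general --- a complex can be simply connected (every cycle has a filling) while still admitting no single subtree that activates everything, and indeed the paper raises precisely this possibility in Section~\ref{subsc:questions}. To prove the stated theorem you must exhibit a cycle in $K_n=\mathbf{H}^{(1)}$ with \emph{no} simplicial filling in $\mathbf{H}$ whatsoever, not merely rule out a single efficient activator. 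The local-to-global activation framework bounds the cost of activation processes; it never directly produces a non-contractible loop. The actual Babson--Hoffman--Kahle argument is a genuinely different one (a first-moment bound plus Gromov's local-to-global criterion for hyperbolicity of $\pi_1(\mathbf{H})$, showing that typical short loops cannot bound), and the paper simply cites it without reproving it.
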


%Before sketching the proof of Theorem~\ref{th:0-statement LM}, let us mention the differences between the two theorems. On the one hand, Theorem \ref{th:0-statement LM} does not match the $1$-statement of Theorem \ref{th:KorPelSud}. On the other hand, it extends Theorem \ref{th:KorPelSud} by determining the threshold for activation from \emph{some} tree of small polynomial depth, up to a constant factor.

The proof of Theorem \ref{th:0-statement LM} is almost verbatim to the proof of the 0-statement in Theorem \ref{th:shallow}. Recall that the 0-statement in Theorem \ref{th:shallow} follows from Lemma \ref{lem: no ell-cycle between log and Klog}, Lemma \ref{lem: exists an ell-cycle with more than log}, and Proposition~\ref{prop: local to global}. Let us state the analogues of Lemmas~\ref{lem: no ell-cycle between log and Klog},~\ref{lem: exists an ell-cycle with more than log} that, combined with Proposition~\ref{prop: local to global}, imply Theorem~\ref{th:0-statement LM}.

%To state the main lemmas let us introduce some more notations.
 Let $\mathbf{H}\sim H_{n,p}$, where $p<cn^{-1/2}$ and $c\in(0,2^{-7/2})$ is a constant. Moreover, let $\ell \coloneqq \ell(n)\geq 3$ be a sequence of integers, and also let $v=v(n),w=w(n)$ be two sequences of non-negative integers. Finally, let $Z^{\ell}_{v,w}$ be the (random) number of activation diagrams $D\in P_{v,w}$ such that $D\subseteq \mathbf{H}$\footnote{As in Section~\ref{subsection:2}, it means that all triangles of $D$ also belong to $\mathbf{H}$.}. The proofs of the following two lemmas are presented in Appendix \ref{Appendix}.

%Combined with Lemma \ref{lem: main lemma for 0-statement}, we obtain an analogue of the $0$-statement in Theorem \ref{th:general}. That is, we find that for any small and fixed $\eps>0$ and any $p<n^{-1/2-\eps}$ w.h.p.\ there is no tree activating $K_n$ in $\mathbf{H}$. Thus, to prove Theorem \ref{th:0-statement LM} it is enough to assume that $p\geq n^{-1/2-\eps}$ for some small and fixed $\eps$, let us do so.

%Next, one can prove the following two adaptations of Lemma \ref{lem: no ell-cycle between log and Klog} and of Lemma \ref{lem: exists an ell-cycle with more than log} -- the proofs can be found in the Appendix \ref{Appendix}.
\begin{restatable}{lemma}{xyz}\label{lem: no ell-cycle between log and Klog LM}
    Fix constants $K_2>K_1> -\frac{1}{\ln(2^7c^2)}$ and $0<\eps<1/12$.
     Then, w.h.p.\ for every integer $3\leq \ell=\ell(n)\leq n^{\eps}$, there are no $H,C\subseteq K_n$ such that $C$ is an $\ell$-cycle, $|V(H)|\in [K_1\ell\ln n , K_2\ell\ln n]$, and $H \stackrel{\mathbf{H}}\longrightarrow C$ via a nice activation process.
\end{restatable}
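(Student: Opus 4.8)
The plan is to mimic the proof of Lemma~\ref{lem: no ell-cycle between log and Klog} almost verbatim, tracking how the arithmetic changes when the ambient density threshold moves from $n^{-1/3}$ to $n^{-1/2}$. The combinatorial skeleton --- activation processes, nice processes, activation diagrams, the sets $P_{v,w}$, and the counting Lemma~\ref{lem: low w} --- is purely graph-theoretic and does not refer to how triangles are sampled, so it carries over unchanged; in particular $|P_{v,w}|\leq n^v(2v)^{4w}w^w\binom{2v+w-2}{w}2^{7v}$ still holds, and the identity \eqref{eq:Pvw=0} giving $P_{v,w}=\emptyset$ for $w>2v^2$ is untouched. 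The only genuinely new input is that in $\mathbf H\sim H_{n,p}$ the probability that a fixed activation diagram $D$ embeds is $p^{|F(D)|}$, where $F(D)$ is the set of \emph{triangles} of $D$ (not its edges), since in the Linial--Meshulam model edges are always present and triangles are the random objects.

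So the first step is to prove the analogue of Lemma~\ref{lem: expectation of the number of activations}: one needs a lower bound on $|F(D)|$ for $D\in P_{v,w}$. Here I would use that in a nice activation process $\mathbf A=(H;\Delta_1,\ldots,\Delta_s)$ the number of faces is exactly $s=|E(K^{(1)}(\mathbf A))|-|E(H)|$, and the double-counting identity from the proof of Lemma~\ref{lem: number of edges in minimal process} gives $s=|E(K^{(1)}(\mathbf A))|+|E(H)|-\ell+\exc(\mathbf A) - 2|E(H)|$... more directly, from $2s=|E(G)|+|E(H)|-\ell+\exc(\mathbf A)$ and $|E(G)|\geq 3|V(H)|+3d_H+\exc(\mathbf A)-(3+\ell)$ together with $|E(H)|=|V(H)|-1+d_H$, one extracts $s\geq |V(H)|+d_H+\tfrac12\exc(\mathbf A)-\tfrac{3+\ell}{2}+\ldots$; carrying the bookkeeping through yields $|F(D)|=s\geq (3/2)v + (1/2)w - O(\ell)$ (the exact constants should be recomputed, but the shape is ``$\tfrac32 v+\tfrac12 w$ minus lower order''). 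Substituting $p<cn^{-1/2}$ gives
\[
  \E[Z_{v,w}^\ell]\leq |P_{v,w}|\cdot p^{|F(D)|}\leq n^v (2v)^{4w} w^w \binom{2v+w-2}{w} 2^{7v}\cdot (cn^{-1/2})^{(3/2)v+(1/2)w-O(\ell)},
\]
and after splitting $\binom{2v+w-2}{w}\leq (3v)^w$ for $w\leq v$ and $\leq 2^{3w}$ for $w>v$ exactly as before, one obtains $\E[Z_{v,w}^\ell]\leq n^{(-1/2+o(1))w+(1/2+o(1))\ell+1}\cdot v^{O(w)}\cdot (2^7c^2)^v$. Note the exponent of $c$ has dropped from $3$ to $2$, which is why the threshold constant in the statement is $-1/(\ln(2^7c^2))$ rather than $-1/(\ln(2^7c^3))$.

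The second step is the first-moment/union-bound argument, identical in structure to the proof of Lemma~\ref{lem: no ell-cycle between log and Klog}: fix $3\le\ell\le n^\eps$, set $Z_\ell:=\sum_{v=K_1\ell\ln n}^{K_2\ell\ln n}\sum_{w=0}^{2v^2}Z_{v,w}^\ell$, observe it is integer-valued, and bound $\E[Z_\ell]$ by summing the per-$(v,w)$ estimate. The factor $n^{(-1/2+o(1))w}$ is summable over $w$ (the $v^{O(w)}$ and Catalan-type polynomial factors are absorbed since $w\le 2v^2\le n^{o(1)}\cdot$polylog and $\eps<1/12$ makes $6\eps<1/2$, so $n^{(-1/2+6\eps+o(1))w}\to$ summable), leaving $\E[Z_\ell]\leq \exp((1+\ell/2+o(\ell))\ln n)\cdot(2^7c^2)^{K_1\ell\ln n}$. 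The hypothesis $K_1>-1/\ln(2^7c^2)$ makes $(2^7c^2)^{K_1\ell\ln n}=n^{-K_1\ell|\ln(2^7c^2)|}$ beat $n^{\ell/2}$, giving $\E[Z_\ell]\le n^{-1/2}$ for large $n$; a union bound over $\ell\le n^\eps$ finishes. I expect the main obstacle to be purely bookkeeping: getting the constant in the lower bound $|F(D)|\geq \tfrac32 v+\tfrac12 w-O(\ell)$ exactly right so that the $o(\ell)\ln n$ slack does not swamp the $n^{-K_1\ell|\ln(2^7c^2)|}$ gain, and making sure the requirement $\eps<1/12$ (rather than $1/18$) is exactly what is needed for the $w$-sum to converge --- here one should double-check that $6\eps$ appears in the exponent of $w$ and that $1/2-6\eps>0$. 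No new ideas beyond the $n^{-1/3}\to n^{-1/2}$ substitution are needed.
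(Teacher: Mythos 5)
Your overall plan is exactly the paper's: redo Lemma~\ref{lem: expectation of the number of activations} with $p^{|F(D)|}$ in place of $p^{|E(D)|}$, then run the same first-moment argument. But the intermediate lower bound you propose for $|F(D)|$ is wrong in a way that matters.

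From $s=|E(G)|-|E(H)|$ together with the paper's identity $|E(G)|=3|E(H)|+\exc(\mathbf A)-\ell$ and $|E(H)|=v-1+d_H$, one gets \emph{exactly}
\[
|F(D)|=s=2|E(H)|+\exc(\mathbf A)-\ell=2v+2d_H+\exc(\mathbf A)-2-\ell=2v+w-2-\ell,
\]
whereas you extract $s\geq \tfrac{3}{2}v+\tfrac{1}{2}w-O(\ell)$ after a dropped factor of $2$. That coefficient is not cosmetic. With $p=cn^{-1/2}$, the factor $p^{|F(D)|}$ must supply $n^{-v}$ to cancel the $n^v$ coming from $\binom{n}{v}$ in $|P_{v,w}|$; the correct exponent $2v+w-2-\ell$ does exactly this, producing the clean $(2^7c^2)^v$ factor that drives the estimate $\E[Z_\ell]\le n^{-1/2}$. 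Your proposed exponent $\tfrac{3}{2}v+\tfrac{1}{2}w-O(\ell)$ yields only $n^{-3v/4}$, leaving an uncancelled $n^{v/4}$, so the bound diverges in $v$ and the stated conclusion $\E[Z_{v,w}^\ell]\leq n^{(-1/2+o(1))w+(1/2+o(1))\ell+1}v^{O(w)}(2^7c^2)^v$ does not follow from your intermediate inequality (in particular no $(2^7c^2)^v$ factor appears). So this is not a ``constants to be recomputed'' issue --- the ``shape'' $\tfrac32 v+\tfrac12 w$ is itself incorrect and would break the proof. Once you replace it by $|F(D)|\geq 2v+w-2-\ell$ (which the paper obtains more directly via Euler's relation $|F(D)|=|E(D)|-|V(H)|-d_H+1$ combined with Lemma~\ref{lem: number of edges in minimal process}), the remainder of your argument --- the $\binom{2v+w-2}{w}$ case split, the Markov bound, the union bound over $\ell\le n^\eps$ with $\eps<1/12$ to keep the $w$-exponent negative, and the use of $K_1>-1/\ln(2^7c^2)$ --- matches the paper's Appendix~A proof line for line.
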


\begin{restatable}{lemma}{ab}\label{lem: exists an ell-cycle with more than log LM}
    Fix positive constants $K$, $\eps<1/12$. Then, w.h.p.\ for every integer $4\leq \ell=\ell(n)\leq n^{\eps}$ there exists an $\ell$-cycle $C\subseteq K_n$ with the following property: 
    There is no $H\subseteq K_n$ with 
    $|V(H)|\in [\ell, K\ell\ln n]$ that activates $C$ via a nice activation process in $\mathbf{H}$.
\end{restatable}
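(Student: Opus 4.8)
The plan is to mimic the first-moment proof of Lemma~\ref{lem: exists an ell-cycle with more than log}, re-using the combinatorial machinery of Subsection~\ref{subsection:1} verbatim --- Lemma~\ref{lem: number of edges in minimal process}, the counting bound Lemma~\ref{lem: low w} on $|P_{v,w}|$, and the fact that $P_{v,w}=\emptyset$ for $w>2v^{2}$ are all statements about abstract activation diagrams, insensitive to the ambient complex --- and changing only the probability that a fixed diagram sits inside the random complex. In $H_{n,p}$ the weight is paid per triangle rather than per edge: if $\mathbf{A}=(H;\Delta_1,\dots,\Delta_s)$ is a nice activation process of an $\ell$-cycle $C$, its activation diagram $D$ has exactly $s$ two-dimensional faces, and these are pairwise distinct as labelled triangles (once a triangle is used to activate an edge all three of its edges are present, so a triangle activates at most one edge), whence $\Prob(D\subseteq\mathbf{H})=p^{\,s}$. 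Since $C$ is connected, Observation~\ref{obs:nice_connected} gives that $H$ is connected, so $|E(H)|=v-1+d_H$ with $v:=|V(H)|$, and the identity $|E(K^{(1)}(\mathbf{A}))|=3|E(H)|+\exc(\mathbf{A})-\ell$ from the proof of Lemma~\ref{lem: number of edges in minimal process} gives
\[
    s \;=\; |E(K^{(1)}(\mathbf{A}))|-|E(H)| \;=\; 2|E(H)|+\exc(\mathbf{A})-\ell \;=\; 2v+w-\ell-2,
\]
where $w:=2d_H+\exc(\mathbf{A})$.

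Combining this with Lemma~\ref{lem: low w}, the bound $p\le cn^{-1/2}$, and $v\le K\ell\ln n\le Kn^{\eps}\ln n$ in $\E[Z_{v,w}^{\ell}]=|P_{v,w}|\cdot p^{\,2v+w-\ell-2}$ gives, exactly as Lemma~\ref{lem: expectation of the number of activations} is obtained in the graph case, the estimate
\[
    \E\!\left[Z_{v,w}^{\ell}\right] \;\le\; n^{(-1/2+6\eps+o(1))\,w+(1/2+o(1))\,\ell+1}\cdot(2^{7}c^{2})^{v}.
\]
Here the $n^{v}$ from $|P_{v,w}|$ cancels the $n^{-v}$ extracted from $p^{2v}$, the remaining poly-in-$w$ factors $(2v)^{4w}$, $w^{w}$, $\binom{2v+w-2}{w}$ and $c^{\,w-\ell-2}$ are absorbed into $n^{(6\eps+o(1))w}$ (splitting $\binom{2v+w-2}{w}$ according to whether $w\le v$), and $2^{7v}c^{2v}=(2^{7}c^{2})^{v}\le 1$ since $c<2^{-7/2}$.

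Now let $X_{\ell}$ count the $\ell$-cycles $C\subseteq K_n$ that \emph{fail} the assertion, i.e.\ for which some $H\subseteq K_n$ with $|V(H)|\in[\ell,K\ell\ln n]$ satisfies $H\stackrel{\mathbf{H}}{\longrightarrow}C$ via a nice activation process. Every such $C$ is the external boundary of an activation diagram $D\in P_{v,w}$ with $\ell\le v\le K\ell\ln n$, $w\le 2v^{2}$, and $D\subseteq\mathbf{H}$, so
\[
    \E[X_{\ell}] \;\le\; \sum_{v=\ell}^{K\ell\ln n}\ \sum_{w=0}^{2v^{2}}\E\!\left[Z_{v,w}^{\ell}\right] \;\le\; n^{\,1+\ell/2+o(\ell)},
\]
where the $w$-sum is a convergent geometric series exactly because $6\eps<1/2$ (this is where the hypothesis $\eps<1/12$ is used) and the $v$-sum costs only a factor $n^{o(1)}$. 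On the other hand the number of $\ell$-cycles in $K_n$ is $N_{\ell}=\tfrac{1}{2\ell}\tfrac{n!}{(n-\ell)!}=(1+o(1))\tfrac{n^{\ell}}{2\ell}$, and for every $\ell\ge4$ we have $\ell-(1+\ell/2)=\ell/2-1\ge1$, so $\E[X_{\ell}]\le n^{-\Omega(\ell)}N_{\ell}$ for $n$ large. Markov's inequality then gives $\Prob(X_{\ell}\ge N_{\ell})\le n^{-\Omega(\ell)}$, and a union bound over $4\le\ell\le n^{\eps}$ shows this is $o(1)$; hence \whp, for every $\ell$ in the range, $X_{\ell}<N_{\ell}$, so some $\ell$-cycle $C\subseteq K_n$ is not counted by $X_{\ell}$ --- exactly the conclusion of the lemma.

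The heavy lifting is already contained in Lemma~\ref{lem: low w}, so I do not expect a genuine obstacle; the two points that need care are: (i) recognising that the Linial--Meshulam weight of a diagram is $p^{s}$ with $s=2v+w-\ell-2$ triangles rather than $p^{\,3v+w-\ell-3}$ with edges, which is precisely what moves the critical density from $n^{-1/3}$ to $n^{-1/2}$ and the exponent threshold from $1/18$ to $1/12$; and (ii) checking that the first-moment margin survives with the new baseline $N_{\ell}=\Theta(n^{\ell}/\ell)$ in place of $\E[Y_{\ell}]$ --- one needs $\E[X_{\ell}]=n^{1+\ell/2+o(\ell)}$ to beat $N_{\ell}$, which holds with room to spare for $\ell\ge4$.
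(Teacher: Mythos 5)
Your proof is correct and follows essentially the same strategy as the paper's Appendix~\ref{Appendix}: first rederive the Linial--Meshulam analogue of the expectation bound (with weight $p^{s}$ per diagram and $s=|F(D)|=2v+w-\ell-2$, which is exactly what the paper's Lemma~\ref{lem: expectation of the number of activations LM Appendix} records), then apply a first-moment argument and union bound over $\ell$. The only cosmetic difference is that you compare $\E[X_\ell]$ directly against the exact cycle count $N_\ell=\Theta(n^\ell/\ell)$, whereas the paper uses the slightly cruder lower bound $(n/2)^\ell\le N_\ell$; both yield the same conclusion with comparable margin.
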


In the same way as in Section~\ref{subsection:4}, the proof of Theorem \ref{th:0-statement LM} follows by combining the above typical properties of $\mathbf{H}$ with the local-to-global Proposition \ref{prop: local to global}. %, same as was done at the start of Subsection \ref{subsection:4}.

\subsection{Activating from a Hamilton path}\label{subsc:hampath}

In this section, we prove that there is a witness for the $1$-statement in Theorem \ref{th:general}, which is a Hamilton path rather than from a depth $2$ tree.

Let $\eps>0$ be a fixed constant, $p:=p(n)\geq(1+\eps)n^{-1/3}$, and $\mathbf{G}\sim G_{n,p}$.
In what follows, we use notation from Section \ref{sc:proof_shallow_1}.
Fix a sufficiently small constant $c \coloneqq c(\eps)>0$, and an arbitrary vertex $v\in[n]$. 
Recall that if $np\gg \ln n$, then \whp\ $G_{n,p}$ is Hamiltonian --- see, e.g., \cite[Theorem 6.5]{FriKar}. %\MZ{Thus, using the standard multiple-exposure technique,} we obtain the following bound.
%\AC{I don't know any reference for the probability bound for $G_{n,p}$ being not Hamiltonian, so I just wrote this bound.}
%\begin{lemma}\label{lem:hamilton}
%    Suppose that $p \geq n^{-(1-\delta)}$ for some constant $\delta\in (0,1)$, then 
%    \[
 %       \mathbb{P}(G_{n,p}\text{ does not contain a Hamilton cycle}) \leq \exp(-n^{\delta/2}).
%    \]
%\end{lemma}
 We observe that \whp\ $|N(v)|p=(1+o(1))np^2\gg \ln n$, due to Claim \ref{cl:degrees}. Therefore, \whp\ $G[N(v)]$ contains a Hamilton cycle.

We will also rely on the following fact that we discussed in Section \ref{sc:proof_shallow_1}: w.h.p., for every $u\in [n]\setminus(N(v)\cup\{v\})$, there exists a unique connected component $C(u)\subseteq H(u)$ of size at least $cn^{1/3}$. Given $u,w\notin N(v)\cup\{v\}$,  we say that $w$ is {\it $u$-friendly} if $C(u)\cap C(w)\neq \emptyset$. We note that being friendly is a symmetric relation, i.e.\ $w$ is $u$-friendly if and only if $u$ is $w$-friendly. We need the following claim regarding the number of friendly pairs.

\begin{claim}\label{cl:hampath}
    W.h.p.\ for every $u\not\in N(v)\cup \{v\}$ there are at least $n/\ln^4 n$ vertices $w\not\in N(v)\cup \{v\}$ which are $u$-friendly.
\end{claim}

The proof of this fact is based on a similar technique as the proof of Claim \ref{cl:giant_to_giant}, and does not introduce any novel ideas, so we postpone it to Appendix~\ref{appendix_claim}.

A Hamilton path $P\subseteq \mathbf{G}$ such that $P\rightarrow \mathbf{G}$ w.h.p.\ is constructed iteratively. In each step, we reveal more information about the random graph and extend $P$.
%We start with the following claim regarding the number of friendly pairs, which we do not prove as its proof is very similar to that of Claim \ref{cl:giant_to_giant}.

%\begin{claim}\label{cl:hampath}
%    W.h.p.\ for every $u\not\in N(v)\cup \{v\}$ there are at least $n/\ln^4 n$ vertices $w\not\in N(v)\cup \{v\}$ which are $u$-friendly.
%\end{claim}

\begin{comment}

\begin{proof}[Proof sketch]
    Set $k\coloneqq n/\ln^4 n$ and expose all edges adjacent to $v$. 
    By the union bound, it suffices to show that with probability $1-o(n^{-1})$ for a fixed $u\not\in N(v)\cup \{v\}$ there are at least $k$ vertices $w$ which are $u$-friendly.   
    Indeed, fix $u\not\in N(v)\cup \{v\}$ and fix $V_1,\ldots,V_k\subseteq (N(v)\cup \{v\})\setminus \{u\}$ disjoint sets of size $ \lceil \ln^2n\rceil$ each. This is possible as by Claim \ref{cl:degrees} \whp\ we have $n-|N(v)\cup \{v\}|=(1-o(1))n$.
    Once more, by the union bound the problem reduces to showing that for every $j\leq k$, with probability $1-o(n^{-2})$ there is a vertex $w\in V_j$ which is $u$-friendly. This can be done as in the proof of Claim~\ref{cl:giant_to_giant}.
\end{proof}

\end{comment}

%The construction of $P$ is done by several steps. In each step, we reveal more information about our graph and extend $P$.

\paragraph{Step 1:}
Expose all edges adjacent to $v$ --- this determines $N(v)$. 
Fix $w_1\in N(v)$, and an ordering of all the vertices {\it outside of} $(N(v)\cup\{v\})\setminus \{w_1\}$ such that $w_1$ appears first. We refer to this ordering as the \emph{canonical ordering}. 

Then, expose all edges with an endpoint in $N(v)$.
As mentioned above, \whp\ there is a Hamilton cycle in $\mathbf{G}[N(v)]$, and the assertion of Claim \ref{cl:hampath} holds. 
Set $\ell\coloneqq |N(v)|$ and let $v_1v_2\ldots v_\ell=w_1$ be a Hamilton path in $N(v)$. Define the prefix of the path $P$ under construction as $P_1\coloneqq vv_1v_2\ldots v_\ell$. %=u\eqqcolon w_1$.

\paragraph{Step 2:}
For every $1\leq i\leq \lfloor n/\ln^{6} n\rfloor$, we iteratively extend the path $P_i$ to a path $P_{i+1}$ by adding a vertex $w_{i+1}$.
To this end, we introduce a partition of $X\coloneqq [n]\setminus (N(v)\cup \{v\})\cup \{w_1\}$ into two sets $U_i$ and $W_i$. Throughout, we make sure that $|U_i|\leq i $ for all $i$.
For the initial step $i=1$, we set $U_1 \coloneqq \emptyset$ and $W_1\coloneqq X$. 
At every step $i$, the vertex $w_{i+1}$ is chosen from the set $W_i$ and attached to the endpoint $w_i$ of $P_i$.

Suppose that $U_i, W_i,$ and $w_i$ are given at the beginning of step $i$. By the assertion of Claim \ref{cl:hampath} there are at least $n/\ln^4 n$ vertices $x\in X=W_1$ which are $w_i$-friendly. This and the assumption that $|U_i|\leq i \leq n/\ln^{6} n$ imply that there are at least $n/\ln^5 n$ vertices $x\in W_i$ which are $w_i$-friendly.

Expose the edges between $w_i$ and $W_i$, then let $W'$ be the set of $w_i$-friendly neighbours of $w_i$ in $W_i$. With probability $1-\exp(-\Theta(np/\ln^5n))$ we have $|W'|\geq np/\ln^6 n$.  Choose $w_{i+1}$ to be an arbitrary vertex in $W'$ and set $U_{i+1}=U_{i}\cup\{w_{i}\}$ and $W_{i+1}=X\setminus U_{i+1}$.

Each iteration of the process is independent of the others, as it depends only on the edges between $w_i$ and $W_i$. Thus, the process is successfully completed with probability at least
\[
	\biggl(1-\exp(-\Theta(np/\ln^5n))\biggr)^{\lfloor n/\ln^{6} n\rfloor}  \geq 1- \frac{n}{\exp(n^{2/3}/\ln^5 n)\ln^6 n}= 1-o(1).
\]

\paragraph{Step 3:}
Let the process successfully terminate in $s\coloneqq \lfloor n/\ln^{6} n\rfloor$ steps. We get a path $P_{s+1}$. Note that the edges with both endpoints in $[n]\setminus V(P_{s+1})$ have not yet been revealed. Thus, w.h.p.\ there exists a Hamilton cycle in $\mathbf{G}[[n]\setminus V(P_{s+1})]$. Find an edge $\{w_{s+1},w_{s+2}\}$ between the endpoint $w_{s+1}$ of $P_{s+1}$ and the Hamilton cycle, and, using the edges of this cycle, extend $P_{s+1}$ to a Hamilton path $P$ in $\mathbf{G}$.\\

%of the process be su. Note that \whp\ $|P_{i}|\leq i+2n^{2/3}\leq n/\ln n$. Since, due to Lemma \ref{lem:hamilton} and the union bound \whp\ every set of size at least $n-n/\ln n$ induced a Hamilton graph, we get that w.h.p. there is a Hamilton cycle in $G[[n]\setminus V(P_s)]$. We may thus extend $P_s$ to $P$ a Hamilton path in an arbitrary way. \\

Let $\mathbf{G}_0\sim G(n,\varepsilon n^{-1/3})$ be independent of $\mathbf{G}$ and let $\mathbf{G}'=\mathbf{G}\cup\mathbf{G}_0$. In order to conclude the proof, it suffices to prove that w.h.p.\ $P \to \mathbf{G}'$. The latter would follow from the fact that w.h.p.\ $P$ activates a subtree $T\subseteq\mathbf{G}'$ considered in Section \ref{sc:proof_shallow_1}  %\AC{This is not quite accurate. We activate a `similar-looking' tree, but it is not the same one. The way I thought of going around this was to expose in two rounds only the edges not adjacent to $v$ -- Do you think that this is overkill?}.
 Omitting technicalities, it is easy to see that, in order to activate $\mathbf{G}'$, we can also use a tree that contains only $\mathbf{G}$-edges incident to $v$. In other words, we shall prove the following: (1) $P$ activates all edges incident to $v$ in $\mathbf{G}$; (2) for every $w\in [n]\setminus[N_{\mathbf{G}}(v)\cup\{v\}]$, the path $P$ activates an edge between $w$ and $C'(w)$, which is the giant component of $\mathbf{G}'[N_{\mathbf{G}}(v)\cap N_{\mathbf{G}'}(w)]$. We note that, for every $w\notin N(v)\cup\{v\}$, the set $C(w)$ is entirely inside $C'(w)$. % \AC{replace with: we have $C(w)\subseteq C'(w)$?}.

Activation of all $\mathbf{G}$-edges incident to $v$ is straightforward. %Indeed, the edge $vv_1$ is an edge of $P$ and so is $v_1v_2$, thus we may activate $v_1v_2$. Inductively, assume $vv_i$ is activated. Then, as $v_iv_{i+1}$ is an edge of $P$, we may also activate $vv_{i+1}$. 
Next, we show that for every $2\leq i\leq s+1$ we may activate an edge between $w_i$ and $C(w_i)\subseteq C'(w_i)$.
This is done inductively, by showing that actually all edges between $w_i$ and $C(w_i)$ can be activated. For $i=2$, by construction $w_1\in C(w_2)$ and the edge $\{w_1,w_2\}$ is initially activated. Then, we may also activate all edges between $w_2$ and $C(w_2)$. For $i\geq 3$, suppose that we have activated all edges between $w_{i-1}$ and $C(w_{i-1})$. By construction, there is $z\in C(w_{i-1})\cap C(w_i)$ and the edge $\{w_{i-1},w_{i}\}$ is initially activated. By induction, we may activate the edge $\{w_{i-1},z\}$, and thus also the edge $\{w_i,z\}$. So we may activate all edges between $w_i$ and $C(w_i)$.

Finally, let $x\in [n]\setminus V(P_{s+1})$, and let $P^x$ be the subpath of $P$ starting from $v$ and ending at $x$. Using the standard multiple-exposure technique, we get that the graph $\mathbf{G}_0[N_{\mathbf{G}'}(x)\cap V(P^x)\setminus\{v\}]$ is connected with probability $1-\exp(n^{-\Theta(1)})$. Therefore, w.h.p., for every $x$, such a graph is connected. Since any vertex from $C(x)$ belongs to this graph, we get that w.h.p., for every $x$, all edges between $x$ and $C(x)$ can be activated, completing the proof. % edge between $x$ and

\subsection{Remaining questions}
\label{subsc:questions}

In this paper, we estimate the activation threshold for all trees that have diameter at most $n^{1/18-\varepsilon}$, up to a constant factor. The next natural step would be to generalise this result to all trees. Clearly, our main ingredient --- Proposition~\ref{prop: local to global} --- imposes a limitation due to the bound $L/100$ on the diameter of a tree, which prevents such a generalisation within our current framework. Nevertheless, we believe that this diameter restriction can potentially be omitted. We also believe that every tree that w.h.p. activates $G_{n,p}$ can be reduced to a tree with bounded diameter. An alternative route to refining the assertion of Theorem~\ref{th:general} could be to develop a purely combinatorial approach to prove lower bounds on the activation threshold; however, we did not manage to invent it. 

Actually, even if the diameter constraint in Proposition~\ref{prop: local to global} was lifted, it would not suffice to push the lower bound beyond $3/4^{4/3}\cdot n^{-1/3}$. Thus, it remains possible that the upper bound in Theorem~\ref{th:general} is not optimal. We believe that $3/4^{4/3}\cdot n^{-1/3}$ is a sharp threshold for the property that, for every cycle in $G_{n,p}$, there is a subgraph in $G_{n,p}$, which is isomorphic to a triangulation of that cycle. Note that this property implies simple connectivity of $X^{(2)}(G_{n,p})$, yet it does not guarantee the existence of a single tree $T\subset G_{n,p}$ that activates the entire graph. Indeed, while triangulation of every cycle ensures that each cycle can be activated from some tree, it does not imply the existence of a common tree that activates all cycles. 

The above discussion motivates the following question: do hitting times for simple connectivity and for the property of being activated from a tree coincide? We suspect they may not and that, specifically, the (sharp) threshold for the former property is $3/4^{4/3}\cdot n^{-1/3}$, while the (sharp) threshold for the latter property is $n^{-1/3}$. 

Another interesting direction is to explore the behaviour of $\mathrm{wsat}(G_{n,p},F)$ for other graphs $F$. In particular, Kor\'{a}ndi and Sudakov~\cite{KorSud} established the stability of weak saturation~\eqref{eq:stability} for all cliques $F=K_s$. It is also known~\cite{Bidgoli} that the stability property~\eqref{eq:stability} has a threshold probability 
$$
p_{K_s}\in\left(n^{-\frac{2}{s+1}+o(1)},n^{-\frac{1}{2s-3}+o(1)}\right).
$$
Determining the exact exponent of $n$ in this threshold for any $s\geq 4$ remains a challenging open problem. It does not seem plausible that our topological approach can be used for this task. Nevertheless, it may still be useful for estimating the stability threshold when $F$ is a cycle.

\section*{Acknowledgements} 

The last author would like to thank Stepan Alexandrov and Nikolay Bogachev for their excellent and insightful introduction to the theory of hyperbolic metric spaces. This work has been supported by the Royal Society International Exchanges grant IES$\backslash$R3$\backslash$233177, GA\v{C}R grant 25-17377S, and ERC Synergy Grant DYNASNET 810115.

\appendix
\section{Proofs of Lemma \ref{lem: no ell-cycle between log and Klog LM} and Lemma \ref{lem: exists an ell-cycle with more than log LM}}\label{Appendix}

Here we prove both Lemma \ref{lem: no ell-cycle between log and Klog LM} and Lemma \ref{lem: exists an ell-cycle with more than log LM}.
Let us start with the following analogue of Lemma~\eqref{lem: expectation of the number of activations}.

\begin{lemma}\label{lem: expectation of the number of activations LM Appendix}
Suppose that $3\leq \ell=\ell(n)$, $v=v(n)$, and $w=w(n)$ are sequences of non-negative integers such that $w\leq 2v^2$.  
 Then, for large enough $n$, 
%    \[
%        \E\left[Z_{v,w}^{\ell}\right] \leq  n^{(-1/2+o(1))w+(1/2+o(1))\ell+1} \cdot v^{6w+(1+ \mathbbm{1}[w>v])\ell}\cdot(2^7 c^2)^{v}.
%        \]
        \[
        \E\left[Z_{v,w}^{\ell}\right] \leq  n^{(-1/2+o(1))w+(1/2+o(1))\ell+1} \cdot v^{6w}\cdot(2^7 c^2)^{v}.
        \]
\end{lemma}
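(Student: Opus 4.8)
The plan is to follow the proof of Lemma~\ref{lem: expectation of the number of activations} essentially verbatim; the one genuinely new point is that in the Linial--Meshulam model one pays a factor $p$ per \emph{triangle} of an activation diagram rather than per edge. I would first note that the collection $P_{v,w}$ and the estimate of Lemma~\ref{lem: low w} are purely combinatorial and hence apply unchanged. Fix a diagram $D\in P_{v,w}$ arising from a nice activation process $\mathbf{A}=(H;\Delta_1,\ldots,\Delta_s)$ of an $\ell$-cycle. The triples $\Delta_1,\ldots,\Delta_s$ are pairwise distinct: if $\Delta_i=\Delta_j$ with $i<j$, then at step $j$ no new edge could be activated, a contradiction. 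Hence $D$ has exactly $s$ two-dimensional faces, and since $\mathbf{H}\sim H_{n,p}$ contains every edge of $K_n$ and each triangle independently with probability $p$, we get $\mathbb{P}(D\subseteq\mathbf{H})=p^{s}$.

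Next I would determine $s$ exactly in terms of $v,w,\ell$. The double-counting identity established inside the proof of Lemma~\ref{lem: number of edges in minimal process} gives $|E(K^{(1)}(\mathbf{A}))|=3|E(H)|+\exc(\mathbf{A})-\ell$, and therefore $s=|E(K^{(1)}(\mathbf{A}))|-|E(H)|=2|E(H)|+\exc(\mathbf{A})-\ell$. Using Observation~\ref{obs:nice_connected} (so $H$ is connected and $|E(H)|=v-1+d_H$) together with the identity $2d_H+\exc(\mathbf{A})=w$ from the definition of $P_{v,w}$, this becomes $s=2v+w-\ell-2$, a quantity depending only on $v,w,\ell$. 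If $\ell>v$ then $P_{v,w}=\emptyset$ and there is nothing to prove; otherwise $v\ge\ell\ge 3$, so $s\ge\ell-2\ge 1$ and we may safely raise the inequality $p<cn^{-1/2}$ to the power $s$.

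Finally, by linearity of expectation and Lemma~\ref{lem: low w}, $\E[Z_{v,w}^{\ell}]=|P_{v,w}|\,p^{\,2v+w-\ell-2}\le n^{v}(2v)^{4w}w^{w}\binom{2v+w-2}{w}2^{7v}\,(cn^{-1/2})^{2v+w-\ell-2}$. Here the factor $n^{v}$ exactly cancels the $n^{-v}$ produced by $p^{2v}$, and $2^{7v}c^{2v}=(2^{7}c^{2})^{v}$ appears in place of the $(2^{7}c^{3})^{v}$ of the $G_{n,p}$ case --- this is precisely the shift that moves the threshold from $n^{-1/3}$ to $n^{-1/2}$. From this point the estimate is literally the end of the proof of Lemma~\ref{lem: expectation of the number of activations}: split on $w\le v$ versus $v<w\le 2v^{2}$, bound $\binom{2v+w-2}{w}$ by $(3v)^{w}$ respectively $2^{3w}$, collect the polynomial-in-$v$ contributions into $v^{6w}$, and absorb all constant bases raised to powers $O(w)$ or $O(\ell)$ (as well as the stray constant $c^{-2}$, using $\ell\ge 3$ to write it as $n^{o(1)\ell}$) into the $o(1)$ corrections to the exponents of $n$, yielding $\E[Z_{v,w}^{\ell}]\le n^{(-1/2+o(1))w+(1/2+o(1))\ell+1}v^{6w}(2^{7}c^{2})^{v}$. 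I do not expect a genuine obstacle here; the only place that requires care is the computation of $s$ --- that is, obtaining the exponent growing like $2v$ rather than $3v$ --- since every subsequent estimate, and in particular the value $2^{-7/2}$ of the threshold constant appearing in Theorem~\ref{th:0-statement LM}, hinges on it.
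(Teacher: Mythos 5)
Your proposal is correct and follows the paper's argument almost exactly: both reduce to bounding the probability that a given diagram $D\in P_{v,w}$ lies in $\mathbf{H}$, both observe that the relevant exponent is the number of $2$-faces of $D$ rather than the number of edges, and both then feed the bound of Lemma~\ref{lem: low w} into the same two-case computation. The one cosmetic difference is in how you obtain the exponent on $p$: you compute $s=2|E(H)|+\exc(\mathbf{A})-\ell=2v+w-\ell-2$ directly from the double-counting identity inside Lemma~\ref{lem: number of edges in minimal process}, together with $|E(H)|=v-1+d_H$ from Observation~\ref{obs:nice_connected}, whereas the paper writes $|F(D)|$ via an Euler-type relation and then invokes the inequality form of Lemma~\ref{lem: number of edges in minimal process} (which, as you implicitly note, is in fact an equality for connected $H$); the two routes give the same quantity, and your derivation is arguably cleaner. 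Your justification that $\Delta_1,\ldots,\Delta_s$ are pairwise distinct, so $\mathbb{P}(D\subseteq\mathbf{H})=p^s$, is a correct and worthwhile explicitness that the paper leaves implicit, and your handling of the boundary case $\ell>v$ (where $P_{v,w}=\emptyset$) correctly accounts for the slightly weaker hypothesis in this lemma compared with Lemma~\ref{lem: expectation of the number of activations}.
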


\begin{proof}  
For every $\mathbf{A}$-activation diagram $D\in P_{v,w}$, where $\mathbf{A}=(H;\Delta_1,\ldots,\Delta_s)$ is a nice activation process of an $\ell$-cycle, Lemma \ref{lem: number of edges in minimal process} and the two equalities $2d_H+\exc(\mathbf{A})=w$, and $|F(D)|=|E(D)|-|V(H)|-d_H+1$, which hold by the definition of a diagram, imply that 
    \[
    |F(D)|\geq 2v+2d_H +\exc(\mathbf{A})-2-\ell = 2v+w-2-\ell.
    \]
Hence, by Lemma \ref{lem: low w},
% \begin{align*}
%    \E\left[Z_{v,w}^{\ell}\right]&=\sum_{D\in P_{v,w}}\mathbb{P}(D\subseteq \mathbf{H})= \sum_{D\in P_{v,w}}p^{|F(D)|} \leq  |P_{v,w}|\cdot p^{2v+w-2-\ell}\\
%    &\leq  n^v (2v)^{4w} w^w \cdot \binom{2v+w-3}{\ell-1} \binom{2v+w-2}{w}2^{7v} \cdot \left(cn^{-\frac{1}{2}}\right)^{2v+w-2-\ell}\\
%    &\leq (2v)^{4w} w^w \cdot \binom{2v+w-3}{\ell-1} \binom{2v+w-2}{w} \cdot \left(cn^{-\frac{1}{2}}\right)^{w-2-\ell} (2^7c^2)^{v}.
% \end{align*}
%It remains to apply the inequality
%\begin{equation*}
% \binom{2v+w-3}{\ell-1} \binom{2v+w-2}{w} \leq \begin{cases} (3v)^{\ell+w} \quad&\text{ if }w\leq v,\\(3w)^{\ell} \cdot 2^{3w} \quad & \text{ if }w>v.
%\end{cases}
%\end{equation*}
%Indeed, if $w\leq v$ then
%\begin{align*}
%       \E\left[Z_{v,w}^{\ell}\right]&\leq (2v)^{4w}w^w \cdot (3v)^{\ell+w} \cdot (cn^{-1/2})^{w-2-\ell} \cdot   (2^7 c^2)^{v}\\
%       &\leq  n^{(-1/2+o(1))w+(1/2+o(1))\ell+1} \cdot v^{6w+\ell} \cdot(2^7 c^2)^{v}.
%\end{align*}
%Moreover, if $v<w\leq 2v^2$ then
%\begin{align*}
%       \E\left[Z_{v,w}^{\ell}\right]&\leq (2v)^{4w}w^w \cdot(3w)^{\ell} \cdot 2^{3w} \cdot (cn^{-1/2})^{w-2-\ell} \cdot(2^7 c^2)^{v}\\
%       &\leq n^{(-1/2+o(1))w+(1/2+o(1))\ell+1} \cdot v^{6w+2\ell} \cdot (2^7 c^2)^{v}.\qedhere
%\end{align*}
 \begin{align*}
    \E\left[Z_{v,w}^{\ell}\right]&=\sum_{D\in P_{v,w}}\mathbb{P}(D\subseteq \mathbf{H})= \sum_{D\in P_{v,w}}p^{|F(D)|} \leq  |P_{v,w}|\cdot p^{2v+w-2-\ell}\\
    &\leq  n^v (2v)^{4w} w^w \cdot \binom{2v+w-2}{w}2^{7v} \cdot \left(cn^{-\frac{1}{2}}\right)^{2v+w-2-\ell}\\
    &\leq (2v)^{4w} w^w \cdot  \binom{2v+w-2}{w} \cdot \left(cn^{-\frac{1}{2}}\right)^{w-2-\ell} (2^7c^2)^{v}.
 \end{align*}
 It remains to apply the inequality
\begin{equation*}
 \binom{2v+w-2}{w} \leq \begin{cases} (3v)^{w} \quad&\text{ if }w\leq v,\\ 2^{3w} \quad & \text{ if }w>v.
\end{cases}
\end{equation*}
Indeed, if $w\leq v$ then
\begin{align*}
       \E\left[Z_{v,w}^{\ell}\right]&\leq (2v)^{4w}w^w \cdot (3v)^{w} \cdot (cn^{-1/2})^{w-2-\ell} \cdot   (2^7 c^2)^{v}\\
       &\leq  n^{(-1/2+o(1))w+(1/2+o(1))\ell+1} \cdot v^{6w} \cdot(2^7 c^2)^{v}.
\end{align*}
Moreover, if $v<w\leq 2v^2$ then
\begin{align*}
       \E\left[Z_{v,w}^{\ell}\right]&\leq (2v)^{4w}w^w \cdot 2^{3w} \cdot (cn^{-1/2})^{w-2-\ell} \cdot(2^7 c^2)^{v}\\
       &\leq n^{(-1/2+o(1))w+(1/2+o(1))\ell+1} \cdot v^{6w} \cdot (2^7 c^2)^{v}.\qedhere
\end{align*}

\end{proof}

For the sake of convenience, we reiterate the statements of Lemma \ref{lem: no ell-cycle between log and Klog LM} and Lemma \ref{lem: exists an ell-cycle with more than log LM} below, and then prove them. 

We first recall \eqref{eq:Pvw=0} which states that $P_{v,w}=\emptyset$ for $w>2v^2$.
\begin{comment}
    Let $\mathbf{A}=(F;\Delta_1,\ldots,\Delta_s)$
be an activation process with $|V(K(\mathbf{A}))|=v$. By the definition of $\exc_{\mathbf{A}}$ and $\con_{\mathbf{A}}$, we get
\[
    2d_H+\exc(\mathbf{A})\leq 2|E(K(\mathbf{A}))|+\sum_{e\in E(K(\mathbf{A}))}\con_\mathbf{A}(e) = 2|E(K(\mathbf{A}))|+2s < 4|E(K(\mathbf{A}))|.
\]
Therefore, $2d_F+\exc(\mathbf{A}) < 4E(K(\mathbf{A}))\leq 2v^2$, implying that $P_{v,w}=\emptyset$ for $w>2v^2$.
\end{comment} 
Thus, as in Section \ref{sec:main proof} we have $Z^{\ell}_{v,w} \equiv 0$ deterministically, whenever $w>2v^2$.

\xyz*

\begin{proof}
Recall that an activation process of an $\ell$-cycle $\mathbf{A}=(H;\Delta_1,\ldots ,\Delta_s)$ defines an activation diagram.
    If further $\mathbf{A}$ is a nice process, with $|V(K(\mathbf{A}))|=v$, and with $2d_F+\exc({\mathbf{A}})=w$, then the activation diagram belongs to $P_{v,w}$.
    Thus, in order to conclude the proof, it suffices to show that w.h.p.\ for any $K_1\ell\ln n\leq v\leq K_2\ell\ln n$ and any $w\leq 2v^2$ there is no $D\in P_{v,w}$ such that $D\subseteq \mathbf{H}$.
We will show that, for all $n$ large enough and for every $3\leq\ell\leq n^{\varepsilon}$, with probability at least $1-n^{-1/2}$,
$$
Z_{\ell}\coloneqq \sum_{v=K_1\ell\ln n}^{K_2\ell\ln n}\sum_{w=0}^{2v^2} Z_{v,w}^{\ell}=0.
$$
This clearly suffices to finish the proof by the union bound.
As $Z_\ell$ is an integer-valued random variable, by Markov's inequality, it is enough to show that $\E[Z_{\ell}]\leq n^{-1/2}$ for all $\ell$ in the range. We write $\E[Z_\ell]$ in the following way:
\begin{align*}
	\E[Z_\ell]=  \sum_{v=K_1\ell\ln n}^{K_2\ell\ln n}\sum_{w=0}^{2v^2} \E\left[Z_{v,w}^{\ell}\right]. % =  \sum_{v=K_1\ell\ln n}^{K_2\ell\ln n}\sum_{w=0}^{v} \E\left[Z_{v,w}^{\ell}\right] + \sum_{v=K_1\ell\ln n}^{K_2\ell\ln n}\sum_{w=v+1}^{2v^2} \E\left[Z_{v,w}^{\ell}\right].
\end{align*}
By applying Lemma \ref{lem: expectation of the number of activations LM Appendix} in conjunction with our assumption that $\eps<1/12$ and that $\ell\leq n^{\eps}$ we obtain the following inequality:
%    \begin{align*}
%        \sum_{v=K_1\ell\ln n}^{K_2\ell\ln n}\sum_{w=0}^{v} \E\left[Z_{v,w}^{\ell}\right]&\leq \sum_{v=K_1\ell\ln n}^{K_2\ell\ln n} \sum_{w=0}^{v} n^{(-1/2+6\eps+o(1))w+(1/2 +\eps+o(1))\ell+1} \cdot(2^7 c^2)^{v}\\
%            &\leq \exp((1+(1/2+\eps)\ell+o(\ell))\ln n) \cdot(2^7 c^2)^{K_1\ell \ln n},
%    \end{align*}
%and 
%\begin{align*}
%        \sum_{v=K_1\ell\ln n}^{K_2\ell\ln n}\sum_{w=v+1}^{2v^2} \E\left[Z_{v,w}^{\ell}\right]&\leq \sum_{v=K_1\ell\ln n}^{K_2\ell\ln n} \sum_{w=v+1}^{2v^2} n^{(-1/2+6\eps+o(1))w+(1/2+2\eps+o(1))\ell+1} \cdot (2^7 c^2)^{v}\\
%        &\leq \exp((1+(1/2+2\eps)\ell+o(\ell))\ln n) \cdot(2^7 c^2)^{K_1\ell \ln n}.
%    \end{align*}
%By our assumptions that $\eps<1/21$ and that $K_1>-\frac{2}{\ln(2^7c^2)}$ we obtain the required inequality for large enough $n$:
%\[
%    \E[Z_\ell] \leq \exp((1+3\ell/4)\ln n) \cdot(2^7 c^2)^{K_1\ell \ln n}<\exp((1-\ell/2)\ln n)\leq n^{-1/2}, 
%\]
%completing the proof.
\begin{align*}
        \sum_{v=K_1\ell\ln n}^{K_2\ell\ln n}\sum_{w=0}^{2v^2} \E\left[Z_{v,w}^{\ell}\right]&\leq \sum_{v=K_1\ell\ln n}^{K_2\ell\ln n} \sum_{w=0}^{2v^2} n^{(-1/2+6\eps+o(1))w+(1/2+o(1))\ell+1} \cdot(2^7 c^2)^{v}\\
            &\leq \exp((1+\ell/2+o(\ell))\ln n) \cdot(2^7 c^2)^{K_1\ell \ln n}.
    \end{align*}
By our assumptions that $\eps<1/12$ and that $K_1>-\frac{1}{\ln(2^7c^2)}$ we obtain the required inequality for large enough $n$:
\[
    \E[Z_\ell] \leq \exp((1+\ell/2)\ln n) \cdot(2^7 c^2)^{K_1\ell \ln n}<\exp((1-\ell/2)\ln n)\leq n^{-1/2}, 
\]
completing the proof.
\end{proof}

\ab*

\begin{proof}
Let $X_{\ell}$ denote the random variable counting the number of $\ell$-cycles $C\subseteq K_n$ that \textbf{do not} satisfy the assertion of the lemma. Formally, $X_{\ell}$ counts the number of $\ell$-cycles $C\subseteq K_n$ such that:
\[
	\exists H \subseteq K_n \text{ with } |V(H)|\in [\ell, K\ell\ln n] \text{ and satisfying }  H\stackrel{\mathbf{H}}{\longrightarrow} C \text{ via a nice activation process}.
\]
In order to conclude the proof of the lemma, it is enough to show that w.h.p., for every $\ell\leq n^{\eps}$, 
\[
	X_{\ell} < (n/2)^{\ell}.
\]
In what follows we prove that
 $
 \E[X_{\ell}] \leq  n^{5\ell/6} = (n/2^6)^{-\ell/6}\cdot (n/2)^{\ell}.
 $
% \AC{Here we use the assumption that $p>n^{-1/3-\eps'}$.}
 Then, Markov's inequality and the union bound over $\ell$ yield the desired assertion.
% \[ 
%    \mathbb{P}(X\geq \E[Y]/4)\leq \frac{4\E[X]}{\E[Y]}=o(1).
% \]

The rest of the proof resembles the proof of Lemma \ref{lem: no ell-cycle between log and Klog LM}. 
Recall that an activation process $\mathbf{A}=(H;\Delta_1,\ldots ,\Delta_s)$ of an $\ell$-cycle defines an activation diagram.
    If $\mathbf{A}$ is nice, $|V(K(\mathbf{A}))|=v$, and $2d_F+\exc({\mathbf{A}})=w$, then the activation diagram belongs to $P_{v,w}$.
    Thus, 
$$
\E[X_{\ell}]\leq \sum_{v= \ell}^{K\ell\ln n}\sum_{w=0}^{2v^2} \E[Z_{v,w}^{\ell}].%=\sum_{v= \ell}^{K\ell\ln n}\sum_{w=0}^{v} \E[Z_{v,w}^{\ell}]+\sum_{v= \ell}^{K\ell\ln n}\sum_{w=v+1}^{2v^2} \E[Z_{v,w}^{\ell}].
$$
By applying Lemma \ref{lem: expectation of the number of activations LM Appendix} in conjunction with our assumptions that $\eps < 1/12$, $c<2^{-7/2}$, and that $\ell\leq n^{\eps}$ we obtain the following inequality:
%\begin{align*}
%         \sum_{v= \ell}^{K\ell\ln n}\sum_{w=0}^{v} \E[Z_{v,w}^{\ell}]&\leq \sum_{v= \ell}^{K\ell\ln n}\sum_{w=0}^{v} n^{(-1/2+6\eps+o(1))w+(1/2+\eps+o(1))\ell+1}\leq n^{1+(1/2+\eps+o(1))\ell}
%\end{align*}
%and
%\begin{align*}
%         \sum_{v= \ell}^{K\ell\ln n}\sum_{w=v+1}^{2v^2} \E[Z_{v,w}^{\ell}]&\leq \sum_{v= \ell}^{K\ell\ln n}\sum_{w=v+1}^{2v^2}n^{(-1/2+6\eps+o(1))w+(1/2+2\eps+o(1))\ell+1}\leq n^{1+(8\eps+o(1))\ell}.
%\end{align*}
%Then, for $\ell\geq 4$, we have $1+\ell(1/2+\varepsilon)<5\ell/6$, implying
%\[
 %    \E[X_{\ell}] \leq n^{1+(1/2+\eps+o(1))\ell}\leq n^{5\ell/6},
%\]
%and completing the proof.
\begin{align*}
         \sum_{v= \ell}^{K\ell\ln n}\sum_{w=0}^{2v^2} \E[Z_{v,w}^{\ell}]&\leq \sum_{v= \ell}^{K\ell\ln n}\sum_{w=0}^{2v^2} n^{(-1/2+6\eps+o(1))w+(1/2+o(1))\ell+1}\leq n^{1+(1/2+o(1))\ell}.
\end{align*}
%and
%\begin{align*}
%         \sum_{v= \ell}^{K\ell\ln n}\sum_{w=v+1}^{2v^2} \E[Z_{v,w}^{\ell}]&\leq \sum_{v= \ell}^{K\ell\ln n}\sum_{w=v+1}^{2v^2}n^{(-1/2+6\eps+o(1))w+(1/2+o(1))\ell+1}\leq n^{1+(6\eps+o(1))\ell}.
%\end{align*}
Then, for $\ell\geq 4$, we have $1+\ell/2<5\ell/6$, implying
\[
     \E[X_{\ell}] \leq n^{1+(1/2+o(1))\ell}\leq n^{5\ell/6},
\]
and completing the proof.
\end{proof}

\section{Proof of Claim \ref{cl:hampath}}\label{appendix_claim}

First expose the edges incident to $v$ and notice that by Claim \ref{cl:degrees}, w.h.p.\ $|N(v)|=np(1+o(1))$ and thus w.h.p.\ $|[n]\setminus N(v)|=(1+o(1))n$. We further assume that these equalities hold deterministically. By the union bound it is enough to show that with probability $1-o(n^{-1})$, for every fixed $u\not \in N(v) \cup \{v\}$ there are at least $n/\ln^4 n$ vertices $w\not\in N(v)\cup\{v\}$ which are $u$-friendly. 

Fix $u\not \in N(v) \cup \{v\}$ and let $\{A_1,\ldots ,A_{n/\ln^4 n}\}$ be a partition of $[n]\setminus (N(v)\cup\{v,u\})$ into sets of size $\tau\coloneqq \lfloor \ln^3 n\rfloor$ each.
%This is possible due to Claim \ref{cl:degrees}, which implies that $|[n]\setminus (N(v)\cup\{v,u\})|=(1+o(1))n\gg n/\ln n$.
 To conclude the proof, by the union bound it is enough to show that each set $A_i$ contains a $u$-friendly vertex with probability at least $1-o(n^{-2})$.
Indeed, fix $A_k$, and let $\{u_i\}_{i=1}^{\tau}$ be an ordering of $A_k$.
For every $i=1,\ldots, \tau$, we expose sequentially some sets of edges `associated' with $u_i$. These sets of edges are defined in the following way. Assume that, before step $i\in[\tau]$, all edges between $\{u\eqqcolon u_0,u_1,\ldots,u_{i-1}\}$ and $N(v)$ are exposed. Also, for every $j\in\{0,\ldots,i-1\}$, the edges in a certain set $U_j\subseteq N(u_j)\cap N(v)$, that is defined below, are exposed. Set $U_0=N(u)\cap N(v)$.

At step $i$, we first expose the adjacency relations between $u_i$ and $N(v)$. Set $\mathcal{E}_i$ to be the event that $u_i$ has a neighbour in $C(u)$.
If $\mathcal{E}_i$ is not satisfied, set $U_i=\emptyset$. Otherwise, set
$$
U_i:=(N(u_i)\cap N(v))\setminus (U_0\sqcup U_1\sqcup\ldots\sqcup U_{i-1})
$$ 
and expose the edges of $\mathbf{G}[U_i]$. Also, let $z\in V(C(u))$ be a neighbour of $u_i$. Subject to the event that \begin{equation}
\label{eq:common_neighbourhood_exposed2}
|N(v)\setminus(U_0\sqcup U_1\sqcup\ldots\sqcup U_{i-1})|=(1+o(1))np,
\end{equation}
the set $U_i$ satisfies
\begin{equation}
\label{eq:U_i_cardinality2}
|U_i|=(1+o(1))np^2
\end{equation}
%\AC{In both equation above, I think it looks better to put the $(1+o(1))$ in front.}  
 with probability $1-o(n^{-3})$ by the Chernoff bound. Due to Theorem~\ref{th:giant}, with probability $1-o(n^{-3})$, there exists a component of size at least $cn^{1/3}$ in $\mathbf{G}[U_i]$. If such a component exists, then it has a neighbour of $z$ with probability at least 
\begin{equation}
\label{eq:two_neighbours2}
1-(1-p)^{cn^{1/3}}\sim 1-e^{-(1+\varepsilon)c}.
\end{equation}
As soon as we find such a neighbour in $U_i$, we get the desired $u$-friendly vertex, namely $u_i$.

Let $i^*$ be the first $i\in[\tau]$ such that at least $\ln^2 n$ events $\mathcal{E}_j$, $j\leq i$, hold; if no such $i$ exists let $i^*=\tau+1$.
Since the events $\mathcal{E}_i$, $i\in[\tau]$, are independent and  
$$
\mathbb{P}(\mathcal{E}_i)=1-(1-p)^{|C(u)|}=\Theta(1)
$$
for all $i\in[\tau]$, we get that $i^*\leq \tau$ with probability $1-o(n^{-2})$. Due to Claim~\ref{cl:degrees}, with probability $1-o(n^{-2})$, 
$$
(1-o(1))np-(1+o(1))\ln^2 n \cdot np^2 \leq |N(v)\setminus(U_0\sqcup U_1\sqcup\ldots\sqcup U_{i})|=(1-o(1))np
$$
for all $i\leq i^*$.
Due to~\eqref{eq:two_neighbours2} and Theorem~\ref{th:giant}, with probability $1-o(n^{-2})$, there exists $i\leq i^*$ such that $\mathcal{E}_i$,~\eqref{eq:common_neighbourhood_exposed2},~and~\eqref{eq:U_i_cardinality2} hold, the largest component in $U_i$ is inside $C(u_i)$, and this component has a neighbour of $z$, completing the proof.

\end{document}